 \renewcommand*\l@subsection{\@tocline{2}{0em}{2.8em}{6.4em}{}}
\newcommand{\RR}{\mathbb{R}}
\newcommand{\CC}{\mathbb{C}}
\newcommand{\NN}{\mathbb{N}}
\newcommand{\cA}{\mathcal{A}}
\newcommand{\cS}{\mathcal{S}}
\newcommand{\cH}{\mathcal{H}}
\newcommand{\cV}{\mathcal{V}}
\renewcommand{\S}{\cS}
\newcommand{\PSD}{\mathcal{S}_+}
\newcommand{\psdcone}[1]{\cS_+^{#1}}
\newcommand{\rank}{\textup{rank}\,}
\newcommand{\psd}{\succeq}
\newcommand{\conv}{\textup{conv}}
\newcommand{\Z}{\mathcal{Z}} % Zero set
\DeclareMathOperator{\cl}{cl}
\DeclareMathOperator{\linspan}{span}
\DeclareMathOperator{\interior}{int}
\DeclareMathOperator{\mindeg}{mindeg}
\DeclareMathOperator{\Aut}{Aut}
\newcommand{\tr}{\textup{tr}}
\newcommand{\epi}{\textup{epi}}
\newcommand{\dom}{\textup{dom}}
\newcommand{\ext}{\textup{ext}}
\newcommand{\intt}{\textup{int}}
\newcommand{\ff}{F}
\newcommand{\Horn}{\textsc{horn}}
\newcommand{\Honey}{\textsc{honey}}
\newtheorem{theorem}{Theorem}[section]
\newtheorem{lemma}[theorem]{Lemma}
\newtheorem{corollary}[theorem]{Corollary}
\newtheorem{proposition}[theorem]{Proposition}
\theoremstyle{definition}
\newtheorem{definition}[theorem]{Definition}
\newtheorem{example}[theorem]{Example}
\theoremstyle{remark}
\newtheorem{remark}[theorem]{Remark}
\title{Lifting for Simplicity: Concise Descriptions of Convex Sets}
\author[H. Fawzi]{Hamza Fawzi} \address{Department of Applied Mathematics and Theoretical Physics, University of Cambridge, CB3 0WA, United Kingdom} \email{h.fawzi@damtp.cam.ac.uk}
\author[J. Gouveia]{Jo{\~a}o Gouveia}
\address{CMUC, Department of Mathematics,
  University of Coimbra, 3001-454 Coimbra, Portugal}
\email{jgouveia@mat.uc.pt}
\author[P.A. Parrilo]{Pablo A. Parrilo}\address{Laboratory for
  Information and Decision Systems (LIDS), Massachusetts Institute of
  Technology, Cambridge, MA 02139, USA} \email{parrilo@mit.edu}
\author[J. Saunderson]{James Saunderson}
\address{Department of Electrical and Computer Systems Engineering, Monash University, VIC 3800, Australia}
\email{james.saunderson@monash.edu}
\author[R.R. Thomas]{Rekha R. Thomas}
\address{Department of Mathematics, University of Washington, Box
  354350, Seattle, WA 98195, USA} \email{rrthomas@uw.edu}
  \thanks{Jo{\~a}o Gouveia was partially supported by the Centre for
Mathematics of the University of Coimbra - UIDB/00324/2020, funded by the Portuguese Government through FCT/MCTES. Pablo Parrilo was partially supported by the National Science Foundation through NSF Grant {\#}CCF-1565235. James Saunderson 
was partially supported by an Australian Research Council Discovery Early Career Researcher Award (project number DE210101056). Rekha Thomas was partially suported by the National Science Foundation through NSF Grant {\#}DMS-1719538. 
}
\begin{document}

\begin{abstract}
	This paper presents a selected tour through the theory and applications of
	lifts of convex sets. A lift of a convex set is a higher-dimensional
	convex set that projects onto the original set. Many convex sets have
	lifts that are dramatically simpler to describe than the original set.
	Finding such simple lifts has significant algorithmic implications, particularly for
	optimization problems. 
        We consider both the classical
	case of polyhedral lifts, described by linear inequalities, as well as
	spectrahedral lifts, defined by linear matrix inequalities, with a
	focus on recent developments related to spectrahedral lifts.

	Given a convex set, ideally we would either like to find a (low-complexity)
	polyhedral or spectrahedral lift, or find an obstruction proving that
	no such lift is possible. To this end, we explain the connection
	between the existence of lifts of a convex set and certain structured
	factorizations of its associated slack operator. Based on this
	characterization, we describe a uniform approach, via sums of squares,
	to the construction of spectrahedral lifts of convex sets and illustrate
	the method on several families of examples. Finally, we discuss
	two flavors of obstruction to the existence of lifts:
	one related to facial structure, and the other related to
	algebraic properties of the set in question.

	Rather than being exhaustive, our aim is to illustrate the richness of the area. 
	We touch on a range of different topics related to the existence of
	lifts, and present many examples of lifts from different 
	areas of mathematics and its applications.
\end{abstract}
\maketitle 

% \begin{keywords}
% Convex sets, polyhedral lifts, positive semidefinite cone, spectrahedral lifts, cone factorization, slack operator
% \end{keywords}
% 
% \begin{AMS}
% 52-02, 90-02 
% \end{AMS}

\tableofcontents

%%%%%%%%%%%%%%%%%%%%%%%%%%%%%%%%%%%%%%%%%%%%%%%%%%%%%%%%
\section{Introduction}
\label{sec:introduction}
% !TEX root = main.tex

%\begin{figure}[ht]
\begin{wrapfigure}{R}{0.25\textwidth}
  \centering
  \includegraphics[width=0.18\textwidth]{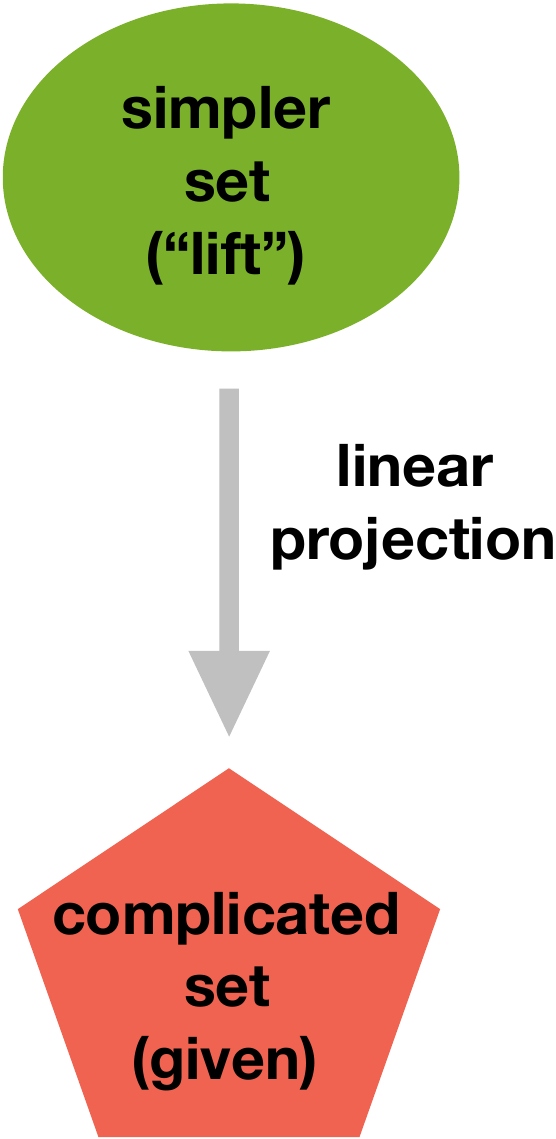}
  \caption{A complicated convex set might admit a concise ``lift'', which is a higher-dimensional convex set with a simpler description that projects to it.}
  \label{fig:cartoon}
\end{wrapfigure}
%\end{figure}

The representation of a convex set in $\RR^n$ is a crucial aspect of algorithmic primitives for convex sets, such as testing membership, computing volume, or optimization. The study of representations of \emph{convex} sets is of particular importance in the context of optimization, although it is also of interest well beyond this setting. An established idea for representing a convex set, which has received much attention recently, is to express it as the linear projection of another, ``simpler'' convex set.

This idea is very powerful --- there are by now many 
examples of convex sets that require highly complicated descriptions in their ambient space, 
but for which one can find a convex set in a higher-dimensional space, with a concise description, that projects to it. The cartoon in Figure~\ref{fig:cartoon} illustrates the idea.

Such concise ``lifts'' can lead to  
considerable algorithmic efficiencies for applications such as linear optimization over the original set. They effectively exploit the non-uniqueness of possible preimages of a projection, to find a more computationally convenient description. 
This article will take the reader on a selective tour of key ideas, examples, and applications related to representations of convex sets as projections. 

\subsection{Why lifts? Two simple examples} To motivate our topic, consider {\em linear programming} which is the problem of optimizing a linear function over a polytope. 
The number of linear inequalities needed to represent the polytope can be thought of as a measure of its complexity, as 
it directly affects the efficiency of certain algorithms for linear programming over the polytope. 
The obvious linear inequality description of a
polytope $P \subset \RR^n$ requires one inequality for every facet of $P$ (a codimension-one face). However, the number of
facets of $P$ may be very large.  In such situations,
one might ask if there is a
more compact representation of $P$.
% by linear inequalities that can bypass the need to represent every facet.
One possible way out is to move to a higher dimension and write  $P$ as the 
linear projection of a polytope $Q \subset \RR^\ell$ where $\ell > n$. Such a $Q$ is called a {\em lift} of $P$. 
If in addition, both $\ell$ and the number of facets of $Q$ are small in size, say polynomial in $n$, then $Q$ is referred to as a 
{\em small/compact} lift of~$P$.  
Such a small lift can be very useful in optimization, since any linear optimization problem over $P$ can be solved via linear optimization over $Q$. Indeed if $P = \pi(Q)$ where $\pi$ is a linear map, then for any cost vector $c$ we have:
\[
\max_{x \in P} \; \langle c , x \rangle = \max_{y \in Q} \; \langle c , \pi(y) \rangle.
\]
The right-hand side is a linear optimization problem over $Q$, which can potentially be much easier to solve.

\begin{figure}[ht]
  \centering
  \includegraphics[width=4cm]{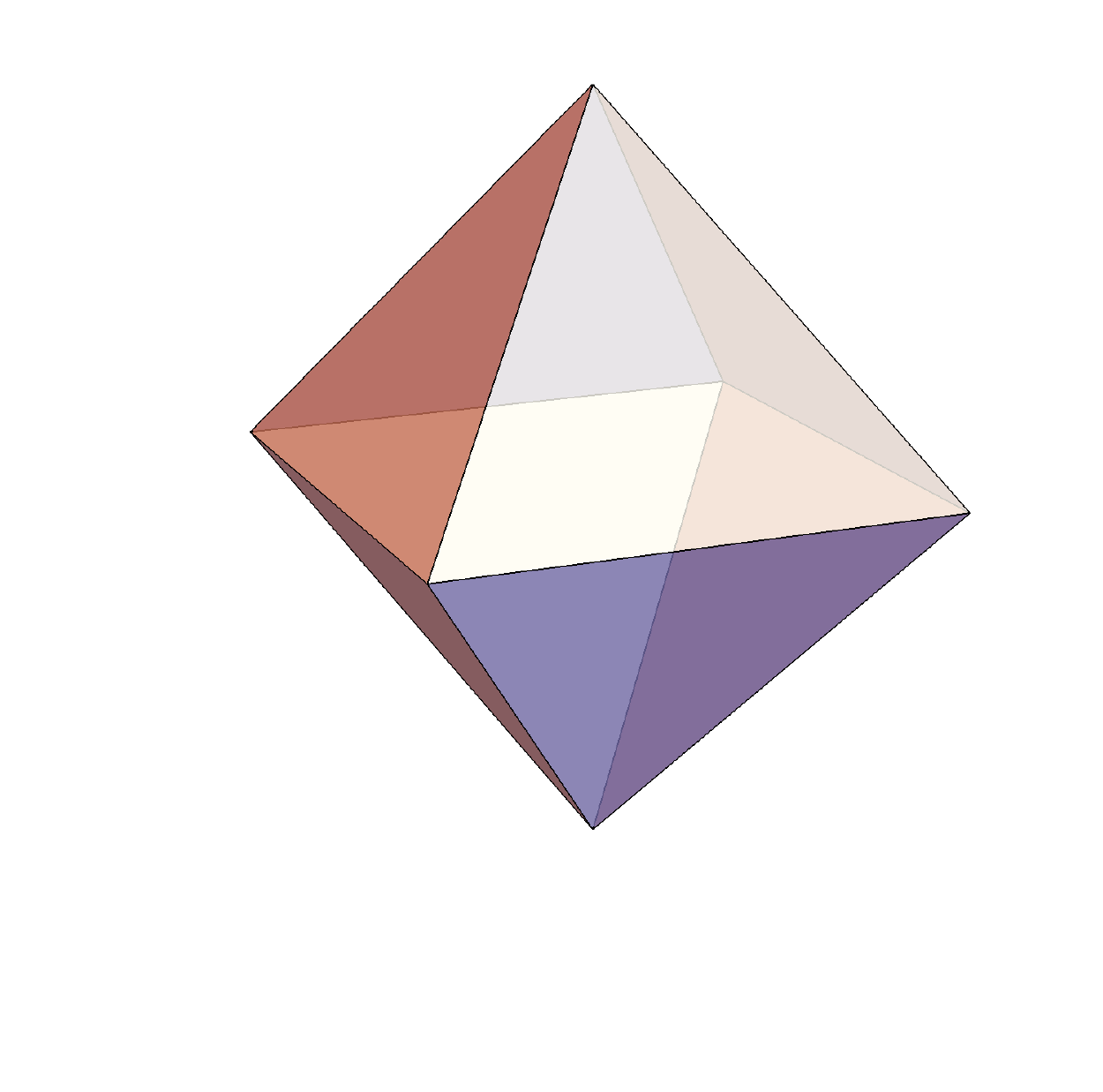}
  \caption{Cross-polytope $C_3$}
  \label{fig:octahedron}
\end{figure}

\begin{example} \label{ex:crosspolytope}
Consider the $\ell_1$-norm ball in $\RR^n$  which is the  
$n$-dimensional {\em cross-polytope} defined as the convex hull of the $2n$ vectors $\pm e_i, \,\, i=1, \ldots, n$ where $e_i$ is the 
$i$th standard basis vector in $\RR^n$.  
This polytope has $2^n$ facets, and its inequality description is 
$C_n = \{ x \in \RR^n \,:\, \sum_{i=1}^n \pm x_i \leq 1 \}$. 
The three-dimensional cross-polytope $C_3$ is shown in Figure~\ref{fig:octahedron}. 

Let  $\pi_x$ denote the projection from $\RR^{2n}$ to $\RR^n$ sending $(x,y) \mapsto x$. 
Then $C_n = \pi_x(Q_n)$ where 
$Q_n = \{ (x,y) \in \RR^{2n} \,:\,  \sum_{i=1}^n  y_i = 1, \,\,\, -y_i \leq x_i \leq y_i \,\,\forall \, i=1, \ldots, n  \}$. 
Note that $Q_n$ has only $2n$ facets, in contrast to the exponentially many of $C_n$.

% Since $-y_i \leq x_i \leq y_i$ for $(x,y) \in Q_n$,  we have that $-y_i \leq \pm x_i \leq y_i$ for all $i=1, \ldots, n$. Therefore, $\sum \pm x_i \leq \sum y_i = 1$ and so, $\pi_x(Q_n) \subseteq C_n$. On the other hand, for each $i=1, \ldots, n$,  both $(e_i, e_i)$ and $(-e_i, e_i)$ lie in $Q_n$ and they project down to the vertices $e_i$ and $-e_i$ of $C_n$. By the convexity of both $C_n$ and $Q_n$ we then get that $C_n \subseteq \pi_x(Q_n)$ and hence, $\pi_x(Q_n) = C_n$. 
\end{example}

Even though the cross-polytope has $2^n$ facets, it has only $2n$ vertices. So it is perhaps not too surprising that it has a small lift. Next we consider an example of a polytope that has both exponentially many facets and vertices. 

\begin{example} \label{ex:permutahedron}
The {\em permutahedron} $\Pi_n \subset \RR^n$ is the convex hull of the $n!$ vectors obtained by permuting the coordinates of $(1,2,\ldots,n)$. 
This is an $(n-1)$-dimensional polytope with $2^{n}-2$ facets, one corresponding to each proper subset of $[n]$. Figure~\ref{fig:perm} shows 
$\Pi_3$ and $\Pi_4$.

\begin{figure}[ht]
	\begin{center}
		\raisebox{-0.5\height}{\includegraphics[scale=1]{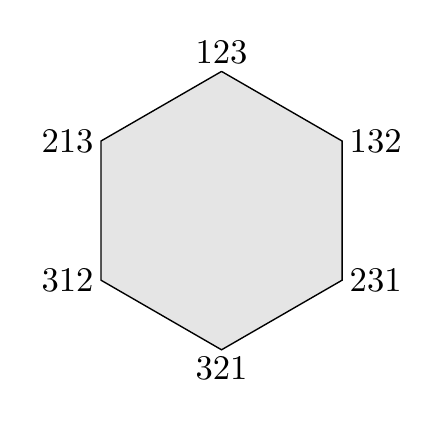}}
		\hspace{2cm}\raisebox{-0.5\height}{\includegraphics[scale=0.6]{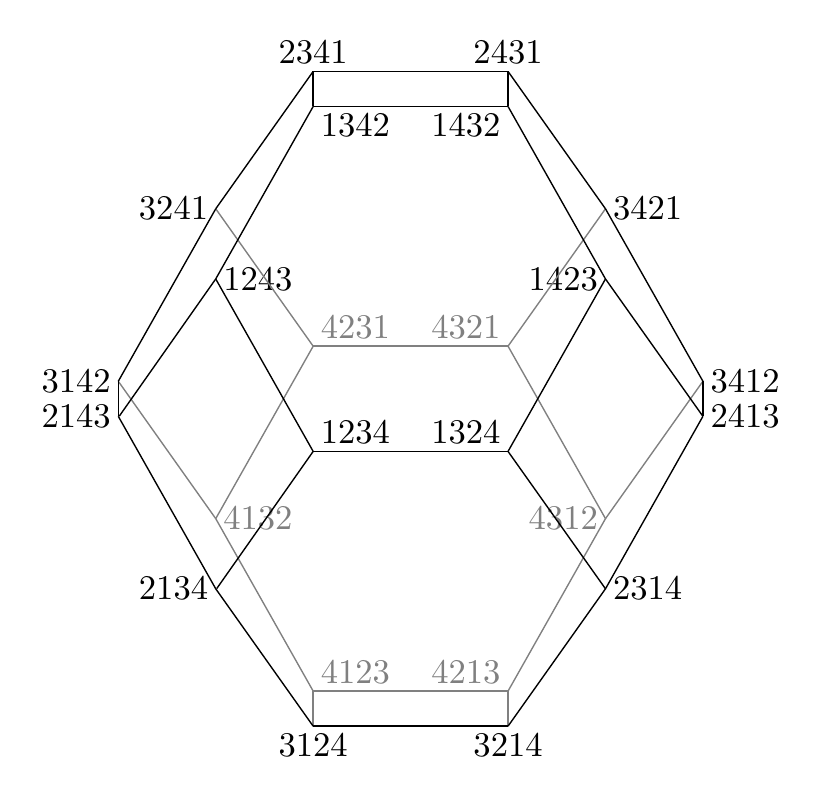}}
	\end{center}
	\caption{\label{fig:perm} On the left is the 
	permutahedron $\Pi_3$ shown in the ambient space $x_1+x_2+x_3=6$. On the right is the permutahedron
	$\Pi_4$ shown in the ambient space $x_1+x_2+x_3+x_4=10$.}
\end{figure}

The {\em Birkhoff polytope} $B_n$ which is the convex hull of all permutation matrices
 in $\RR^{n \times n}$ is a lift of the permutahedron
$\Pi_n$. The linear map $X \in \RR^{n \times n} \mapsto (1,2,\ldots,n)X \in \RR^n$ surjects $B_n$ onto~$\Pi_n$.
	The Birkhoff-von Neumann theorem~\cite{birkhoff1946three,von1953certain} 
	states that $B_n$ coincides with the set of \emph{doubly stochastic matrices}, i.e.,
$$B_n = \left\{ X \in \RR^{n \times n} \,:\, 
\begin{array}{c}
\sum_{j=1}^n X_{ij} = 1, \,\, \forall \,\, i \in [n], 
\,\,\sum_{i=1}^n X_{ij} = 1, \,\, \forall \,\, j \in [n], \\
X_{ij} \geq 0    \,\, \forall \,\,1 \leq i,j \leq n 
\end{array} 
\right\}.$$
One can see from this description that $B_n$ has $n^2$ facets given by the nonnegativity conditions $X_{ij} \geq 0$,
and lives in $\RR^\ell$ where $\ell = n^2$. Hence $B_n$ is a small lift of $\Pi_n$.
\end{example}

\label{example:octagon}
In general, lifts are far from being unique. For instance, in Figure~\ref{fig:oct} we show two different 
polyhedral lifts of a regular octagon. These are non-isomorphic, since the combinatorial structure of the polytopes is different, although both lifts have the minimum possible number of facets
(six, in this case).
\begin{figure}[h]
\begin{center}
\includegraphics[width=0.3\linewidth]{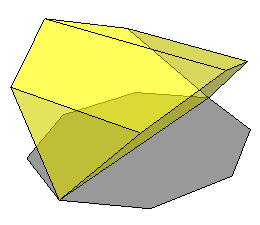} \hspace{1cm}  \includegraphics[width=0.3\linewidth]{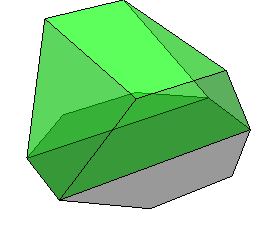}
\end{center}
\caption{Two non-isomorphic polyhedral lifts of a regular octagon, of six facets each.} 
\label{fig:oct}
\end{figure}

%\end{example}

\subsection{Lifting to nonpolyhedral sets} The two examples of lifts we saw above were polytopes. From the computational point of view polytopes are feasible sets of linear programs. 
A significant portion of this survey will be devoted to linear programming's relative, \emph{semidefinite programming}. {\em Semidefinite programs} \cite{VaB:96} are linear optimization problems where one optimizes a linear function $\sum c_i x_i$ subject to 
constraints described by a linear matrix inequality (LMI) of the form
\begin{equation}
\label{eq:introlmi}
x_1 A_1 + \dots + x_n A_n \preceq B,
\end{equation}
where $A_1,\ldots,A_n,B$ are symmetric matrices. The notation $A \preceq B$ means that $B-A$ is \emph{positive semidefinite (psd)}. The relation $A \preceq B$ (equivalently, $B \succeq A$), is a partial order on the set of symmetric matrices of the same size. 
Linear matrix inequalities of the form \eqref{eq:introlmi} describe convex sets that are, in general, nonpolyhedral. Such sets are called \emph{spectrahedra}. We say that a convex set $C$ has a \emph{spectrahedral lift} if it can be expressed as the projection of a spectrahedron. The motivation is clear: if $C$ has a spectrahedral lift then the maximum of a linear function on $C$ can be computed using semidefinite programming.
\begin{example}
Consider the spectrahedron 
\[
\mathcal{E}_3 = \left\{ (x,y,z) \in \RR^3 : \begin{bmatrix} 1 & x & y\\ x & 1 & z\\ y & z & 1\end{bmatrix} \succeq 0\right\}.
\]
This spectrahedron is known as the \emph{elliptope} and arises in the famous semidefinite relaxation for the maximum cut problem \cite{GoemansWilliamson}. The three-dimensional elliptope is shown in Figure~\ref{fig:elliptope} along with various projections.
\end{example}

\begin{figure}[h]
\begin{center}
\includegraphics[width=0.25\linewidth]{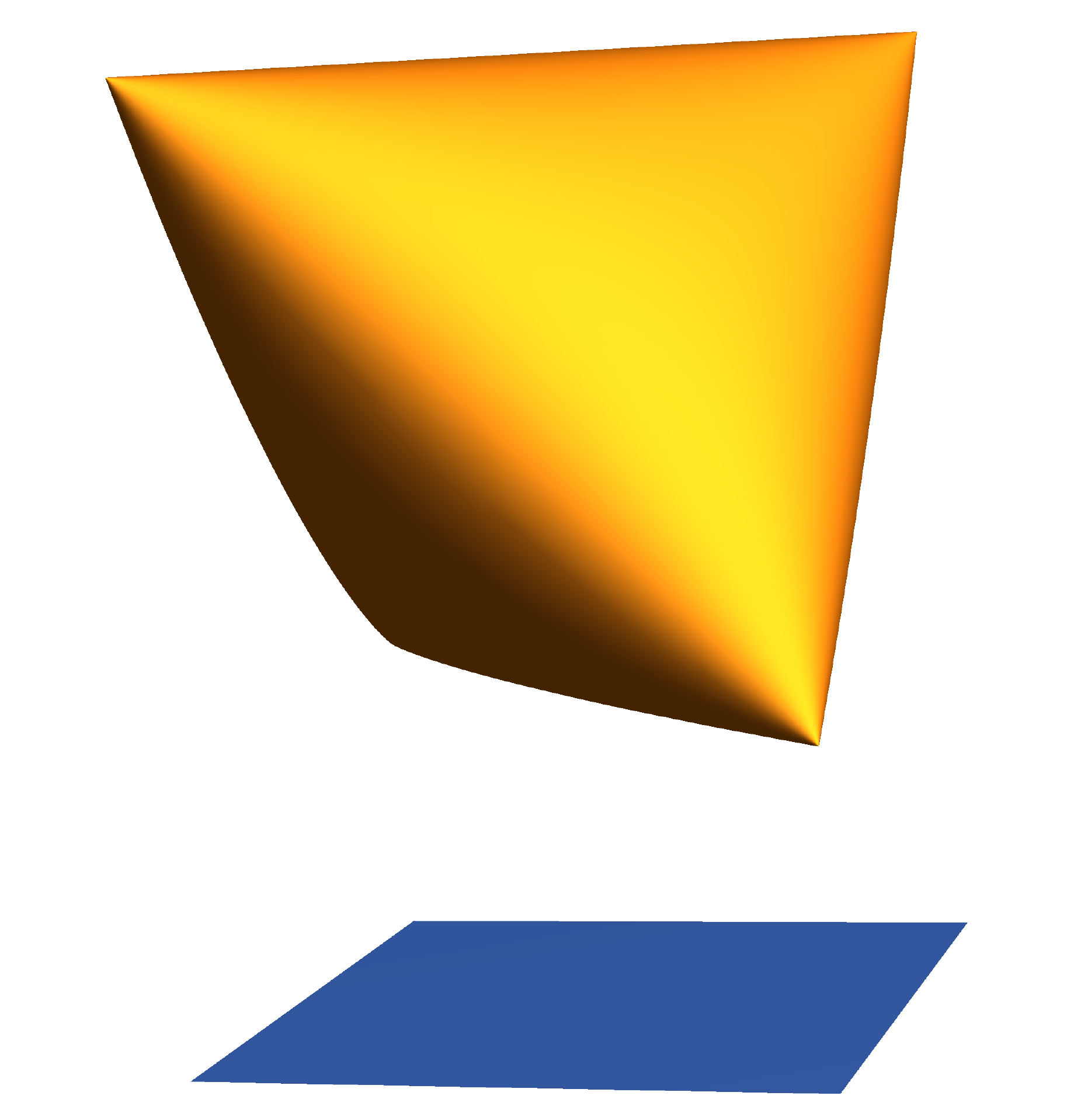} \hspace{2cm}  \includegraphics[width=0.25\linewidth]{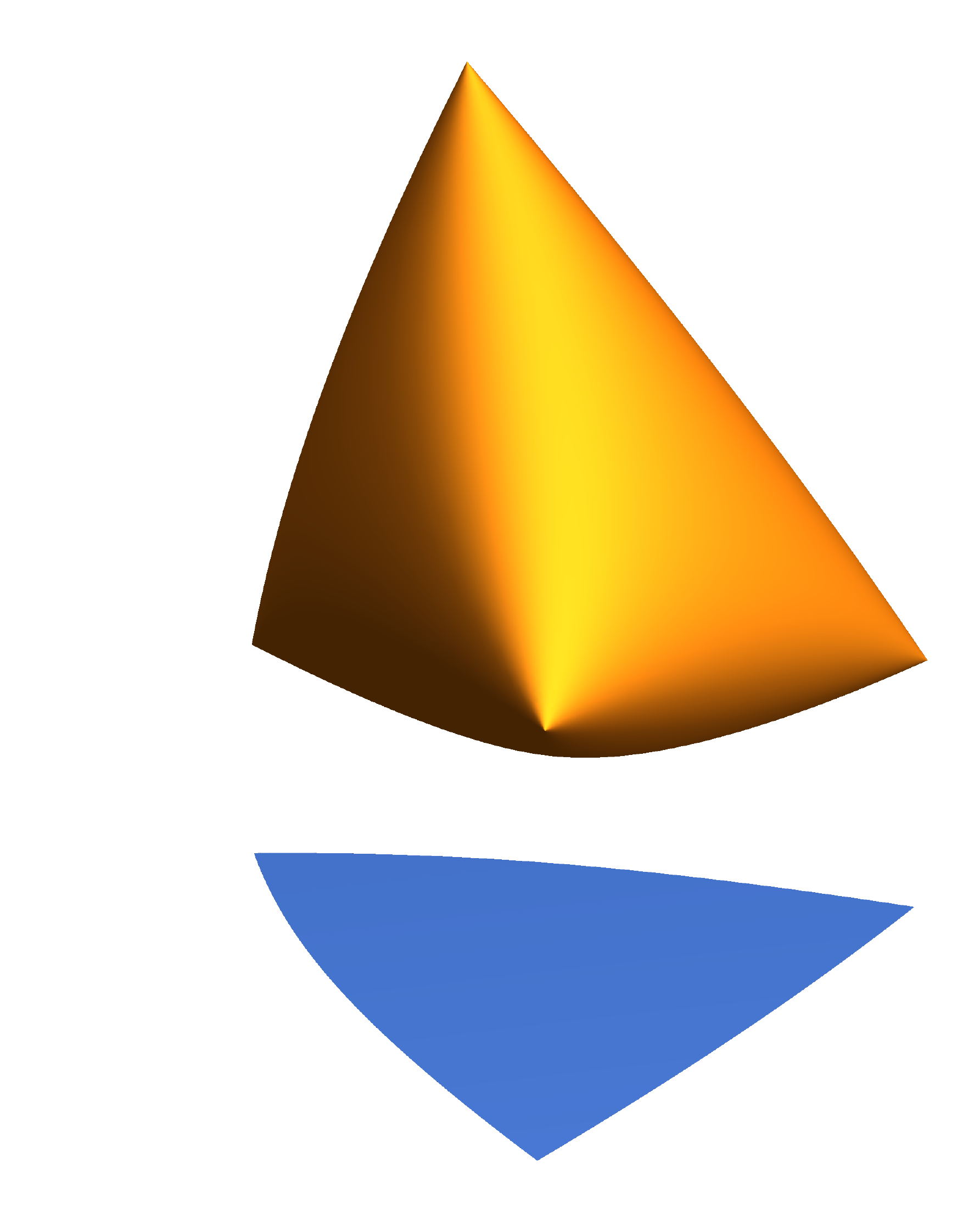}
\end{center}
\caption{Two different linear images of a spectrahedron. (Note that the linear image on the left is a polytope!)} \label{fig:elliptope}
\end{figure}

\textbf{Why focus on \emph{convex} sets?} The sets we have considered so far are all convex,  and this survey will be dedicated to lifts of convex sets. On a first encounter, focusing exclusively on lifts of convex sets may seem quite restrictive. However, essentially any linearly parameterized family of optimization problems can be rewritten as 
linear optimization over a convex set.
%%% replaces
%However, it turns out that families of optimization problems without any obvious convexity properties can be reformulated as linear optimization problems over convex sets. 
We illustrate this idea through a simple example. Suppose we have the family of optimization problems:
$$ \max \, \{ \alpha \sin t + \beta \cos t \,:\, t \in [0,2\pi] \}$$
where $\alpha, \beta \in \RR$ parameterize a choice of objective function.
Then we can set $x := \sin t$ and $y := \cos t$ and rewrite the family of problems as 
$\max \{ \alpha x + \beta y  \,:\, x^2 + y^2 = 1 \}$. Since the objective function is now linear, this problem is 
equivalent to maximizing the linear function $\alpha x + \beta y$ over the convex hull of the circle, 
i.e., the unit disc in $\RR^2$. This last reformulation results in a {\em convex program}, since the objective function 
is linear and the feasible region is convex. Figure~\ref{fig:3 formulations} shows the reformulations for two different objective functions.

\begin{figure}[ht]
	\begin{center}
		\includegraphics[scale=0.85]{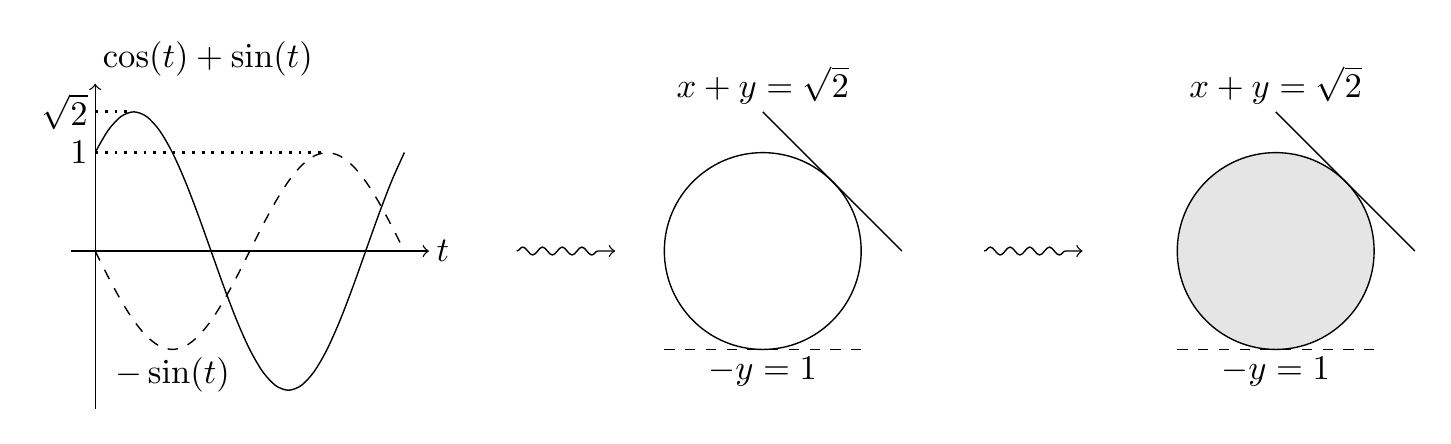}	
	\end{center}
	\caption{\label{fig:3 formulations} Reformulating the problem of maximizing $\alpha\cos(t)+\beta\sin(t)$ over $t\in [0,2\pi]$
	as a convex optimization problem for $(\alpha,\beta) = (1,1)$ (solid) and $(\alpha,\beta)=(0,-1)$ (dashed).}
\end{figure}
The above idea makes sense in much more generality. Suppose we want to reformulate a family of problems of the form 
$\max \{ \sum_{i=1}^k \alpha_i b_i(x) : x\in S \}$ where the $b_i(x)$ are fixed continuous functions from $\RR^n \rightarrow \RR$, $S\subset \RR^n$ is a compact set, and the  $\alpha_i$
 parameterize the problem instance. This can be rephrased as 
 $$\max \left\{ \sum_{i=1}^k \alpha_i z_i \,:\, z \in b(S)\right\} \quad\textup{where}\quad b(S) := \left\{(b_1(x), \ldots, b_k(x)) \,:\, x \in S \right\}.$$
Since the objective function is linear we can now optimize over the convex hull of $b(S)$ and obtain the 
equivalent convex problem $\max \left\{ \sum \alpha_i z_i \,:\, z \in \conv(b(S)) \right\}$.
The approach is most interesting when a subspace of objective functions is reformulated simultaneously.
In the special case where we consider only a single objective function $b$, this reduces to the 
(rather tautological) convex optimization problem of finding the largest point in the interval $\conv(b(S))$. 

As an example of this general convexification approach, consider applying it to a family of polynomial 
optimization problems.
%Polynomial optimization is an example of this. 
For a vector $a \in \NN^n$, let $|a|$ denote its $1$-norm and let $x^a$ denote the monomial 
$x_1^{a_1} x_2^{a_2} \cdots x_n^{a_n}$. 
Suppose we wish to maximize any degree $d$ polynomial $p(x) = \sum_{|a| \leq d} p_a x^a$ over 
all $x \in S \subset \RR^n$. This is equivalent to the convex optimization problem
$$\max \left \{ \sum_{|a| \leq d}  p_a y_a \,:\, y \in \conv\{ (x^a)_{|a| \leq d}\,:\, x \in S\}  \right\}.$$
Note that the convex sets that arise from these reformulations are, in general, not polyhedra, 
and so it is not at all obvious how to computationally represent these sets. 

In combinatorial optimization we are constantly faced with linear optimization problems over finite sets in $\RR^n$, 
mostly subsets of vectors in $\{0,1\}^n$. Any such problem is equivalent to a linear program over the 
polytope that is the convex hull of the finitely many feasible solutions. We will see examples of lifts of such 
polytopes in later sections. 

\subsection{The conic point of view} The formal setting we consider in this paper is that of \emph{conic lifts} which permits an elegant treatment of both polyhedral and spectrahedral lifts in a unified way. This treatment is also motivated from optimization theory, where linear and semidefinite programming are seen as special cases of \emph{conic programming} \cite{nesterov1994interior,ben2001lectures}. To motivate the definition which follows, note that a polyhedron $P = \{x \in \RR^n : Ax \leq b\}$ where $A \in \RR^{m\times n}$ can be equivalently seen (via a linear bijection) as the intersection $\RR^m_+ \cap L$, where $\RR^m_+$ is the nonnegative orthant in 
$\RR^m$ and $L = b + \text{colspan}(A)$ is an affine space. Similarly the spectrahedron defined by \eqref{eq:introlmi} is linearly isomorphic to $\S^m_+ \cap L$, where $\S^m_+$ is the convex cone of $m\times m$ symmetric positive semidefinite matrices, and $L = B + \linspan(A_1,\ldots,A_n)$. With these examples in mind, we can state the formal definition of a conic lift that we adopt in this paper:
\begin{definition} \label{def:lift definition}
Let $C \subset \RR^n$ be a convex set and $K \subset \RR^\ell$ be a closed convex cone. We say that
$C$ has a $K${\em -lift} if $C = \pi(K \cap L)$ where $L$ is an affine space in $\RR^\ell$ and $\pi \,:\, \RR^\ell \rightarrow \RR^n$ is a
linear map. The convex set $K \cap L$ is called a $K$-lift of $C$.
\end{definition}
%Polyhedral lifts correspond precisely to the case where $K = \RR^m_+$ is the nonnegative orthant, and spectrahedral lifts correspond to $K = \PSD^m$ is the positive semidefinite cone.
If $C = \pi(\RR^m_+ \cap L)$ (respectively, $C = \pi(\PSD^m \cap L)$) we say that $C$ has a polyhedral (respectively, spectrahedral) lift of {\em size} $m$.  Going back to Example \ref{ex:permutahedron}, the Birkhoff polytope $B_n$
 has $O(n^2)$ inequalities none of which are redundant, and thus is a polyhedral lift of $\Pi_n$ of size $O(n^2)$.
 The smallest $m$ for which $C = \pi(\RR^m_+ \cap L)$ is called the {\em linear extension complexity} of $C$, and the smallest $m$ 
 for which $C = \pi(\PSD^m \cap L)$ is called the {\em semidefinite extension complexity} of $C$. It was shown by Goemans 
\cite{goemans2015smallest} that the linear extension complexity of $\Pi_n$ is $O(n \log n)$. The proof involves constructing
an explicit lift of $\Pi_n$ of that size by using sorting networks.

Another convex cone of special interest in conic programming is the \emph{second-order cone} defined as
\[
\mathcal{L}_+^{3} = \{(x_0,x)\in \RR\times \RR^{2}\;:\; \|x\|_2 \leq x_0\}.
\]
If a convex set has a $(\mathcal{L}^3_+)^m$-lift (for some integer $m$) then we say it has a {\em second-order cone lift}. Examples of convex sets with second-order cone lifts include ellipsoids and $\ell_p$-norm balls for rational $p \geq 1$, which arise in many practical situations. (See, e.g.,~\cite{ben2001lectures} for these, and many more, interesting examples.) 
%The convex cones $\RR^m_+, (\mathcal{L}^3_+)^m, \S^m_+$ represent the most important convex cones in optimization, and, as such, also for the study of lifts. 
Figure \ref{fig:Kliftsrelation} summarizes the inclusion relationship between different types of $K$-lifts, when $K$ is one of $\RR^m_+, (\mathcal{L}^3_+)^m, \S^m_+$. A discussion and proof of these relationships, in particular the strict inclusions, will be given in Section~\ref{sec:obstructions and lower bounds}. 
\begin{figure}[ht]
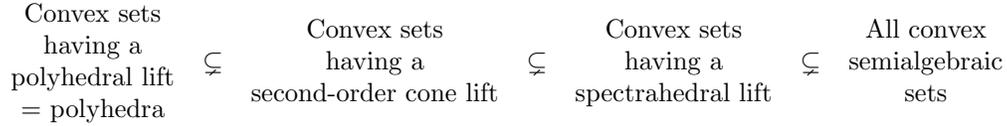

{\small
\[
\begin{array}{c}
\text{Convex sets}\\
\text{having a}\\
\text{polyhedral lift}\\
\text{= polyhedra}
\end{array}
\; \subsetneq \;
\begin{array}{c}
\text{Convex sets}\\
\text{having a}\\
\text{second-order cone lift}
\end{array}
\; \subsetneq \;
\begin{array}{c}
\text{Convex sets}\\
\text{having a}\\
\text{spectrahedral lift}
\end{array}
\; \subsetneq \;
\begin{array}{c}
\text{All convex}\\
\text{semialgebraic}\\
\text{sets}
\end{array}
\]
}
\caption{Inclusion relationship between different types of lifts. The fact that the inclusions are strict will be discussed in more detail in Section \ref{sec:obstructions and lower bounds}.
(The definition of a semialgebraic set is given in Section~\ref{sec:obs-semi-prelim}.)
	}
\label{fig:Kliftsrelation}
\end{figure}

\begin{remark}[Terminology]
We remark that there is differing terminology in the papers that study
lifts of convex sets.  Polyhedral lifts are sometimes called {\em
  extended formulations}, {\em linear lifts} or {\em
  LP-lifts}. Spectrahedral lifts are sometimes called {\em
  semidefinite lifts} or {\em psd-lifts}. A convex set that admits a
spectrahedral lift is a {\em projected spectrahedron} or {\em
  spectrahedral shadow}, and the set is said to have a {\em linear
  matrix inequality (LMI) representation} or {\em semidefinite
  representation}. {Linear extension complexity} is also known as {\em
  nonnegative rank} and {semidefinite extension complexity} as {\em
  psd rank}. In this paper we will use the terminology we have
introduced in the previous paragraphs.
\end{remark}

\subsection{Lifts in mathematics} 
The idea of going to a higher-dimensional space to seek a ``nicer'' representation of an object  is a common theme in many areas of mathematics.
A familiar example is {\em resolution of singularities} of algebraic varieties, one of the oldest and best known problems in algebraic geometry. Given a singular algebraic variety, the goal is to write it as a proper
birational image of a smooth variety.  In 1964 Hironaka~\cite{hironaka1964resolution1,hironaka1964resolution2} 
proved that such resolutions always exist for varieties over
fields of characteristic zero such as $\CC$ or $\RR$. 
In simplified terms, Hironaka's result says that the set of solutions in $k^n$, where $k$ is a field
of characteristic zero, to a system of polynomial equations can always be rationally parametrized by a smooth manifold.
(See, e.g.,~\cite{hauser2003hironaka} for a tutorial-style exposition.)
% In particular, every algebraic variety is the image under a smooth map of a smooth manifold in a sufficiently big ambient space.
Besides the theoretical importance of resolutions, they are also of practical relevance. For instance, they allow us to use our arsenal of techniques for optimization over smooth functions to indirectly optimize over a non-smooth variety, by passing to the lifted smooth variety.

\begin{example}
Consider the deltoid curve (see Figure~\ref{fig:deltoid}), which is the set of solutions in $\RR^2$ of
$$(x^2 + y^2)^2 -8 (x^3 - 3 xy^2) +18 (x^2 + y^2) - 27 = 0.$$
As is evident from Figure~\ref{fig:deltoid}, this curve has three singularities.
\begin{figure}[h]
\begin{center}
\includegraphics[width=3cm]{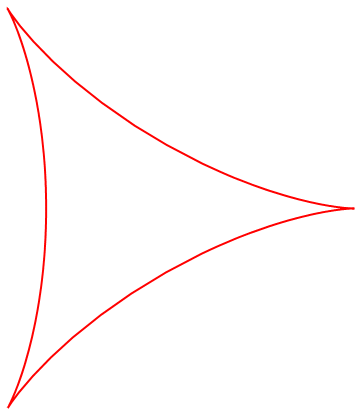} \,\,\,\,
\includegraphics[width=4cm]{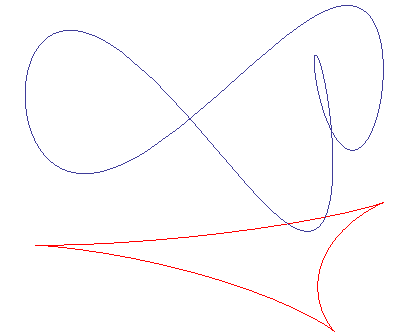}
\end{center}
\caption{On the left is the deltoid curve which has $3$ singularities.
On the right we see the deltoid as the projection of a smooth curve. \label{fig:deltoid}}
\end{figure}

The deltoid is polynomially parametrized by a circle, since it can be written as
$$\{(1+ 2 \cos(t) - 2\sin(t)^2, 2 \sin(t) - 2\sin(t)\cos(t)) \ : \ t\in[0,2\pi]\}.$$
This is already very useful, but one can actually write it as a linear projection of a smooth curve, by simply adding a $\sin(3t)$ term to the parametrization. Consider the curve
$$Y=\{(1+ 2 \cos(t) - 2\sin(t)^2, 2 \sin(t) - 2\sin(t)\cos(t), \sin(3t)) \ : \ t\in[0,2\pi]\} \subseteq \RR^3.$$
The curve $Y$ is smooth, and %in Figure~\ref{fig:proj_deltoid}
we can see it projecting onto the deltoid.

%\begin{figure}[h]
%\begin{center}
%\includegraphics[width=6cm]{projdeltoid.png}
%\end{center}
%\end{figure}
\end{example}

Here is another instance of the usefulness of ``lifting''.
Given a probability distribution $\pi_{\mathcal{X}}$ defined on some space $\mathcal{X}$, 
a fundamental problem in probability and statistics 
is to generate a sample from the distribution.  One way to do this is to
construct an ergodic Markov chain with stationary distribution given by
$\pi_{\mathcal{X}}$, and then generate sample paths for the Markov chain.  A
key measure of the performance of such a sampling method is the \emph{mixing
time} of the Markov chain, a measure of how fast the distribution of the
samples converges to the stationary distribution.

A \emph{lifting} of a Markov chain is a new Markov chain on a larger state space
$\mathcal{X}\times \mathcal{Y}$ so that the marginal on the $\mathcal{X}$ space
is the original Markov chain.  In certain situations, carefully designed
liftings of a Markov chain mix faster than the original chain, leading to
faster sampling methods~\cite{chen1999lifting}.  More generally, many methods
for sampling involve introducing auxiliary variables and a probability
distribution $\pi_{\mathcal{X}\times \mathcal{Y}}$ on a larger state space,
running a Markov chain Monte-Carlo (MCMC) method to (approximately) sample from that distribution, and
then taking the marginal on the $\mathcal{X}$ variables. Such strategies often
tend to mix faster in practice than an MCMC method working directly in the
space of the original distribution.  These are often called \emph{auxiliary
variable samplers}, and include approaches such as Hamiltonian Monte-Carlo 
(HMC)~\cite{neal2011mcmc} and slice sampling~\cite{neal2003slice}. 

On a philosophical note, recall that in {\em quantifier elimination}, one is interested in finding a set that has been specified in terms of 
quantifiers and extra variables. This is a 
projection operation that is, in general, hard to do. 
For example, projecting out variables from a system of linear inequalities can be done by 
{\em Fourier-Motzkin elimination}  \cite[Chapter 1]{ziegler2000lectures} which can take an exponential number of steps. 
The concept of a lift is inverse to the idea of projection. 
We start with a highly complicated (convex) set and ask if it can be written as the projection of a 
(if possible, simple) convex set  in some higher dimension.

 \subsection{Organization}
 %Section 2 
 There are a number of applications, coming from a wide range of areas of mathematics, that illustrate the usefulness of lifted representations of convex sets.
 In Section~\ref{sec:examples} we give four very different examples, to bring home this point. 
 
 %% Section 3
Given a convex set $C \subset \RR^n$ and a closed convex cone $K \subset \RR^\ell$, the first question one 
can ask is whether $C$ admits a $K$-lift at all. In Section~\ref{sec:slacks} we build the theory that allows us to answer this 
foundational question. The key ingredient is the {\em slack operator} of a convex set which is described in 
Section 3.3. This is a generalization of the {\em slack matrix} of a polytope which was
 introduced by Yannakakis \cite{yannakakis1991expressing} to study polyhedral lifts of polytopes. The slack matrix of a polytope 
 is defined in Section 3.1 and Yannakakis' theorem that decides the existence of a $\RR^m_+$-lift of a polytope $P \subset \RR^n$ is described in Section 3.2. The main result of Section 3 says that a convex set $C \subset \RR^n$ has a $K$-lift for some closed convex cone 
 $K \subset \RR^\ell$ if and only if the slack operator of $C$ factorizes through $K$ and its polar cone $K^\ast$. 
 This result is stated and discussed in Section 3.4 and we illustrate it on several examples. The 
 slack operator of a convex set can also be seen as a generalization of the notion of Bregman divergence of a convex function.
 This analogy is discussed in Section 3.5.

%% Section 4
There are many creative constructions of lifts of convex sets in the literature, and in Section~\ref{sec:constructions}
we focus on construction methods for spectrahedral lifts  of convex sets. We emphasize in particular the connection with \emph{sum-of-squares} certificates of nonnegativity, and semidefinite hierarchies.
%The factorization theorem from Section~\ref{sec:slacks} provides a uniform framework within which to view spectrahedral lifts: $C = \textup{conv}(X)$ has a spectrahedral lift of size $m$ if and only if there is a vector space $V$ of  functions on $X$ where $\dim(V) \leq m$, such that the validity of every linear inequality $\ell(x) \leq 1$ on $C$ can be  certified by writing $1-\ell(x)$ as a sum of squares of functions that come from $V$.\textbf{--Just say that we provide a link with sums of squares}.
This viewpoint is illustrated on  several examples. We briefly discuss the analogous viewpoint for polyhedral lifts and establish the connection with well-known methods such as the Sherali-Adams hierarchy~\cite{sheraliadams}. 
%The difference from spectrahedral 
%lifts is that instead of searching for a vector space $V$, one searches for a set of functions that are able to validate 
%inequalities on $X$.

%% Section 5
Having addressed the existence question and seen some construction methods, a next natural step is to understand 
obstructions to the existence of a $K$-lift of $C$. In Section~\ref{sec:obstructions and lower bounds} we discuss two flavors of 
obstructions. The first is combinatorial in nature and depends on the facial structure of both $C$ and $K$. 
Such obstructions are already powerful enough to show interesting results such as the psd cone $\mathcal{S}^3_+$ 
does not admit a lift into a product of second order cones~\cite{fawzi2018representing}. Next we consider algebraic obstructions to the existence 
of lifts. We prove lower bounds on the semidefinite extension complexity of $C$ based on the degree of 
its algebraic boundary as well as $\dim(C)$.
Finally we discuss the recent result of Scheiderer~\cite{scheidererSDR} that not all convex semialgebraic sets admit a spectrahedral lift and 
provide concrete examples of such sets (see Figure \ref{fig:Kliftsrelation}).

% section 6 
The study of lifts of convex sets has many results and theories beyond what we have chosen to present in this 
paper. We conclude with a brief discussion of related directions and results, as well as pointers to surveys and 
expository articles on these topics.

\section{Four Examples}
\label{sec:examples}
% !TEX root = main.tex
%\label{sec:examples}

In the rest of this paper we will study lifts of convex sets as in Definition~\ref{def:lift definition}.
We begin with four examples of lifts, each from a different area of mathematics, to illustrate the ubiquity and usefulness of the
idea even beyond the context of optimization. This section can be read independently of the rest of the paper and can be skipped if
the reader wishes to get to the mathematical theory behind lifts, which begins in Section~\ref{sec:slackoperator}.

%%%%%%%%%%%%%%%%%%%%%%%%%%%%%%%%%%%%%%%%%%%%%%%%%%
\subsection{Lifting $0/1$ polytopes to flow polytopes via binary decision diagrams}
\label{sec:bdd}
\begin{figure}
\begin{center}
\includegraphics[scale=1]{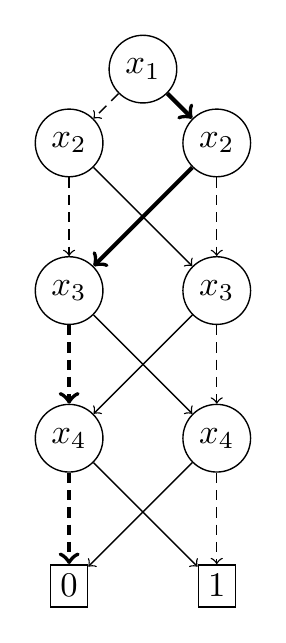}
\hspace{1cm}
\includegraphics[scale=1]{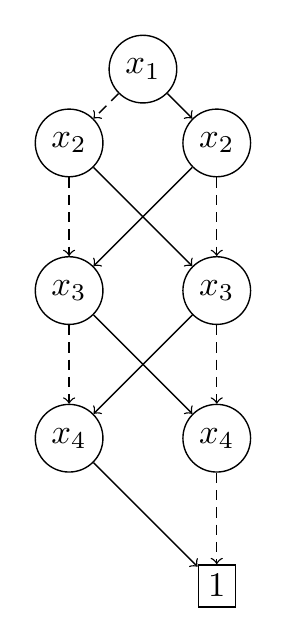}
\end{center}
\caption{\label{fig:bdd1} On the left is an (ordered) BDD corresponding to the Boolean function $f_{\textup{xor}}$ that computes
the sum of the inputs modulo $2$. Solid lines represent arcs labeled by 1, and dashed lines represent arcs labeled by 0.
% It is, in fact, an ordered binary decision diagram.
 The path corresponding to the assignment $(1,1,0,0)$ to the variables is highlighted in bold. On
the right is a zero-suppressed BDD representing the set $\{x\in \{0,1\}^4\;:\; f_{\textup{xor}}(x) = 1\}$.}
\end{figure}
A \emph{binary decision diagram (BDD)}
or \emph{branching program} is a data structure that represents a Boolean function. It is a
directed acyclic graph in which exactly one node has in-degree zero (the
\emph{source node}), exactly two nodes have out-degree zero (the \emph{sink
nodes}), and every non-sink node (known as a \emph{decision node}) has
out-degree two (see Figure~\ref{fig:bdd1}).  The sink nodes are labeled $0$ and $1$.  Every decision node
is labeled with a variable $x_1,x_2,\ldots,x_n$ and has one outgoing arc
labeled $0$ and one outgoing arc labeled $1$ (which are usually depicted as
dashed and solid arcs respectively).  Any assignment of Boolean values to the
variables, i.e.,  $x_i = a_i\in \{0,1\}$ for all $i$, specifies a path from the
source to one of the sinks. The path is defined by taking, for each decision
node labeled $x_i$, the outgoing edge labeled $a_i$ (see
Figure~\ref{fig:bdd1}). Whether the path associated with $a\in \{0,1\}^n$ ends
at the $0$-sink or the $1$-sink defines a Boolean function
$f:\{0,1\}^n\rightarrow \{0,1\}$. The diagram on the left of Figure~\ref{fig:bdd1} shows a BDD
representing the function
\[ f_{\textup{xor}}(x_1,x_2,x_3,x_4) = x_1+x_2+x_3+x_4 \;\;  (\textup{mod $2$}).\]
It has the additional property that the
variables are totally ordered and, along every
source-sink path, every variable appears at most once in a way that respects the order.
Such a BDD is called an \emph{ordered BDD (OBDD)} or \emph{oblivious read-once
branching program}~\cite{bryant1986graph}. If a Boolean function has an OBDD representation with a
small number of nodes then this automatically leads to efficient algorithms for
counting the number of satisfying assignments, sampling from the uniform
distribution on satisfying assignments, and solving the integer linear program
$\max_{x\in \{0,1\}^n: f(x)=1} \sum_{i}a_ix_i$. Moreover, natural operations on
Boolean functions lead to corresponding operations on OBDDs.

Associated with a Boolean function $f:\{0,1\}^n\rightarrow \{0,1\}$ is the $0/1$ polytope
$P_f = \textup{conv}\{x\in \{0,1\}^n\;:\; f(x) = 1\}$.
It turns out that we can construct a polyhedral lift of $P_f$ from an OBDD representation of $f$.
Let $A$ denote the set of arcs of the OBDD. For any subset $S$ of arcs let $\chi^S\in \{0,1\}^A$ denote
its {\em characteristic vector} defined as $(\chi^S)_i = 1$ if $i \in S$ and $(\chi^S)_i = 0$ otherwise.
Given an OBDD $B_f$ representing $f$, consider the \emph{flow polytope}
\begin{equation*}
	F(B_f) = \textup{conv}\left\{\chi^S\in \{0,1\}^A\;:\;%\right.\\
	%\left.
	\textup{$S\subseteq A$ is a directed path from the source to the $1$-sink in $B_f$}\right\}.
\end{equation*}
For $i=1,2,\ldots,n$, let $A_i$ denote the set of arcs leaving nodes of the OBDD labeled with variable $i$.
\begin{proposition}
The polytope $F(B_f)$ is a polyhedral lift of the $0/1$ polytope $P_f$. The linear map defining the lift is
$\pi:\RR^{A}\rightarrow \RR^n$ defined by $[\pi(y)]_i = \sum_{a\in A_i} y_{a}$ for $i=1,2,\ldots,n$.
\end{proposition}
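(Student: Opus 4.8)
The plan is to exhibit $P_f = \pi(F(B_f))$ directly by relating source-to-$1$-sink paths in the OBDD $B_f$ to satisfying assignments of $f$, and then invoke convexity to pass from vertices to polytopes. First I would recall the basic correspondence: since $B_f$ is ordered and read-once, every source-to-$1$-sink directed path $S$ passes through exactly one decision node labeled with variable $i$ for each $i$ such that variable $i$ is ``read'' along that path. The subtlety is that some variables may be skipped along a given path (the OBDD need not test every variable on every branch), so I would first argue that this can be handled either by a harmless normalization of $B_f$ (inserting dummy decision nodes so every path reads every variable in order) or by noting that a skipped variable may take either value, which is exactly what the flow-polytope projection will capture. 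Assuming the normalized form for cleanliness, each source-to-$1$-sink path $S$ determines a unique $a(S)\in\{0,1\}^n$, where $a(S)_i$ is the label of the unique arc of $S$ leaving the (unique) node in $S$ labeled $i$; and conversely each $a\in\{0,1\}^n$ with $f(a)=1$ determines a unique such path, by the defining property of the BDD. Hence $S\mapsto a(S)$ is a bijection between source-to-$1$-sink paths and $\{x\in\{0,1\}^n : f(x)=1\}$.

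Next I would check that the linear map $\pi$ from the statement sends the vertex $\chi^S$ of $F(B_f)$ to the corresponding vertex $a(S)$ of $P_f$. Indeed $[\pi(\chi^S)]_i = \sum_{a\in A_i}(\chi^S)_a$ counts the arcs of $S$ leaving nodes labeled $i$; in the normalized OBDD this is exactly one arc, and its label (hence its contribution, since arcs labeled $0$ contribute $0$ to the coordinate and arcs labeled $1$ contribute $1$ — here one must be slightly careful that the ``flow'' coordinate records arc-presence, so I would instead define $\pi$ to sum only over the arcs in $A_i$ labeled $1$, or equivalently observe $[\pi(\chi^S)]_i$ equals the number of $1$-labeled arcs of $S$ at level $i$, which is $a(S)_i$). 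The cleanest route is: $[\pi(\chi^S)]_i = \sum_{a\in A_i} (\chi^S)_a$ and, among the arcs of $A_i$, path $S$ uses exactly one; that arc is the $1$-arc iff $a(S)_i=1$. To make the bookkeeping match the proposition's formula verbatim, I would take $A_i$ to mean the set of $1$-labeled arcs leaving level-$i$ nodes (or adjust by the standard trick of having the $0$-arcs contribute nothing), so that $\pi(\chi^S)=a(S)$ holds on the nose.

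With the vertex-level identity $\pi(\chi^S)=a(S)$ in hand, the proposition follows by linearity and convexity: $\pi$ maps the vertex set $\{\chi^S\}$ of $F(B_f)$ onto the vertex set $\{x\in\{0,1\}^n:f(x)=1\}$ of $P_f$; since $F(B_f)$ is the convex hull of the former and $\pi$ is linear, $\pi(F(B_f)) = \pi(\conv\{\chi^S\}) = \conv\{\pi(\chi^S)\} = \conv\{x\in\{0,1\}^n:f(x)=1\} = P_f$. Finally, $F(B_f)$ is a polyhedron (a face of the unit cube intersected with the flow-conservation equations describing $s$--$t$ paths, or simply the convex hull of finitely many $0/1$ points), so $F(B_f) = \mathbb{R}^A_+ \cap L$ for a suitable affine subspace $L$ — the flow-conservation and capacity constraints — which is exactly the form required of a polyhedral lift in Definition~\ref{def:lift definition}. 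I expect the main obstacle to be purely a matter of careful setup rather than deep argument: namely handling skipped variables and pinning down the precise meaning of ``arcs leaving level-$i$ nodes'' so that the stated formula for $\pi$ is literally correct; once the normalization is fixed, the bijection and the convexity argument are routine.
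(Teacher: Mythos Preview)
Your approach is essentially the paper's: establish that $\pi$ sends the vertices $\chi^S$ of $F(B_f)$ (indexed by source-to-$1$-sink paths) onto the vertices of $P_f$, then conclude by linearity and convexity. The paper's proof is a terse two-sentence version of exactly this, and your extra care about the meaning of $A_i$ (that one must read it as the $1$-labeled arcs leaving level-$i$ nodes) and about skipped variables is warranted---the paper glosses over both points.
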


\begin{proof}
There is a one-one correspondence between paths from the source to the $1$-sink in the OBDD and
vertices of $P_f$. Moreover, every such path contains at most one arc from each $A_i$. As such, the vertices of $F(B_f)$
are mapped to those of $P_f$ by $\pi$.
\end{proof}

Since $F(B_f)$ is an example of a network flow polytope, it can be described using one linear inequality for each
arc in the network. Indeed, it is a fundamental result (see, e.g.,~\cite{schrijver2003combinatorial}) that
\[ F(B_f) = \{y\in \RR^{A}\;:\; y \geq 0,\; By = b\}\]
where $b\in \RR^V$ and $B:\RR^{A}\rightarrow \RR^V$ are defined by
\[ b_v = \begin{cases} 1 & \textup{if $v$ is the $1$-sink}\\ -1 & \textup{if $v$ is the source}\\0 & \textup{otherwise,}\end{cases}
\quad\textup{and}\quad [By]_v = \sum_{a\in v_{\textup{in}}}y_a  - \sum_{a\in v_{\textup{out}}}y_a\quad\textup{for all $v\in V$},\]
and where $v_{\textup{in}}\subseteq A$ and $v_{\textup{out}}\subseteq A$ denote the set of incoming arcs and outgoing arcs
from vertex $v\in V$. To summarize, any OBDD representation of a Boolean function leads to a polyhedral
lift of the corresponding $0/1$ polytope of the same size as the OBDD.
\begin{corollary}
\label{cor:bdd-lift}
If $f:\{0,1\}^n\rightarrow \{0,1\}$ is a Boolean function with an OBDD representation having $|A|$ arcs, then
the polytope $P_f = \{x\in \{0,1\}^n\;:\; f(x) =1\}$ has a polyhedral lift of size $|A|$.
\end{corollary}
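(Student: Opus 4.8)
The plan is to read off the corollary directly from the Proposition combined with the network-flow description of $F(B_f)$ recalled immediately after it. The Proposition already establishes that $P_f = \pi(F(B_f))$: since $F(B_f)$ is the convex hull of the characteristic vectors of source-to-$1$-sink paths and $\pi$ is linear, $\pi(F(B_f))$ is the convex hull of the images of those vectors, which by the one-to-one correspondence noted in the proof of the Proposition is exactly the vertex set of $P_f$. So the only thing left is to present $F(B_f)$ in the form $\RR^{|A|}_+\cap L$ required by Definition~\ref{def:lift definition}.

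To do this I would invoke the stated fact that $F(B_f) = \{y\in\RR^{A}\;:\; y\geq 0,\ By = b\}$, where $(B,b)$ encode the node-arc flow-conservation constraints of the OBDD. Setting $L := \{y\in\RR^{A}\;:\; By = b\}$, which is an affine subspace of $\RR^{A}$, this says $F(B_f) = \RR^{A}_+\cap L$. Identifying $\RR^{A}$ with $\RR^{|A|}$ (hence $\RR^{A}_+$ with $\RR^{|A|}_+$), we obtain $F(B_f) = \RR^{|A|}_+\cap L$, and therefore
\[
P_f = \pi(F(B_f)) = \pi\bigl(\RR^{|A|}_+\cap L\bigr),
\]
which is precisely the assertion that $P_f$ has a polyhedral lift of size $|A|$ in the sense defined just after Definition~\ref{def:lift definition}.

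There is no genuine obstacle here; every step is either a citation or a relabeling of ambient spaces. The one point that deserves a word is the passage from ``$\pi$ sends vertices of $F(B_f)$ to vertices of $P_f$'' to ``$\pi(F(B_f)) = P_f$'', which uses that a polytope equals the convex hull of its vertices together with linearity of $\pi$; and the underlying combinatorial fact that each source-to-$1$-sink path meets each layer $A_i$ at most once, which is exactly the ordered read-once property of the OBDD and was already used in proving the Proposition. So the corollary is really just a repackaging of what precedes it.
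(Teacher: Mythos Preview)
Your proposal is correct and matches the paper's approach exactly: the paper states the corollary without a separate proof, treating it as immediate from the Proposition (which gives $P_f=\pi(F(B_f))$) together with the network-flow description $F(B_f)=\{y\in\RR^A: y\geq 0,\ By=b\}=\RR_+^{|A|}\cap L$ recalled just before the corollary. Your write-up simply makes this implicit two-line argument explicit.
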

\begin{example}[Odd parity polytope]
The odd parity polytope is the convex hull of binary vectors with an odd number
	of ones. From Figure~\ref{fig:bdd1} it is clear that the Boolean
	function $f_{\textup{xor}}$ has an OBDD with $4(n-1)+2 = 4n-2$ arcs. It
	follows that the odd parity polytope has a polyhedral lift of size
	$4n-2$.
\end{example}
There is a nontrivial upper bound on the size of an OBDD of a general Boolean function $f$.
This gives a corresponding upper bound on the size of a polyhedral lift of an arbitrary $0/1$ polytope.

\begin{theorem}
Any $0/1$ polytope in $\RR^n$ has a polyhedral lift of size at most $\frac{6.25}{n}\cdot 2^n$.
\end{theorem}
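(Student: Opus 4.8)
The plan is to deduce the theorem from Corollary~\ref{cor:bdd-lift}, reducing everything to a purely combinatorial statement about binary decision diagrams: it suffices to show that every Boolean function $f\colon\{0,1\}^n\to\{0,1\}$ admits an OBDD whose arc set $A$ satisfies $|A|\le \tfrac{6.25}{n}\,2^n$. Since in an OBDD every sink has out-degree $0$ and every decision node has out-degree exactly $2$, we have $|A| = 2D$, where $D$ is the number of decision nodes, so the target becomes $D \le \tfrac{3.125}{n}\,2^n$.

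To bound $D$, I would fix the variable order $x_1 < x_2 < \cdots < x_n$ and take the canonical reduced OBDD of $f$ for this order. Its decision nodes at ``level $i$'' (those labelled $x_i$) are in bijection with the distinct non-constant subfunctions $f|_{x_1=a_1,\dots,x_{i-1}=a_{i-1}}$ obtained by fixing the first $i-1$ variables (and that still depend on $x_i$), the two out-arcs of such a node pointing to the nodes or sinks representing the subfunctions obtained by additionally setting $x_i=0$ or $x_i=1$. The number $D_i$ of level-$i$ nodes then admits two bounds: $D_i \le 2^{i-1}$, since there are only $2^{i-1}$ partial assignments to $x_1,\dots,x_{i-1}$; and $D_i \le 2^{2^{\,n-i+1}}-2$, since there are only that many non-constant Boolean functions on the $n-i+1$ remaining variables. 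Hence
\[
D \;\le\; \sum_{i=1}^{n}\min\!\left(2^{\,i-1},\ 2^{2^{\,n-i+1}}-2\right).
\]

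The remaining task is to show this right-hand side is at most $\tfrac{3.125}{n}\,2^n$. Writing $r=n-i+1$ for the number of remaining variables and letting $r_0$ be the largest $r$ with $2^r\le n-r$, the bound $2^{i-1}$ is the smaller one exactly for $i\le n-r_0$ and contributes a geometric sum $< 2^{\,n-r_0}$; the bound $2^{2^{\,r}}-2$ is the smaller one for the top $r_0$ levels and, being a sum of doubly-exponential terms, contributes at most a small constant times its largest term $2^{2^{r_0}}\le 2^{\,n-r_0}$. A careful accounting of these two contributions --- using, in the regime where $2^{r_0}$ is close to $n-r_0$, that $2^{\,n-r_0}=(1+o(1))\,2^n/n$, and in the complementary regime that the doubly-exponential tail is negligible --- yields $D\le(2+o(1))\,2^n/n$, which lies below $\tfrac{3.125}{n}\,2^n$ for all sufficiently large $n$; the finitely many small values of $n$ are then checked directly from the displayed inequality, where the constant $6.25$ leaves ample room. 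Alternatively one may simply quote the known sharp $\Theta(2^n/n)$ bound on the size of an optimal OBDD of an $n$-variable Boolean function and double it.

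The main obstacle is this last step: pinning the constant. The crude estimate $D\le 2^{2^{r_0}}+2^{\,n-r_0}$ combined with the worst-case inequality $2^{r_0}>(n-r_0-1)/2$ only produces a constant a small factor too large, so one must exploit that the two sources of loss --- the doubly-exponential tail being comparable to $2^{\,n-r_0}$, and $2^{r_0}$ being much smaller than $n-r_0$ --- never occur simultaneously; and one must keep the ``$-2$'' (excluding the constant subfunctions) in the level count, since without it the estimate is slightly too weak for the smallest $n$, for instance $n=6$.
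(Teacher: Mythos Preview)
Your approach is exactly the paper's: invoke Corollary~\ref{cor:bdd-lift} and use the bound that every $n$-variable Boolean function has an OBDD with at most $\tfrac{3.125}{n}2^n$ nodes, hence at most $\tfrac{6.25}{n}2^n$ arcs. The only difference is that the paper simply cites this node bound as \cite[Theorem~1]{liaw1992obdd}, whereas you sketch its proof (the subfunction-counting argument you give is indeed the standard one); your alternative of ``simply quoting the known sharp $\Theta(2^n/n)$ bound'' is precisely what the paper does.
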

\begin{proof}
Any $n$-variable Boolean function has an OBDD with at most
$\frac{3.125}{n}\cdot 2^n$ nodes~\cite[Theorem 1]{liaw1992obdd}, and hence at
most $\frac{6.25}{n}\cdot 2^n$ arcs. The result then follows from
Corollary~\ref{cor:bdd-lift}.
\end{proof}
% This can be compared with the following general lower bound due to Rothvo{\ss}~\cite{rothvoss2013some},
% established via a counting argument.
% \begin{theorem}
% There exists a $0/1$ polytope $P$ in $\RR^n$ such that any polyhedral lift of $P$ has size at least
% $\Omega\left(\sqrt{\frac{2^n}{n\log(2n)}}\right)$.
% \end{theorem}
There are many variations and generalizations of OBDDs, a number of which
naturally give rise to flow-polytope lifts of $0/1$ polytopes.
A similar approach can be used to construct polyhedral lifts based
on free BDDs (in which every variable appears at most once on every source-sink path, but not
necessarily in the same order~\cite{masek1976fast,wegener2000branching}). The
lifted representations of $0/1$ polytopes described in this section can be
simplified by using zero-supressed OBDDs (ZBDDs)~\cite{minato1993zero}.  These are OBDDs
where any arcs leading only to the $0$-sink are removed, and the corresponding
diagram is reduced by removing any redundancy. A ZBDD is shown on the right in Figure~\ref{fig:bdd1}.  Clearly these arcs do not change
any path from the source to the $1$-sink, so the polyhedral lifts described
above can (possibly) be reduced in size by using a ZBDD representation instead.

%%%%%%%%%%%%%%%%%%%%%%%%%%%%%%%%%%%%%%%%%%%%%%%%%%%%%%%%%%%%
\subsection{The chain polytope of a poset and the Lov\'asz theta body of a graph}
\label{sec:chainTH}

\subsubsection{Chain polytopes}
Let $\mathcal{P}$ be a finite partially ordered set (poset), where the partial order is denoted by $\prec$\footnote{We also use this notation for the specific partial order on symmetric matrices induced by the positive semidefinite cone. Occasionally both will be used in close proximity, but the meaning should be clear from the context.}, and let $\RR^{\mathcal{P}}$ denote the Euclidean space whose coordinates are indexed by the elements of $\mathcal{P}$. We start by quickly recalling
a series of basic concepts from posets, that will be used throughout the example. The {\em characteristic vector} of an element $a \in \mathcal{P}$ is the vector in $\RR^{\mathcal P}$ with $1$ in the
coordinate indexed by $a$ and $0$ everywhere else.  An {\em antichain} in $\mathcal{P}$ is a collection of elements of $\mathcal{P}$ such that no two in the collection are comparable in the partial order.  A {\em chain} in $\mathcal{P}$ on the other hand is a collection of elements that are totally ordered. The characteristic vector of a collection of elements in $\mathcal{P}$, for instance of a chain or antichain, is the sum of the characteristic vectors of all elements in that collection. A closely related concept to antichain is that of a \emph{filter}, i.e. a set $I$ such that $a_1 \in I$ and $a_2 \succeq a_1$ implies $a_2 \in I$. There is a natural bijection between filters $I$ and antichains $A$, given by
$$ I = \{ y \in \mathcal{P} \,:\, y \succeq x \textup{ for some } x \in A \} \,\,\,\,\leftrightarrow \,\,\,\, A = \{ \textup{minimal elements of } I \}.$$
Finally, we say that $a_2$ \emph{covers}  $a_1$ if  $a_1 \prec a_2$ and there is no $x \in \mathcal{P}$ with  $a_1 \prec x \prec a_2$. We call this a \emph{cover relation}.

\begin{example}
Consider $\mathcal P$ to be the poset of the faces of the triangular prism illustrated in Figure \ref{fig:poset} ordered by inclusion. We will usually represent such a poset by its \emph{Hasse diagram}. This is simply a graph where the nodes are the elements of the poset, drawn in such a way that if $a_1 \preceq a_2$ then $a_1$ is below $a_2$, and the edges represent the \emph{cover relations}. In Figure \ref{fig:poset} we can also see the Hasse diagram of the poset of faces of the triangular prism.

\begin{figure}[ht]
\begin{center}
\raisebox{0.2\height}{\includegraphics[width=0.3\textwidth]{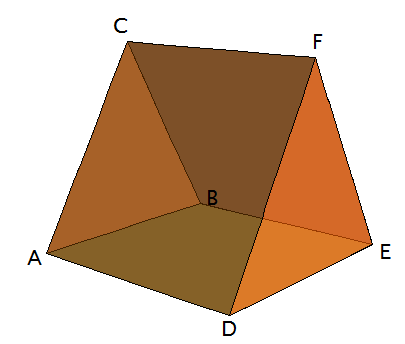}}
\hspace{1cm}
\begin{tiny}
\begin{tikzpicture}[scale=0.9]
  \node (abcdef) at (0,6) {$ABCDEF$};
  \node (abc) at  (-3,4.5) {$ABC$};
  \node (abde) at (-1.5,4.5) {$ABDE$};
  \node (bcef) at (0,4.5) {$BCEF$};
  \node (acdf) at (1.5,4.5) {$ACDF$};
  \node (def) at  (3,4.5) {$DEF$};
  \node (ab) at (-4,3) {$AB$};
  \node (bc) at (-3,3) {$BC$};
  \node (ac) at (-2,3) {$AC$};
  \node (df) at (4,3) {$DF$};
  \node (ef) at (3,3) {$EF$};
  \node (de) at (2,3) {$DE$};
  \node (ad) at (-1,3) {$AD$};
  \node (be) at (0,3) {$BE$};
  \node (cf) at (1,3) {$CF$};
  \node (a) at (-3,1.5) {$A$};
  \node (b) at (-2,1.5) {$B$};
  \node (c) at (-1,1.5) {$C$};
  \node (d) at (1,1.5) {$D$};
  \node (e) at (2,1.5) {$E$};
  \node (f) at (3,1.5) {$F$};
  \node (empty) at (0,0) {$\emptyset$};
  \draw (empty) -- (a) -- (ab) -- (abc) -- (abcdef) -- (def) -- (df) -- (f) -- (empty);
  \draw (empty) -- (b) -- (bc) -- (bcef) -- (abcdef) -- (bcef) -- (ef) -- (e) -- (empty);
  \draw (empty) -- (c) -- (ac) -- (abc) -- (abcdef) -- (def) -- (de) -- (d) -- (empty);
  \draw  (a) -- (ac) -- (acdf) -- (cf) -- (c) -- (bc) -- (abc);
  \draw  (a) -- (ad) -- (abde) -- (abcdef) -- (acdf)--(df) -- (d) -- (ad)-- (acdf);
  \draw  (b) -- (ab) -- (abde) -- (de) -- (e)--(be) -- (bcef) -- (cf)-- (f)--(ef) -- (def);
  \draw  (b) -- (be) -- (abde);
\end{tikzpicture}
\end{tiny}
\end{center}
\caption{\label{fig:poset} Triangular prism and the Hasse diagram of its face poset}
\end{figure}

This poset has a single maximal element, the entire prism, and minimal element, the empty face. An example of a chain is $(\emptyset, A, AB, ABDE, ABCDEF)$ which, in this case, is maximal. An example of a maximal antichain is $\{ABC,AD,BE,CF,DEF\}$. The filter mapping to that antichain is the collection of all the elements that are greater than or equal to any of the elements, namely 
	\[\{ABC,AD,BE,CF, DEF,\\ADBE,ACDF,BCEF,ABCDEF\}.\]
\end{example}

The \emph{chain polytope} $\textsc{chain}(\mathcal{P})$ of $\mathcal{P}$, introduced in \cite{stanley1986two}, is the convex hull in $\RR^{\mathcal{P}}$ of the characteristic vectors of all antichains of $\mathcal{P}$. The name comes from its
inequality description:
\begin{equation}
\textsc{chain}(\mathcal{P})
= \left\{ x \in \RR^{\mathcal{P}}_+ \;:\; 0 \leq x_{a_1}+x_{a_2}+\cdots + x_{a_k} \leq 1,%\right.\\
	%\qquad\left.
	\,\,\forall \,\, \textup{ chains } a_1 \prec a_2 \prec \cdots \prec a_k
\textup{ in } \mathcal{P} \right\}.\label{eq:chain_ineq}
\end{equation}
The correctness of this inequality description will become clear at the end of this section.
The facets of $\textsc{chain}(\mathcal{P})$ are given by the nonnegativity constraints $0 \leq x_a$ for all $a \in \mathcal{P}$ and the inequalities
$x_{a_1}+x_{a_2}+\cdots + x_{a_k} \leq 1$ for all maximal chains $a_1 \prec a_2 \prec \cdots \prec a_k$ in $\mathcal{P}$.

\begin{example} There are five non isomorphic posets with three elements. Those will have three dimensional chain polytopes. We can see all of them in Figure~\ref{fig:3dchainpoly}. Note that two have the same chain polytope up to coordinate labeling.

\begin{figure}[ht]

\begin{tikzpicture}[scale=0.8]
  \node[circle,fill=red] (1) at (0,0) {};
  \node[circle,fill=red] (2) at  (0,1) {};
  \node[circle,fill=red] (3) at (0,2) {};
  \draw  (1) -- (2) -- (3);
\end{tikzpicture} \hspace{1.5cm}
\begin{tikzpicture}[scale=0.8]
  \node[circle,fill=red] (1) at (-0.5,0) {};
  \node[circle,fill=red] (2) at  (-0.5,1) {};
  \node[circle,fill=red] (3) at (0.5,0) {};
  \draw  (1) -- (2);
\end{tikzpicture}\hspace{1.5cm}
\begin{tikzpicture}[scale=0.8]
  \node[circle,fill=red] (1) at (-1,0) {};
  \node[circle,fill=red] (2) at  (0,0) {};
  \node[circle,fill=red] (3) at (1,0) {};
\end{tikzpicture}\hspace{1.5cm}
\begin{tikzpicture}[scale=0.8]
  \node[circle,fill=red] (1) at (0,0) {};
  \node[circle,fill=red] (2) at  (-0.5,1) {};
  \node[circle,fill=red] (3) at (0.5,1) {};
  \draw  (2) -- (1) -- (3);
\end{tikzpicture}\hspace{1.5cm}
\begin{tikzpicture}[scale=0.8]
  \node[circle,fill=red] (1) at (0,1) {};
  \node[circle,fill=red] (2) at  (-0.5,0) {};
  \node[circle,fill=red] (3) at (0.5,0) {};
  \draw  (2) -- (1) -- (3);
\end{tikzpicture}

\includegraphics[width=2cm]{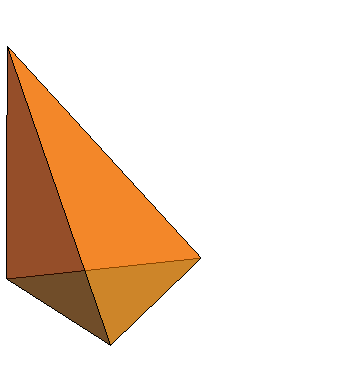}\hspace{0cm}
\includegraphics[width=2cm]{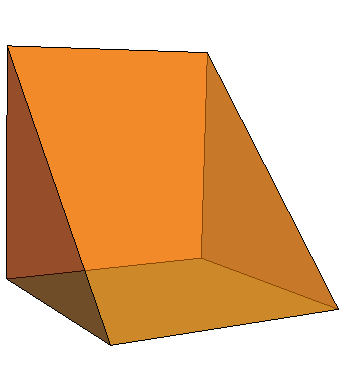}\hspace{0.5cm}
\includegraphics[width=2cm]{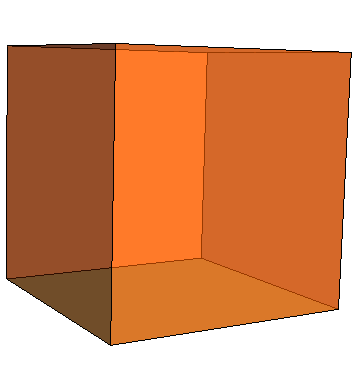}\hspace{2.75cm}
\includegraphics[width=2cm]{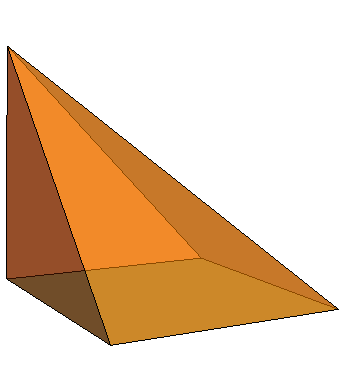}%\hspace{1cm}

\caption{\label{fig:3dchainpoly} All the three-element posets and their corresponding chain polytopes}

\end{figure}

\end{example}

The number of vertices and facets of $\textsc{chain}(\mathcal{P})$ can be superpolynomial in $|\mathcal{P}|$ as the following example shows.

\begin{example} Let $\mathcal{B}(n)$ denote the Boolean lattice of the subsets of $[n]$. The number of antichains in $\mathcal{B}(n)$
is the $n$-th Dedekind number which grows at least as fast as $2^{\frac{N}{\sqrt{\log(N)}}}$, where $N=2^n$ is the number of elements in $\mathcal{B}(n)$. As antichains correspond to vertices of the chain polytope $\textsc{chain}(\mathcal{B}(n))$, there are a superpolynomial number of them.
On the other hand, since all  maximal chains define facets of $\textsc{chain}(\mathcal{B}(n))$, and there are $n!$ maximal chains in $\mathcal{B}(n)$, the number of facets of $\textsc{chain}(\mathcal{B}(n))$ is of the order $N^{\log(\log(N))}$ which is also superpolynomial in $n$.
\end{example}

The \emph{order polytope} of the poset $\mathcal{P}$ was also introduced in \cite{stanley1986two}.  It is given by
\begin{equation}\label{eq:order polytope}
\textsc{order}(\mathcal{P}) = \{ x \in \RR^{\mathcal{P}} \,:\, 0 \leq x_{a_1} \leq x_{a_2} \leq 1, \,\,\forall \,\, a_2 \succeq a_1 \textup{ in } \mathcal{P} \}.
\end{equation}
It is in fact enough to consider the inequalities corresponding to cover relations, together with $x_a \geq 0$ for all minimal elements $a \in \mathcal{P}$ and
$x_a \leq 1$ for all maximal elements $a \in \mathcal{P}$.
%If $a_1 \prec a_2$ is a cover relation, we say that $a_2$ {\em covers} $a_1$.
The vertices of $\textsc{order}(\mathcal{P})$ are the characteristic vectors of filters, which we have seen to have a natural bijection to antichains,
 %{\em filters}, i.e. sets $I$ such that $a_1 \in I$ and $a_2 \succeq a_1$ implies $a_2 \in I$. There is a bijection between filters $I$ and antichains $A$ given by
%$$ I = \{ y \in \mathcal{P} \,:\, y \geq x \textup{ for some } x \in A \} \,\,\,\,\leftrightarrow \,\,\,\, A = \{ \textup{ minimal elements of } I \},$$
which leads to the following result.

%\begin{proposition}[Theorem 3.2 \cite{stanley1986two}]
%The map $f:\RR^{\mathcal{P}} \rightarrow \RR^{\mathcal{P}}$ defined by $f(x)_{a_i}=x_{a_i}$ if $a_i$ is a minimal element of $\mathcal{P}$ and
%$$f(x)_{a_i}=\min\{x_{a_i}-x_{a_j} \ | \ a_i \textrm{ covers } a_j \}$$
%if not, is piecewise linear and maps $\textsc{order}(\mathcal{P})$ bijectively to $\textsc{chain}(\mathcal{P})$ preserving the bijection between vertices.
%\end{proposition}

\begin{proposition}[Theorem 3.2 \cite{stanley1986two}]
The map $f:\RR^{\mathcal{P}} \rightarrow \RR^{\mathcal{P}}$ defined by 
\[
f(x)_{a_i}=
\begin{cases}
x_{a_i} & \text{if } a_i \text{ is a minimal element of } \mathcal{P} \\
\min\{x_{a_i}-x_{a_j} \ | \ a_i \textrm{ covers } a_j \} & \text{otherwise}
\end{cases}
\]
is piecewise linear and maps $\textsc{order}(\mathcal{P})$ bijectively to $\textsc{chain}(\mathcal{P})$ preserving the bijection between vertices.
\end{proposition}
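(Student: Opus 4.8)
The plan is to verify that the map $f$ is the composition of the inverse of a standard ``partial-sums'' bijection with the identity, thereby reducing the statement to checking two things: that $f$ sends $\textsc{order}(\mathcal{P})$ into $\textsc{chain}(\mathcal{P})$, and that it has an explicit inverse sending $\textsc{chain}(\mathcal{P})$ back into $\textsc{order}(\mathcal{P})$. First I would exhibit the candidate inverse $g:\RR^{\mathcal{P}}\rightarrow\RR^{\mathcal{P}}$ defined by $g(z)_{a} = \max\{ z_{a_1}+z_{a_2}+\cdots+z_{a_k} \,:\, a_1\prec a_2\prec\cdots\prec a_k = a\}$, the maximum being over all chains in $\mathcal{P}$ with top element $a$. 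The intuition is that $f$ records, at each non-minimal element, the ``slack'' in the order inequalities along a tightest incoming cover, while $g$ reconstitutes the order-polytope coordinate by accumulating these slacks along a worst-case chain.

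The key steps, in order: (1) Check $f(\textsc{order}(\mathcal{P}))\subseteq\textsc{chain}(\mathcal{P})$. Given $x\in\textsc{order}(\mathcal{P})$, all coordinates of $f(x)$ are nonnegative: for minimal $a$, $f(x)_a = x_a\ge 0$, and for non-minimal $a$, each difference $x_{a}-x_{a_j}\ge 0$ since $a\succeq a_j$. For a chain $a_1\prec\cdots\prec a_k$, refine it to a chain through cover relations and telescope: the sum $\sum_i f(x)_{a_i}$ is bounded above by a telescoping sum of differences $x_{a_{i}}-x_{a_{i-1}}$ (with the bottom term $x_{a_1}\le x_{a_1}$), which collapses to at most $x_{a_k}\le 1$. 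This is where one must be slightly careful that the ``$\min$ over covers'' is $\le$ the difference along whichever cover actually appears in a refinement of the given chain. (2) Check $g(\textsc{chain}(\mathcal{P}))\subseteq\textsc{order}(\mathcal{P})$: for $z\in\textsc{chain}(\mathcal{P})$, monotonicity $g(z)_{a_1}\le g(z)_{a_2}$ when $a_1\prec a_2$ is immediate since any chain topped at $a_1$ extends to one topped at $a_2$; the bound $g(z)_{a}\le 1$ is exactly the chain inequality \eqref{eq:chain_ineq}; and $g(z)_a\ge 0$ since the single-element chain $(a)$ gives $g(z)_a\ge z_a\ge 0$ (or, if one prefers, the empty chain gives $g(z)_a \ge 0$ directly). (3) Check $g\circ f = \mathrm{id}$ and $f\circ g = \mathrm{id}$ on the relevant polytopes. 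For $g\circ f = \mathrm{id}$: along any cover chain $a_1\prec\cdots\prec a_k$, the telescoping sum of the $f(x)$-values is $\le x_{a_k}$, with equality achieved by choosing at each step a cover realizing the minimum; so the max defining $g(f(x))_{a_k}$ equals $x_{a_k}$. The identity $f\circ g=\mathrm{id}$ follows symmetrically, or from the fact that a piecewise-linear map with a one-sided inverse between sets of equal (full) dimension is a bijection. (4) Finally, check that $f$ restricts to a bijection between the vertex sets respecting the stated correspondence: the vertices of $\textsc{order}(\mathcal{P})$ are characteristic vectors $\chi^I$ of filters $I$, and one computes directly that $f(\chi^I)$ is the characteristic vector of the antichain of minimal elements of $I$ — indeed $f(\chi^I)_a = 1$ precisely when $a\in I$ but no element covered by $a$ lies in $I$, which (since $I$ is a filter) means exactly that $a$ is a minimal element of $I$.

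The main obstacle is step (1), and specifically the telescoping/refinement argument: one must argue cleanly that for an arbitrary (not necessarily saturated) chain $a_1\prec\cdots\prec a_k$, the quantity $\sum_i f(x)_{a_i}$ is bounded by $x_{a_k}$. The clean way is to first prove the bound for saturated chains (chains of cover relations), where $f(x)_{a_i}\le x_{a_i}-x_{a_{i-1}}$ by definition of the $\min$, so the sum telescopes to $\le x_{a_k}-x_{a_0}\le x_{a_k}\le 1$ (taking $a_0$ to be a minimal element below $a_1$, with $f(x)_{a_1}\le x_{a_1}$ when $a_1$ itself is minimal); then observe that any chain is contained in a saturated one and all $f(x)$-coordinates are nonnegative, so adding the missing elements only increases the sum. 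Everything else is bookkeeping with the defining inequalities \eqref{eq:order polytope} and \eqref{eq:chain_ineq}, together with the remark — already noted in the excerpt — that the vertices of $\textsc{order}(\mathcal{P})$ are the characteristic vectors of filters and that filters biject with antichains.
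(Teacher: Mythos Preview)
The paper does not prove this proposition; it is stated with a citation to Stanley's original paper and then immediately used to derive the corollary about the polyhedral lift of $\textsc{chain}(\mathcal{P})$. So there is no proof in the paper to compare against.

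Your argument is essentially correct and follows the standard route: exhibit the explicit inverse $g$ (the ``longest weighted chain below $a$'' map), check that each map lands in the correct polytope, and verify the two compositions are identities. The one place where you handwave is $f\circ g=\mathrm{id}$: ``follows symmetrically'' is not quite accurate (the two maps are not formally dual), and the dimension argument you sketch would need invariance of domain plus a boundary analysis to be airtight. A direct verification is easy, though: for non-minimal $a$, since $z\ge 0$ the maximum defining $g(z)_a$ is attained on a saturated chain, whence $g(z)_a = z_a + \max_{b:\,a\text{ covers }b} g(z)_b$; subtracting gives $g(z)_a - g(z)_b \ge z_a$ for every such $b$, with equality at the maximizing $b$, so $f(g(z))_a = \min_b\bigl(g(z)_a-g(z)_b\bigr) = z_a$. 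With that in place your proof is complete.
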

This is not really a lift of the chain polytope, as the map involved is not linear. However, as is current practice in linear optimization, one can introduce slack variables
to linearize the minimum function and we get the following result.

\begin{corollary} \label{cor:chain polytope has a small lift}
The chain polytope $\textsc{chain}(\mathcal{P})$ of a poset $\mathcal{P}$ is the projection onto the $z$ coordinates of $(z,x)\in\RR^{\mathcal{P}} \times \RR^{\mathcal{P}}$ such that $x \in \textsc{order}(\mathcal{P})$, $0 \leq z_{a_i} \leq x_{a_i}-x_{a_j}$ for all cover relations $a_i \succ a_j$ and $z_{a_i}=x_{a_i}$ for all minimal elements $a_i$.
\end{corollary}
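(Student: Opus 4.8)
The plan is to verify the two inclusions between $\textsc{chain}(\mathcal P)$ and the projection $\pi_z(\mathcal R)$ onto the $z$-coordinates of the polytope $\mathcal R$ of pairs $(z,x)$ satisfying the conditions listed in the statement. \emph{The inclusion $\textsc{chain}(\mathcal P)\subseteq\pi_z(\mathcal R)$ is the easy direction and is essentially immediate from the preceding proposition.} Given $z\in\textsc{chain}(\mathcal P)$, that proposition supplies an $x\in\textsc{order}(\mathcal P)$ with $f(x)=z$, and one just checks that this $(z,x)$ lies in $\mathcal R$: for a minimal element $a_i$ we have $z_{a_i}=f(x)_{a_i}=x_{a_i}$ by the very definition of $f$, while for a non-minimal $a_i$ we have $z_{a_i}=\min\{x_{a_i}-x_{a_j} : a_i\text{ covers }a_j\}$, so that $0\le z_{a_i}\le x_{a_i}-x_{a_j}$ for every cover relation $a_i\succ a_j$ --- the lower bound holding because $x\in\textsc{order}(\mathcal P)$ forces $x_{a_i}\ge x_{a_j}$ whenever $a_i\succeq a_j$. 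Hence $z$ is the $z$-part of a point of $\mathcal R$.

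\emph{For the reverse inclusion} I would take an arbitrary $(z,x)\in\mathcal R$ and show that $z$ satisfies the inequality description \eqref{eq:chain_ineq} of $\textsc{chain}(\mathcal P)$. Nonnegativity of $z$ is built in ($z_{a_i}=x_{a_i}\ge 0$ on minimal elements, and $z_{a_i}\ge 0$ is imposed for the rest). The real point is the upper bound $z_{a_1}+\cdots+z_{a_k}\le 1$ along a chain $a_1\prec\cdots\prec a_k$, and the idea is to recover it by \emph{telescoping}. First, refine and extend the chain to a \emph{saturated} chain $b_0\prec b_1\prec\cdots\prec b_m$ with $b_0$ a minimal element, $b_m=a_k$, $\{a_1,\dots,a_k\}\subseteq\{b_0,\dots,b_m\}$, and each $b_t$ covering $b_{t-1}$. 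Then the defining inequalities of $\mathcal R$ give $z_{b_0}=x_{b_0}$ and $z_{b_t}\le x_{b_t}-x_{b_{t-1}}$ for $t\ge 1$, so
\[ \sum_{t=0}^{m} z_{b_t}\ \le\ x_{b_0}+\sum_{t=1}^{m}\big(x_{b_t}-x_{b_{t-1}}\big)\ =\ x_{b_m}\ =\ x_{a_k}\ \le\ 1, \]
where $x_{a_k}\le 1$ because $x\in\textsc{order}(\mathcal P)$. As all $z_{b_t}\ge 0$ and $\{a_1,\dots,a_k\}$ is a sub-collection of $\{b_0,\dots,b_m\}$, this yields $\sum_{i=1}^{k}z_{a_i}\le 1$, and so $z\in\textsc{chain}(\mathcal P)$.

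\emph{The only genuine obstacle} is the step of enlarging a given chain to a saturated one that still contains it and descends to a minimal element --- one needs a cover-relation constraint available for each consecutive pair in order to telescope --- but this is a routine fact about finite posets. One caveat is worth recording: the argument above for $\pi_z(\mathcal R)\subseteq\textsc{chain}(\mathcal P)$ uses the inequality description \eqref{eq:chain_ineq}, whose nontrivial direction is precisely what is promised to become clear at the end of this section; if one prefers to avoid that dependency, one can instead observe that any $(z,x)\in\mathcal R$ satisfies $0\le z\le f(x)$ coordinatewise (since $z_{a_i}\le x_{a_i}-x_{a_j}$ for every cover forces $z_{a_i}\le\min_j(x_{a_i}-x_{a_j})=f(x)_{a_i}$, with equality on minimal elements) and then invoke that $\textsc{chain}(\mathcal P)=f(\textsc{order}(\mathcal P))$ is closed under coordinatewise decrease to nonnegative vectors.
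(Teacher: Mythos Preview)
Your proof is correct. The paper does not give a separate proof of this corollary at all: it simply states that it follows from the preceding proposition by ``introducing slack variables to linearize the minimum function,'' and moves on. Your argument makes explicit what the paper leaves implicit --- in particular, your telescoping along a saturated chain to recover the chain inequality for the reverse inclusion is exactly the verification the paper suppresses.

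One small remark on your caveat about circularity: in the paper's logical structure, the inequality description~\eqref{eq:chain_ineq} of $\textsc{chain}(\mathcal P)$ is justified at the end of the section via the perfect-graph/theta-body argument, and that argument does not rely on this corollary. So there is no genuine circularity, only a forward reference. Your alternative route (observing $0\le z\le f(x)$ and using that $\textsc{chain}(\mathcal P)$ is downward closed in the nonnegative orthant) is in fact closer in spirit to what the paper's one-line justification presumes; the downward closure is immediate from the vertex description, since any subset of an antichain is an antichain and hence the box $[0,\chi^A]$ lies in $\textsc{chain}(\mathcal P)$ for every antichain $A$.
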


Corollary \ref{cor:chain polytope has a small lift} gives an explicit polyhedral lift of $\textsc{chain}(\mathcal{P})$ with $O(|\mathcal{P}|^2)$ facets, showing that one can use linear programming to efficiently optimize over the chain polytope.
Since posets can be thought of as acyclic digraphs, this allows us to efficiently optimize over sets of mutually unreachable vertices in an acyclic digraph. Further discussions on the algorithmic implications of the chain polytope can be found on Section~14.5 of \cite{schrijver2003combinatorial}.

\subsubsection{A spectrahedral lift of the chain polytope}

Next we will see that $\textsc{chain}(\mathcal{P})$ has a spectrahedral lift of size $O(|P|)$. This lift is a special case of a
general construction of spectrahedral lifts of {\em stable set polytopes} of {\em perfect graphs}. We describe the general construction below.

The {\em comparability graph} of the poset $\mathcal{P}$ is the graph whose vertices are the elements of $\mathcal{P}$ and whose edges are $(a_1,a_2)$ for all $a_1 \prec a_2$ (not just cover relations).
A {\em stable set} $S$ of a graph $G=(V(G),E(G))$ is a subset of $V(G)$ such that no pair in $S$ lies in $E(G)$. The characteristic vector of a stable set $S$ in $G$ is the vector in $\{0,1\}^{V(G)}$ with coordinate $1$ in positions indexed by $v \in S$ and $0$ everywhere else. The {\em stable set polytope} of $G$, denoted as $\textsc{stab}(G)$,
 is the convex hull of the characteristic vectors of all stable sets in $G$. Note that the antichains in a poset $\mathcal{P}$ are precisely the stable sets in its comparability graph. Hence $\textsc{chain}(\mathcal{P})$  is the stable set polytope of the
 comparability graph of $\mathcal{P}$.

 \begin{figure}[ht]
\begin{center}
\includegraphics[width=5cm]{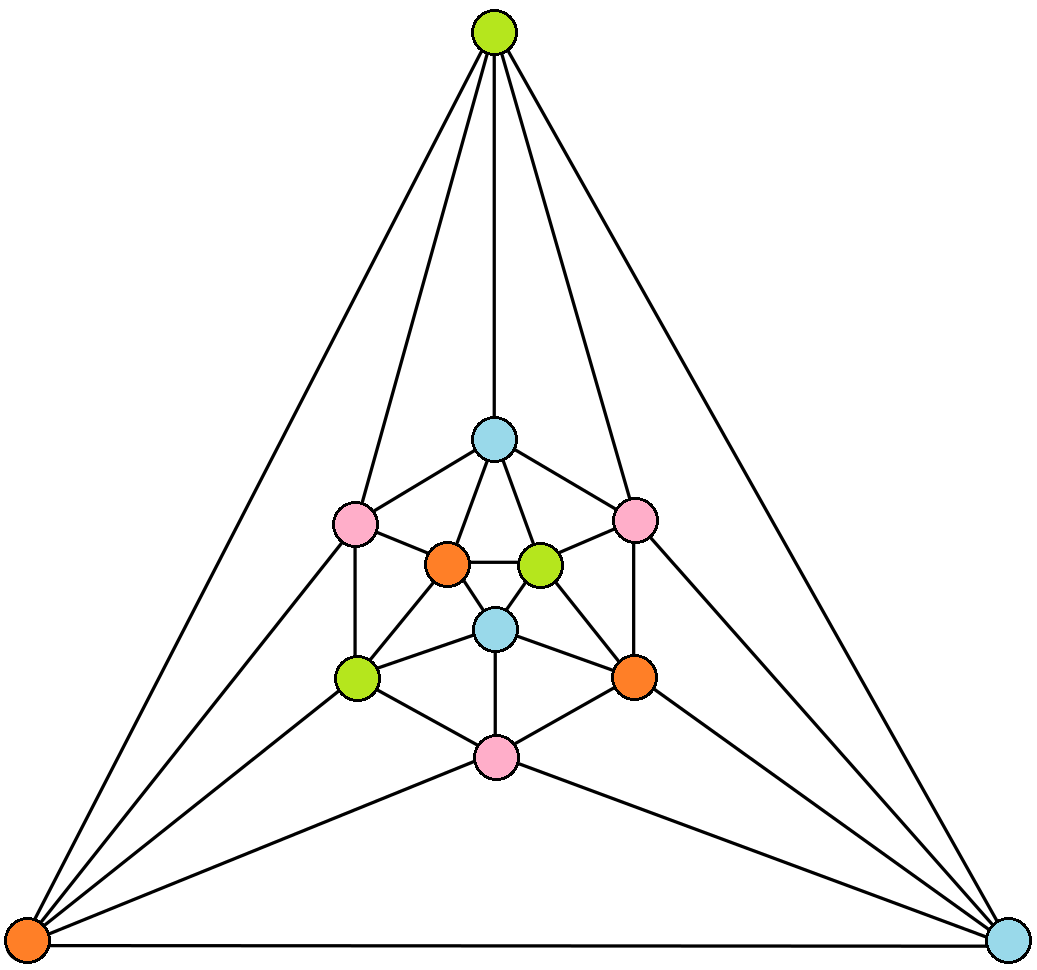}
\end{center}
\caption{\label{fig:stable sets} Icosahedral graph with an optimal $4$-coloring. Each maximal monochromatic set of nodes is actually a maximal stable set.}
\end{figure}

Recall that a {\em perfect graph} is a graph for which the {\em chromatic number} equals the {\em clique number} for every induced subgraph. In Figure \ref{fig:stable sets} we can see that the icosahedral graph is not perfect, as its chromatic number is $4$ while its maximal clique is
a triangle and hence its clique number is $3$.
In general, optimizing a linear function over the stable set polytope of a graph is NP-hard. However if the graph $G$ is perfect then $\textsc{stab}(G)$ admits a small spectrahedral lift of size $n+1$ where $n = |V(G)|$.
 In particular, we get that the largest stable set in a perfect graph can be found in polynomial time via semidefinite programming \cite{GrotschelLovaszSchrijver1984, grotschel1986relaxations}. This
construction marks the beginning of the use of semidefinite programming in discrete optimization. Comparability graphs of posets are perfect graphs (see for instance \cite[Section 5.7]{Gol04}), and hence the chain polytope $\textsc{chain}(\mathcal{P})$ has a spectrahedral lift of size $|\mathcal{P}|+1$. We now
describe the general construction from which this lift arises.

Let $G = (V(G),E(G))$ be a graph where $V(G) = [n]$. For a stable set $S \subset [n]$, let $\chi^S$ denote its characteristic vector in
$\{0,1\}^n$. Setting $x := \chi^S$, notice that
$$X := \begin{bmatrix} 1 \\ x \end{bmatrix} \begin{bmatrix} 1 &  x^\top \end{bmatrix}$$
 is a matrix
in $\PSD^{n+1}$ with the following properties:
\begin{align*}
	X_{00} = 1, \,\,\, X_{0i} = X_{i0} &= x_i \,\,\forall i \in [n], \,\,\, X_{ii} = x_i \,\,\forall i \in [n],\\
	\,\,\, X_{ij} &= 0 \,\,\forall ij \in E(G), \,\,
\rank(X) = 1.
\end{align*}
Consider the spectrahedron defined by all the above conditions except for the rank condition:
\begin{align*}
L(G) := \left\{  X \in \PSD^{n+1} \,:\, \begin{array}{l}
X_{00} = 1, \\
X_{0i} = X_{i0} = x_i \,\,\forall i \in [n], \\
X_{ii} = x_i \,\,\forall i \in [n], \\
X_{ij} = 0 \,\,\forall  ij \in E(G)
\end{array}
\right\}.
\end{align*}

Now consider the linear map $\pi$ that maps $X \in \mathcal{S}^{n+1}$ to $(X_{01}, X_{02}, \ldots, X_{0n}) \in \RR^n$.

\begin{definition} The projection
$\textsc{th}(G) := \pi(L(G)) \subset \RR^n$ is called the {\em theta body} of the graph $G$.
\end{definition}

Theta bodies were introduced in \cite{grotschel1986relaxations}. For the formulation shown above, see \cite[Chapter 9]{GLSBook}.
Since every vertex of $\textsc{stab}(G)$ is a $\chi^S$ and the matrix $X$ obtained by lifting $\chi^S$ as above lies in $L(G)$,
it follows from convexity that $\textsc{stab}(G) \subseteq  \textsc{th}(G)$.

\begin{theorem} \cite[Corollary 3.11]{grotschel1986relaxations}
\label{thm:THperfect}
$\textsc{stab}(G) =   \textsc{th}(G)$ if and only if $G$ is perfect.
\end{theorem}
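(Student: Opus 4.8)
The plan is to prove both directions via the characterization of perfection in terms of facet structure of the stable set polytope, together with the sandwiching property of the theta body. Recall from the discussion preceding the theorem that we always have $\textsc{stab}(G) \subseteq \textsc{th}(G)$. The first step is to record the ``upper'' bound: every $X \in L(G)$, being psd with $X_{00}=1$ and $X_{ii}=X_{0i}$, has the property that for each clique $Q$ of $G$ the principal submatrix indexed by $\{0\} \cup Q$ is psd, which forces $\sum_{i \in Q} X_{0i} \le 1$ (this is the standard argument: the all-ones-type constraint on a psd matrix whose off-diagonal clique entries vanish). Hence $\textsc{th}(G) \subseteq \textsc{QSTAB}(G) := \{x \ge 0 : \sum_{i\in Q} x_i \le 1 \text{ for all cliques } Q\}$. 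Combining, we get the Lov\'asz sandwich
\[
\textsc{stab}(G) \subseteq \textsc{th}(G) \subseteq \textsc{QSTAB}(G).
\]

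For the ``if'' direction, suppose $G$ is perfect. Then by the theorem of Fulkerson--Lov\'asz--Chv\'atal characterizing perfect graphs, $\textsc{stab}(G) = \textsc{QSTAB}(G)$ (the clique inequalities together with nonnegativity suffice to describe the stable set polytope precisely when $G$ is perfect). The sandwich then collapses, giving $\textsc{stab}(G) = \textsc{th}(G)$. For the ``only if'' direction, suppose $G$ is not perfect. By the Strong Perfect Graph Theorem — or, to stay closer to the era of the original result, by the more elementary fact that a non-perfect graph has an induced subgraph on which the fractional point $\frac{1}{\alpha(H)+\text{something}}\onevec$ lies outside $\textsc{stab}$ — one exhibits a point in $\textsc{QSTAB}(G)$ (indeed in $\textsc{th}(G)$) not in $\textsc{stab}(G)$. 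Concretely, the cleanest route: it suffices to treat $G = C_{2k+1}$ an odd hole (or its complement), show directly that the uniform vector $\frac{1}{2}\onevec$ or an appropriate scaling lies in $\textsc{th}(C_{2k+1})$ by exhibiting an explicit psd matrix in $L(C_{2k+1})$ (using a circulant/Gram construction from vectors on a circle), while it violates the odd-hole inequality $\sum x_i \le k$ that is valid for $\textsc{stab}$; then lift this to general non-perfect $G$ using the fact that $\textsc{th}$ restricted to an induced subgraph is the theta body of that subgraph (a face/coordinate-projection argument on $L(G)$).

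The main obstacle is the ``only if'' direction, and within it, the cleanest self-contained argument. Quoting the Strong Perfect Graph Theorem is a heavy hammer and arguably circular in spirit for a survey; the honest approach is the direct odd-hole computation, so the real work is (a) constructing the explicit psd certificate placing a bad point in $\textsc{th}(C_{2k+1})$ — this is the Gram-matrix-of-unit-vectors-spaced-around-a-circle construction and requires checking the angle between non-adjacent vertices makes the corresponding entry nonnegative while adjacent ones are zero — and (b) the monotonicity lemma $\textsc{th}(G[W])$ is the image of $\textsc{th}(G)$ under deleting coordinates outside $W$, which follows by checking $L(G[W])$ is obtained from $L(G)$ by restricting to the principal submatrix on $\{0\}\cup W$ and that this operation commutes with projection. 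Everything else is routine linear algebra on psd matrices.
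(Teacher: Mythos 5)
The paper does not actually prove Theorem~\ref{thm:THperfect}; it cites it directly from Gr\"otschel--Lov\'asz--Schrijver, so there is no in-paper proof to compare against. Your sandwich $\textsc{stab}(G) \subseteq \textsc{th}(G) \subseteq \textsc{QSTAB}(G)$ is correct (and your argument for the right-hand inclusion via psd principal submatrices on $\{0\}\cup Q$ is the standard one), and the ``if'' direction via Fulkerson--Lov\'asz--Chv\'atal is exactly how the original handles it. But the ``only if'' direction has two problems, one structural and one computational.

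Structurally, you lean on the Strong Perfect Graph Theorem to reduce to odd holes and antiholes. You flag this as a heavy hammer, and indeed it is disqualifying rather than merely inelegant: SPGT was proved in 2006, twenty years after the result you are proving, so the original proof cannot and does not take this route. Gr\"otschel--Lov\'asz--Schrijver instead prove the antiblocking duality $\textsc{th}(G)^{\textup{abl}} = \textsc{th}(\bar G)$ and combine it with an argument that if $\textsc{th}(G)$ is a polytope then its nontrivial facets are clique inequalities, hence $\textsc{th}(G)=\textsc{QSTAB}(G)$; applying the antiblocker relation then forces $\textsc{stab}(G)=\textsc{QSTAB}(G)$, i.e.\ perfection, with no structural classification of imperfect graphs required. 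That duality lemma is the key missing ingredient in your plan. Computationally, your proposed witness $\tfrac12\onevec$ is not in $\textsc{th}(C_{2k+1})$: the maximum of $\sum_i x_i$ over $\textsc{th}(C_{2k+1})$ is the Lov\'asz number $\vartheta(C_{2k+1}) = \frac{(2k+1)\cos(\pi/(2k+1))}{1+\cos(\pi/(2k+1))}$, which is strictly less than $(2k+1)/2$ for every $k\ge 1$ (indeed $\frac{\cos\theta}{1+\cos\theta}<\tfrac12$ whenever $\cos\theta<1$); e.g.\ $\vartheta(C_5)=\sqrt5\approx 2.236 < 2.5$. The correct witness is $\frac{\vartheta(C_{2k+1})}{2k+1}\onevec$, which lies in $\textsc{th}(C_{2k+1})$ by symmetry and violates the odd-hole inequality $\sum_i x_i\le k$ because $\vartheta(C_{2k+1})>k$. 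Your phrase ``or an appropriate scaling'' is precisely where the real content would have to go. The odd-antihole case also needs its own (short) computation, and the induced-subgraph monotonicity step you sketch is fine.
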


Since the comparability graph of a poset $\mathcal{P}$ is perfect,  and $\textsc{chain}(\mathcal{P})$ is its stable set polytope, we
get the following corollary.

\begin{corollary}
The chain polytope $\textsc{chain}(\mathcal{P})$ admits a spectrahedral lift of size $|\mathcal{P}|+1$.
More precisely,
	\begin{multline} \textsc{chain}(\mathcal{P}) = \left\{ x \in \RR^{\mathcal{P}} \,:\, \exists \,\, Y\in \RR^{\mathcal{P} \times \mathcal{P}} \textup{ s.t. }\phantom{\begin{bmatrix} 1 & x^T \\ x & Y \end{bmatrix}}\right.\\
		\left.\begin{bmatrix} 1 & x^T \\ x & Y \end{bmatrix} \succeq 0, \,\, \textup{diag}(Y)=x, \,\,y_{ab}=0 \textup{ if }a \succ b \textup{ or } a \prec b \right\}.\end{multline}
\end{corollary}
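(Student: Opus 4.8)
The plan is to deduce this corollary directly from Theorem~\ref{thm:THperfect} together with the facts, already established in the text, that (i) comparability graphs of posets are perfect, and (ii) $\textsc{chain}(\mathcal{P})$ equals $\textsc{stab}(G)$ where $G$ is the comparability graph of $\mathcal{P}$. So the only real content is to unwind the definition of $\textsc{th}(G)$ for this particular $G$ and check that the displayed spectrahedral description matches $\pi(L(G))$.

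First I would set $G$ to be the comparability graph of $\mathcal{P}$, so that $V(G) = \mathcal{P}$ and $ab \in E(G)$ exactly when $a \prec b$ or $a \succ b$. By Theorem~\ref{thm:THperfect} and perfectness of $G$, we have $\textsc{chain}(\mathcal{P}) = \textsc{stab}(G) = \textsc{th}(G) = \pi(L(G))$, which already gives a spectrahedral lift of size $|V(G)|+1 = |\mathcal{P}|+1$. Next I would spell out $L(G)$: a matrix in $\PSD^{\mathcal{P}\cup\{0\}}$ of the block form $\begin{bmatrix} X_{00} & (\text{row})\\ (\text{col}) & Y\end{bmatrix}$ with $X_{00}=1$, the $0$th row/column equal to $x$, $\diag(Y) = x$ (from the conditions $X_{ii}=x_i$), and $Y_{ab}=0$ whenever $ab\in E(G)$, i.e.\ whenever $a\prec b$ or $a\succ b$. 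The projection $\pi$ reads off the $0$th row $x$. Substituting, the image $\pi(L(G))$ is precisely the set on the right-hand side of the claimed multiline display, so the two descriptions coincide.

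I would then add one sentence justifying why $L(G)$ uses $X_{ii}=x_i$ rather than $X_{ii}=x_i^2$: since $x_i\in\{0,1\}$ on the vertices $\chi^S$, we have $x_i^2 = x_i$, and this linear constraint is what makes $L(G)$ a spectrahedron; this is exactly the construction recalled just before the definition of $\textsc{th}(G)$, so it can simply be cited. Nothing more is needed — there is no genuine obstacle here, as the corollary is a straightforward specialization. The only point requiring a moment's care is bookkeeping: making sure the index $0$ is handled consistently (the lift lives in $\PSD^{\mathcal{P}\cup\{0\}}$, equivalently $\PSD^{|\mathcal{P}|+1}$, with rows/columns indexed by $\mathcal{P}$ together with a distinguished coordinate) and that "$Y_{ab}=0$ if $a\succ b$ or $a\prec b$" in the displayed formula is literally the edge condition $ab\in E(G)$ for the comparability graph. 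So I would present the proof as: invoke perfectness of comparability graphs, invoke Theorem~\ref{thm:THperfect} to get $\textsc{chain}(\mathcal{P})=\textsc{th}(G)$, and then identify $\textsc{th}(G)=\pi(L(G))$ with the explicit set in the statement by unwinding definitions.
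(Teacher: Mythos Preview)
Your proposal is correct and matches the paper's own argument: the corollary is stated immediately after observing that comparability graphs are perfect and that $\textsc{chain}(\mathcal{P})$ is the stable set polytope of the comparability graph, so it follows directly from Theorem~\ref{thm:THperfect} by specializing the definition of $\textsc{th}(G)=\pi(L(G))$. The paper gives no additional proof beyond this, and your unwinding of $L(G)$ into the displayed block-matrix form is exactly the intended identification.
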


Other examples of perfect graphs are chordal graphs and bipartite graphs. In all of these cases,
$\textsc{stab}(G)$ admits a spectrahedral lift of size $|V(G)| + 1$.

If $G=([n],E)$ is perfect, then $\textsc{stab}(G)$ also
has an explicit (although large) polyhedral description, namely,
$$\textsc{stab}(G) = \left\{ x \in \RR^n \,: \, x \geq 0, \,\sum_{i \in K} x_i \leq 1 \;\;\forall \, \textup{ cliques } K \textup{ in } G \right\}.$$
This is why the facets of the chain polytope $\textsc{chain}(P)$ are given by the maximal chain inequalities in the comparability graph of $\mathcal{P}$
and nonnegativities of variables.

We have seen that the chain polytope %$\textsc{chain}(\mathcal{P})$ 
admits the polyhedral lift $\textsc{order}(\mathcal{P})$ of size $O(|\mathcal{P}|^2)$ and the spectrahedral lift
$L(G)$ of size
$|\mathcal{P}|+1$. We will see later that no spectrahedral lift of $\textsc{chain}(\mathcal{P})$ can have size smaller than
$|\mathcal{P}|+1$.
More generally, any spectrahedral lift of a polytope $P$ has size at least $\dim(P)+1$.

%%%%%%%%%%%%%%%%%%%%%%%%%%%%%%%%%%%%%%%%%%%%%%%%%%%%%%%%%%%%%%%%

\subsection{Convex polynomials and their epigraphs} \label{sec:epi}
In the area of nonlinear programming, it is traditional to express optimization problems in the form
\begin{equation}
	\label{eq:nlp}
	 \min_x\; f(x)\;\;\textup{subject to}\;\; g_i(x) \leq 0\;\;\textup{for $i=1,2,\ldots,m$}
\end{equation}
for functions $f:\RR^n\rightarrow \RR$ and $g_1,\ldots,g_m:\RR^n\rightarrow \RR$.
If these are all convex functions, then such an optimization problem
is called a convex optimization problem. Since the advent of interior point methods in the 1990s, it has become
common to reformulate such convex optimization problems as
\[ \min_{x,t}\;t\;\;\textup{subject to}\;\; \begin{cases} (x,t)\in \textup{epi}(f)\\
 (x,0)\in \textup{epi}(g_i)\;\; \textup{for $i=1,2,\ldots,m$}\end{cases}\]
where $\textup{epi}(f) = \{(x,t)\in \RR^n\times \RR\;:\; f(x)\leq t\}$ is the \emph{epigraph} of $f$. Here we
have introduced an additional variable and expressed the problem as the minimization of a linear functional over a convex set.
This motivates the study of lifted representations of the epigraphs of convex functions.

\begin{figure}
\begin{center}
\includegraphics[scale=0.4, trim=80 120 80 100, clip]{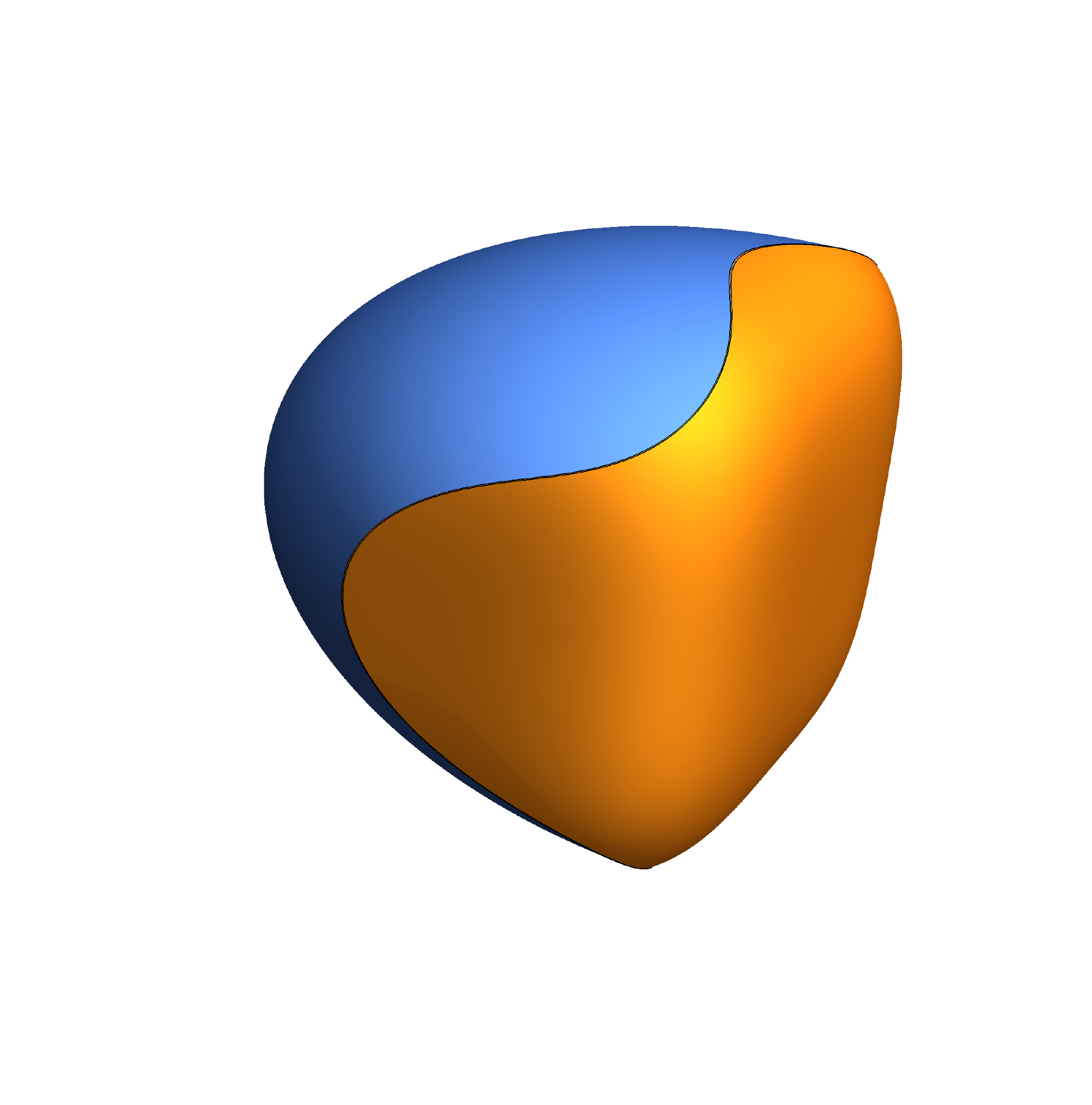}
\end{center}
\caption{\label{fig:ellipsoid-four} The intersection of the zero-sublevel sets of the (sos-)convex polynomials
$g_1(x,y,z) = x^4+y^4+z^4-3$ and $g_2(x,y,z) = (x-2)^2 + 2y^2+3z^2-5$.}
\end{figure}

If we restrict to differentiable convex functions, one approach to constructing lifted representations of their epigraphs
is to make use of the first-order characterization of convexity, i.e.,
\begin{equation}
	\label{eq:fo-conv} f(x) - [f(y) + \langle \nabla f(y),x-y\rangle] \geq 0\;\;\textup{for all $(x,y)\in \RR^{n}\times \RR^{n}$}.
\end{equation}
This inequality just tells us that the graph of $f$ is above the graph of its linear approximation at $y$,
for any $y$. The expression on the left hand side of~\eqref{eq:fo-conv} is the \emph{Bregman divergence}
associated with $f$.

\subsubsection{Convex and sos-convex polynomials}

We now restrict, further, to convex \emph{polynomials} of degree at most $2d$. 
(With  minor modifications,
the discussion extends to convex rational functions $f(x)/g(x)$ with $g(x) > 0$ for all $x$.)
The Bregman divergence of a convex polynomial $f$ of degree at most $2d$ in $n$ variables,
is a non-negative polynomial of degree $2d$
in the $2n$ variables $(x,y)$. Since the Bregman divergence is linear in $f$,
the first order characterization of convexity~\eqref{eq:fo-conv}
tells us that the set of convex polynomials of degree at most $2d$ in
$n$ variables is a slice of $\textup{Pol}_+^{2n,2d}$, the cone of
nonnegative polynomials of degree at most $2d$ in $2n$ variables. 
Denote by $\textup{Pol}^{l,2d}$, the set of all polynomials in $l$ variables of degree at most $2d$.

Whenever $n=1$ or $2d=2$ or $(n,2d) = (2,4)$, it can be shown~\cite{ahmadi2013complete} that 
the Bregman divergence
of a convex polynomial of degree at most $2d$ in $n$ variables can be written as a \emph{sum of squares}, i.e.,
there exist $f_1,f_2,\ldots \in \textup{Pol}^{2n,d}$ such that 
\[ f(x) - [f(y) + \langle \nabla f(y),x-y\rangle] = \sum_{k} f_k(x,y)^2\;\;
\textup{for all $(x,y)\in \RR^{n}\times \RR^{n}$}.\]
Otherwise, the polynomials for which the Bregman divergence is a sum of squares (called \emph{sos-convex polynomials}~\cite{helton2010semidefinite})
are a strict subset of all convex polynomials with a given degree and number of variables.

Let $\Sigma_{+}^{\ell,2d}$ denote the cone of polynomials of degree at most $2d$ in $\ell$ variables
that are sums of squares. This cone has a spectrahedral lift given by
\[ \Sigma_+^{\ell,2d} = \{f\in \textup{Pol}^{\ell,2d}\;:\; \exists Q \psd 0,\;\;\textup{such that}\;\;f(x) = v_{d}(x)^{\top}Qv_{d}(x)\;\;\forall x\in \RR^{\ell}\}.\]
Here $v_{d}(x)$ is the length $\binom{\ell+d}{d}$ vector consisting of all monomials of degree at most $d$ in $\ell$ variables. Using this, we obtain a spectrahedral lift of the cone of sos-convex polynomials.
\begin{proposition}
	\label{prop:sos-cvx}
The cone of sos-convex polynomials has the following spectrahedral lift
	\begin{multline} \left\{f\in \textup{Pol}^{n,2d}\;:\; \exists Q \psd 0\;\;\textup{s.t.}\;\;\right.\\
		\left.
		f(x) - [f(y) + \langle \nabla f(y),x-y\rangle] = v_{d}(x,y)^{\top}Qv_{d}(x,y)\;\;\forall (x,y) \in \RR^{n}\times \RR^{n}\right\}.
	\end{multline}
\end{proposition}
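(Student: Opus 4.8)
The plan is to recognize the displayed set as the coordinate projection of the intersection of a positive semidefinite cone with an affine subspace, and then to identify this projection with the cone of sos-convex polynomials by directly unpacking the definition of sos-convexity and invoking the spectrahedral lift of $\Sigma_+^{2n,2d}$ recalled just above.

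First I would do the elementary bookkeeping for the Bregman divergence. For $f \in \textup{Pol}^{n,2d}$, set $B_f(x,y) := f(x) - [f(y) + \langle \nabla f(y), x - y\rangle]$. Each partial derivative $\partial f / \partial y_i$ has degree at most $2d-1$ and is multiplied by the linear form $x_i - y_i$, so $B_f$ has degree at most $2d$ in the $2n$ variables $(x,y)$, i.e.\ $B_f \in \textup{Pol}^{2n,2d}$; moreover $f \mapsto B_f$ is \emph{linear}. By the definition of sos-convexity given above, $f$ is sos-convex precisely when $B_f \in \Sigma_+^{2n,2d}$.

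Next I would apply the spectrahedral lift of the sum-of-squares cone: for $g \in \textup{Pol}^{2n,2d}$ one has $g \in \Sigma_+^{2n,2d}$ if and only if there exists $Q \psd 0$ with $g(x,y) = v_d(x,y)^\top Q v_d(x,y)$ for all $(x,y) \in \RR^{2n}$, where $v_d(x,y)$ is the length-$\binom{2n+d}{d}$ vector of all monomials of degree at most $d$ in $(x,y)$. (This is exactly the displayed description of $\Sigma_+^{\ell,2d}$ with $\ell = 2n$; it rests on the usual Gram-matrix argument, which I would not reproduce.) Chaining this with the previous paragraph, $f$ is sos-convex if and only if there exists $Q \psd 0$ with $B_f(x,y) = v_d(x,y)^\top Q v_d(x,y)$ for all $(x,y)$ --- which is precisely membership of $f$ in the set displayed in the statement. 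This gives the claimed equality of sets.

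It remains to note that this equality really is a lift in the sense of Definition~\ref{def:lift definition}. Since $\RR$ is infinite, the polynomial identity $B_f(x,y) = v_d(x,y)^\top Q v_d(x,y)$ for all $(x,y)$ is equivalent to the finite system of \emph{linear} equations obtained by equating, monomial by monomial, the coefficients of $x^a y^b$ with $|a|+|b| \le 2d$ on the two sides; these equations are linear in the coefficients of $f$ and in the entries of $Q$. Hence $\{(f,Q)\,:\, Q \psd 0,\ B_f = v_d^\top Q v_d\}$ is the intersection of the closed convex cone $\textup{Pol}^{n,2d} \times \PSD^{N}$, with $N = \binom{2n+d}{d}$, with an affine (indeed linear) subspace, and the set in the statement is its image under the projection $(f,Q) \mapsto f$. (If one insists that the cone in Definition~\ref{def:lift definition} be literally a psd cone, the free coordinates $f$ can be absorbed into an additional diagonal psd block in the standard way.) Essentially nothing here is hard: the proposition is a direct translation of definitions composed with the lift of $\Sigma_+^{2n,2d}$, and the only point demanding a little care is the first-paragraph degree count --- making sure $B_f$ contains no monomial of degree exceeding $2d$, so that $v_d(x,y)$ is the complete and correct monomial vector.
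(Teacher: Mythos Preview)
Your proposal is correct and follows exactly the approach implicit in the paper: the paper states the proposition without a separate proof, since it follows immediately from the linearity of $f\mapsto B_f$, the definition of sos-convexity, and the spectrahedral lift of $\Sigma_+^{2n,2d}$ recalled just before the statement. Your write-up simply makes these steps explicit (and adds the useful observation that matching coefficients yields a finite linear system, so the description is genuinely a $K$-lift in the sense of Definition~\ref{def:lift definition}).
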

In the cases $n=1$, $2d=2$, and $(n,2d) = (2,4)$, for which all convex polynomials are sos-convex,
Proposition~\ref{prop:sos-cvx}, in fact, gives a spectrahedral lift of the cone of convex polynomials.

\subsubsection{Epigraphs of convex and sos-convex polynomials}

We can also use the characterization~\eqref{eq:fo-conv} to express the epigraph of any fixed convex polynomial as
an affine slice of $\textup{Pol}_{+}^{n,2d}$, and give a spectrahedral lift of
the epigraph of any sos-convex polynomial.
\begin{proposition}
\label{prop:epi-nn}
If $f$ is a convex polynomial, then
\begin{equation}
\label{eq:epi-nn}
	 \textup{epi}(f) = \{(x,t)\in \RR^{n}\times \RR\;:\; t-[f(y) + \langle \nabla f(y),x-y\rangle] \in \textup{Pol}_+^{n,2d}\}.
\end{equation}
If $f$ is a sos-convex polynomial, then
\begin{multline}
\label{eq:epi-sos}
	 \textup{epi}(f) = \left\{(x,t)\in \RR^{n}\times \RR\;:\;\exists Q \psd 0\;\;\textup{s.t.}\;\;\right.\\
	\left.t-[f(y) + \langle \nabla f(y),x-y\rangle] = v_d(y)^{\top}Qv_d(y)\;\;\forall y\in \RR^n\right\}.
\end{multline}
\end{proposition}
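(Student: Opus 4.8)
The plan is to prove the two displayed identities in turn, establishing \eqref{eq:epi-nn} first and then deriving \eqref{eq:epi-sos} from it together with the definition of sos-convexity and the Gram-matrix description of $\Sigma_+^{n,2d}$ recalled just before Proposition~\ref{prop:sos-cvx}. Throughout, the point of view I would adopt is that $(x,t)$ are frozen parameters and the expression $p_{x,t}(y) := t - [f(y) + \langle \nabla f(y), x-y\rangle]$ is to be regarded as a polynomial in the single vector variable $y \in \RR^n$; since $f$ has degree at most $2d$, the gradient $\nabla f$ has entries of degree at most $2d-1$, and hence $p_{x,t}$ has degree at most $2d$ in $y$. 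With that normalization both identities become statements about membership in $\textup{Pol}_+^{n,2d}$, respectively $\Sigma_+^{n,2d}$.

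For \eqref{eq:epi-nn} I would argue both inclusions directly. If $(x,t)\in\textup{epi}(f)$, i.e. $f(x)\le t$, then the first-order characterization of convexity \eqref{eq:fo-conv} gives $f(x) \ge f(y) + \langle \nabla f(y), x-y\rangle$ for every $y$, and combining this with $t \ge f(x)$ yields $p_{x,t}(y)\ge 0$ for all $y$, so $p_{x,t}\in\textup{Pol}_+^{n,2d}$. Conversely, if $p_{x,t}\in\textup{Pol}_+^{n,2d}$, simply evaluating at $y=x$ gives $0 \le p_{x,t}(x) = t - f(x)$, so $(x,t)\in\textup{epi}(f)$. This is the whole of \eqref{eq:epi-nn}.

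For \eqref{eq:epi-sos} the key algebraic manipulation is to add and subtract $f(x)$:
\[
p_{x,t}(y) = \big(t - f(x)\big) + \Big(f(x) - \big[f(y) + \langle \nabla f(y), x-y\rangle\big]\Big),
\]
exhibiting $p_{x,t}(y)$ as the constant $t-f(x)$ plus the Bregman divergence of $f$ evaluated at $(x,y)$. The inclusion "$\supseteq$" in \eqref{eq:epi-sos} is then immediate: if $p_{x,t}(y) = v_d(y)^\top Q v_d(y)$ for some $Q \psd 0$, then $p_{x,t}$ is a sum of squares, hence lies in $\textup{Pol}_+^{n,2d}$, and \eqref{eq:epi-nn} gives $(x,t)\in\textup{epi}(f)$. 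For "$\subseteq$", suppose $(x,t)\in\textup{epi}(f)$, so $t-f(x)\ge 0$. Since $f$ is sos-convex, its Bregman divergence is $\sum_k f_k(x,y)^2$ with $f_k\in\textup{Pol}^{2n,d}$; freezing $x$, each $f_k(x,\cdot)$ is a polynomial in $y$ of degree at most $d$, so the Bregman divergence is a sum of squares of such polynomials, and adding the nonnegative constant $t-f(x) = (\sqrt{t-f(x)})^2$ keeps it a sum of squares of polynomials of degree at most $d$ in the $n$ variables $y$. By the Gram-matrix form of $\Sigma_+^{n,2d}$, there is a $Q\psd 0$ with $p_{x,t}(y) = v_d(y)^\top Q v_d(y)$ identically in $y$, which is exactly the condition on the right-hand side of \eqref{eq:epi-sos}; note that equating coefficients of monomials in $y$ turns that condition into linear equations in $(x,t,Q)$, so \eqref{eq:epi-sos} literally presents $\textup{epi}(f)$ as the projection of a spectrahedron.

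I do not expect a genuine obstacle here: both parts reduce to \eqref{eq:fo-conv} and the definition of sos-convexity. The one place that needs care — and the only thing I would call the "hard part" — is the bookkeeping separating the roles of the variables: one must verify that the degree in $y$ never exceeds $2d$ (so that a single Gram matrix against $v_d(y)$ suffices), and that a sum-of-squares certificate for the Bregman divergence in the joint variables $(x,y)$ specializes, for each fixed $x$, to a sum-of-squares certificate in $y$ alone with squares of degree at most $d$. Once this is checked, \eqref{eq:epi-sos} is a formal consequence of \eqref{eq:epi-nn}.
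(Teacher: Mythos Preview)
Your proof is correct and follows exactly the same route as the paper: both directions of \eqref{eq:epi-nn} are obtained from the first-order convexity inequality and the substitution $y=x$, and \eqref{eq:epi-sos} comes from the same add-and-subtract decomposition $p_{x,t}(y) = (t-f(x)) + \big(f(x)-[f(y)+\langle\nabla f(y),x-y\rangle]\big)$ into a nonnegative constant plus the Bregman divergence. Your additional remarks on degree bookkeeping are accurate and make explicit what the paper leaves implicit.
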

\begin{proof}
If $f$ is convex (respectively, sos-convex) and $(x,t)\in \textup{epi}(f)$, then $f(x) \leq t$ and so
\[ t-[f(y) + \langle \nabla f(y),x-y\rangle]  = (t-f(x)) + f(x) - [f(y) + \langle \nabla f(y),x-y\rangle]\]
is nonnegative (respectively, a sum of squares).
On the other hand, if $t-[f(y) + \langle \nabla f(y),x-y\rangle] \geq 0$ for all $y\in \RR^n$, then
putting $y=x$ we have that $t\geq f(x)$, and so $(x,t)\in \textup{epi}(f)$.
\end{proof}
Since the epigraphs of sos-convex polynomials have spectrahedral lifts,
we can reformulate convex optimization problems of the form~\eqref{eq:nlp}, in which the objective function and the constraints
are sos-convex polynomials, as semidefinite programming problems. Figure~\ref{fig:ellipsoid-four} shows the feasible region of
such a problem for two sos-convex polynomials.
% In the case where $f$ is homogeneous of degree $2d$, we have that $\langle \nabla f(y),y\rangle = df(y)$. As such, this
% simplifies to
% \[ \textup{epi}(f) = \{(x,t)\in \RR^n\times \RR\;:\; t  +(d-1)f(y) - \langle \nabla f(y),x\rangle \geq 0\;\;\textup{for all $y\in \RR^n$}\}.\]

\begin{example}[Epigraph of a convex quadratic and convex quadratic programming]
	If $f(x) = x^{\top}Ax + 2b^{\top}x + c$ and $A$ is positive semidefinite, then $f$ is a convex quadratic.
Optimization problems of the form~\eqref{eq:nlp} in which $f$ and the $g_i$ are all convex quadratics are
known as (convex) quadratic programs. Since any convex quadratic is sos-convex,
\begin{align*}
 \textup{epi}(f) & = \left\{(x,t)\in \RR^n\times \RR\;:\; \exists Q \psd 0\;\;\textup{s.t.}\;\;\right.\\
	&\qquad \left.
	y^{\top}Ay - 2(Ax)^{\top}y + (t-c-2b^{\top}x)  = \begin{bmatrix} 1 & y^{\top}\end{bmatrix} Q \begin{bmatrix} 1\\y\end{bmatrix}\;\;\forall y\in \RR^n\right\}\\
		& = \left\{(x,t)\in \RR^n\times \RR\;:\; \begin{bmatrix} t-c-2b^{\top}x & -(Ax)^{\top}\\-Ax & A\end{bmatrix} \psd 0\right\}.
\end{align*}
% where we have used the fact that non-negative quadratics are characterized by
% \[ y^{\top}Ay + 2by + c \geq 0\;\;\textup{for all $y\in \RR^n$}\;\;\iff\;\; \begin{bmatrix} c & b^{\top}\\b & A\end{bmatrix} \psd 0.\]
	If $f(x) = x^{\top}x$ and we consider the $t=1$ slice of $\textup{epi}(f)$,  we obtain
a well-known spectrahedral representation of the sphere:
	\[ \{x\in \RR^n\;:\; x^{\top}x\leq 1\} = \left\{x\in \RR^n\;:\; \begin{bmatrix} 1 & -x^{\top}\\-x & I\end{bmatrix} \psd 0\right\} = \left\{x\in \RR^n\;:\; \begin{bmatrix} 1 & x^{\top}\\x & I\end{bmatrix} \psd 0\right\}\]
		where the last description is obtained via a congruence by $\left[\begin{smallmatrix} 1 & 0\\0 & -I\end{smallmatrix}\right]$, which preserves positive semidefiniteness.
%The intersection $\{(x,y,z)\in \RR^3\;:\; (x+1)^2+(y+1)^2+(z+1)^2 \leq 3,\; x^2+2(y+1)^2+3(z+1)^2\leq 5\}$ of
%an ellipsoid and a sphere is shown in Figure~\ref{fig:quad-ellip}. This convex region is a spectrahedron, and any projection
%of this region is a projected spectrahedron.
\end{example}

\subsection{The honeycomb lift of the Horn cone}

Given the eigenvalues of two $n \times n$ Hermitian matrices $A$ and $B$, a classical question from linear algebra is to characterize the eigenvalues of $A+B$. Equivalently, the problem is to characterize the triplets $(\lambda, \mu, \nu) \in (\RR^n)^3$
where $\lambda$ is the vector of eigenvalues of an $n \times n$ Hermitian matrix $A$ in weakly-decreasing order,
$\mu$ that of $B$, and $\nu$ that of $C$, such that $A+B+C=0$. Call such a triplet $(\lambda, \mu, \nu) \in (\RR^n)^3$ {\em valid}.
The question of characterizing valid triplets was posed by Hermann Weyl in 1912. He gave a
number of necessary conditions, starting with the obvious trace condition that $\sum \lambda_i + \sum \mu_i + \sum \nu_i = 0$.

%The set of valid triplets forms a polyhedral cone in $(\RR^n)^3$ of dimension $3n-1$ (\red{nonobvious fact}).
In 1962 Horn conjectured that the set of valid triplets forms a convex polyhedral cone and proposed a complete set of linear inequalities that describes it
via a recursive recipe  \cite{Horn}. Let $\Horn_n \subset \RR^{3n}$ be the set of valid triplets $(\lambda,\mu,\nu)$.
%In honor of this work, we shall refer to the cone of valid triplets as the {\em Horn cone}.
The inequalities conjectured by Horn have the form
$$ \lambda_{i_1} + \ldots + \lambda_{i_r} + \mu_{j_1} + \ldots + \mu_{j_r} + \nu_{k_1} + \ldots + \nu_{k_r} \geq 0$$
where $1 \leq r < n$ and all triplets of indices $1 \leq i_1 < \ldots < i_r \leq n, 1 \leq j_1 < \ldots < j_r \leq n, 1 \leq k_1 < \ldots < k_r \leq n$
are in a set $T_{n,r}$. Horn's conjecture described the set $T_{n,r}$ recursively in terms of sets $T_{r,s}$ for $1 \leq s < r$. This recipe
leads to an exponential number of inequalities for $\Horn_n$.

%\red{It seems the proof was actually a combination of Klyachko and Knutson-Tao.}
The Horn conjecture was proved by a combination of the works of Klyachko \cite{Klyachko} and Knutson-Tao \cite{KnutsonTaoJAMS,KnutsonTao}. Remarkably the work of Knutson-Tao also shows that the convex set $\Horn_n$ admits a small polyhedral lift of size $O(n^2)$.

%In 1999 a new proof of this conjecture was given by Knutson and Tao
%via their proof of the {\em saturation conjecture} from the representation theory of $\textup{GL}_n(\CC)$ \cite{KnutsonTao}. Many of Horn's inequalities are redundant, and in \cite{KnutsonTaoWoodward} Knutson, Tao and Woodward showed that the non-trivial facets of the Horn cone are in bijection with combinatorial objects called {\em $n$-puzzles without gentle loops}. The remaining facets are given by the {\em chamber inequalities}
%$\tau_{i} \geq \tau_{i+1}$ for $\tau \in \{ \lambda, \mu, \nu \}$ which are essential when $n > 2$.
%The total number of facets of the Horn cone indexed by $n$ is still exponential in $n$.

%The significance of this work in the context of this paper is the following restatement of a central tool used by  Knutson and Tao.

\begin{theorem} The Horn cone for $n \times n$ Hermitian matrices admits a polyhedral lift called $\textsc{honey}_n$ with $O(n^2)$ facets.
\end{theorem}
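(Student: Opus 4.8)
The plan is to realize $\Honey_n$ explicitly as the cone of \emph{honeycombs} (equivalently, of \emph{hives}) attached to the $\mathrm{GL}_n$ Horn problem, to recover $\Horn_n$ from it by a linear boundary-reading map, and then to count: the honeycomb cone lives in an ambient space of dimension $O(n^2)$ and is cut out by $O(n^2)$ linear inequalities. The single non-elementary ingredient is that this projection actually \emph{equals} $\Horn_n$; this is the Knutson--Tao theorem \cite{KnutsonTaoJAMS,KnutsonTao} completing the proof of Horn's conjecture, which I would take as given (it is precisely the result cited just above the statement).

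Concretely I would use the hive description, where the polyhedral structure is most transparent. A \emph{hive of size $n$} is an array of reals $h = (h_{ij})$ indexed by the lattice points $i,j\ge 0$, $i+j\le n$ of a triangle subdivided into $n^2$ unit triangles, subject to the \emph{rhombus inequalities}: for each of the $3\binom{n}{2}$ unit rhombi (a pair of unit triangles sharing an edge), the sum of $h$ at the two $120^\circ$ vertices is at least the sum at the two $60^\circ$ vertices. Each such condition is one linear inequality in the $\binom{n+2}{2}$ entries of $h$, so the set $H_n$ of hives is a polyhedral cone in $\RR^{\binom{n+2}{2}}$ defined by $3\binom{n}{2}=O(n^2)$ inequalities and living in a space of dimension $O(n^2)$. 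Taking successive differences of $h$ along the three sides of the triangle (and, if one wants a faithful lift, normalizing by the linear equation $h_{00}=0$) defines a linear map $\pi : H_n \to (\RR^n)^3$, $h\mapsto(\lambda,\mu,\nu)$; note that the trace condition $\sum_i\lambda_i+\sum_i\mu_i+\sum_i\nu_i=0$ holds automatically because these boundary differences telescope around the triangle back to $h_{00}$. Dually, $H_n$ is the space of honeycombs with $n$ boundary rays in each of the three lattice directions: hive vertices become honeycomb regions, the rhombus inequalities become nonnegativity of the honeycomb edge lengths, and the ``constant coordinates'' of the boundary rays return $(\lambda,\mu,\nu)$. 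This is the object we call $\Honey_n$.

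The core of the argument is the identity $\Horn_n = \pi(\Honey_n)$: a triple $(\lambda,\mu,\nu)$ is valid, i.e.\ it is the list of eigenvalues of Hermitian matrices $A,B,C$ with $A+B+C=0$, if and only if some hive (equivalently, some honeycomb) has that boundary data. One would invoke \cite{KnutsonTaoJAMS,KnutsonTao} here; the substantive direction is the passage from a honeycomb to Hermitian matrices via a deformation/degeneration construction, while the converse, in the integer-partition formulation, also rests on the saturation theorem $c^{N\nu}_{N\lambda,N\mu}\neq 0 \Rightarrow c^{\nu}_{\lambda,\mu}\neq 0$. Granting this, $\Honey_n=H_n$ (intersected with $\{h_{00}=0\}$ if desired) is a polyhedral cone with $O(n^2)$ facets in $\RR^{O(n^2)}$ that projects linearly onto $\Horn_n$; writing the cone as $\{h:Ah\ge 0\}$ with $A$ of size $O(n^2)$ exhibits it as a polyhedral lift of $\Horn_n$ of size $O(n^2)$, which is the assertion.

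The main obstacle is entirely concentrated in the identity $\Horn_n=\pi(\Honey_n)$: this is deep, and in a survey one would not reprove it. Everything else is bookkeeping: writing down the rhombus (equivalently, edge-length) inequalities, verifying that they number $3\binom{n}{2}$, that the ambient dimension is $\binom{n+2}{2}$, and that Horn's exponentially long recursive list of inequalities is exactly the (massively redundant, un-lifted) shadow on $(\RR^n)^3$ of these $O(n^2)$ inequalities. The genuinely new content relative to the preceding discussion is this last point — that the hive/honeycomb cone packages Horn's inequalities into a projection of a polyhedron with only polynomially many facets.
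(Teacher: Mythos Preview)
Your proposal is correct and follows essentially the same approach as the paper: describe the Knutson--Tao lift explicitly, count that it sits in an $O(n^2)$-dimensional space with $O(n^2)$ linear inequalities, and invoke \cite{KnutsonTaoJAMS,KnutsonTao} for the deep identity $\Horn_n=\pi(\Honey_n)$. The only cosmetic difference is that you work with the hive model (rhombus inequalities on the $\binom{n+2}{2}$ lattice-point values), whereas the paper uses the honeycomb model (edge-coordinate inequalities indexed by $\Gamma_n$); you yourself note these are dual, and the paper likewise does not attempt to reprove the Knutson--Tao projection result.
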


In this short section we describe the lift of Knutson-Tao but we do not attempt to explain the proof, which uses sophisticated arguments from algebraic geometry and representation theory (we refer the interested reader to \cite{KnutsonTao} for more details).

Consider a honeycomb structure as shown in Figure \ref{fig:hcomb}. The honeycomb is formed of internal hexagons, and has $3n$ outgoing edges, each labeled by $\lambda_i$, $\mu_i$ or $\nu_i$ in the way shown in the Figure.

% \honeycomb{n}{0} --> draws honeycomb.
% Second argument either 0 or 1 to highlight a particular edge
\newcommand{\honeycomb}[2]{\begin{tikzpicture}[hexa/.style= {shape=regular polygon,regular polygon sides=6,draw,minimum size=1cm,rotate=30}]

% \Nn = n (number of eigenvalues)
\def\Nn{#1}
\pgfmathtruncatemacro{\Nhex}{\Nn-2}

% Draw the interior hexagons
\foreach \j in {1,...,\Nhex}{%
  \foreach \i in {1,...,\j}{%
    \node[hexa] (h\j;\i) at ({(\i-\j/2)*sin(60)},{-\j*0.75}) {};} }

% Draw highlighted edge
\IfEqCase{#2}{%
  {1}{\draw[ultra thick,black] (h1;1)--++(0,-1)--++(0.866/2,-0.5/2)--++(0,-0.5);}
  {0}{}}

% Draw the outgoing edges for the lambda's (northeast)
\foreach \j in {1,...,\Nn}{%
  \coordinate(l\j) at ({(1-(\Nn-\j+1)/2)*sin(60)},{0.75-(\Nn-\j+1)*0.75+cos(60)/2});
  \draw (l\j) -- + ({-sin(60)/2},{cos(60)/2}) node[at end,left] {$\lambda_{\j}$};
}

% Draw the outgoing edges for the mu's (northwest)
\foreach \j in {1,...,\Nn}{%
  \coordinate(m\j) at ({(\j/2)*sin(60)},{0.75-\j*0.75+cos(60)/2});
  \draw (m\j) -- + ({sin(60)/2},{cos(60)/2}) node[at end,right] {$\mu_{\j}$};
  }

% Draw the outgoing edges for the nu's (vertical)
\foreach \j in {1,...,\Nn}{%
  \coordinate (n\j) at ({(-1+(\Nn-\j+1)-\Nhex/2)*sin(60)},{-\Nhex*0.75-0.5});
  \draw (n\j) -- + (0,-1/2) node[at end,below] {$\nu_{\j}$};
}

% Draw three additional edges
\draw (h1;1)--+(0,1);
\draw (h\Nhex;1)--+(-0.866,-0.5);
\draw (h\Nhex;\Nhex)--+(0.866,-0.5);

\end{tikzpicture}}

\begin{figure}[ht]
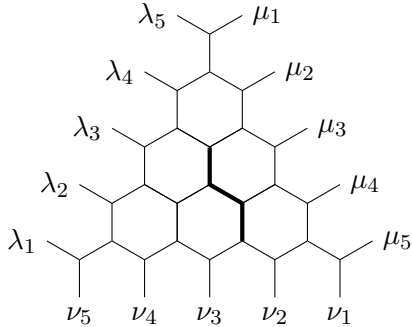

  \centering

%\honeycomb{5}{0}
%\qquad
\honeycomb{5}{1}

\caption{The honeycomb structure used to describe the lift of $\Horn_n$ for the case $n=5$}
\label{fig:hcomb}

\end{figure}

Let $E_n$ be the set of edges in the $n$-honeycomb. It is not hard to see that $|E_n| = O(n^2)$. Let $\Gamma_n \subset E_n \times E_n$ be the pairs of edges $(a,b) \in E_n \times E_n$ that are in one of the configurations shown in Figure \ref{fig:configshoneycomb}. An example of such a configuration is highlighted in Figure \ref{fig:hcomb}.

\def\NEconstraint{\tikz[baseline=-5ex]{
\draw[very thick] (0,0)--node[below,pos=0.3] { $a$}(-30:0.75) --++ (30:0.75) -- node[above,pos=0.7] { $b$}  +(-30:0.75);}}

\def\NWconstraint{\tikz[baseline=.1ex]{
\draw[very thick] (0,0)--node[right,midway] { $a$}(0,0.75) --++ (150:0.75) -- node[left,midway] { $b$}  +(0,0.75);}}

\def\Sconstraint{\tikz[baseline=.1ex]{
\draw[very thick] (0,0)--node[below,midway] { $b$}(30:0.75)--++(0,0.75)-- node[above,midway] { $a$} +(30:0.75);}}

\begin{figure}[ht]
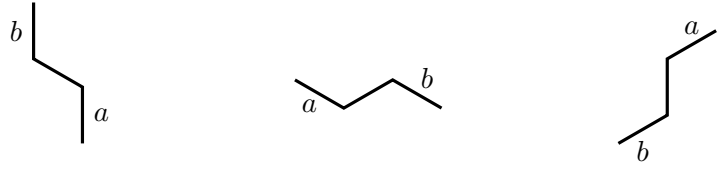

\centering
\NWconstraint \qquad\qquad \qquad   \NEconstraint \qquad \qquad \qquad\Sconstraint
\caption{Configurations of pairs of edges $(a,b)$ in a honeycomb that appear in the lift of $\Horn_n$. Note that these three patterns are $120^{\circ}$ rotations of each other.}
\label{fig:configshoneycomb}
\end{figure}

We are now ready to describe the lift. Consider the polyhedral cone $\Honey_n$ in $\RR^{E_n}$ defined by:
\begin{multline}
\label{eq:lifthoney}
%\begin{aligned}
\Honey_n = \left\{e \in \RR^{E_n} \; : \; e_{a} - e_{b} \geq 0  \;\; \forall (a,b) \in \Gamma_n\right.
%\;\;\; \forall \text{pairs of edges $(a,b)$ in the honeycomb that appear}\\
%&  \qquad\qquad\qquad\qquad \qquad \qquad \text{in any of the three configurations shown in Figure \ref{fig:configshoneycomb},}
\\
% \qquad\qquad\qquad \quad
\left.e_{a} + e_{b} + e_{c} = 0 \;\; \forall \text{edges $a,b,c$ meeting at a common vertex} \right\}.
%\end{aligned}
\end{multline}
Consider the map $\partial$ that projects a vector $e \in \RR^{E_n}$ onto the $3n$ outgoing edges  of the honeycomb. The result of Knutson and Tao shows that 
\begin{multline}
\Horn_n = \left\{ (\lambda,\mu,\nu) \in \RR^{3n} \; : \; \lambda_1 \geq \dots \geq \lambda_n, \;\; \mu_1 \geq \dots \geq \mu_n, \;\; \nu_1 \geq \dots \geq \nu_n\right. \\
 \left.  \text{ and } \exists e \in \Honey_n \text{ s.t. } (\lambda,\mu,\nu) = \partial(e) \right\}.
\end{multline}
%\partial(\Honey_n) = \Horn_n
%In other words, $\Honey_n$ is a polyhedral lift of $\Horn_n$.

\if0

The Horn cone is called $\textsc{bdry}_n$ in the Knutson-Tao papers and they
construct an explicit  linear map $\partial \,:\, \textsc{honey}_n \rightarrow \textsc{bdry}_n$ that projects $\textsc{honey}_n$ onto
$\textsc{bdry}_n$. Thus $(\lambda, \mu, \nu)$ lies in the Horn cone if and only if $\partial^{-1}((\lambda, \mu, \nu))$ is non-empty. Since $\textsc{honey}_n$ has only $O(n^2)$ facets, linear programming gives a polynomial time test for valid triplets, a result that was not known before.

The construction of $\textsc{honey}_n$ is based on the notion of honeycombs \cite{KnutsonTaoJAMS, KnutsonTao}.
A non-degenerate $n$-{\em honeycomb} is a planar diagram $h$ in $\RR^3_{\Sigma=0} = \{(x,y,z) \in \RR^3 \,:\, x + y + z = 0\}$ made up of line segments (possibly semi-infinite) parallel to the ``cardinal'' directions {\em northwest} $(0,1,-1)$,
{\em south} $(1,-1,0)$, and {\em northeast} $(-1,0,1)$, and with $n$ semi-infinite rays in each of the cardinal directions.
%Locally around each point $v \in h$, $h$ looks like a union of rays emanating from $v$ and the sum of the
%directions of these rays is zero. We say that $v$ is a {\em zero-tension} point.
There are only finitely many {\em vertices} in $h$ (i.e., points with rays emanating in different directions),
 and $h$ being non-degenerate means that
each vertex is at the center of a ``Y'' or ``upside-down Y'' shaped intersection.
See Figure~\ref{fig:honeycomb} for the picture of a non-degenerate
$4$-honeycomb.

\begin{figure}[ht]
\centering
\includegraphics[scale=1]{honeycomb.pdf}
\caption{A $4$-honeycomb with boundary values given by $(\lambda, \mu, \nu)$. \label{fig:honeycomb}}
\end{figure}

Due to the presence of a $0$ in each of the cardinal directions,
all points on an edge in $h$ has a constant coordinate.
In a non-degenerate honeycomb, the constant coordinates on the three edges meeting at a vertex, in the order
northwest, northeast, south are precisely the coordinates of this vertex. These three constant coordinates sum to zero
since $h$ lies in $\RR^3_{\Sigma=0}$. Reading off the constant coordinates along the semi-infinite edges of a $n$-honeycomb $h$, clockwise from the southwest, gives a triplet $(\lambda, \mu, \nu) \in (\RR^n)^3$ called the {\em boundary values} of $h$.

There is a bijection between the edges of any two non-degenerate $n$-honeycombs and hence
they can be represented by the constant coordinates of their edges in $\RR^e$, where
$e$ is number of edges. Requiring that the length of every edge is positive
and that the constant coordinates of three edges meeting at a vertex sum to zero, creates an open polyhedral cone.
The closure of this cone is $\textsc{honey}_n$. Its points are in bijection with the set of $n$-honeycombs.
Knutson and Tao proved that $(\lambda, \mu, \nu)$ is a valid triplet if and only if there is a honeycomb $h$ with boundary values $(\lambda, \mu, \nu)$. This result uses all $n$-honeycombs, not just non-degenerate ones.
Define the linear map $\partial \,:\, \textsc{honey}_n \rightarrow (\RR^n)^3$ that sends $h$ to its boundary values $(\lambda, \mu, \nu)$. Remarkably, the image of $\partial$
%, denoted as $\textsc{bdry}_n$,
is precisely the Horn cone in $(\RR^n)^3$.

\fi

\begin{example}

\begin{figure}[ht]
\centering
\includegraphics[scale=1]{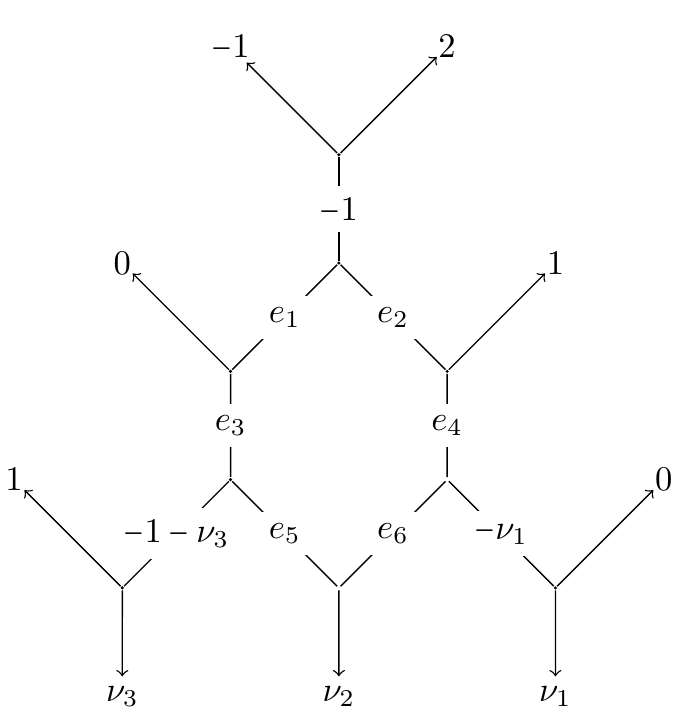}
\caption{A $3$-honeycomb. \label{fig:3honeycomb}}
\end{figure}

Take $n=3$ and suppose $\lambda = (1,0,-1)$, $\mu = (2,1,0)$ and $\nu=(\nu_1,\nu_2, \nu_3)$
are the vectors of eigenvalues of $A, B, C$ such that $C = -A-B$ and all matrices are $3 \times 3$ and Hermitian.
We would like to use honeycombs to decide the possible values of $\nu$.

A $3$-honeycomb with boundary values $\lambda, \mu, \nu$ is shown in Figure~\ref{fig:3honeycomb}.
There are $9$ internal edges in this honeycomb, of which three are determined by the boundary values. The remaining edges are labeled $e_1, \ldots, e_6$ as shown. Writing the second condition in \eqref{eq:lifthoney} for all vertices of the honeycomb, yields the following system of equations:
%condition that the sum of the constant coordinates on edges around a vertex must be $0$, yields the following system of equations:
\begin{equation}
\label{eq:3honeycombeqs}
\begin{aligned}
&e_1 + e_2 = 1 \,\, \Rightarrow \,\, e_2 = 1 - e_1,\\% \,\,\,\,
	&e_1 + e_3 = 0 \,\, \Rightarrow \,\, e_3 = -e_1, \\
	&e_3 + e_5 = 1 + \nu_3 \,\, \Rightarrow \,\, e_5 = 1 + \nu_3 + e_1,\\% \,\,\,\,
	&e_5 + e_6 = -\nu_2 \,\, \Rightarrow \,\, e_6 = -1 - \nu_2 - \nu_3 - e_1, \\
	&e_4 + e_6 = \nu_1 \,\, \Rightarrow \,\, e_4 = 1 + (\nu_1 + \nu_2 + \nu_3) + e_1,\\ 
	&e_2 + e_4 = -1 \,\, \Rightarrow \,\, e_2 = -2 - (\nu_1 + \nu_2 + \nu_3) - e_1.
\end{aligned}
\end{equation}
The first and last equations imply the trace equation $\nu_1 + \nu_2 + \nu_3 = -3$. We have eliminated all the $e$ variables
except $e_1$.

Now we impose the nonnegativity constraints coming from the first condition in \eqref{eq:lifthoney}. For example we can identify a configuration of the leftmost type in Figure \ref{fig:configshoneycomb} that yields $-1-e_3 \geq 0$. Using the fact that $e_3 = -e_1$ derived in \eqref{eq:3honeycombeqs} gives $e_1 \geq 1$.
%Now we impose the conditions that all the bounded length edges have nonnegative length. To illustrate this calculation, consider the edge labeled $e_1$ considered as pointing the northeast direction $(-1,0,1)$. The head of this edge has coordinates $(e_2,e_1,-1)$ (obtained by reading off the constant coordinates of the edges incident to this vertex in the order NW, NE, S) and the tail has coordinates $(0,e_1,e_3)$. Therefore $(e_2,e_1,-1) - (0,e_1,e_3) = (e_2, 0, -1-e_3) = c(-1,0,1)$ where $c$ is a nonnegative constant. This implies that $e_2 \leq 0$ and $-1-e_3 \geq 0$. Substituting for $e_2$ and $e_3$ from above we arrive at $e_1 \geq 1$. Note that both conditions yield the same inequality. Proceeding like this for all bounded edges we arrive at the following inequalities:
Proceeding like this will all configurations of edges shown in Figure \ref{fig:configshoneycomb} we arrive at the following inequalities:
\begin{align*}
&1 \leq e_1 \leq 2, \,\,\,\, e_1 \geq 2 + \nu_1 + \nu_2, \,\,\,\, e_1 \geq 1 + \nu_1, \,\,\,\, e_1 \geq -\nu_2, \,\,\,\, e_1 \geq 2 + \nu_2, \\
& e_1 \leq 3 + \nu_1 + \nu_2, \,\,\,\, e_1 \leq 2 + \nu_1.
\end{align*}
In addition, we also have the chamber inequalities: $\nu_1 \geq \nu_2 \geq -3-\nu_1 - \nu_2$.
The possible values of $\nu$ are those that allow a value of $e_1$ to exist given the above inequalities.

For a concrete example consider the following triple of symmetric matrices
$$ A = \begin{bmatrix} 0 & 0 & 0 \\ 0 & 0 & 1 \\ 0 & 1 & 0\end{bmatrix}, \,\,\,
B = \begin{bmatrix} 2 & 0 & 0 \\ 0 & 1 & 0 \\ 0 & 0 & 0\end{bmatrix} \textup{ and }
C = -A-B = \begin{bmatrix} -2 & 0 & 0 \\ 0 & -1 & -1 \\ 0 & -1 & 0
\end{bmatrix}$$
which have eigenvalues $\lambda = (1,0,-1)$, $\mu = (2,1,0)$ and $\nu = (\frac{-1 + \sqrt{5}}{2},  \frac{-1-\sqrt{5}}{2}, -2)$ respectively.
Plugging in  $\nu_1 = \frac{-1 + \sqrt{5}}{2}$ and $\nu_2 = \frac{-1-\sqrt{5}}{2}$ into the inequalities we should arrive at a feasible value for $e_1$
and indeed, we get $\frac{1 + \sqrt{5}}{2} \leq e_1 \leq 2$ which is satisfiable.

In this example, $\textsc{honey}_3$ can be thought of as a cone in $\RR^{6+9}$ with variables indexed by $e_1, \ldots, e_6$ and
the $9$ variables in $\lambda, \mu, \nu$.
It could be in $\RR^{9+9}$ but recall that the variables associated to three of the bounded edges can be eliminated right away
and expressed in terms of $\lambda, \mu, \nu$. The Horn cone is in $\RR^9$. When we fix $\lambda$ and $\mu$ as we have done in this
example, we are looking at a slice of $\textsc{honey}_3$. The method above derives the inequality description of this slice.

%If the constant coordinates of the
%semi-infinite edges are $\lambda_1,  \ldots, \nu_2$ in clockwise order as shown, then the constant coordinates of the
%edges of finite length are uniquely determined as in Figure~\ref{fig:2honeycomb}. Since the constant coordinates of the three edges incident to
%the central vertex of the honeycomb sums to zero, we get that $ -4-3-\nu_1 - \nu_2 = 0$, or equivalently, $\nu_1 + \nu_2 = -7$. We also need
%that the edges of finite length have positive length. The length of a finite edge (up to a positive scale factor) is the difference of the
%constant coordinates on two parallel edges incident to this edge. Thus we get that
%$ 4 + \nu_1 \geq 0$ and $3+4+\nu_2 \geq 0$
%which implies that $\nu_1 \geq -4$ and $\nu_2 \geq -7$. Putting all this together we get that possible values of $(\nu_1, \nu_2)$ are
%$(-t, -7+t)$ where $0 \leq t \leq 4$.

\end{example}

\section{Slack Operators}
\label{sec:slackoperator}
% !TEX root = main.tex
\label{sec:slacks}

A fundamental question concerning lifts is to characterize when they exist. More formally,
given a convex set $C$ and a convex cone $K$, how can we decide whether $C$ has a $K$-lift?
In this section we focus on answering this basic existence question.
In order to do this, we introduce a central tool for studying lifts of convex sets,
namely the {\em slack operator} of the set. We will see that the existence of a
$K$-lift of a convex set $C$ is equivalent to the
existence of a {\em factorization} of the slack operator of $C$ through the cone $K$,
and its dual cone $K^\ast$. This connection provides us with a useful algebraic handle on the
geometric picture of cone lifts of convex sets.

%When the convex set is a polytope, this operator is finite and can be represented by a matrix called the {\em slack matrix} of the polytope.

\subsection{Slack matrices}

In the case of polyhedral lifts of polytopes, their existence was characterized in~\cite{yannakakis1991expressing}. A key component turns out to be the \emph{slack matrix} of a polytope, which we introduce in this section.

There are two traditional ways of representing a polytope $P$.  We can represent $P$ as a $\cV$-polytope, i.e., as the convex hull of a minimal finite set of points, its vertices.  We can also represent $P$ as a $\cH$-polytope, i.e.,  as a bounded intersection of a finite number of closed half-spaces.  It is a fundamental property of polytopes that these two representations are equivalent.  Thus we would commonly either list all the vertices of $P$, or list a minimal set of affine inequalities that cut out $P$, more precisely, one inequality per facet of $P$.

% removed V-polytope and H-polytope since we con
% : as a $\cV$-polytope, that is to say as the convex hull of a minimal finite
% set of points, its vertices, or as a $\cH$-polytope, a bounded intersection of
% a finite number of closed half-spaces. It is a fundamental property of
% polytopes that these two representations are equivalent. Thus we would commonly
% either list all the vertices of $P$, or list a minimal set of affine
% inequalities that cut out $P$, more precisely, one per facet of $P$.

\begin{example} \label{ex:d+4}
Consider the polytope $P$ in $\RR^3$, shown on the left in Figure~\ref{fig:d+4example}, with the seven vertices $(\pm 1,\pm 1,-1/2)$, $(\pm 1, -1, 1/2)$ and $(0,-1,3/2)$. This polytope can also be described by the seven affine inequalities that cut it out:
$$1 - x \geq 0,\quad 1 + x \geq 0, \quad 1 + 2z \geq 0, \quad 1 + y \geq 0, \quad 1 -2y - 2z \geq 0,$$
$$1 - x - \frac{1}{2}y - z \geq 0, \quad \quad 1 + x -\frac{1}{2} y - z \geq 0.$$

\begin{figure}[h]
\begin{center}
\includegraphics[width=5cm]{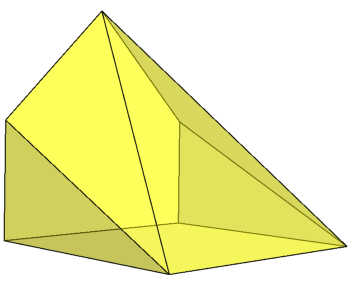} \hspace{2cm} \includegraphics[width=5cm]{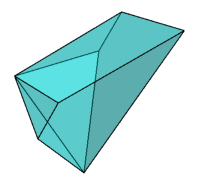}
\end{center}
\caption{Polytope $P$ with seven vertices and seven facets,  and its polar $P^\circ$.}
\label{fig:d+4example}
\end{figure}

\end{example}

The slack matrix of $P$ combines the $\mathcal{V}$ and $\mathcal{H}$ descriptions of $P$ into a single, less parsimonious, description. To do this we compute the {\em slack},  $\beta - h^\top v$, of each vertex $v$ in each facet defining inequality $h^\top x \leq \beta$, obtaining a nonnegative matrix, with rows indexed by the facets and columns indexed by the vertices of the given polytope $P$. For instance, the slack matrix of the polytope
$P$ from Example~\ref{ex:d+4} is the following $7\times 7$ matrix:
$$S_P=\left[
\begin{array}{ccccccc}
 2 & 2 & 2 & 0 & 0 & 0 & 1 \\
 0 & 0 & 0 & 2 & 2 & 2 & 1 \\
 0 & 0 & 2 & 0 & 0 & 2 & 4 \\
 0 & 2 & 0 & 0 & 2 & 0 & 0 \\
 4 & 0 & 2 & 4 & 0 & 2 & 0 \\
 3 & 2 & 2 & 1 & 0 & 0 & 0 \\
 1 & 0 & 0 & 3 & 2 & 2 & 0 \\
\end{array}
\right].$$
The rows of $S_P$ are indexed by the facets of $P$ in the order they are listed above. The columns of $S_P$ are
indexed by the vertices of $P$ in the following order:
$$(-1,-1,-\frac{1}{2}), (-1,1,-\frac{1}{2}), (-1,-1,\frac{1}{2}), (1,-1,-\frac{1}{2}), (1,1,-\frac{1}{2}), (1,-1,\frac{1}{2}), (0,-1,\frac{3}{2}).$$
Since the inequality defining a facet of $P$ is only defined up to scaling by a positive number, the slack matrix $S_P$
is only defined up to positive scaling of rows.

%\marginpar{\small \textcolor{blue}{we should fix what $n,d,m,k$ etc are used for}}
There is another interesting interpretation of the slack matrix, that brings the duality theory of polytopes to the forefront.
Recall that given a polytope $P \subset \RR^n$ with the origin in its interior, one can define its {\em polar} as the set
$$P^{\circ}=\{y \in \RR^n \;:\;  \langle x, y \rangle \leq 1, \textrm{ for all } x \in P\}$$
which is again a polytope. A fundamental result in the duality of polytopes is that there is a
one-to-one correspondence between the vertices of $P^\circ$ and the facets of $P$,
that sends vertex $y$ of $P^{\circ}$ to the facet of $P$ cut out by the inequality $1-\langle x, y \rangle \geq 0$.
In our example, this means that $P^\circ$ is the polytope with vertices $(\pm 1,0,0)$, $(0,0,-2)$,
$(0,-1,0)$, $(0,2,2)$, $(1,1/2,1)$ and $(-1,1/2,1)$,  shown on the right in Figure~\ref{fig:d+4example}.
Note that if the facet inequalities of $P$ are scaled so that they are of the form
$1 - \langle x, a_i \rangle \geq 0$ (which is possible since the origin lies in the interior of $P$), then the transposed matrix $S_P^\top$ is a slack matrix
of $P^\circ$. With the language of duality, we can think of the slack matrix $S_P$ as the map  $1-\langle x,y\rangle$
evaluated at the vertices of $P^{\circ} \times P$.

\subsection{Nonnegative factorizations and Yannakakis' theorem}

In order to characterize the existence of polyhedral lifts for a polytope, we
need to introduce the notion of \emph{nonnegative rank} of a matrix.  A {\em
nonnegative factorization} of a nonnegative matrix $M \in \RR_+^{n \times k}$
is a pair of nonnegative matrices $A \in \RR_+^{m \times n}$ and $B \in
\RR_+^{m \times k}$ such that $M=A^\top B $. We say $m$, the inner dimension of
the product, is the \emph{size} of the factorization, and define the
\emph{nonnegative rank} of $M$ to be the smallest $m$ for which a nonnegative
factorization of $M$ exists.  Computing nonnegative ranks of matrices is a hard
problem. Nonnegative factorizations and nonnegative rank have
many interpretations and applications in diverse fields. An early
survey can be found in \cite{cohen1993nonnegative}, but there is a growing body
of literature on this topic. Somewhat surprisingly, it turns out that this hard
algebraic problem encodes the geometric problem of interest---the existence of polyhedral lifts.

%We will also denote by $a_i$ and $b_j$ the columns
%of $A$ and $B$ respectively.

\begin{theorem}[Yannakakis \cite{yannakakis1991expressing}] The linear extension complexity of a polytope equals the nonnegative rank of its slack matrix. \label{thm:yannakakis}
\end{theorem}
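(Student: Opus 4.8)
The plan is to prove the two inequalities separately: that the linear extension complexity of $P$ is at most $\rankplus(S_P)$, and that it is at least $\rankplus(S_P)$. Fix a facet description $P=\{x\in\RR^d : h_i^\top x\le \beta_i,\ i\in[f]\}$ together with the vertex list $v_1,\dots,v_k$, so that $(S_P)_{ij}=\beta_i-h_i^\top v_j\ge 0$; restricting to the affine hull of $P$, we may assume $P$ is full-dimensional, so that the matrix $H$ with rows $h_1^\top,\dots,h_f^\top$ has rank $d$.

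For the bound ``extension complexity $\le \rankplus(S_P)$'', suppose $S_P=A^\top B$ is a nonnegative factorization of size $m$ and write $a_i$ and $b_j$ for the $i$-th column of $A$ and the $j$-th column of $B$, so $a_i,b_j\in\RR^m_+$ and $a_i^\top b_j=\beta_i-h_i^\top v_j$. I would exhibit the lift directly as $Q=\{(x,y)\in\RR^d\times\RR^m : y\ge 0,\ h_i^\top x+a_i^\top y=\beta_i\ \forall i\in[f]\}$ with projection $\pi(x,y)=x$. Then $\pi(Q)\subseteq P$, since $y\ge 0$ forces $h_i^\top x=\beta_i-a_i^\top y\le\beta_i$; and $P\subseteq\pi(Q)$, since for a convex combination $x=\sum_j\lambda_j v_j$ the point $y=\sum_j\lambda_j b_j\ge 0$ satisfies $h_i^\top x+a_i^\top y=\sum_j\lambda_j(h_i^\top v_j+a_i^\top b_j)=\beta_i$. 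Since $H$ has full column rank, the equality constraints determine $x$ as an affine function of $y$, so $Q$ is linearly isomorphic to $\RR^m_+$ intersected with an affine subspace of $\RR^m$; this is a polyhedral lift of size $m$.

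For the reverse bound, suppose $P=\pi(\RR^m_+\cap L)$ with $\pi:\RR^m\to\RR^d$ linear and $L=\{y:Ey=g\}$ an affine subspace meeting $\RR^m_+$. For each vertex $v_j$ choose a preimage $b_j\in\RR^m_+\cap L$ with $\pi(b_j)=v_j$. The task is to produce, for each facet $i$, a vector $a_i\in\RR^m_+$ with $a_i^\top b_j=(S_P)_{ij}$ for all $j$. The affine function $y\mapsto\beta_i-h_i^\top\pi(y)=\beta_i-(\pi^\ast h_i)^\top y$ is nonnegative on $\RR^m_+\cap L$; the one delicate point is that it actually \emph{attains} the value $0$ there, because the $i$-th facet is a nonempty face of $P$ and hence contains some vertex $v_{j_0}$, whose preimage $b_{j_0}$ is then a zero. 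Therefore the linear program $\max\{(\pi^\ast h_i)^\top y : Ey=g,\ y\ge 0\}$ is feasible with optimal value exactly $\beta_i$, and LP strong duality supplies an optimal dual solution $\mu_i$ with $E^\top\mu_i\ge\pi^\ast h_i$ and $g^\top\mu_i=\beta_i$. Set $a_i:=E^\top\mu_i-\pi^\ast h_i\in\RR^m_+$. Then for every $y\in L$, $a_i^\top y=\mu_i^\top(Ey)-(\pi^\ast h_i)^\top y=g^\top\mu_i-(\pi^\ast h_i)^\top y=\beta_i-h_i^\top\pi(y)$; taking $y=b_j$ gives $a_i^\top b_j=\beta_i-h_i^\top v_j=(S_P)_{ij}$. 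Hence $S_P=A^\top B$ with the $a_i$ and $b_j$ as columns is a nonnegative factorization of size $m$, so $\rankplus(S_P)\le m$. The two inequalities together give the theorem.

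I expect the reverse bound to be the main obstacle. The subtlety is that one must use that the rows of $S_P$ record \emph{facet} inequalities, each of which is tight at a vertex of $P$: without this one only gets $(S_P)_{ij}=c_i+a_i^\top b_j$ with a possibly nonzero constant $c_i\ge 0$, which would cost an extra coordinate and give size $m+1$ rather than $m$. A more conceptual substitute for the explicit LP-duality step is a conic theorem of the alternative: any linear functional that is nonnegative on $\RR^m_+\cap L$ and vanishes somewhere on it can be written as (a functional nonnegative on all of $\RR^m_+$) plus (a functional vanishing on $L$). For the forward bound, one could alternatively define $\pi$ by $\pi(b_j)=v_j$ and verify it is well defined from $\ker B\subseteq\ker S_P$ together with the fact that no point of $\RR^d$ lies on every facet hyperplane of $P$, but the product-cone construction above sidesteps this.
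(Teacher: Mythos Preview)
Your proof is correct and follows essentially the same route as the paper: the forward direction constructs the lift via the same slack-equation polyhedron $\{(x,y):y\ge 0,\ a_i^\top y=\beta_i-h_i^\top x\}$, and the reverse direction expresses each pulled-back facet inequality as a nonnegative combination of the lift's defining inequalities. The only cosmetic difference is that you phrase the reverse step as explicit LP strong duality on $\RR^m_+\cap L$, whereas the paper invokes Farkas' Lemma on a polytope $Q$ with $m$ facets; your version is slightly more careful in making explicit why the additive constant vanishes (tightness of facet inequalities), a point the paper leaves implicit.
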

\begin{proof}
Suppose a polytope $P$ is cut out by the facet inequalities $h_i^\top x \leq \beta_i$ for $i=1,\dots,f$,
	and that $S_P=A^\top B$ is a nonnegative factorization of its slack matrix, with inner dimension $m$.
	If we let $a_i$, for $i=1,2,\ldots,f$, denote the columns of $A$, then we will show that %one can check that
\begin{equation} \label{eq:polylift}
	P=\left\{ x \in \RR^n \;:\; \exists y\in \RR_+^m\; \textup{s.t.}\; a_i^\top y = \beta_i - h_i^\top x,\;\textup{for all $i=1,\dots,f$} \right\}.
\end{equation}
This equality rewrites $P$ as the projection of a polytope with $m$ facets, or, equivalently, as
	the %. This, as previously noticed, is the same has writing $P$ has the
	projection of a slice of $\RR_+^m$. Hence $P$ has a size $m$ polyhedral lift.

	To see why~\eqref{eq:polylift} holds, first note that if $p$ is in the right hand side we must have
$h_i^\top p = \beta_i - a_i^\top y$ for some nonnegative $y$. Then, since $a_i^\top y$ is nonnegative, it
	follows that $h_i^\top p \leq \beta_i$ for all $i$, and so $p\in P$. On
	the other hand, if $v_j$ is a vertex of $P$, let $y=b_j$ be the
	corresponding column of $B$ which is nonnegative. Then $a_i^\top y = a_i^\top b_j = [S_P]_{ij} =
	\beta_i - h_i^\top v_j$. Consequently
	$v_j$, and indeed all vertices of $P$, are in the right hand side set. Since
	the right hand side set is convex, all of $P$ must be contained in it,
	and we have equality between the two sets.

Suppose now that $P$ can be written as the projection of a polytope $Q$ with $m$ facets. Take the slack matrix ${S}_Q$ and keep only columns that correspond to vertices that project to vertices of $P$, and call this reduced $m \times v$ matrix $\widetilde{S_Q}$. Now any facet inequality on $P$ can be pulled back by the projection to a valid inequality on $Q$, so by a version of Farkas Lemma, it can be written as a nonnegative combination of facet inequalities of $Q$. Record the coefficients as column vectors, and form a matrix $A \in \RR_+^{m \times f}$ with those columns. Then $S_P= A^\top \widetilde{S_Q}$ is a nonnegative factorization of $S_P$ with inner dimension $m$.
\end{proof}

\begin{example} Revisiting Example~\ref{ex:d+4}, one can now ask for the linear extension complexity of the polytope $P$. 
	We will prove that it is at least $6$ in Example~\ref{eg:oct-lb}, using basic considerations about how lifts interact with 
	facial structure that will be introduced in Section~\ref{sec:obstructions and lower bounds}. 
	To show that there is a lift with $6$ facets,
	Theorem~\ref{thm:yannakakis} tells us that it is enough to find a nonnegative factorization of
	the slack matrix $S_P$, of size $6$. One such factorization is
$$\left[
\begin{array}{ccccccc}
 2 & 2 & 2 & 0 & 0 & 0 & 1 \\
 0 & 0 & 0 & 2 & 2 & 2 & 1 \\
 0 & 0 & 2 & 0 & 0 & 2 & 4 \\
 0 & 2 & 0 & 0 & 2 & 0 & 0 \\
 4 & 0 & 2 & 4 & 0 & 2 & 0 \\
 3 & 2 & 2 & 1 & 0 & 0 & 0 \\
 1 & 0 & 0 & 3 & 2 & 2 & 0
\end{array}
\right]=
\left[
\begin{array}{cccccc}
 2 & 0 & 0 & 0 & 0 & 1  \\
 0 & 0 & 0 & 0 & 2 & 1  \\
 0 & 0 & 0 & 2 & 0 & 4  \\
 0 & 0 & 2 & 0 & 0 & 0 \\
 0 & 4 & 0 & 2 & 0 & 0  \\
 2 & 1 & 0 & 0 & 0 & 0  \\
 0 & 1 & 0 & 0 & 2 & 0
\end{array}
\right]
\left[
\begin{array}{ccccccc}
 1 & 1 & 1 & 0 & 0 & 0 & 0 \\
 1 & 0 & 0 & 1 & 0 & 0 & 0 \\
 0 & 1 & 0 & 0 & 1 & 0 & 0 \\
 0 & 0 & 1 & 0 & 0 & 1 & 0 \\
 0 & 0 & 0 & 1 & 1 & 1 & 0 \\
 0 & 0 & 0 & 0 & 0 & 0 & 1
\end{array}
\right].$$
It is not hard to recover the lift $Q$ from the factorization. We just start with an inequality description of $P$ and use the factorization to write the lift as explicitly described in \eqref{eq:polylift}. We then simply use the equality constraints to eliminate some the variables $y$.
% 	{\color{blue}In this case, one can see that
% $P$ is the projection of a pyramid over a triangular prism, a lift with $6$ facets. More precisely, $P$ is the projection of
% $$Q=\conv\{(\pm 1,\pm 1, -1/2, 0), (\pm 1, -1, 1/2, 0), (0,-1,3/2,1)  \}$$
% 	onto its first three coordinates. On a more combinatorial note, it is actually the case that the only $n$-polytopes with at least $n+4$ vertices and facets that have linear extension complexity $n+3$  are the projections of iterated pyramids over triangular prisms (see \cite{padrol2016extension}).}
\qed
\end{example}

%Not coincidentally, the second factor in the factorization in the above example is precisely a slack matrix of the polytope $Q$ which is the lift of $P$. This can always be the case: if $P$ is a projection of $Q$, then Farkas Lemma guarantees that every valid inequality for $P$ is a nonnegative combination of facet inequalities of $Q$, thus the rows of the slack matrix of $P$ are nonnegative combinations of the rows of the slack matrix of $Q$. This gives us one direction of Yannakakis' Theorem. The other direction also has a simple explanation, as we can derive an explicit lift from a nonnegative factorization: if $P=\{x \in \RR^d \ | \ Hx \leq g\}$ is a minimal $\cH$-representation for $P$ and $S_P=AB^\top $ a nonnegative factorization of its slack matrix, then
%$$Q=\{ (x,y) \in \RR^d \times \RR^k \ | \  \ y\geq 0, \ Hx+By = g\}$$
%is a polytope with $k$ facets, which when projected onto its $x$ coordinates gives $P$.

\begin{example}

Recall that in Corollary \ref{cor:chain polytope has a small lift} we have shown that the chain polytope $ \textsc{chain}(\mathcal{P})$
of a poset $\mathcal{P}$ has a small polyhedral lift.
By Yannakakis' Theorem, the slack matrix of $ \textsc{chain}(\mathcal{P})$ must have small nonnegative
	factorization, and we will now show that this is indeed the case.

The vertices of $ \textsc{chain}(\mathcal{P})$
	are indexed by antichains of $\mathcal{P}$. The facets,
	apart from the nonnegativities of the variables, are indexed by maximal chains in $\mathcal{P}$.
	The facet inequality corresponding to a maximal chain $C$ is of the form $\sum_{a \in C} x_a \leq 1$.
	As such, given an antichain $A$ and a maximal chain $C$,
	the $(C,A)$-entry of the slack matrix is $0$ if $C$ and $A$ intersect, and $1$ otherwise.

In Corollary \ref{cor:chain polytope has a small lift} we saw that $ \textsc{chain}(\mathcal{P})$ has a lift of the form $(z,x) \in \mathbb{R}^{\mathcal{P} \times \mathcal{P}}$ given by the inequalities
$$ \left\{ \begin{array} {ll}
0 \leq z_a, &  \textrm{ for all } a \in \mathcal{P},\\
%0 \leq x_a-z_a, &  \textrm{ for all } a \in \mathcal{P}_{\min},\\
0 \leq x_{a_i}-x_{a_j}-z_{a_i}, & \textrm{ for all cover relations } a_i\succ a_j, \\
0 \leq 1-x_a, &  \textrm{ for all } a \in \mathcal{P}_{max},\\
\end{array}
\right.  $$
together with the equalities  $z_a=x_a$ for $ a \in \mathcal{P}_{\min}$.
Here we denote by $\mathcal{P}_{\min}$ and $\mathcal{P}_{\max}$ the set of minimal, respectively maximal, elements of $\mathcal{P}$. We will also use $E$ to be the set of all cover relations.
The proof of Theorem \ref{thm:yannakakis} then gives us a way of extracting a factorization.

For every antichain $A$, we will assign a vector that is simply the preimage of its indicator vector $\chi_A$ through the lift, evaluated at each of the facet inequalities.
	%or, in other words, a column of the slack matrix of the lift corresponding to a vertex that projects to $\chi_A$.
The preimage of $\chi_A$ is $(\chi_A,\chi_F)$ where $F$ is the filter over $A$, i.e.,
	the set of all elements  greater than or equal to
 some element of the antichain.
	Evaluating this at each of the defining inequalities above gives a vector $(\chi_A,y,w) \in \{0,1\}^{\mathcal{P} \times \mathcal{P}_{\min} \times E \times \mathcal{P}_{\max}}$ with $y_{a\succ b}=1$ if and only if $a$ is in $F \setminus A$ and $b\notin F$, and $w_a=1$ if and only if $a \in\mathcal{P}_{\max}\setminus F$.

\begin{figure}[ht]
\begin{tikzpicture}[scale=0.85]
  \node[circle,draw] (1) at (-1,0) {1};
  \node[circle,draw,fill=red!20] (2) at (0,0) {2};
  \node[circle,draw] (3) at (1,0) {3};
  \node[circle,draw] (4) at (0,1) {4};
  \node[circle,draw] (5) at (-1.5,2) {5};
  \node[circle,draw] (6) at (-0.5,2) {6};
    \node[circle,draw] (7) at (0.5,2) {7};
  \node[circle,draw,fill=red!20] (8) at (1.5,2) {8};
  \draw  (5) -- (1) -- (4)--(2)--(4)--(3)--(8);
  \draw  (6)--(4)--(7);
\end{tikzpicture} \hspace{1.5cm}
\begin{tikzpicture}[scale=0.85]
  \node[circle,draw] (1) at (-1,0) {1};
  \node[circle,draw,fill=blue!20] (2) at (0,0) {2};
  \node[circle,draw] (3) at (1,0) {3};
  \node[circle,draw,fill=blue!20] (4) at (0,1) {4};
  \node[circle,draw] (5) at (-1.5,2) {5};
  \node[circle,draw,fill=blue!20] (6) at (-0.5,2) {6};
    \node[circle,draw,fill=blue!20] (7) at (0.5,2) {7};
  \node[circle,draw,fill=blue!20] (8) at (1.5,2) {8};
  \draw  (5) -- (1) -- (4)--(2)--(4)--(3)--(8);
  \draw  (6)--(4)--(7);
\end{tikzpicture} \hspace{1.5cm}
\begin{tikzpicture}[scale=0.85]
  \node[circle,draw] (1) at (-1,0) {1};
  \node[circle,draw] (2) at (0,0) {2};
  \node[circle,draw] (3) at (1,0) {3};
  \node[circle,draw] (4) at (0,1) {4};
  \node[circle,draw] (5) at (-1.5,2) {5};
  \node[circle,draw] (6) at (-0.5,2) {6};
  \node[circle,draw] (7) at (0.5,2) {7};
  \node[circle,draw] (8) at (1.5,2) {8};
  \draw  (5) -- (1) -- (4)--(2)--(4)--(3)--(8);
  \draw(6)--(4)--(7);
  \draw[orange,line width=2pt]  (1)--(4)--(3);
\end{tikzpicture}

	\caption{ \label{fig:antichainfilter} Example of an antichain (left), the corresponding filter (center), and the vector $y$ of the factorization (right).}
\end{figure}
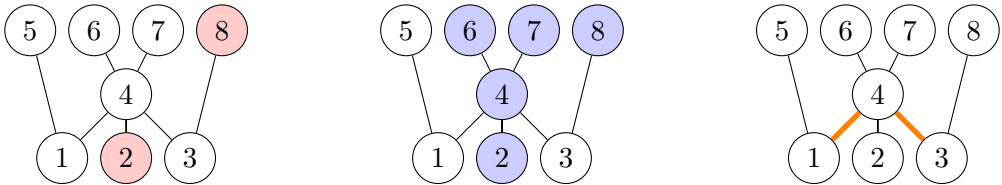

For example, in Figure \ref{fig:antichainfilter} we can see an antichain $A$ and its corresponding filter $F$ in red and blue respectively. We would, in this case, have $\chi_A=(0,1,0,0,0,0,0,1)$, since the antichain contains only the nodes $4$ and $8$, while $\chi_F=(0,1,0,1,0,1,1,1)$. Setting $z=\chi_A$ and $x=\chi_F$ and looking at the inequalities above, the only interesting ones are those coming from the cover relations that will give us the vector $y$ indicated in the figure. It is the set of edges in the Hasse diagram that go from a node not in the filter to a node that is in the filter but not in the antichain.

As for the facets, the nonnegativities of the variables in $\textsc{chain}(\mathcal{P})$ lift to nonnegativities of the variables
$z_a$ in the lift, so we get the vector $(e_a,0,0)\in \{0,1\}^{\mathcal{P}  \times E \times \mathcal{P}_{\max}}$
where $e_a$ is $1$ only in coordinate $a$. The inequality for a maximal chain $C$ of the form
$a_0 \prec a_1 \prec \cdots \prec a_N$, lifts to
$1 - \sum_{i=0}^N {z_{a_i}} \geq 0$, which is the sum of the inequalities corresponding to the
 maximal element in $C$ and also all the cover relations. So we get a vector
$(0,\chi_C,e_{a_N})\in \{0,1\}^{\mathcal{P} \times E \times \mathcal{P}_{\max}}$, where $\chi_C$ is simply the indicator vector of which cover relations are in the chain.

Since we are following the proof of Theorem \ref{thm:yannakakis}, these must necessarily form a factorization. It is not hard to see this independently. For the nonnegativity inequalities this is trivial, so we just have to see that given an antichain $A$ and a chain $C$ the inner product $\langle (\chi_A,y,z), (0,\chi_C,e_{a_N}) \rangle = \langle y, \chi_C \rangle + z_{a_N}$ is $0$ if the chain intersects the antichain and $1$ otherwise. But $\langle y, \chi_C \rangle$ is $1$ if and only if the chain enters the filter outside of an antichain element, while $z_{a_N}$ is $1$ if and only if the chain never enters the filter at all, so they can never be $1$ at the same time. If they are both $0$ it means that the chain did enter the filter at an element of the antichain, and so we have indeed a factorization.
This is a slightly different factorization from the classical one originally given by Yannakakis \cite{yannakakis1991expressing}, but of roughly the same size.

%These necessarily form a factorization, a fact that one can easily verify independently by noting that $(\chi_A,0,y,z) \dot (0,e_{a_0},\chi_C,e_{a_N})$ is just $\sum_{i=0}^N y_{a_{i+1}\succ a_i} + z_{a_N}$. We have three cases:
%\begin{enumerate}
%\item If the chain does not pass through the filter $F$ then $z_{a_N}=1$ and all the $y_{a_{i+1}\succ a_i}$ are zero.
%\item If it is contained in the filter, then $a_0 \in F$, in which case $a_0 \in A$, since it is minimal, and all the $a_i$ are in $F$ so all the variables in the sum are zero.
%\item If there is some $k \geq 1$ such that $a_i \in F$ if and only if $i \geq k$, then all values in the sum are zero except possibly $y_{a_k\succ a_{k-1}}$, which is zero if $a_k \in C$ and $1$ otherwise.
%\end{enumerate}
%We can then see that the inner product is zero if and only if the facet corresponding to $C$ contains the vertex corresponding to $A$, and one otherwise, exactly as intended.

\end{example}

\subsection{The slack operator}

Since we are interested in lifts of convex sets in general and not just polytopes, we will next need to generalize the notion of slack matrix to a more continuous object, the slack operator of a convex set. To simplify the exposition we will always consider compact convex sets with the origin in the interior, and will refer to these as \emph{convex bodies}.
%and will always assume the origin is in their interior, which does not affect the existence of lifts.
Given a convex body $C$, its polar is $$C^{\circ}=\{y \in \RR^n \;:\;  \langle x, y \rangle \leq 1, \textrm{ for all } x \in C\},$$
which is again a convex body. Recall that an \emph{extreme point} of a convex set $C$ is a point in $C$ that cannot be written as a convex combination of two distinct points of $C$, i.e., it is not in the relative interior of any line segment contained in $C$. We will denote the set of all extreme points of $C$ as $ \ext(C)$. Generalizing the notion of extreme point, a \emph{face} of a convex set $C$ is a convex subset $F \subseteq C$ such that if any point in $F$ is in the relative interior of a line segment contained in $C$, then the entire line segment is contained in $F$.

Consider again the function $1-\langle y, x \rangle$ but now with domain $C^\circ \times C$. In this domain, it is a nonnegative function, and imduces a one-to-one correspondence between points $\tilde{y} \in C^{\circ}$ and linear inequalities valid over $C$, simply given by $1-\langle\tilde{y},x\rangle  \geq 0$. Since $1-\langle y, x \rangle$ is affine
in $x$ and $y$ (separately), and $C$ and $C^\circ$ are compact,
its value at any point is completely determined by its value on $\textup{ext}(C^\circ)\times \textup{ext}(C)$.
%we really only need to know the value of $l$
%at the extreme points of its domain, which is the product of the extreme points of $C$ and $C^{\circ}$ respectively.
We call the resulting map the $\emph{slack operator}$ of $C$.

\begin{definition}
	Given a convex body $C$, its slack operator is the map
$s_C: \ext (C^\circ) \times \ext(C) $ defined as $s_C(y,x)=1-\langle x, y \rangle$.
\end{definition}

When $C$ is a polytope, both $C$ and $C^\circ$ have a finite number of extreme points, and we recover the slack matrix.

\begin{example} \label{ex:slackoperator}
Let $C  = \{ (x,y) \in \RR^2 \,:\, (1+x)^2(x-1) + y^2 \leq 0, \; x \geq -1 \}$. This is a convex set bounded by a cubic curve. Its boundary is parametrized by $$\left(1-v^2,2v-v^3\right)$$
with $v \in [-\sqrt{2},\sqrt{2}]$, and every boundary point is extreme. The polar of $C$ is the convex hull of the cardioid defined by
$4 x^4 + 32 y^4 + 13 x^2 y^2 + 18 xy^2 - 4 x^3 - 27 y^2 = 0$, as seen in Figure~\ref{fig:polar}.
\begin{figure}[h]
\begin{center}
\includegraphics[width=4cm]{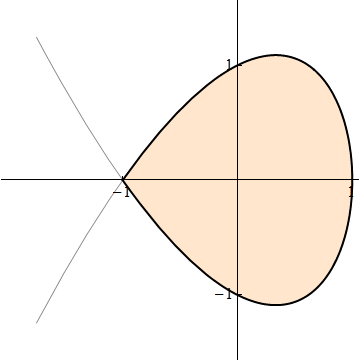} \hspace{2cm} \includegraphics[width=4cm]{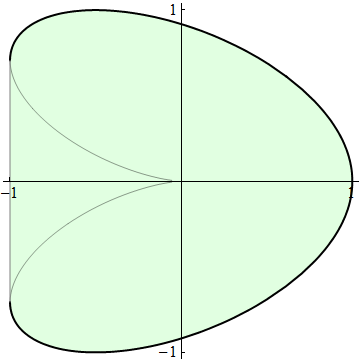}
\end{center}
	\caption{Convex set $C$ (left) and its polar $C^{\circ}$ (right) with extreme points highlighted.} \label{fig:polar}
\end{figure}
Not every boundary point of $C^\circ$ is extreme. The extreme boundary points are parameterized by
$$\left(\frac{2-3u^2}{2-u^2+u^4},\frac{2u}{2-u^2+u^4}\right)$$
with $u \in [-\sqrt{2},\sqrt{2}]$. The slack operator can be parametrized as $s_C:\left[-\sqrt{2},\sqrt{2}\right]^2 \rightarrow \RR_+$ where
$$s_C(u,v)=1-\frac{(2-3u^2)(1-v^2)+2u(2v-v^3)}{2-u^2+u^4}
%=\frac{2 s^2 + s^4 - 4 s t + 2 t^2 - 3 s^2 t^2 + 2 s t^3}{2-s^2+s^4}
= \frac{(2-v^2)(v-u)^2 + (u^2-v^2)^2}{u^4 + (2-u^2)}.$$
It is obvious from the last expression that $s_C$ is nonnegative on its domain.
\qed
\end{example}

\subsection{Cone factorizations of slack operators}

We define these general slack operators in the hope that there might be a geometric interpretation to
their factorizations. % of objects more general than slack matrices.
However, to go from polyhedral lifts to more general conic lifts, we still need to generalize the idea of
nonnegative factorizations.
Before that, we need to quickly recall the definition of the dual of a cone. Given a cone $K \subseteq \RR^\ell$ we define its dual to be $K^\ast = \{y \in \RR^\ell\;:\;  \langle x, y \rangle \geq 0 \textup{ for all } x \in K \}$. Here we may think of any inner product but we will always consider the usual Euclidean one. When working with cones of symmetric matrices this amounts to $\langle X,Y \rangle = \tr(XY)$, where $\tr$ is the usual matrix trace, also called the trace inner product.

\begin{definition}
Let $K$ be a closed convex cone and $K^*$ its dual. Furthermore, let $C$ be a convex body
and $s_C$ its slack operator. A factorization of $s_C$ through $K$, or $K$-{\em factorization of }$C$, is a pair of maps $A:\ext(C)\rightarrow K$ and $B : \ext(C^{\circ})\rightarrow K^*$ such that $s_C(y,x)=\langle A(x),B(y) \rangle$ for all $x \in \ext(C)$ and $y \in \ext(C^\circ)$.
\end{definition}

This allows us to factorize the slack operator of a non-polyhedral sets $C$ through a cone $K$.

\begin{example} \label{ex:slackoperatorfact}
Recall from Example~\ref{ex:slackoperator} that the slack operator of the set
$$C=\{(x,y) \in \RR^2 \;:\; (1+x)^2(x-1) + y^2 \leq 0, \; x \geq -1\}$$
is given by the function $s_C(u,v)$ with domain $[-\sqrt{2},\sqrt{2}]^2$ and equaling
$$1-\frac{(2-3u^2)(1-v^2)+2u(2v-v^3)}{2-u^2+u^4}=\frac{2 u^2 + u^4 - 4 u v + 2 v^2 - 3 u^2 v^2 + 2 u v^3}{2-u^2+u^4}.$$
Here we are identifying the extreme points of both $C$ and $C^\circ$ by the interval $[-\sqrt{2},\sqrt{2}]$ since we have an explicit parametrization that we can think of as a labeling of the points. We claim that we can factor this operator through the cone $\psdcone{3}$. To see this, define
$$A(v)=\begin{bmatrix} 1 & 0 & 1-v^2 \\ 0 & 2-v^2 & v(2-v^2) \\ 1-v^2 & v(2-v^2) & 1  \end{bmatrix} =
	\begin{bmatrix} 1 \\ 0 \\ 1-v^2 \end{bmatrix}
        \begin{bmatrix} 1 \\ 0 \\ 1-v^2 \end{bmatrix}^\top 
+ (2-v^2)
\begin{bmatrix} 0 \\ 1 \\ v \end{bmatrix}
\begin{bmatrix} 0 \\ 1 \\ v \end{bmatrix}^\top
$$
and
{\small
$$
B(u)= \frac{1}{2-u^2+u^4}\begin{bmatrix}
 \left(u^2-1\right)^2 & -u \left(u^2-1\right) & u^2-1 \\
 -u \left(u^2-1\right) & u^2 & -u \\
 u^2-1 & -u & 1
\end{bmatrix} =
\frac{1}{2-u^2+u^4}
\begin{bmatrix} u^2-1 \\ -u \\ 1 \end{bmatrix}
\begin{bmatrix} u^2-1 \\ -u \\ 1 \end{bmatrix}^\top.$$}
Since these matrices are positive semidefinite for $u,v \in [-\sqrt{2},\sqrt{2}]$,
and $s_C(u,v)= \langle A(v),B(u)\rangle$, we have the claimed factorization.
\end{example}

As in the Yannakakis setting of polytopes, we can show that the existence of lifts and the existence of factorizations are equivalent even in this  generalized setting of arbitrary convex bodies and closed convex cones. Recall that a convex set $C$ has a $K$-lift, into a closed convex cone $K$, if we can write it as $C= \pi(K \cap L)$, where $\pi$ is a linear map and $L$ an affine subspace.
We will additionally say that $C$ has a \emph{proper} $K$-lift if $L$ intersects the relative interior of $K$. Note that if $C$ has a $K$-lift it always has a proper lift to some face of $K$, so the assumption of properness is not a very strong one.

\begin{theorem}{\cite{GPTlifts}} \label{thm:GPTliftstheorem}
If a convex body $C$ has a proper $K$-lift then its slack operator, $s_C$, has a $K$-factorization. Reciprocally, if $s_C$ has a $K$-factorization then $C$ has a $K$-lift.
\end{theorem}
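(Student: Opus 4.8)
The plan is to prove the two directions of Theorem~\ref{thm:GPTliftstheorem} separately, in each case mimicking the two halves of the proof of Yannakakis' theorem (Theorem~\ref{thm:yannakakis}) but replacing matrix algebra over $\RR_+$ by the duality pairing between $K$ and $K^*$. Throughout, the key bookkeeping device is the identity $s_C(y,x) = 1 - \langle x,y\rangle$ together with the fact that, since $C$ and $C^\circ$ are compact convex bodies, $C = \conv(\ext(C))$ and $C^\circ = \conv(\ext(C^\circ))$, and an affine function of $x$ (resp.\ $y$) is nonnegative on $C$ (resp.\ $C^\circ$) iff it is nonnegative on the extreme points.

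\emph{Lift $\Rightarrow$ factorization.} Suppose $C = \pi(K \cap L)$ is a proper $K$-lift, with $L = v_0 + V$ for a linear subspace $V$, and write $\pi$ for the linear map. The first step is to set up the ``primal'' map $A$: for each $x \in \ext(C)$ choose a preimage, i.e.\ a point $A(x) \in K \cap L$ with $\pi(A(x)) = x$; this defines $A : \ext(C) \to K$. The second step is to set up the ``dual'' map $B$. Given $y \in \ext(C^\circ)$, the affine function $x \mapsto 1 - \langle x,y\rangle$ is nonnegative on $C$, and pulling it back through $\pi$ gives an affine function $z \mapsto 1 - \langle \pi(z),y\rangle$ that is nonnegative on $K \cap L$. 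By a conic Farkas/Hahn--Banach separation argument — here is where properness is used, to guarantee that the relative interior of $K$ meets $L$ so that strong duality / the non-degenerate form of the cone-programming duality applies — one can write this affine function, after restricting to $L$, as $z \mapsto \langle z, B(y)\rangle + \text{const}$ for some $B(y) \in K^*$, with the constant absorbed using $z \in L$; a short normalization then yields $\langle A(x), B(y)\rangle = 1 - \langle x,y\rangle = s_C(y,x)$ for every extreme $x$. This gives the $K$-factorization. I expect this separation/Farkas step to be the main obstacle: making the affine-to-linear reduction on the affine slice $L$ precise, and checking that properness really is what licenses the clean statement ``$K$-lift $\Rightarrow$ $K$-factorization'' without passing to a face.

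\emph{Factorization $\Rightarrow$ lift.} Conversely, suppose $s_C$ has a $K$-factorization $A : \ext(C) \to K$, $B : \ext(C^\circ) \to K^*$ with $s_C(y,x) = \langle A(x), B(y)\rangle$. The candidate lift is
\[
\widehat{C} := \left\{ (x,z) \in \RR^n \times \RR^\ell \;:\; z \in K,\ \langle z, B(y)\rangle = 1 - \langle x, y\rangle \ \text{ for all } y \in \ext(C^\circ) \right\},
\]
with $\pi(x,z) = x$; one checks that the constraints ``$\langle z,B(y)\rangle = 1-\langle x,y\rangle$ for all $y \in \ext(C^\circ)$'' cut out an affine subspace of $\RR^n\times\RR^\ell$ intersected with $\RR^n \times K$, so $\pi(\widehat{C})$ is a $K$-lift of its image. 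It remains to show $\pi(\widehat{C}) = C$. For ``$\supseteq$'': each extreme point $x$ of $C$ lifts via $(x, A(x)) \in \widehat{C}$ by the factorization identity, and since $\widehat{C}$ is convex and $\pi$ linear, $C = \conv(\ext(C)) \subseteq \pi(\widehat{C})$. For ``$\subseteq$'': if $(x,z) \in \widehat{C}$ then for every $y \in \ext(C^\circ)$ we have $1 - \langle x,y\rangle = \langle z, B(y)\rangle \ge 0$ because $z \in K$ and $B(y) \in K^*$; hence $\langle x,y\rangle \le 1$ for all extreme $y$ of $C^\circ$, hence for all $y \in C^\circ$, which by biduality ($C = C^{\circ\circ}$, valid since $C$ is a convex body) means $x \in C$. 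This direction is essentially bookkeeping once the right description of $\widehat{C}$ is written down; the only mild care needed is to note that the affine constraints indexed by $\ext(C^\circ)$ are all implied by finitely many of them when one wants literally an affine space of finite codimension, but for the definition of a $K$-lift it suffices that the constraint set is $K$ intersected with an affine subspace, which it visibly is.
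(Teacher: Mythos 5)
Your proposal follows the paper's proof essentially step for step: define $A(x)$ as a preimage of $x$ in $K\cap L$, pull the valid inequality $1-\langle x,y\rangle\geq 0$ back to $K\cap L$, invoke a conic Farkas/Slater argument (using properness) to realize it as $\langle \cdot, B(y)\rangle$ with $B(y)\in K^*$, and conversely build a lifted set from the factorization and verify both inclusions exactly as you do. One small slip in your converse direction: the set $\widehat C$ you write down is a slice of $\RR^n\times K$, not of $K$, so the parenthetical ``which it visibly is'' is not quite right — a $(\RR^n\times K)$-lift is not literally a $K$-lift. The paper addresses exactly this by observing that, since $C^\circ$ is full-dimensional, the equalities $\langle z,B(y)\rangle = 1-\langle x,y\rangle$ determine $x$ from $z$, so one can eliminate $x$ and obtain $C$ as a linear image of $K\cap L$ for an affine $L\subset\RR^\ell$. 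With that elimination step made explicit your argument matches the paper's proof.
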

\begin{proof}
The proof is just a reworking of the proof of Theorem~\ref{thm:yannakakis}.

Let $A$ and $B$ form a
$K$-factorization of the slack operator $s_C$. Then one can check that
$$C=\left\{ x \in \RR^n \;:\; \exists X \in K\;\textup{s.t.}\; \langle X, B(y) \rangle = s_C(y,x),\; \textrm{for all}\; y \in \ext(C^{\circ}) \right\},$$
since the equalities force $s_C(y,x) \geq 0$ for all $y\in \ext(C^{\circ})$ guaranteeing that the right hand side is contained in $C$, while plugging in $X=A(x)$ shows that $\ext(C)$, hence $C$, is contained in the right hand side. Although there are
potentially infinitely many linear equalities, they necessarily cut out a finite dimensional space, and we can use them to eliminate the variable $x$ and explicitly get $C$ as a projection of a slice of $K$.

Suppose now $C$ has a $K$-lift, i.e., $C=\pi(K \cap L)$, for some linear map $\pi$ and affine space $L$. For $x$ in $\ext(C)$, define
$A(x)$ to be any element in the preimage $\pi^{-1}(x) \cap (K \cap L)$. For $y \in \ext(C^{\circ})$, one can pull back the inequality $1-\langle x,y\rangle$ by $\pi$ to a valid inequality on $K \cap L$. The hypothesis that $L$ intersects the interior of $K$ (Slater's condition) can be shown to guarantee that any valid inequality on $K \cap L$, when restricted to $L$ equals $\langle Y,-\rangle$ for some $Y \in K^*$. Pick such a $Y$ as $B(y)$. Then one can check that $\langle A(x), B(y) \rangle = s_C(y,x)$ as intended.
\end{proof}

The cones $K$ that we will mostly be interested in are nonnegative orthants and positive semidefinite cones. In those cases,  their special structure allows us to omit the properness condition from the statement of Theorem~\ref{thm:GPTliftstheorem} (see \cite[Corollary 1]{GPTlifts}).

\begin{example}
\label{eg:cardiod-lift}
Let us consider again the convex set $C$ given in Example~\ref{ex:slackoperatorfact}. Since the slack operator is factorizable through $\psdcone{3}$, Theorem~\ref{thm:GPTliftstheorem} implies that $C$ must have a $\psdcone{3}$-lift. In fact
$$C=\left\{ (x,y) \;:\;  \begin{bmatrix} 1 & 0 & x \\ 0 & 1+x & y \\ x & y & 1  \end{bmatrix} \succeq 0 \right\}.$$
Indeed, check that the nonnegativity of the principal minors of the above matrix gives the polynomial inequalities defining $C$.
Moreover, the symmetry of the existence of factorizations and the self-duality of the psd cone implies that the polar $C^\circ$ must also have a  $\psdcone{3}$-lift.
One can show that
%$$C^\circ=\left\{ (x,y) \;:\; \exists z_1,z_2,z_3 \in \RR\;\;\textup{s.t.}\;\; \begin{bmatrix} 1-z_3-z_1 & z_2 & x-z_3 \\ z_2 & z_3 & z_2 \\ x-z_3 & y & 4z_1  \end{bmatrix}\succeq 0 \right\}.$$
%$$C^\circ=\left\{ (z,w) \;:\; \exists z_1,z_2,z_3 \in \RR\;\;\textup{s.t.}\;\; \begin{bmatrix} 1-z_3-z_1 & z_2 & z+z_3 \\ z_2 & z_3 & w \\ z+z_3 & w & 4z_1  \end{bmatrix}\succeq 0 \right\}.$$
	$$C^\circ=\left\{ (z,w) \;:\; \exists z_1,z_2,z_3 \in \RR\;\;\textup{s.t.}\;\; \begin{bmatrix} 1-z_3-z_1 & z_2 & -(z+z_3)/2 \\ z_2 & z_3 & -w/2 \\ -(z+z_3)/2 & -w/2 & z_1  \end{bmatrix}\succeq 0 \right\}.$$
\end{example}

An important basic property of slack operators is that for any convex body $C$ we have $s_C(y,x)=s_{C^{\circ}}(x,y)$, in other words the slack operator of $C^\circ$ is the transpose of the slack operator of $C$. This follows directly from the definition, since $(C^\circ)^\circ=C$ for convex bodies. Furthermore it also follows directly from the definition that $s_C$ has a $K$-factorization if and only if its transpose has a $K^*$-factorization.
An immediate consequence of Theorem~\ref{thm:GPTliftstheorem} is that $C$ has a $K$-lift if and only if $C^\circ$ has a $K^*$-lift.

Moreover, note that if a convex body $C$ can be written as a linear image of some convex set $Q$, i.e., $C = \pi(Q)$, then $C^\circ \cong Q^\circ \cap \ker(\pi)^{\perp}$, which means that we can write $C^\circ$ as a linear
%affine
slice of the polar of its lift. In the case of polytopes, this has a nice interpretation. We saw that if $P$ has a $\RR_+^m$-lift then $P^\circ$ also has a $\RR_+^m$-lift. This means that if $P$ is the image of a
polytope with $m$ facets, then $P^\circ$ is a slice of a polytope with $m$ vertices.

\begin{example}
We have seen in, Figure \ref{fig:oct}, that the regular octagon is the projection of a polytope with $6$ facets. Since the regular octagon is self polar, we then get that it must also be a section of a polytope with $6$ vertices. Figure~\ref{fig:oct2} shows the regular octagon as a slice of an octahedron.

\begin{figure}[h]
\begin{center}
\includegraphics[width=0.3\linewidth]{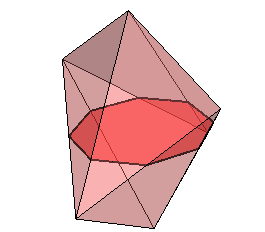}
\end{center}
\caption{The regular octagon as a minimal linear slice of an octahedron.} \label{fig:oct2}
\end{figure}
\end{example}

One should also point out that there is a simple extension of Theorem \ref{thm:GPTliftstheorem} so that it covers approximations of sets, instead of only exact representations. To state it we need to introduce the \emph{generalized slack operator}. Given two convex bodies $C \subseteq D$ we define the
generalized slack operator of the pair $(C,D)$ to be $s_{C,D}: \ext(D^{\circ}) \times \ext(C) \rightarrow \RR_+$ given by $s_{C,D}(y,x)=1-\langle x,y \rangle$. A look at the proof of Theorem \ref{thm:GPTliftstheorem} shows that it immediately generalizes to the following result.

\begin{proposition}
\label{prop:nestedconvexsets}
Let $C$ and $D$ be a pair of convex bodies with $C \subseteq D$. If there is a convex set
	$B$ such that $B$ has a proper $K$-lift and $C \subseteq B \subseteq D$ then
 $s_{C,D}$ has a $K$-factorization. Reciprocally, if $s_{C,D}$ has a $K$-factorization
	then there exists a convex set $B$ such that $B$ has a $K$-lift and $C \subseteq B \subseteq D$.
	%if and only if there exists a convex set $B$ such that $B$ has a $K$-lift and $C \subseteq B \subseteq D$.
\end{proposition}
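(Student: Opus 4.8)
The plan is to follow the proof of Theorem~\ref{thm:GPTliftstheorem} essentially line by line, the only structural change being that the two coordinates of the generalized slack operator now range over \emph{different} sets. First I would record the basic fact that $C\subseteq D$ forces $D^\circ\subseteq C^\circ$, so that every $y\in\ext(D^\circ)$ yields an inequality $1-\langle x,y\rangle\geq 0$ that is valid on all of $D$ (hence on $C$ and on any convex set $B$ with $C\subseteq B\subseteq D$), while every $x\in\ext(C)$ is a genuine point of $D$ and of any such $B$. Thus $s_{C,D}$ is a well-defined nonnegative function on $\ext(D^\circ)\times\ext(C)$, and a $K$-factorization of it is a pair of maps, which I will write $A:\ext(C)\to K$ and $\tilde B:\ext(D^\circ)\to K^*$ (the tilde only to avoid a clash with the set $B$), satisfying $\langle A(x),\tilde B(y)\rangle=1-\langle x,y\rangle$ for all such $x,y$.

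For the forward implication I would take the given proper $K$-lift $B=\pi(K\cap L)$ with $C\subseteq B\subseteq D$. For $x\in\ext(C)$, since $x\in B$ the fiber $\pi^{-1}(x)\cap(K\cap L)$ is nonempty, so I pick $A(x)$ in it. For $y\in\ext(D^\circ)$, the inequality $1-\langle\cdot,y\rangle$ is valid on $B$ (as $B\subseteq D$), so pulling it back by $\pi$ gives an affine inequality valid on $K\cap L$; by Slater's condition --- here is where properness of the lift is used --- its restriction to $L$ equals $\langle\cdot,Y\rangle$ for some $Y\in K^*$, and I set $\tilde B(y)=Y$. Evaluating, $\langle A(x),\tilde B(y)\rangle=1-\langle\pi(A(x)),y\rangle=1-\langle x,y\rangle=s_{C,D}(y,x)$, exactly as in Theorem~\ref{thm:GPTliftstheorem}.

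For the converse, given a $K$-factorization $(A,\tilde B)$ of $s_{C,D}$, I would set
\[
B=\left\{x\in\RR^n\;:\;\exists\,X\in K\ \text{s.t.}\ \langle X,\tilde B(y)\rangle=1-\langle x,y\rangle\ \text{for all}\ y\in\ext(D^\circ)\right\}.
\]
As in Theorem~\ref{thm:GPTliftstheorem}, the defining equalities are linear in $(x,X)$ and cut out a finite-dimensional affine space, so $B$ is, up to a linear isomorphism, the projection of a slice of $K$; hence $B$ has a $K$-lift. To see $C\subseteq B$, for $x\in\ext(C)$ take $X=A(x)\in K$; the factorization identity gives $\langle A(x),\tilde B(y)\rangle=1-\langle x,y\rangle$ for all $y\in\ext(D^\circ)$, so $x\in B$, and since $B$ is convex while $C=\conv(\ext(C))$ we get $C\subseteq B$. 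To see $B\subseteq D$, if $x\in B$ with witness $X\in K$ then $1-\langle x,y\rangle=\langle X,\tilde B(y)\rangle\geq 0$ for every $y\in\ext(D^\circ)$ because $\tilde B(y)\in K^*$; since $1-\langle x,\cdot\rangle$ is affine and $D^\circ=\conv(\ext(D^\circ))$, this holds for all $y\in D^\circ$, i.e.\ $x\in(D^\circ)^\circ=D$.

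The only point requiring any care is the one already present in Theorem~\ref{thm:GPTliftstheorem}: in the forward direction properness of the lift of $B$ is what lets Slater's condition turn a valid inequality on $K\cap L$ into an element of $K^*$ on $L$, and in the backward direction one needs the Krein--Milman and bipolar facts $D^\circ=\conv(\ext(D^\circ))$ and $(D^\circ)^\circ=D$ to pass from extreme points back to all of $D^\circ$. I do not expect a genuine obstacle here --- the argument is a transcription of the earlier proof with $\ext(C^\circ)$ replaced by $\ext(D^\circ)$ throughout.
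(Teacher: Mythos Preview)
Your proposal is correct and is exactly what the paper intends: it states only that ``a look at the proof of Theorem~\ref{thm:GPTliftstheorem} shows that it immediately generalizes,'' and your write-up carries out that transcription, replacing $\ext(C^\circ)$ by $\ext(D^\circ)$ and checking the two inclusions $C\subseteq B$ and $B\subseteq D$ separately.
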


By setting $D=(1+\varepsilon)C$ we obtain a powerful tool to study approximability of a given set by sets with small lifts.

%%%%%%%%%%%%%%%%%%%%%%%%%%%%%%%%%%%%%%%%%%%%%%%%%%%%%%%%%%%%%%%%%%%%%%%%%%%%%%%%%%%%%%%%%%%%%%%%%%%%%%%%%%%%%%%%%%%%%
%%%%%%%%%%%%%%%%%%%%%%%%%%%%%%%%%%%%%%%%%%%%%%%%%%%%%%%%%%%%%%%%%%%%%%%%%%%%%%%%%%%%%%%%%%%%%%%%%%%%%%%%%%%%%%%%%%%%%%

\subsection{Bregman divergence as a slack operator}
An important notion that is closely related to that of slack operator is the \emph{Bregman divergence} of a convex function, which was briefly mentioned in Section \ref{sec:epi} and is widely used in applications.
The Bregman divergence can, under some mild assumptions, be seen as the slack operator of the epigraph of the
corresponding convex function, while slack operators can be seen as special cases of a nonsmooth version of the Bregman divergence, tying the two notions together.

Following \cite{bauschke1997legendre}, let $f:\RR^n \rightarrow \overline{\RR}$ be a closed convex proper function
whose domain has nonempty interior. If $f$ is differentiable in the interior of its domain we
define its Bregman divergence as the function $D_f:\RR^n \times \intt (\dom (f)) \rightarrow [0,+\infty)$
given by
$$D_f(x,y)=f(x)-f(y)-\langle \nabla f(y), x-y \rangle.$$
In other words, $D_f(x,y)$ is the difference between $f(x)$ and the value at $x$ of the linear approximation to $f$ around $y$.
The convexity of $f$ implies that $D_f(x,y) \geq 0$.

\begin{example}
If we take $f(x)=x^\top Qx$ to be a convex quadratic function, then
$$D_f(x,y)=x^\top Qx-y^\top Qy - 2y^\top Q(x-y) = (x-y)^\top Q(x-y)$$
since $Q$ is a symmetric matrix.
In particular, if $f(x)=\|x\|^2$ we have $D_f(x,y)=\|x-y\|^2$.
\end{example}

It turns out that the Bregman divergence is a slack operator of sorts for convex functions, where Fenchel duality replaces polarity of convex sets.
Recall that the Fenchel conjugate of  $f$ is the function $f^*:\RR^n \rightarrow \overline{\RR}$ defined by
$$f^*(y^*)=\sup \{ \langle x,y^*\rangle -f(x) \ | \ x \in \RR^n \}.$$
The function $f^*$ is closed and convex and {\em Fenchel's inequality} says that $f(x) + f^*(y^*) \geq \langle x,y^* \rangle$.
We will refer to $F(x,y^*) := f(x) + f^*(y^*) -  \langle x,y^* \rangle$ as the {\em slack} in the Fenchel inequality.
Using simple properties of Fenchel conjugation one gets that $D_f(x,y)= F(x,\nabla f(y))$. As such, the Bregman divergence is essentially a slack, but of duality of convex functions rather than convex sets.

In fact, if $f$ is well-behaved, namely if $f$ is a {\em convex function of Legendre type}, (smooth and strictly convex in its domain, and the limits of $\|\nabla f(x)\|$ as $x$ approaches the boundary of the domain through its interior is $+\infty$), then the map $y\rightarrow y^* :=\nabla f(y)$ is an isomorphism from the interior of the domain of $f$ to the interior of the domain of $f^*$. In that case, the Bregman divergence and the slack of the Fenchel inequality are just reparametrizations of each other. For more details, see \cite[Section 26]{rockafellar}.

\begin{example}
The function $f(x)=e^x$ is a strictly convex smooth function with domain $\RR$. Its conjugate $f^*$ has domain in the nonnegative reals, and is given by $f^*(0)=0$ and $f^*(z)= z \ln(z) -z$ for $z>0$.
The Fenchel inequality becomes $F(x,z)=e^x+ z \ln(z) -z -xz \geq 0$. The Bregman divergence is
$$D_f(x,y)=F(x,f'(y))=F(x,e^y)=e^x-e^y+(y-x)e^y.$$
As above, we can think of the function $F$ as the map $x_0+y_0-\langle x, y \rangle$ with domain the product of the
graph of $f$ and the graph of $f^*$. When extended to the epigraphs, this operator creates a one-to-one
correspondence between points in the epigraph of a function and affine under-estimators of the other function,
as illustrated in Figure~\ref{fig:epi}.

\begin{figure}[ht]
\begin{center}
\includegraphics[width=0.45\columnwidth]{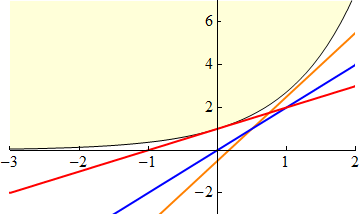}
\hspace{1cm}
\includegraphics[width=0.45\columnwidth]{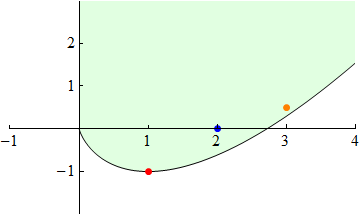}
\end{center}
\caption{The epigraphs of $f(x)=e^x$ and $f^*(z)= z \ln(z) -z$, with some linear underestimators of $f$ and corresponding points in the epigraph of $f^*$ marked.} \label{fig:epi}
\end{figure}

For instance, the point $(2,0)$ in $\epi(f^*)$ gives rise to the inequality $x_0-2x \geq 0$,
while the point $(1,-1)$, that is an extreme point of $\epi(f^\ast)$ gives rise to the inequality
$x_0-1-x\geq 0$, and so on.
\qed
\end{example}

This illustrates that for a smooth convex function of Legendre type, its Bregman divergence is, in a very strong sense, the same as the slack operator of its epigraph. The only difference is that  $\epi(f)$ and $\epi(f^*) $ are not actually polar to each other, having instead a different duality relation.
\medskip

Interestingly, not only can we think of the Bregman divergence as a slack operator, we can conversely think of slack operators as straightforward generalizations of the Bregman divergence.
For any closed convex set $C$, consider its \emph{indicator function} $\iota_C(x)$ given by $\iota_C(x)=0$ if $x\in C$ and $+\infty$ otherwise. This is a proper closed convex function, and its Fenchel conjugate is the \emph{support function} of $C$, given by $h_C(y)=\sup_{x \in C} \langle x,y\rangle$. Since $\iota_C(x)$ is not a smooth convex function, its usual Bregman divergence is not defined. However, there are many generalizations of the Bregman divergence to the nonsmooth case. A simple way of doing this is to think of Bregman divergence as
the affine operator $x_0+y_0-\langle x, y \rangle$ applied to $\epi(\iota_C) \times \epi(h_C)$.
We still have a correspondence between points in the epigraph of $h_C$ and valid inequalities on the epigraph of $\iota_C$.
Restricting to the extreme points of $\epi(\iota_C)$, which correspond to the extreme points of $C$, and a representative of every extreme ray of the cone $\epi(h_C)$, recovers the slack operator of $C$ (up to scaling). An advantage of this viewpoint is that we can remove the technical condition of requiring $0$ to be in the interior of $C$.

\section{Constructions of Lifts}
\label{sec:constructions}
% !TEX root = main.tex

Is there a \emph{systematic} way to construct lifts for convex bodies? In this section we will try to answer this question, informed by the factorization theorem from the previous section.
%, Theorem \ref{thm:GPTliftstheorem}.

\newcommand{\XX}{\mathcal X}

\subsection{Spectrahedral lifts and sums of squares}

 We start by restating the factorization theorem (Theorem \ref{thm:GPTliftstheorem}) in the special case of spectrahedral lifts, which will be our main focus in this section.
%For a linear form $\ell:\RR^n \rightarrow \RR$ we say that $\ell \leq 1$ is a valid linear inequality for the convex set $C$ if $\ell(x) \leq 1$ for all $x \in C$.
\begin{theorem}
\label{thm:GPTliftstheorem22}
Let $C = \conv(\XX)$ be a convex body. Then $C$ has a spectrahedral lift of size $m$ if, and only if, there exists a map $A:\XX \rightarrow \S^m_+$ such that the following holds: for any $\ell \in C^{\circ}$,
%any valid linear inequality $\ell \leq 1$ on $C$,
there exists a $B \in \S^m_+$ such that
\begin{equation}
\label{eq:1-ellAB}
1 - \ell(x) = \tr ( A(x) B ) \quad \forall x \in \XX.
\end{equation}
\end{theorem}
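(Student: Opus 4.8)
The plan is to read this as Theorem~\ref{thm:GPTliftstheorem} specialized to the self-dual cone $K=\psdcone{m}$ (so $K^\ast=K$), together with the remark recorded just after that theorem that for positive semidefinite cones the properness hypothesis may be omitted (see \cite[Corollary 1]{GPTlifts}); both implications can then be obtained from Theorem~\ref{thm:GPTliftstheorem} by short arguments with convex combinations and the bipolar theorem. Two preliminary observations make this clean. First, $\ext(C)\subseteq\XX$: if an extreme point $v$ of $C$ were not in $\XX$, then $C=\conv(\XX)$ would write $v$ as a nontrivial convex combination of points of $\XX\setminus\{v\}\subseteq C$, contradicting extremality. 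Second, a $\psdcone{m}$-factorization of $s_C$ is by definition a pair of maps into $\psdcone{m}$, so its ``size'' in the sense of Theorem~\ref{thm:GPTliftstheorem} is exactly $m$.

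For the ``only if'' direction I would start from a $\psdcone{m}$-factorization of $s_C$, which Theorem~\ref{thm:GPTliftstheorem} (with properness removed via \cite[Corollary 1]{GPTlifts}) provides once $C$ has a size-$m$ spectrahedral lift: maps $A_0:\ext(C)\to\psdcone{m}$ and $B_0:\ext(C^\circ)\to\psdcone{m}$ with $1-\langle x,y\rangle=\tr(A_0(x)B_0(y))$ on $\ext(C)\times\ext(C^\circ)$. I would extend $A_0$ to $\XX$ by fixing, for each $x\in\XX$, a representation $x=\sum_i\lambda_i v_i$ as a finite convex combination of extreme points of $C$ (Minkowski's and Carath\'eodory's theorems) and setting $A(x):=\sum_i\lambda_i A_0(v_i)\in\psdcone{m}$. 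Given $\ell\in C^\circ$, write $\ell=\sum_j\mu_j w_j$ with $w_j\in\ext(C^\circ)$ and put $B:=\sum_j\mu_j B_0(w_j)\in\psdcone{m}$; then, using bilinearity of the trace pairing and $\sum_i\lambda_i=\sum_j\mu_j=1$,
\[
\tr(A(x)B)=\sum_{i,j}\lambda_i\mu_j\tr(A_0(v_i)B_0(w_j))=\sum_{i,j}\lambda_i\mu_j(1-\langle v_i,w_j\rangle)=1-\langle x,\ell\rangle=1-\ell(x)
\]
for every $x\in\XX$, which is the required identity.

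For the ``if'' direction, given $A:\XX\to\psdcone{m}$ as in the statement, I would fix for each $\ell\in\ext(C^\circ)$ a matrix $B_\ell\in\psdcone{m}$ with $1-\ell(x)=\tr(A(x)B_\ell)$ for all $x\in\XX$, and consider
\[
C':=\{x\in\RR^n\;:\;\exists\,X\in\psdcone{m}\ \text{with}\ \tr(XB_\ell)=1-\ell(x)\ \text{ for all }\ell\in\ext(C^\circ)\}.
\]
The plan is to show $C'=C$. The inclusion $C\subseteq C'$ is immediate: every $x\in\XX$ lies in $C'$ with witness $X=A(x)$, and $C'$ is convex (it is the image of a slice of $\psdcone{m}$ under the projection $(x,X)\mapsto x$), so $C=\conv(\XX)\subseteq C'$. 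For $C'\subseteq C$: if $x\in C'$ has witness $X$, then $1-\ell(x)=\tr(XB_\ell)\ge 0$ for all $\ell\in\ext(C^\circ)$, and since $\ell\mapsto\ell(x)$ is linear and $C^\circ=\conv(\ext(C^\circ))$ by Minkowski's theorem, $\ell(x)\le 1$ for all $\ell\in C^\circ$, i.e.\ $x\in(C^\circ)^\circ=C$. It then remains to observe that the description of $C'$ exhibits $C$ as a projection of a slice of $\psdcone{m}$: the defining equalities are linear in $(x,X)$, so they cut out an affine subspace of $\RR^n\times\S^m$ even though there may be infinitely many of them; and since $C^\circ$ is a body, finitely many $\ell_1,\dots,\ell_n\in\ext(C^\circ)$ are linearly independent, so the corresponding equalities express $x$ affinely in terms of $X$, allowing one to eliminate $x$ and write $C=\pi(\psdcone{m}\cap L)$ for a suitable map $\pi$ and affine $L\subseteq\S^m$ --- a size-$m$ spectrahedral lift.

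The one genuinely substantive ingredient is the conic-duality (Farkas) step internal to Theorem~\ref{thm:GPTliftstheorem} --- that a valid affine inequality on $\psdcone{m}\cap L$ restricts on $L$ to $\tr(\,\cdot\,B)$ for some $B\succeq 0$ under Slater's condition --- which I would invoke as a black box. Everything proper to this restatement (self-duality $K^\ast=K$, the removal of properness for psd cones, and the passage from extreme points to the generating set $\XX$ and to all of $C^\circ$) is routine bookkeeping; the only mild technicality, converting the a priori infinitely many linear equalities defining $C'$ into an honest affine slice of $\S^m$ and eliminating $x$, is handled exactly as in the proof of Theorem~\ref{thm:GPTliftstheorem}.
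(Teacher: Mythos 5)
Your proof is correct and proceeds exactly in the spirit of the paper, which presents Theorem~\ref{thm:GPTliftstheorem22} as a restatement of Theorem~\ref{thm:GPTliftstheorem} specialized to $K=\psdcone{m}$ (noting in the remark that follows that \eqref{eq:1-ellAB} is ``nothing but'' a $\psdcone{m}$-factorization of the slack operator). You supply the bookkeeping the paper leaves implicit --- extending $A$ from $\ext(C)$ to $\XX$ and the certificate from $\ext(C^\circ)$ to all of $C^\circ$ by bilinearity and Minkowski/Carath\'eodory, and invoking \cite[Corollary 1]{GPTlifts} to drop the properness hypothesis --- but the overall route is the same.
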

\begin{remark}
Throughout this section we use the notation $\ell$ for elements of $C^{\circ}$, to emphasize that these are linear forms defined on the ambient space where $C$ lives. We also write $\ell(x)$ (as opposed to the bracket notation $\langle \ell , x \rangle$) for the evaluation of $\ell$ at some element $x$.
\end{remark}
Recall that the slack operator of $C$ is indexed precisely by pairs $(\ell,x) \in \ext(C^{\circ}) \times \ext(C)$ and the associated entry in the matrix is $1-\ell(x) \geq 0$.  Equation \eqref{eq:1-ellAB} is nothing but a $\S^m_+$ factorization of this slack operator.
% (In Section \ref{sec:} the slack operator is indexed by $\ext(C^{\circ}) \times \ext(C)$, however there is no loss of generality .)

It is useful to think of Equation \eqref{eq:1-ellAB} as a \emph{certificate of nonnegativity} of the affine function $1-\ell(x)$ on the set $\XX$. Indeed the right-hand side of \eqref{eq:1-ellAB} is ``obviously'' nonnegative as it is the inner product of two positive semidefinite matrices. That \eqref{eq:1-ellAB} is a certificate of nonnegativity becomes clearer if we factorize $A(x)$ as $A(x) = F(x)F(x)^\top$ and $B = DD^\top$. (Such a factorization is possible since $A(x),B \psd 0$.) Then we have, for any $x \in \XX$
\begin{equation}
\label{eq:1-ellSOS}
1 - \ell(x) = \tr ( A(x) B ) = \tr \left ( (F(x)^\top D)^\top F(x)^\top D \right) = \sum_{1\leq i,j \leq m} M_{ij}(x)^2
\end{equation}
where $M(x) = F(x)^\top D$. Equation \eqref{eq:1-ellSOS} shows that $1-\ell(x)$ is a \emph{sum of squares} of the entries of $M(x)$. In turn, the entries of $M(x)$ are linear combinations of the entries of $F(x)$. We have thus shown the following: if $C=\conv(\XX)$ has a spectrahedral lift of size $m$, then there is a \emph{subspace} $V$ of functions on $\XX$ with $\dim(V) \leq m^2$ such that the following is true:
\begin{equation}
\label{eq:1-ellSOSV}
\tag{*-SDP}
\begin{array}{cc}
\text{For any valid linear inequality $\ell(x) \leq 1$ on $\XX$,}\\
\text{there exist functions $h_k \in V$ s.t. $1 - \ell(x) = \sum_{k} h_k(x)^2$ for all $x \in \XX$.}
\end{array}
\end{equation}
The subspace $V$ here is precisely $V = \linspan \left\{ F_{ij} : 1 \leq i,j \leq m \right\}$, where we think of each $F_{ij}$ as a function on $\XX$. (We will look at the nature of these functions later in this section.)

One can also prove a converse of the previous statement. This is the object of the next proposition, showing how to construct a lift from a subspace satisfying \eqref{eq:1-ellSOSV}. Note that the statement below is not an \emph{exact} converse, as it gives a spectrahedral lift of size $\dim(V)$ instead of $\sqrt{\dim(V)}$. The reason has to do with the rank of the map $A(x)$ which is rank-one in the construction \eqref{eq:Axrankone} below, whereas it can be of higher rank in Theorem \ref{thm:GPTliftstheorem22}.

\begin{proposition}
\label{prop:soslift}
Let $C = \conv(\XX)$ be a convex body. Assume there exists a subspace $V$ of functions on $\XX$ such that \eqref{eq:1-ellSOSV} is true. Then $C$ has a spectrahedral lift of size $\dim(V)$.
\end{proposition}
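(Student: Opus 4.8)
The plan is to use the map $A(x)$ built from an orthonormal basis of $V$, exactly mirroring the construction that produced $V$ in the first place, but now going in the reverse direction. Fix a basis $f_1,\ldots,f_d$ of $V$ (with $d = \dim(V)$), and define $A : \XX \rightarrow \S^d_+$ by
\[
A(x) = v(x) v(x)^\top, \quad\text{where}\quad v(x) = (f_1(x),\ldots,f_d(x))^\top.
\]
This is the rank-one map alluded to in the remark preceding the proposition; it explains why the lift we get has size $d$ rather than $\sqrt{d}$. By Theorem~\ref{thm:GPTliftstheorem22}, it suffices to show that for every $\ell \in C^\circ$ there is a $B \in \S^d_+$ with $1 - \ell(x) = \tr(A(x)B)$ for all $x \in \XX$. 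First I would handle a general $\ell \in C^\circ$, i.e.\ a valid inequality $\ell(x) \le 1$ on $\XX$: by hypothesis~\eqref{eq:1-ellSOSV} there are functions $h_k \in V$ with $1 - \ell(x) = \sum_k h_k(x)^2$ on $\XX$. Writing each $h_k = \sum_i c_{ki} f_i$ in the chosen basis, we get $h_k(x) = c_k^\top v(x)$ with $c_k \in \RR^d$, so
\[
1 - \ell(x) = \sum_k (c_k^\top v(x))^2 = v(x)^\top \Big(\sum_k c_k c_k^\top\Big) v(x) = \tr\!\big(A(x)\, B\big), \qquad B := \sum_k c_k c_k^\top \in \S^d_+.
\]
This is the required factorization, so Theorem~\ref{thm:GPTliftstheorem22} immediately yields a spectrahedral lift of $C$ of size $d = \dim(V)$.

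The one genuine subtlety — and the step I expect to need the most care — is the scope of the quantifier in~\eqref{eq:1-ellSOSV} versus in Theorem~\ref{thm:GPTliftstheorem22}: the hypothesis gives a sum-of-squares certificate only for \emph{linear} inequalities $\ell(x)\le 1$, whereas the cleanest statement of the factorization theorem ranges over all $\ell \in C^\circ$, which are precisely such inequalities, so in fact the two match on the nose. What does require a remark is that Theorem~\ref{thm:GPTliftstheorem22} as stated asks the factorization identity to hold for $\ell \in C^\circ$ while the slack operator only uses $\ell \in \ext(C^\circ)$; but an identity of the affine-in-$\ell$ form $1-\ell(x) = \tr(A(x)B_\ell)$ holding on extreme points, together with the fact that $B_\ell$ can be chosen to depend affinely on $\ell$ (take $B_\ell = \sum_k c_k(\ell)c_k(\ell)^\top$ — here one should note we only need \emph{existence} of \emph{some} $B$ per $\ell$, not affineness), extends to all of $C^\circ$ by convexity. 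In practice it is cleanest to just invoke the ``only if'' direction of Theorem~\ref{thm:GPTliftstheorem22} verbatim: it requires producing, for each $\ell \in C^\circ$, a single $B \in \S^d_+$ with the stated property, and that is exactly what the computation above does.

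Finally I would double-check the edge cases: if $\dim(V) = 0$ then~\eqref{eq:1-ellSOSV} forces every valid inequality to satisfy $1 - \ell(x) \equiv 0$ on $\XX$, which (since $0 \in \intt C$ gives the valid inequality $\ell = 0$, i.e.\ $1 \le 1$... in fact $1-0 = 1 \neq 0$) cannot happen, so this case is vacuous and the bound $\dim(V)$ is harmless. I would also note that the map $A$ is well-defined independently of the choice of basis of $V$ up to congruence, so the construction is canonical. Assembling these pieces gives the proposition; the whole argument is essentially a transcription of the paragraph preceding the statement, read backwards, with the rank-one choice $A(x) = v(x)v(x)^\top$ supplying the converse.
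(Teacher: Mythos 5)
Your construction is exactly the one in the paper: take a basis $f_1,\ldots,f_d$ of $V$, set $A(x) = v(x)v(x)^\top$ with $v(x)$ the vector of basis evaluations, and for each $\ell\in C^\circ$ convert the sum-of-squares certificate from~\eqref{eq:1-ellSOSV} into a Gram matrix $B=\sum_k c_k c_k^\top$, then invoke the factorization theorem. The digressions about quantifier scope and the $\dim(V)=0$ edge case are harmless but unnecessary; the core argument matches the paper's proof essentially line for line.
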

\begin{proof}
We construct a positive semidefinite factorization of the slack operator of $C$. Let $m = \dim(V)$, $f_1,\ldots,f_m$ be a basis of $V$, and $\ff(x) = [f_1(x),\ldots,f_m(x)]^\top$. Define 
\begin{equation}
\label{eq:Axrankone}
A(x) = \ff(x)\ff(x)^\top \in \S^m_+.
\end{equation}

Let $\ell \in C^{\circ}$ so that $1-\ell(x) \geq 0$ for all $x \in \XX$. We want to show that there exists $B \in \S^m_+$ such that $1- \ell(x) = \tr(A(x) B)$ for all $x \in \XX$. We know by \eqref{eq:1-ellSOSV} that there exist functions $h_k \in V$ such that $1-\ell(x) = \sum_{k} h_k(x)^2$ for $x \in \XX$. Since $f_1,\ldots,f_m$ form a basis of $V$, we know there exist vectors $b_k \in \RR^m$ such that $h_k(x) = b_k^\top \ff(x)$. Then
\[
1 - \ell(x) = \sum_{k} h_k(x)^2 = \sum_{k} (b_k^\top \ff(x))^2 = \tr\left(\ff(x) \ff(x)^\top B\right)
\]
where $B = \sum_{k} b_k b_k^\top \in \S^m_+$. This completes the proof.
\end{proof}

Proposition \ref{prop:soslift} reduces the problem of constructing a spectrahedral lift of $C$ to that of finding a subspace $V$ of functions such that \eqref{eq:1-ellSOSV} holds. We now list some examples of lifts constructed in this way.

\begin{example}[Elliptope]
Consider the convex body $C = [-1,1]^2$ with extreme points $\XX = \{ -1, +1 \}^2$. Define the subspace $V = \linspan(1,x,y)$ of affine functions, and consider the valid linear inequality $x \leq 1$ on $C$. Note that we can write
\[
1 - x = \frac{1}{2} (1-x)^2 \quad \forall(x,y) \in \XX
\]
where we used the fact that $x^2 = 1$. Similarly one can check that the other facet inequalities of $C$ (namely $1 + x \geq 0$ and $1\pm y \geq 0$) are sums of squares from functions in $V$. It thus follows that $C$ has a spectrahedral lift of size $\dim(V) = 3$. Indeed one can verify that
\begin{equation}
\label{eq:square_lift_elliptope}
C = \left\{ (x,y) \in \RR^2 : \exists z \in \RR, \; \begin{bmatrix} 1 & x & y\\ x & 1 & z\\ y & z & 1\end{bmatrix} \psd 0 \right\}.
\end{equation}
This is precisely the example of the three-dimensional \emph{elliptope} seen in the introduction (see Figure \ref{fig:elliptope}(left)) which projects onto the two-dimensional square.
\end{example}
%
%\begin{figure}[ht]
%  \centering
%  \includegraphics[width=4cm]{../SIREV_PAPER/elliptope_square.pdf}
%  \caption{Illustration of the lift \eqref{eq:square_lift_elliptope}. The three-dimensional convex set is the set of $(x,y,z)$ such that the $3\times 3$ matrix in the right-hand side of \eqref{eq:square_lift_elliptope} is positive semidefinite. Projecting this set on $(x,y)$ gives the square $[-1,1]^2$.}
%\end{figure}

\begin{example}[Convex hull of rational curves]
Consider a plane curve $\XX \subset \RR^2$ having a rational parametrization,  $\XX = \left\{(x(t),y(t)) : t \in \RR\right\}$ where $x(t),y(t)$ are rational functions, i.e., $x(t)=a(t)/q(t)$, $y(t) = b(t)/q(t)$ where we assume that $q(t) > 0$ for all $t \in \RR$ and $\max(\deg a, \deg b) \leq \deg q$.
We can show that $\cl\conv(\XX)$ (where $\cl$ denotes the closure) has a spectrahedral lift of size at most $\deg(q)/2+1$ \cite{parrilo2006exact,henrionrational}; we will show this by exhibiting a subspace of functions $V$ such that property \eqref{eq:1-ellSOSV} holds for $\XX$.

We first define a function $t:\XX\rightarrow \RR$ that gives the ``time'' coordinate of any point in $\XX$ in the parametrization $(x(t),y(t))$; namely for any $(x_0,y_0) \in \XX$ let $t(x_0,y_0)=t_0 \in \RR$ be such that $(x(t_0),y(t_0)) = (x_0,y_0)$ (if multiple such $t_0$ exist, an arbitrary choice is made so that $t$ is single-valued). Consider now the subspace $V$ of bivariate functions of the form
\begin{equation}
\label{eq:funcxyrational}
h(x,y) = \sum_{i=0}^{\deg(q)/2} c_i \frac{t(x,y)^i}{\sqrt{q(t(x,y))}}
\end{equation}
where the $c_i$ are arbitrary coefficients. Note that $\dim(V) = 1+\deg(q)/2$.
% In short, $V$ is the space of polynomials of degree at most $\deg(q)/2$, divided by $\sqrt{q(t)}$.

We claim that any valid linear inequality on $\cl \conv(\XX)$ is a sum of squares from $V$. Indeed, assume we have a linear form $\ell$ such that $1 - \ell(x)$ is nonnegative on $\XX$, i.e., $1-\ell(x(t),y(t)) \geq 0$ for all $t \in \RR$. Using the fact that $x(t) = a(t)/q(t)$ and $y(t) = b(t)/q(t)$ this gives:
\[
q(t) - \ell(a(t),b(t)) \geq 0 \quad \forall t \in \RR.
\]
Note that $q(t) - \ell(a(t),b(t))$ is a (univariate) polynomial of degree at most $\deg q$. It is a well known simple fact that univariate polynomials are nonnegative in the real line if and only if they are sums of squares (see Chapter 3 of \cite{BPTSIAMBook} for an overview on sums of squares), therefore $q(t)-\ell(a(t),b(t))$ must be a sum of squares of some polynomials $h_k(t) \in \RR[t]$ of degree at most $(\deg q)/2$, i.e.,
\[
q(t) - \ell(a(t),b(t)) = \sum_{k} h_k(t)^2.
\]
Evaluating at $t=t(x,y)$ and dividing by $q$ we get
\[
1 - \ell(x,y) = \sum_{k} \left(\frac{h_k(t(x,y))}{\sqrt{q(t(x,y))}}\right)^2 \qquad \forall (x,y) \in \XX.
\]
Since the functions $(x,y) \mapsto h_k(x,y) / \sqrt{q(t(x,y))}$ belong to $V$ we have shown what we wanted.
\end{example}

\subsection{Hierarchies} Consider the case where $C$ is the convex hull of a set defined using polynomial equations.
% and inequalities, i.e., $C$ is the convex hull of a basic semialgebraic set.
%, i.e., $C = \conv(X)$ with $X = \left\{ x \in \RR^n : p_i(x) = 0, \; \forall i=1,\ldots,k \right\}$.
A natural choice of subspace $V$ in this setting is the space $V=V_k$ of \emph{polynomials} of degree at most $k$, where $k$ is a certain fixed integer.
% Specifically this subspace is defined as follows\footnote{This may not be transparent from the definition, but in most situations, one can find an explicit basis for the subspace $V$ defined in Equation \eqref{eq:degkmodI}.}
%\begin{equation}
%\label{eq:degkmodI}
%V_k =\left\{ p \in \RR[x_1,\ldots,x_n] : \exists q \in \RR[x_1,\ldots,x_n] \text{ s.t. } p(x) = q(x) \; \forall x \in X \text{ and } \deg(q) \leq k\right\}.
%\end{equation}
If property \eqref{eq:1-ellSOSV} holds with such $V_k$ then we get a spectrahedral lift of $\conv(\XX)$ of size $\dim(V_k)$. This spectrahedral lift is precisely the lift produced by the \emph{Lasserre/theta-body method}, see \cite{lasserre2009convex,thetabodies}.
% (In this case it is said that the Lasserre/theta-body hierarchy is exact at level $k$.)

We note that it is possible for a convex body $C$ to have a spectrahedral lift, and yet not verify property \eqref{eq:1-ellSOSV} for any  subspace of polynomials $V$ (of arbitrary degree). For example, consider the curve defined by the equation $y^2 - x^3 + x^4 = 0$ and shown in Figure \ref{fig:piriform}; this curve is known as the \emph{Piriform curve}. Because of the singularity at the origin, one can show that the Lasserre/theta body hierarchy for this curve is not exact at any finite level $k$, see \cite[Theorem 9]{gouveiathomaschapter}. However the convex hull of this curve has a SDP representation since the curve has a rational parametrization, namely it can be shown that it is parametrized by
\[
\begin{cases}
x(\theta) = \frac{1+\sin(\theta)}{2}\\
y(\theta) = \frac{\cos(\theta)(1+\sin(\theta))}{4},
\end{cases}
\]
which can be turned into a rational parametrization using $\cos(\theta) = \frac{1-t^2}{1+t^2}$ and $\sin(\theta) = \frac{2t}{1+t^2}$.

\begin{figure}
\centering
\includegraphics[scale=0.7]{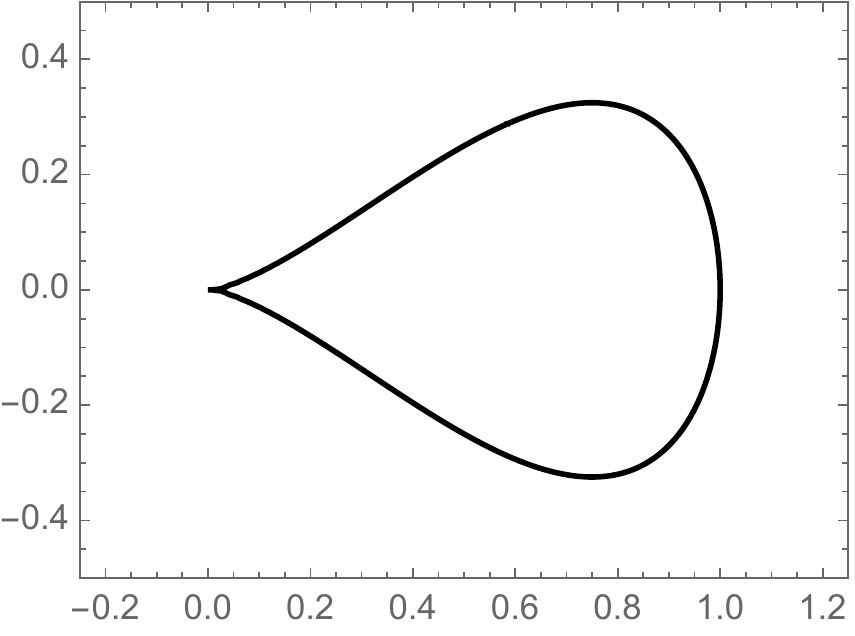}
\caption{The Piriform curve defined by $y^2 - x^3 + x^4 = 0$. This is a rational curve and so the convex hull of this curve has a spectrahedral lift. However property \eqref{eq:1-ellSOSV} is not true for any subspace $V$ consisting of polynomials. (Note that functions of the form \eqref{eq:funcxyrational} are not polynomials.) This example was considered in \cite[Example 4.4]{gouveia2011positive}.}
\label{fig:piriform}
\end{figure}

\begin{example}[$k$-level polytopes]
\label{sec:k-level} A polytope $P$ is called \emph{$k$-level} if every facet-defining linear function of $P$ takes at most $k$ different values on the vertices of the polytope. Said differently, for every affine hyperplane $H$ defining a facet of $P$, all the vertices of $P$ lie in at most $k$ different translates of $H$. See Figure \ref{fig:23level} for examples of 2 and 3-level polytopes in $\RR^3$. One important example of 2-level polytope is the stable set polytope of perfect graphs (see Section \ref{sec:chainTH}, Theorem \ref{thm:THperfect}).

\begin{figure}[ht]
  \centering
  \includegraphics[height=2.5cm]{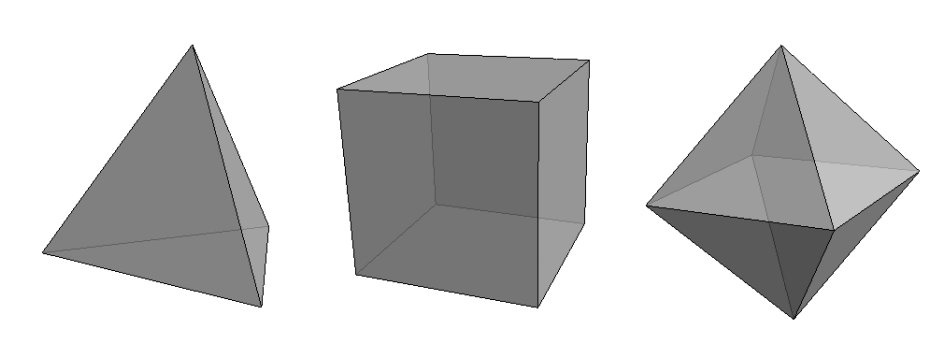}
  \qquad\qquad\qquad
  \includegraphics[height=2.5cm]{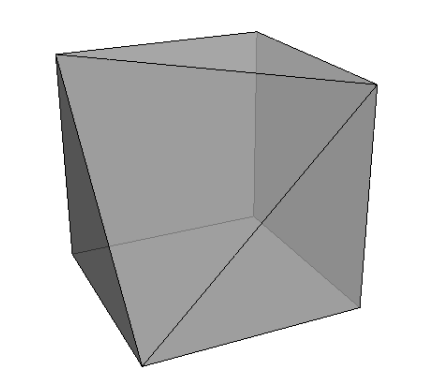}
  \caption{Left: examples of 2-level polytopes. Right: a polytope that is 3-level but not 2-level. Figure from \cite{gouveiathomaschapter}.}
  \label{fig:23level}
\end{figure}

One can prove that if $P$ is a $k$-level polytope, then $P$ admits a spectrahedral lift of size at most $\binom{n+k-1}{k-1} = \dim(V_{k-1})$ where $V_{k-1}$ is the space of polynomials of degree at most $k-1$ in $n$ variables. To do this we will prove that \eqref{eq:1-ellSOSV} holds with $V=V_{k-1}$.
% We will prove this by exhibiting a subspace $V$ of dimension $\binom{n+k-1}{k-1}$ for which \eqref{eq:1-ellSOSV} holds. In this case the subspace is simply the subspace of polynomials in $n$ variables of degree at most $k-1$.

Indeed, let $\XX$ be the set of vertices of the polytope, and assume that $1-\ell(x) \geq 0$ for all $x\in \XX$. We need to show that there exist $h_i \in V$ such that $1-\ell(x) = \sum_{i} h_i(x)^2$ for all $x \in \XX$. Note that by Farkas' Lemma, $1-\ell(x)$ is a nonnegative combination of facet defining inequalities, so it is enough to show that any facet inequality can be written as a sum of squares. We can therefore assume that $1-\ell(x)\geq 0$ defines a facet which,
since $P$ is $k$-level, implies that $\left\{1-\ell(x) : x \in \XX\right\} = \{a_1,a_2,\ldots,a_k\}$ for some nonnegative numbers $a_i \in \RR$. Consider a univariate polynomial $r \in \RR[t]$ of degree $k-1$ such that $r(a_i) = \sqrt{a_i}$ (e.g., constructed using the Lagrange interpolation formula), and let $h(x) = r(1-\ell(x))$ be the evaluation of $r$ at $1-\ell(x)$. Since $\deg r \leq k-1$ we have $h \in V_{k-1}$. Note that for any $x \in \XX$ we have $1-\ell(x) = r(1-\ell(x))^2 = h(x)^2$ where we used the fact that $1-\ell(x) \in \{a_1,\ldots,a_k\}$ and that $r(a_i)^2 = a_i$ for all $i=1,\ldots,k$. We have thus shown that this choice of subspace works. An illustration of this construction is shown in Figure \ref{fig:klevel_hexagon} in the case of the regular hexagon.
\end{example}

\begin{figure}[ht]
  \centering
  \includegraphics[scale=.75]{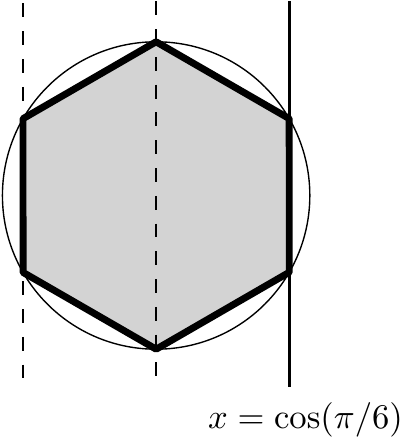}
  \qquad\qquad\qquad
  \includegraphics[width=6cm]{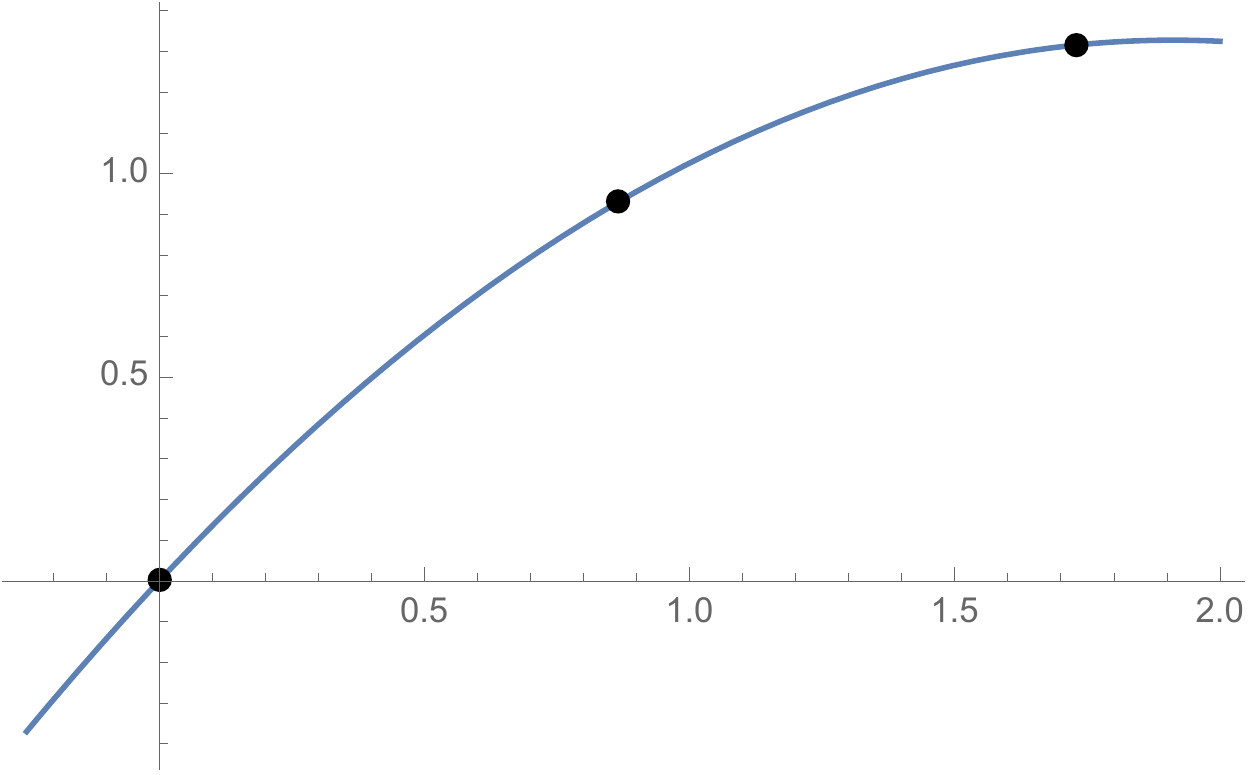}
  \caption{Left: The convex hull of the regular hexagon is a 3-level polytope. The levels of the facet inequality $\cos(\pi/6) - x \geq 0$ are $\{0,a_1,a_2\} = \{0,\cos(\pi/6),2\cos(\pi/6)\}$. Right: the polynomial $r(t)$ of degree 2 such that $r(0) = 0$, $r(a_1) = \sqrt{a_1}$, $r(a_2) = \sqrt{a_2}$. Any facet inequality of the regular hexagon is a sum of squares of quadratic polynomials. One can show that this gives a spectrahedral lift of the regular hexagon of size 5.}
  \label{fig:klevel_hexagon}
\end{figure}

\subsection{Symmetric convex bodies} In many cases, the convex body $C \subset \RR^n$ we are interested in has some symmetries. This means that there are certain linear invertible transformations of $\RR^n$ that leave $C$ invariant. The symmetry group (or \emph{automorphism group}) of $C$, is precisely the set of such transformations:
\[
\Aut(C) = \left\{ T \in GL(\RR^n) : T(C) = C \right\}.
\]
For example the automorphism group of the regular $n$-gon in the plane is the dihedral group and consists of $2n$ elements. When constructing a lift of the convex body $C$, one may require that the lift ``respects'' the symmetry of $C$. Such lifts are called symmetric or \emph{equivariant} lifts and have been considered since the work of Yannakakis \cite{yannakakis1991expressing}. Having an equivariant spectrahedral lift of a convex body $C$ has a very natural interpretation in terms of the subspace $V$ of functions in \eqref{eq:1-ellSOSV}. Finding an equivariant lift of $C = \conv(\XX)$ amounts to finding a subspace $V$ of functions on $\XX$ that is \emph{invariant} under the action of $\Aut(C)$. Here the action of $\Aut(C)$ on functions $\XX\rightarrow \RR$ is the natural one defined by $(g \cdot f)(x) = f(g^{-1} x)$ for $g \in \Aut(C)$ and $x \in \XX$. A question the reader may ask is whether there is benefit in \emph{breaking symmetry} when constructing lifts of convex bodies. More precisely, given a symmetric convex body, can non-equivariant lifts be much smaller than their equivariant counterparts? It turns out that the answer is yes, and the gap can be very large. Kaibel et al. \cite{kaibel-pashkovich-theis} showed the existence of polytopes $(P_n)$ that have a nonequivariant polyhedral lift of polynomial size in $n$, and yet any equivariant lift has size at least $n^{\Omega(\log n)}$. Also it was shown in \cite{equivariantliftssos} that the even parity polytope has no polynomial-size equivariant spectrahedral lift, despite having a linear size non-equivariant one (cf. Section \ref{sec:bdd}).
%  \textcolor{blue}{Discuss separation between equivariant and non-equivariant lifts}

\subsection{Linear programming lifts} We have so far only discussed the case of spectrahedral lifts. In this concluding paragraph, we briefly discuss polyhedral lifts. It can be shown via the slack matrix factorization theorem, that, for a finite set $\XX$, a polytope $P = \conv(\XX)$ has a polyhedral lift of size $m$ if, and only if, there are $m$ nonnegative functions $a_1,\ldots,a_m:\XX \rightarrow \RR_+$ such that the following is true:
\begin{equation}
\tag{*-LP}
\label{eq:1-ellLP}
\begin{array}{cc}
\text{For any valid linear inequality $\ell(x) \leq 1$ on $\XX$,}\\
\text{there exist $b_1,\ldots,b_m \geq 0$ s.t. $1 - \ell(x) = \sum_{i=1}^m b_i a_i(x)$ for all $x \in \XX$.}
\end{array}
\end{equation}
As such, whereas the task of constructing a spectrahedral lift corresponds to finding a subspace $V$ such that \eqref{eq:1-ellSOSV} holds, constructing an LP lift requires finding a finite set of functions $a_1,\ldots,a_m:\XX \rightarrow \RR_+$ such that \eqref{eq:1-ellLP} is true. There are some settings where a natural set of functions can be considered. Assume that $\XX \subset \{0,1\}^n$ is a subset of the Boolean hypercube and is described using polynomial inequalities, i.e.,
\begin{equation}
\label{eq:Xbinary}
\XX = \left\{ x \in \{0,1\}^n : g_1(x) \geq 0, \ldots, g_k(x) \geq 0\right\},
\end{equation}
where $g_1,\ldots,g_k$ are polynomials in $x$. Such a setting covers many combinatorial optimization problems. In this case one can consider the following family of functions
\begin{equation}
\label{eq:aTIg}
a_{T,I,\gamma}(x) = \prod_{i \in T} x_i \prod_{i \in I \setminus T} (1-x_i) \prod_{i=1}^k g_i(x)^{\gamma_i},
\end{equation}
where $T \subseteq I \subset \{1,\ldots,n\}$ and $(\gamma_1,\ldots,\gamma_k) \in \NN^k$. The functions \eqref{eq:aTIg} are clearly nonnegative on $\XX$. Many of the early \emph{lift-and-project methods} in combinatorial optimization (Sherali-Adams \cite{sheraliadams} or Lov\'asz-Schrijver \cite{lovaszschrijver}) can be understood in this framework and correspond to choosing functions $a_i$ of the form \eqref{eq:aTIg}.

%%%%%%%%%%%%%%%%%%%%%%%%%%%%%%%%%%%%%%%%%%%%%%%%%%%%%%%%%
\section{Obstructions and lower bounds}
\label{sec:obstructions and lower bounds}
% !TEX root = main.tex
% Given a family of closed convex cones, how can we decide which convex bodies $C$ do not
% have a $K$-lift for any cone $K$ in the family? For example, the class of convex bodies with $\RR_+^d$-lifts
% are the polytopes with \emph{extension complexity} at most $d$. 
% 
% 
% Given a family of convex cones $\mathcal{K}$, which features of a convex body
% $C$ mean that $C$ does not have a $K$-lift for any cone $K$ in the family
% $\mathcal{K}$?  In this section we discuss such obstructions to a convex body
% having a lifted representation. We focus on the following cone families:
% 	
Given a convex body $C$ and a closed convex cone $K$, how can we show that $C$
does not have a $K$-lift?  In other words, what properties of $C$ are
obstructions to $C$ having a $K$-lift?  When $K$ comes from
a cone family, such as $\RR_+^m$ or $\mathcal{S}_+^m$, we typically phrase
things in more quantatitive language, aiming to find \emph{lower bounds on the
size} of the corresponding lift. 

For polytopes, the study of lower bounds on the size of polyhedral and
spectrahedral lifts has received considerable attention in
recent years. (See Section~\ref{sec:discussion} for more information and
references.) In this section, we focus on cases where $C$ is not a polytope. In this
more general setting, even deciding whether $C$ has a $K$-lift for some $K$ in a
given cone family (such as positive semidefinite cones of any finite size) 
can be very challenging. 

The characterization of the existence of lifts in terms of factorizations of the slack
operator, discussed in Section~\ref{sec:slackoperator}, 
is a key tool to finding obstructions. This connection
allows us to conclude the non-existence of $K$-lifts by showing that it is impossible to 
find a $K$-factorization of the slack operator. Crucially, we can often find such
obstructions by considering only very coarse features of the slack operator. For instance, 
we may consider only the pattern of zeros and non-zeros of the slack operator, 
capturing only which extreme points lie on which supporting hyperplanes. 

As a concrete example, consider the following result of Goemans about the size of polyhedral 
lifts of polytopes. 
We use this as a starting point when discussing obstructions to lifts of convex bodies that are not polytopes
throughout this section. 
		\begin{proposition}[{\cite{goemans2015smallest}}]
\label{prop:goemans}
Assume $C$ is a polytope with $v$ extreme points. Then any polyhedral lift of $C$ has size at least $\lceil \log_2 v\rceil$.
\end{proposition}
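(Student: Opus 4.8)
The plan is to route through Yannakakis' theorem and then run a counting argument on the supports of the columns of a nonnegative factorization of the slack matrix. By Theorem~\ref{thm:yannakakis}, the linear extension complexity of the polytope $C$ equals the nonnegative rank of its slack matrix $S_C$, so it suffices to show that every size-$m$ nonnegative factorization $S_C = A^\top B$ forces $m \geq \lceil \log_2 v\rceil$. Here the columns $b_1,\dots,b_v \in \RR^m_+$ of $B$ are indexed by the vertices $v_1,\dots,v_v$ of $C$, the columns $a_1,\dots,a_f \in \RR^m_+$ of $A$ are indexed by the facets $F_1,\dots,F_f$, and $[S_C]_{ij} = a_i^\top b_j$ vanishes exactly when $v_j$ lies on $F_i$.

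The key observation I would use is that, since $[S_C]_{ij} = \sum_{k} (a_i)_k(b_j)_k$ is a sum of nonnegative terms, it is zero if and only if $\supp(a_i)\cap\supp(b_j)=\emptyset$; combined with the previous sentence this gives
\[
v_j \in F_i \iff \supp(a_i)\cap\supp(b_j)=\emptyset ,
\]
so the set of facets through $v_j$ is completely determined by the subset $\supp(b_j)\subseteq\{1,\dots,m\}$. I would then show the map $v_j \mapsto \supp(b_j)$ is injective: if two vertices had equal column supports they would lie on exactly the same facets of $C$, and for a polytope the intersection of the facets through a point is the smallest face containing that point, which for a vertex is the vertex itself, forcing $v_j=v_{j'}$. (The degenerate case $v=1$, where the asserted bound is $0$, is handled separately and trivially; for $v\geq 2$ each vertex lies on at least one facet, so this intersection is genuinely nonempty.) An injection from the $v$ vertices into the $2^m$ subsets of $\{1,\dots,m\}$ yields $v\leq 2^m$, hence $m\geq\log_2 v$, and since $m$ is an integer, $m\geq\lceil\log_2 v\rceil$.

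The step needing the most care — and the only genuinely nontrivial one — is the injectivity claim, i.e.\ that two vertices incident to the same set of facets must coincide. This is exactly where the polytope hypothesis is used: it fails for general convex bodies and even for non-vertex faces, and it rests on the combinatorial fact that every proper face of a polytope is an intersection of facets, so the minimal face containing a point equals the intersection of the facets through it. Everything else (invoking Yannakakis, reading off the supports, the final counting) is routine. As a remark, one can give an equivalent argument bypassing the slack matrix: writing $C=\pi(\RR^m_+\cap L)$, attach to each vertex the support of the relative interior of its (exposed) preimage face in the lift; if two such supports agreed, both preimage faces would lie in the relative interior of the single face $\RR^m_+\cap L$ cut out by that support, so the two vertices would be relative-interior points of the same image set, and a vertex that is also a relative-interior point of a convex set is all of it — again forcing the vertices to coincide. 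I expect the factorization route to be the cleanest to write, so I would present that as the main proof.
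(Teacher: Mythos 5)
Your proof is correct. The paper does not give a standalone proof of Proposition~\ref{prop:goemans}; it immediately absorbs it into the slight generalization Lemma~\ref{lem:logbnd}, which is proved by a different packaging of the same underlying idea. There, one observes that a lift $C = \pi(\RR_+^m \cap L)$ induces an order embedding $\psi\circ\phi$ of the full face poset $\mathcal{F}_C$ into the face lattice of $\RR_+^m$ (the Boolean lattice of order $m$), giving the stronger inequality $|\mathcal{F}_C|\le 2^m$; specializing to vertex faces recovers $v\le 2^m$. Your main argument instead routes through Yannakakis' theorem and reads the same information off a nonnegative factorization $S_C=A^\top B$: the support $\supp(b_j)\subseteq\{1,\dots,m\}$ determines the set of facets through $v_j$, and the polytope fact that a vertex is the intersection of the facets through it makes $v_j\mapsto\supp(b_j)$ injective. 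These are really two views of one mechanism --- the support of $b_j$ is precisely the face of $\RR_+^m$ that $\psi\circ\phi$ assigns to the vertex $v_j$, and your parenthetical remark sketches exactly this geometric version. What each buys: your factorization route is more elementary and self-contained, needing only Theorem~\ref{thm:yannakakis} and the facet-intersection property of polytopes; the paper's poset-embedding route is structurally cleaner, counts all faces at once (not just vertices), and is what generalizes to obstructions for other cones in the rest of Section~\ref{sec:obstructions and lower bounds}, e.g.\ Lemma~\ref{lem:chain-length} and Theorem~\ref{thm:nbsau}. Your handling of the degenerate $v=1$ case and of empty supports is adequate; one could phrase it more directly by noting that for $v\ge 2$ no vertex can lie on every facet, since any other vertex is a proper face avoiding it.
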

% \begin{proof}
% Assume $C = \pi(P)$ where $\pi$ is a linear map and $P$ is a polytope with $m$
% 	facets. We need to show that $m \geq \lceil\log_2 v\rceil$.  The preimage of
% 	any extreme point of $C$ by $\pi$ is a face of $P$. Since $P$ has $m$
% 	facets it has at most $2^{m}$ faces, since any face is an intersection
% 	of facets. It follows that $2^{m} \geq v$.
% \end{proof}
The bound of Goemans can be tight (up to a constant multiplicative factor). One family of examples
showing this are the regular $N$-gons in the plane. These have $N$ extreme points and are know to have
polyhedral lifts of size either $2 \lceil \log_2 N\rceil$ or $2\lceil \log_2 N\rceil-1$ 
depending on the value of $N$~\cite{vandaele2017linear}.  These polyhedral lifts are 
improved versions of a lift of the regular $2^n$-gon of size $2n+4$ due to 
Ben-Tal and Nemirovski~\cite{ben2001polyhedral}.

Another family of examples showing that Goemans' bound is tight, comes from the
permutahedra $\Pi_n$. Recall from Example~\ref{ex:permutahedron}, that for each
$n$, $\Pi_n$ is the convex hull of all permutations of $(1,2,\ldots,n)$. As
such $\Pi_n$ has $n{!}$ extreme points. Goemans' lower bound therefore tells us
that any polyhedral lift of $\Pi_n$ has size at least $n\log_2(n/e)$. By
comparison, the Birkhoff polytope lift of the permutahedron from
Example~\ref{ex:permutahedron} has size $O(n^2)$. Goemans constructed a polyhedral 
lift of the permutahedron of size $O(n\log(n))$, and used Proposition~\ref{prop:goemans}
to show that this lift cannot be substantially reduced in size~\cite{goemans2015smallest}. 

Goemans' lower bound also implies the intuitively obvious fact 
that a convex body with infinitely many extreme points
cannot have a polyhedral lift. In other words, having infinitely many 
extreme points is an obstruction to the existence of a polyhedral lift.
To see why, we can consider the contrapositive of Proposition~\ref{prop:goemans}. This tells us that  
if a convex body has a polyhedral lift with at most $m$ facets, then it has at most $2^m$ extreme points. 

In the rest of this section, we consider generalizations of
Proposition~\ref{prop:goemans} in two directions.  In
Section~\ref{sec:obs-facial}, we view Goemans' bound as being about obstructions to lifts arising
from the complexity of the facial structure (in this case, the number of vertices) of
$C$. In particular, we
discuss other obstructions based on facial structure that are applicable even
when $C$ is not a polytope.  In terms of the slack operator, focusing on facial
structure essentially corresponds to considering only its pattern of zeros and
non-zeros.  In Section~\ref{sec:obs-alg}, we view Goemans' bound
as being about obstructions to lifts arising from the algebraic
complexity of the boundary of $C^\circ$. In the case of Goemans' bound the
number of vertices of $C$ is the number of facets of $C^\circ$, which is also
the smallest degree of a polynomial vanishing on the boundary of $C^\circ$.  In
particular, in Section~\ref{sec:obs-alg} we discuss other obstructions of an
algebraic nature.

\subsection{Obstructions based on facial structure}
\label{sec:obs-facial}
%Recall that a convex subset $F\subseteq C$ of a convex set is a \emph{face} of
%$C$ if, whenever $x,y\in C$ and $\lambda\in (0,1)$ are such that $\lambda x +
%(1-\lambda)y \in F$, then $x,y\in F$. 
The faces (see Section~\ref{sec:slackoperator} for the definition) of a convex set are partially
ordered by inclusion. Throughout this section we let $\mathcal{F}_C$ denote the
poset of faces of $C$. 
If $C = \pi(Q)$ is the projection of a convex set $Q$, then the preimage under $\pi$ 
of any face of $C$ is a face of $Q$. 
%Moreover, any face of $C$ can only contribute to one 
%such preimage face of $Q$. 
This observation can be used to find lower bounds on the size of lifts.
\begin{example}
	\label{eg:oct-lb}
	Consider again the regular octagon for which we see two polyhedral lifts with six facets 
	in Figure~\ref{fig:oct}. 
	In each case, the preimage (under the projection) of each vertex of the octagon is a face
of the lift. Moreover, a vertex of the lift can belong to at most one such preimage face. 
This implies that any polyhedral lift of an octagon must have at least $8$ vertices. 
Since any polytope with $5$ or fewer facets has at most six vertices, it follows that 
any polyhedral lift of an octagon has size at least $6$.
\end{example}
In general, if $Q$ is a lift of $C$ then there is an \emph{order embedding}
$\phi: \mathcal{F}_C\rightarrow \mathcal{F}_Q$, 
i.e., a map that satisfies $F\subseteq F'$ if and only if $\phi(F) \subseteq \phi(F')$ for any $F,F'\in \mathcal{F}_C$.
Furthermore, if $Q = K \cap L$ for some closed convex cone $K$ and affine space $L$, then there is a 
natural order embedding $\psi:\mathcal{F}_Q\rightarrow \mathcal{F}_K$ which sends a face $F$ of $Q$ to the minimal face of
$K$ containing $F$. Overall, then, the composition $\psi\circ \phi:\mathcal{F}_C\rightarrow \mathcal{F}_K$ 
gives an order embedding from the face poset of $C$ to the face poset of $K$. 
For the lift on the right of Figure~\ref{fig:oct}, the embeddings $\phi$ and $\psi$, and their composition, are all
illustrated in Figure~\ref{fig:poset-embed}.
\begin{figure}
	\begin{center}
		\includegraphics[scale=0.5]{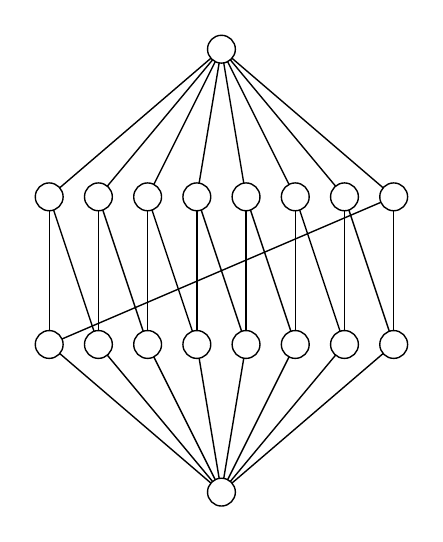}
		\includegraphics[scale=0.5]{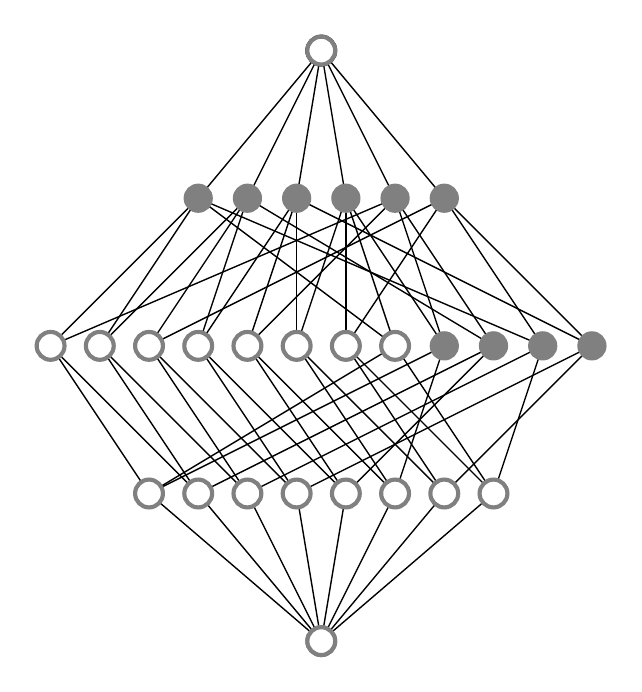}
		\includegraphics[scale=0.5]{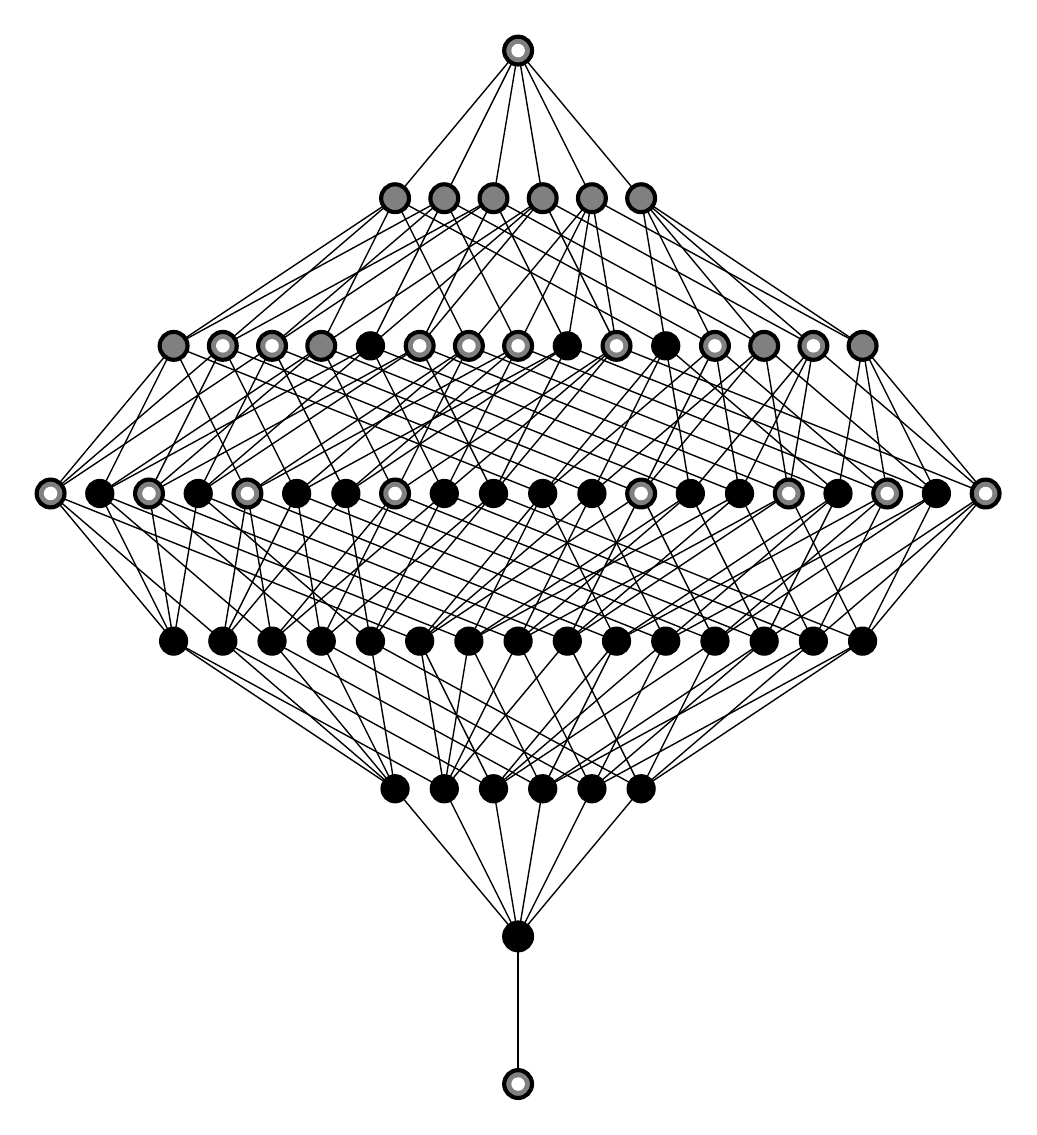}
	\end{center}
	\caption{\label{fig:poset-embed} Poset embeddings implied by the lift $P = \pi(Q) = \pi(\RR_+^6\cap L)$ 
	of the regular octagon $P$ shown on the right in Figure~\ref{fig:oct}. Left: Hasse diagram of $\mathcal{F}_P$.
	Center: Hasse diagram of $\mathcal{F}_Q$ with the white nodes showing the embedding of $\mathcal{F}_P$.
	Right: Hasse diagram of $\mathcal{F}_{\RR_+^6}$ with grey nodes showing the embedding of $\mathcal{F}_Q$ and 
	white nodes showing the embedding of $\mathcal{F}_{P}$.}
\end{figure}

%In this section, we discuss obstructions to the existence of conic lifts based
%on properties of the facial structure of convex bodies.  
Since lifts induce embeddings of face posets, we can find obstructions to the existence of $K$-lifts 
by finding obstructions to embedding the face poset of $C$ in the face poset of $K$. 
%Alternatively, 
%if the face poset of $K$ is not too complicated, then this bounds the complexity of 
%the face poset of any set of the form $C = \pi(K\cap L)$, limiting which convex sets can be represented in this form.

\subsubsection{Obstructions from counting faces}
If the face poset of the cone $K$ is finite, then we can use the order embedding together with a counting 
argument to obtain obstructions to the existence of $K$-lifts. This approach makes sense in the 
case of polyhedral lifts, where $K=\RR_+^m$. The face lattice of $\RR^m_+$ is the \emph{Boolean lattice of order $m$}.
A counting argument gives a slight generalization of the bound of Goemans.
 \begin{lemma}
 \label{lem:logbnd}
 If $C$ has an % = \pi(\RR_+^m \cap L)$ has a polyhedral lift of size $m$, 
	 $\RR_+^m$-lift, then $m\geq \lceil\log_2(|\mathcal{F}_C|)\rceil$.
 \end{lemma}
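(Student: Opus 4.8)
The plan is to exploit the order embedding $\psi \circ \phi : \mathcal{F}_C \rightarrow \mathcal{F}_{\RR_+^m}$ guaranteed by the preceding discussion, together with the fact that an order embedding is in particular injective. Since an injection from $\mathcal{F}_C$ into $\mathcal{F}_{\RR_+^m}$ exists, we immediately get $|\mathcal{F}_C| \leq |\mathcal{F}_{\RR_+^m}|$.

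The key step is then to count the faces of the nonnegative orthant $\RR_+^m$. Recall that the faces of $\RR_+^m$ are exactly the sets $\{x \in \RR_+^m : x_i = 0 \text{ for } i \notin S\}$ as $S$ ranges over all subsets of $\{1,\ldots,m\}$; that is, the face lattice of $\RR_+^m$ is the Boolean lattice of order $m$, as noted just before the statement. Hence $|\mathcal{F}_{\RR_+^m}| = 2^m$. Combining this with the inequality from the order embedding gives $|\mathcal{F}_C| \leq 2^m$, and taking base-$2$ logarithms yields $m \geq \log_2(|\mathcal{F}_C|)$. Since $m$ is an integer, we can round up to conclude $m \geq \lceil \log_2(|\mathcal{F}_C|)\rceil$, as desired.

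There is essentially no obstacle here: the work was already done in setting up the order embedding $\psi \circ \phi$ and in identifying the face lattice of $\RR_+^m$ with the Boolean lattice. The only point that deserves a word of care is that $\mathcal{F}_C$ should be finite for the statement to be meaningful — but this is automatic, since the existence of an injection into the finite set $\mathcal{F}_{\RR_+^m}$ forces $|\mathcal{F}_C| \leq 2^m < \infty$ (equivalently, one may note that a convex body with infinitely many faces cannot have a polyhedral lift, recovering the earlier remark about infinitely many extreme points). I would present the argument in two sentences: invoke the embedding to bound $|\mathcal{F}_C| \leq |\mathcal{F}_{\RR_+^m}| = 2^m$, then take logarithms.
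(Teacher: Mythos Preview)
Your proposal is correct and follows essentially the same approach as the paper's proof: use the injectivity of the order embedding $\psi\circ\phi$ to get $|\mathcal{F}_C|\leq |\mathcal{F}_{\RR_+^m}|=2^m$, then take logarithms. The paper's version is just the one-sentence form of your argument.
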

 \begin{proof}
	 If $C$ has a $\RR_+^m$-lift then $|\mathcal{F}_C|= |(\psi\circ\phi)(\mathcal{F}_C)|\leq 2^m$ since 
	 $\psi\circ \phi$ is injective and the cardinality of the Boolean lattice of order $m$ is $2^m$. 
 \end{proof}

\subsubsection{Obstructions from chains of faces}

%Order embeddings preserve chains and antichains. 
%There are also simple obstructions based on other features of posets. In particular, chains and 
%antichains are preserved by order embeddings. Since 
Any order embedding maps a chain of a certain length to a chain of the same length, giving the following 
basic obstruction to the existence of lifts. 
\begin{lemma}
\label{lem:chain-length}
	If $C$ is a convex body (of dimension at least one) and $C$ has a $K$-lift for a closed convex cone $K$, 
	then the longest chain of faces in $C$ is strictly smaller than the 
longest chain of faces in $K$.
\end{lemma}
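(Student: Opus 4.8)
The plan is to exploit the composite order embedding $\psi\circ\phi\colon\mathcal{F}_C\to\mathcal{F}_K$ constructed just above --- built from a lift $C=\pi(Q)$ with $Q=K\cap L$, with $\phi(F)=\pi^{-1}(F)\cap Q$, and with $\psi$ sending a face of $Q$ to the smallest face of $K$ containing it --- together with the single extra piece of cone geometry that the lineality space $\operatorname{lin}(K):=K\cap(-K)$ is a face of $K$ contained in every nonempty face of $K$. First observe that the longest chains of $\mathcal{F}_C$ and of $\mathcal{F}_K$ are finite, since a face properly contained in another face has strictly smaller dimension. Fix a longest chain $\mathcal{C}$ of $\mathcal{F}_C$ and let $F_0$ be its smallest nonempty member. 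Since $\eta:=\psi\circ\phi$ is an order embedding, $\eta(\mathcal{C})$ is a chain in $\mathcal{F}_K$ of the same length as $\mathcal{C}$, so the longest chain of $\mathcal{F}_K$ is already at least as long as that of $\mathcal{F}_C$. To sharpen this to a strict inequality I will show that $\operatorname{lin}(K)$ is a face of $K$ that does not occur in $\eta(\mathcal{C})$ but that can be inserted into it immediately below $\eta(F_0)$; this produces a chain of $\mathcal{F}_K$ one longer than $\eta(\mathcal{C})$, hence strictly longer than $\mathcal{C}$.

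The whole argument then comes down to the claim $\operatorname{lin}(K)\subsetneq\eta(F_0)$. The inclusion $\operatorname{lin}(K)\subseteq\eta(F_0)$ is automatic once we know $\operatorname{lin}(K)$ lies in every nonempty face of $K$, because $\eta(F_0)$ is a nonempty face of $K$: it contains $\phi(F_0)=\pi^{-1}(F_0)\cap Q$, which is nonempty since $F_0\subseteq C=\pi(Q)$. For the strictness, suppose $\eta(F_0)=\operatorname{lin}(K)$. Since $\eta(F_0)$ is by construction the smallest face of $K$ containing $\phi(F_0)$, this forces $\phi(F_0)\subseteq\operatorname{lin}(K)$; choose a point $q_0\in\phi(F_0)$. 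Then $q_0\in\operatorname{lin}(K)$, and $q_0\in L$ (as $\phi(F_0)\subseteq Q\subseteq L$). Now $q_0\in\operatorname{lin}(K)$ means translation by $-q_0$ is a symmetry of $K$, that is $K-q_0=K$; and $q_0\in L$ means $L-q_0$ is a linear subspace. Hence $Q-q_0=(K-q_0)\cap(L-q_0)=K\cap(L-q_0)$ is an intersection of two convex cones and is therefore a convex cone, so $C-\pi(q_0)=\pi(Q-q_0)$ is a convex cone as well. But $C-\pi(q_0)$ is a translate of the convex body $C$, hence compact of dimension $\dim C\ge1$, whereas the only compact convex cone is $\{0\}$ --- a contradiction. (The preliminary facts about $\operatorname{lin}(K)$ are routine: it is a face straight from the definition, using that $K$ is closed under addition; and if $F$ is a nonempty face with $p\in F$, then $p$ is the midpoint of $[0,2p]\subseteq K$, so $0\in F$, and then $0$ is the midpoint of $[-\lambda,\lambda]\subseteq K$ for each $\lambda\in\operatorname{lin}(K)$, so $\operatorname{lin}(K)\subseteq F$.)

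The contradiction at the end of the second paragraph is the only step I expect to require genuine thought. Its mechanism is worth stating: forcing the bottom face of the image chain to be $\operatorname{lin}(K)$ pins a point of the lift into $\operatorname{lin}(K)\cap L$, and translating everything by that point turns the affine slice $K\cap L$ into a genuine cone through the origin --- which cannot project onto a bounded set of positive dimension. This also handles the pointed case without separate treatment: if $\operatorname{lin}(K)=\{0\}$ then $0\in Q\subseteq L$, so $L$ is already a linear subspace, $Q=K\cap L$ is a cone, and the same contradiction applies. Everything else --- the existence of $\phi$ and $\psi$, their compatibility with inclusions, and the fact that order embeddings carry a chain to a chain of the same length --- is already available from the discussion preceding the lemma.
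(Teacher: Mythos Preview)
Your proof is correct and follows essentially the same route as the paper's: both use the composite order embedding $\psi\circ\phi$ and argue that the image chain misses the minimal nonempty face of $K$, which can then be inserted to produce a strictly longer chain. The paper's version is terser and phrased for pointed $K$ (it inserts the face $\{0\}$, relying on $0\notin L$); your version, using $\operatorname{lin}(K)$ and the translation argument, is the natural extension that covers non-pointed cones as well and in the pointed case collapses exactly to the paper's reasoning.
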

\begin{proof}
	Since $C$ is a convex body, it is compact. Since $C$ has dimension at least one and is compact, it is not a cone. 
	As such, if $C = \pi(K \cap L)$ then $0\notin L$. Therefore, any 
	chain of faces in $C$ is mapped, by the order embedding $\phi$, to a chain of faces of the 
	same length in $K$ that does not include the face $\{0\}$. In particular a longest chain of 
	faces in $C$ is mapped to a chain of faces in $K$ that is not of maximum length. 
\end{proof}
Since the empty set is always a face of a convex set, we focus the discussion on chains of non-empty faces. 
\begin{example}[Convex hull of a cardiod]
Consider the convex body 
	\[C = \textup{conv}\{(x,y)\in \RR^2\;:\; 4x^4 + 32y^4 + 13x^2y^2+18xy^2-4x^3-27y^2=0\},\]
	the convex 
hull of the cardiod shown on the right in Figure~\ref{fig:polar}. 
In Example~\ref{eg:cardiod-lift} we saw that this convex body has a $\mathcal{S}_+^3$-lift. 

The convex hull of a cardiod has a length three chain of non-empty faces given by 
\[ \{(-1,\textstyle{\frac{1}{\sqrt{2}}})\} \subseteq [ (-1,-\textstyle{\frac{1}{\sqrt{2}}}), (-1,\textstyle{\frac{1}{\sqrt{2}}})] \subseteq C.\]
From Lemma~\ref{lem:chain-length}, we know that if the convex hull of a cardiod is to 
	have a $K$-lift, then $K$ must have a chain of non-empty faces of length 
at least four. Since $\mathcal{S}_+^2$ does not have a chain of non-empty faces of length four, we can conclude that 
the convex hull of a cardiod does not have an $\mathcal{S}_+^2$-lift. In fact, for any smooth convex cone, 
the longest chain of non-empty faces has length three (since the only faces are $\{0\}$, the extreme rays, and 
the cone itself). As such, we can conclude that the convex hull of the cardiod does not have a $K$-lift where $K$ is a smooth 
convex cone. 
\end{example}

 Obstructions based on the length of chains of faces 
can be used to give simple lower bounds on the size of spectrahedral lifts of polytopes.
 \begin{corollary}[{\cite{GPTlifts}}]
 \label{cor:lbchain-poly}
 % Suppose that $C$ is a pointed, polyhedral cone. 
 % \begin{itemize}
 % 	\item If $C$ has a $\RR_+^d$-lift then $d\geq \textup{dim}(C)$.
 % 	\item If $C$ has a $\mathcal{S}_+^d$-lift then $d\geq \textup{dim}(C)$.
 % %	\item If $C$ has a $K$-lift where $K$ is a hyperbolicity cone associated with a
 % %	hyperbolic polynomial $p$, then $\textup{degree}(p) \geq \textup{dim}(C)$. 
 % \end{itemize}
 Any spectrahedral lift of a polytope $C$ has size at least $\textup{dim}(C)+1$.
%  \begin{itemize}
%  	\item If $C$ has a $\RR_+^d$-lift then $d\geq \textup{dim}(C)+1$.
%  	\item If $C$ has a $\mathcal{S}_+^d$-lift then $d\geq \textup{dim}(C)+1$.
%  %	\item If $C$ has a $K$-lift where $K$ is a hyperbolicity cone associated with a
%  %	hyperbolic polynomial $p$, then $\textup{degree}(p) \geq \textup{dim}(C)$. 
%  \end{itemize}
 \end{corollary}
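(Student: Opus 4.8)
The plan is to deduce this directly from Lemma~\ref{lem:chain-length}, by comparing the longest chains of non-empty faces of $C$ and of $\mathcal{S}_+^m$. Suppose $C = \pi(\mathcal{S}_+^m \cap L)$ is a spectrahedral lift of size $m$, and set $d := \dim(C)$; the case $d = 0$ (a point) is trivial, so assume $d \geq 1$.

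First I would check that the polytope $C$ has a chain of $d+1$ non-empty faces. Since every polytope of positive dimension has a facet, and every face of a polytope is itself a polytope, iterating produces a strictly decreasing chain $C = G_d \supsetneq G_{d-1} \supsetneq \cdots \supsetneq G_0$ with $\dim G_i = i$; equivalently, the face lattice of a $d$-dimensional polytope is graded by dimension, and so has a maximal chain of $d+1$ non-empty faces. Next I would recall the standard description of the faces of $\mathcal{S}_+^m$: they are in inclusion-preserving bijection with the linear subspaces of $\RR^m$, the face attached to a subspace $U$ being $\{X \in \mathcal{S}_+^m : \operatorname{range}(X) \subseteq U\}$. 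Hence a chain of non-empty faces of $\mathcal{S}_+^m$ is exactly a chain of subspaces $\{0\} = U_0 \subsetneq U_1 \subsetneq \cdots$, so the longest chain of non-empty faces of $\mathcal{S}_+^m$ has precisely $m+1$ terms (from $\{0\}$, through an extreme ray, up to $\mathcal{S}_+^m$) --- this is the fact already used implicitly in the example of the convex hull of the cardiod above, where the same count for $\mathcal{S}_+^2$ gave length three. Now Lemma~\ref{lem:chain-length}, in its non-empty-face form (applicable since $d \geq 1$), says the longest chain of non-empty faces of $C$ is strictly shorter than that of $\mathcal{S}_+^m$, that is $d+1 < m+1$, whence $m \geq d+1 = \dim(C)+1$, as claimed.

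The step requiring the most care is pinning down the longest chain of non-empty faces of $\mathcal{S}_+^m$, as it rests on the face-lattice structure of the psd cone (and on not miscounting the bottom face $\{0\}$); once this and the gradedness of polytope face lattices are in hand, the conclusion is immediate from Lemma~\ref{lem:chain-length}, so there is no real obstacle to overcome here.
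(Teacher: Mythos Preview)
Your proof is correct and follows essentially the same approach as the paper: compare the longest chain of non-empty faces of the polytope $C$ (length $\dim(C)+1$) with that of $\mathcal{S}_+^m$ (length $m+1$), and invoke Lemma~\ref{lem:chain-length} for the strict inequality. The only cosmetic difference is that you bound the chain length in $\mathcal{S}_+^m$ via the bijection between faces and subspaces of $\RR^m$, whereas the paper uses the equivalent observation that rank is constant on the relative interior of each face and strictly increases along chains.
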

 \begin{proof}
For a polytope $C$, the dimension strictly increases along chains of faces, and  
any maximal chain of non-empty faces has length $1+\textup{dim}(C)$.  
 On the other hand, the longest chain of non-empty faces in $\mathcal{S}_+^m$ has length at most $m+1$.
 This is because the rank (which is constant on the relative interior of faces of $\mathcal{S}_+^m$)
	 strictly increases along chains of faces, and the rank is zero on the face $\{0\}$.
\end{proof}

The polytopes $C$ that have spectrahedral lifts of size $\textup{dim}(C)+1$ are 
known as \emph{psd-minimial polytopes}~\cite{gouveia2013polytopes}. 
These are the polytopes for which the lower bound of Corollary~\ref{cor:lbchain-poly} is tight. 
Any $2$-level polytope (see Section~\ref{sec:k-level}) is psd-minimal. In particular, 
the stable set polytopes of perfect graphs from Section~\ref{sec:examples}, are examples of psd-minimal polytopes. There is a complete classification of psd-minimal polytopes in dimensions up to four~\cite{gouveia2017four},
obtained by a careful study of certain algebraic properties of their slack matrices. 

\subsubsection{Obstructions based on neighborliness}

We now consider more elaborate features of the facial structure of convex bodies that, if present, can 
rule out much larger classes of lifts. These features build on the idea of neighborly 
polytopes.
\begin{definition}
	A convex polytope $C$ is \emph{$k$-neighborly} if, for every subset of $k$ extreme points of $C$, 
	the convex hull of that set forms a face of $C$.
\end{definition}
Perhaps the most celebrated examples of neighborly polytopes are the cyclic polytopes~\cite{gale1963neighborly}. 
These are obtained by taking the convex hull of $v$ points on the moment curve $(t,t^2,\ldots,t^n)$.
Neighborly polytopes are extremal in the sense that among all $n$-dimensional polytopes with $v$ vertices, 
the neighborly polytopes have the largest number of faces in each dimension.
This property makes them central objects in polyhedral combinatorics. They also provide a 
geometric interpretation of key results in other contexts, such as compressive sensing~\cite{donoho2005sparse}.

% The following lower bound on the size of $\RR_+^d$-lifts of 
% neighborly polytopes is interesting because it can be stronger than the bound based on the logarithm of the number of faces
% and the bound based on the dimension. 
% \begin{theorem}[{\cite{fiorini2013combinatorial}}]
% \label{thm:knP}
% If $C$ is a $k$-neighborly polytope with $n$ vertices, and $C$ has an $\RR_+^d$-lift, then $d\geq \min\{\binom{k+2}{2},n\}$. 
% \end{theorem}
% Perhaps the most celebrated examples of neighborly polytopes are cyclic polytopes~\cite{gale1963neighborly}. 
% These are obtained by considering the convex hull of points on the moment curve. 
% As a concrete example, consider the regular trigonometric cyclic polytope
% \[ C_{n,k} = \left\{(\cos(\theta),\sin(\theta),\cos(2\theta),\sin(2\theta),\ldots,\cos(k\theta),\sin(k\theta))\;:\; 
% \textstyle{\theta\in\{0,\frac{2\pi}{n},\ldots,\frac{2\pi(n-1)}{n}\}}\right\}.\]
% This has $n$ vertices and is $k$-neighborly. If $n=k^2$, Theorem~\ref{thm:knP} tells us that if $C_{k^2,k}$ has a $\RR_+^d$-lift 
% then $d\in \Omega(k^2)$. If, instead, we consider spectrahedral lifts, it is known that $C_{k^2,k}$ has a $\cS_+^d$-lift
% with $d\in O(k\log(k))$~\cite{fawzi2016sparse}. 
Any face of a polytope is exposed by a linear functional. As such, a convex polytope $C$ 
is $k$-neighborly if, and only if, for every $I\subseteq \textup{ext}(C)$ with $|I|=k$, there is a 
linear functional $\ell_I\in C^\circ$ that exposes exactly the extreme points in $I$ 
(i.e., $\ell_I(x) = 1$ for $x\in I$ and $\ell_I(x)<1$ for $x\in \textup{ext}(C)\setminus I$).  
We can generalize this definition to non-polyhedral convex bodies,
and to strict subsets of the extreme points. This property turns out to provide 
an obstruction to certain lifts of non-polyhedral convex bodies.
%now consider obstructions to certain lifts of non-polyhedral convex bodies
%based on the following generalization of neighborliness.
\begin{definition}
	\label{def:knV}
 	A convex body $C$ is \emph{$k$-neighborly with respect to $V\subseteq \textup{ext}(C)$} if, for every $I\subset V$ with $|I|=k$
	there exists $\ell_I\in C^\circ$ such that $\ell_I(x) = 1$ for $x\in I$ and $\ell_I(x) < 1$ for all $x\in V\setminus I$. 
\end{definition}
One can check that a $k$-neighborly polytope is $k$-neighborly with respect to its entire set of extreme points. 
However, this definition is most interesting beyond the polyhedral setting. 
\begin{example}
	\label{eg:sdptr1}
	The set of $3\times 3$ positive semidefinite matrices with trace one, sometimes called the $3\times 3$ \emph{spectraplex}, is $2$-neighborly with respect to the (countably infinite) set of extreme points
\[ V = \left\{v_pv_p^\top\;:\;\textup{$p$ is an integer}\right\}\quad\textup{where}\quad
	v_p = \frac{1}{\sqrt{1+p^2+p^4}}\begin{bmatrix} 1 \\p \\p^2\end{bmatrix}.\]
	%\begin{matrix}\begin{bmatrix} 1 & p & p^2\end{bmatrix}\\
	%\phantom{\begin{bmatrix} 1 & i & i^2\end{bmatrix}}\\
	%\phantom{\begin{bmatrix} 1 & i & i^2\end{bmatrix}}\end{matrix}.\]
To see this we choose any pair $\{i,j\}$ of integers and define the linear functional $\ell_{\{i,j\}}$ in the polar of the spectraplex by 
%	\[ \ell_{\{i,j\}}\left(\begin{bmatrix} X_{11} & X_{12} & X_{13}\\X_{12} & X_{22} & X_{23}\\X_{13}& X_{23} & X_{33}\end{bmatrix}\right) = (X_{11} + X_{22} + X_{33}) -  \begin{matrix}\begin{bmatrix} ij & -(i+j) & 1\end{bmatrix}\\
%\phantom{\begin{bmatrix} ij & -(i+j) & 1\end{bmatrix}}\\
%\phantom{\begin{bmatrix} ij & -(i+j) & 1\end{bmatrix}}\end{matrix}
%\begin{bmatrix} X_{11} & X_{12} & X_{13}\\X_{12} & X_{22} & X_{23}\\X_{13}& X_{23} & X_{33}\end{bmatrix}
% \begin{bmatrix} ij \\ -(i+j)\\1\end{bmatrix}.\]
	\[ \ell_{\{i,j\}}(X) = \tr(X) - w_{ij}^\top Xw_{ij} \quad\textup{where}\quad w_{ij} = \begin{bmatrix} ij\\-(i+j)\\1\end{bmatrix}.\]
		Since $w_{ij}^\top v_p = (p-i)(p-j)/\sqrt{1+p^2+p^4}$, it follows that
$\ell_{\{i,j\}}(v_pv_p^\top) = 1 - \frac{1}{1+p^2+p^4}(p-i)^2(p-j)^2$
which clearly satisfies Definition~\ref{def:knV}.
By a similar argument, the $(k+1)\times (k+1)$ spectraplex is 
$k$-neighborly with respect to an
infinite set of extreme points.
\end{example}

Convex bodies with neighborliness properties arise very naturally when considering convex reformulations
of polynomial optimization problems from the point of view taken in Section~\ref{sec:introduction}. 
If $S\subseteq\RR^n$ is compact and full-dimensional, we have seen that optimization of 
polynomials of degree $2d$ over $S$ can be rephrased as linear optimization over the convex body
\[ C = \textup{cl}\,\textup{conv}\{(x^\alpha)_{|\alpha|\leq 2d}\;:\; x\in S\}.\]
The extreme points all have the form $(x^\alpha)_{|\alpha|\leq 2d}$ for $x\in S$. 
The polar, $C^\circ$, is exactly the set of coefficients of polynomials of degree $2d$ 
that take value at most one on $S$. 
Averkov~\cite{averkov2019optimal}, in a considerable generalization of 
Example~\ref{eg:sdptr1}, showed that for any positive integer $M \geq \left(\binom{n+d}{n}-1\right)$,
there is a subset $V_M$ of extreme
points of $C$ with $|V_M|=M$, and such that $C$ is $\left(\binom{n+d}{n}-1\right)$-neighborly 
with respect to $V_M$. In other words, $C$ is $\left(\binom{n+d}{n}-1\right)$-neighborly with 
respect to arbitrarily large finite subsets of extreme points. 
%This is, roughly, because for a generic collection of $M$
%points in $K$, and any subset $I$ of the points of size ???, there is a degree
%$d$ polynomial $p_I$ that is nonnegative on $K$, and vanishes exactly on the
%points in $I$. 

% It turns out that being $k$-neighborly with respect to an infinite set of
% extreme rays is an obstruction to the existence of certain kinds of
% low-complexity lifts that are of particular interest in practice. 

Being $k$-neighborly with respect to arbitrarily large finite sets of extreme points turns out to 
be an obstruction to $K$-lifts where $K$ is a Cartesian product of `low-complexity' convex cones. 
The first results in this direction 
considered the expressive power of second-order cone lifts~\cite{fawzi2018representing}. 
Recall that these are $K$-lifts in which $K$ is a finite 
Cartesian product of second-order cones, i.e., cones of the form
\[ \mathcal{L}_+^{\ell+1} = \{(x_0,x)\in \RR\times \RR^{\ell}\;:\; \|x\| \leq x_0\}.\]
\begin{theorem}[{\cite{fawzi2018representing}}]
	\label{thm:fawzi-socp}
 	Let $C$ be a convex body that is $2$-neighborly 
	with respect to arbitrarily large finite sets of extreme points. 
	Then $C$ does not have a second-order cone lift.
\end{theorem}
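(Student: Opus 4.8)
The plan is to turn a hypothetical second-order cone lift of $C$ into a factorization of the slack operator through a \emph{self-dual} product of second-order cones, and then to contradict $2$-neighborliness by a pigeonhole-and-Ramsey argument that strips off one second-order cone factor at a time. Suppose $C=\pi(K\cap L)$ with $K=\prod_{i=1}^r\mathcal{L}_+^{\ell_i+1}$. Replacing $K$ by the minimal face meeting $L$ — a face of a product of second-order cones is again such a product, since the faces of $\mathcal{L}_+^{\ell+1}$ are $\{0\}$, its extreme rays (each a copy of $\RR_+=\mathcal{L}_+^1$), and the whole cone — we may assume the lift is proper. Products of second-order cones are self-dual, so the factorization theorem (Theorem~\ref{thm:GPTliftstheorem}), in the form of Theorem~\ref{thm:GPTliftstheorem22} with the self-dual cone $K$ in place of $\S^m_+$, yields a map $A=(A_1,\dots,A_r)\colon\ext(C)\to K$ such that for every $\ell\in C^{\circ}$ there is $B(\ell)=(B_1(\ell),\dots,B_r(\ell))\in K$ with $1-\ell(x)=\sum_{i=1}^r\langle A_i(x),B_i(\ell)\rangle$ for all $x\in\ext(C)$; since each $\mathcal{L}_+^{\ell_i+1}$ is self-dual, every summand is nonnegative.

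\textbf{A geometric fact.} The engine is an elementary observation: if $u=(u_0,\bar u)$ and $v=(v_0,\bar v)$ lie in $\mathcal{L}_+^{\ell+1}$, are both nonzero, and satisfy $\langle u,v\rangle=0$, then $\ell\ge 1$, both $u$ and $v$ lie on the boundary of the cone, and there is a unit vector $d\in\RR^{\ell}$ with $u=u_0(1,d)$ and $v=v_0(1,-d)$. This follows from $0=u_0v_0+\bar u\cdot\bar v\ge u_0v_0-\|\bar u\|\,\|\bar v\|\ge 0$, which forces $\|\bar u\|=u_0$, $\|\bar v\|=v_0$ and $\bar u/u_0=-\bar v/v_0$. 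For a single factor ($r=1$) this already finishes quickly: evaluating the factorization identity at $x_j$ and $x_k$ of a $2$-neighborliness certificate $\ell$ forces $\langle A_1(x_j),B_1(\ell)\rangle=\langle A_1(x_k),B_1(\ell)\rangle=0$, so (as $A_1$ is nowhere zero and $B_1(\ell)\ne0$) all vectors $A_1(x)$ share one direction $d$, hence $B_1(\ell)$ is a multiple of $(1,-d)\perp(1,d)$, so $1-\ell(x)\equiv 0$, contradicting $\ell(x_m)<1$ for $x_m$ outside the pair.

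\textbf{The induction.} The main step is to prove, by induction on $r$, that there is a threshold $N_0(r)$ so that the following data cannot coexist: a convex body $C$; a set $V=\{x_1,\dots,x_N\}\subseteq\ext(C)$ with $N\ge N_0(r)$; for each pair $\{j,k\}$ a form $\ell_{jk}\in C^{\circ}$ with $\ell_{jk}=1$ on $\{x_j,x_k\}$ and $\ell_{jk}<1$ on $V\setminus\{x_j,x_k\}$; and maps $A_i\colon V\to\mathcal{L}_+^{\ell_i+1}$ with vectors $B_i(\ell_{jk})\in\mathcal{L}_+^{\ell_i+1}$ realizing $1-\ell_{jk}(x_m)=\sum_{i=1}^r\langle A_i(x_m),B_i(\ell_{jk})\rangle$ for all $m$. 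Granting this, the theorem follows: a second-order cone lift fixes $r$ and (by the setup) produces the $A_i$, while $2$-neighborliness with respect to arbitrarily large finite sets supplies a $V$ with $N\ge N_0(r)$ and the certificates $\ell_{jk}$ of Definition~\ref{def:knV}. For $r=0$ the identity reads $1-\ell_{jk}(x_m)=0$, contradicting $\ell_{jk}(x_m)<1$ for some $m$ once $N_0(0)\ge 3$. For $r\ge 1$ put $N_0(r)=2^r R_r(N_0(r-1))$, where $R_r(t)$ is the $r$-colour Ramsey number forcing a monochromatic $t$-clique. Evaluating the identity at $x_j$ and $x_k$ and using nonnegativity of the summands gives $\langle A_i(x_j),B_i(\ell_{jk})\rangle=\langle A_i(x_k),B_i(\ell_{jk})\rangle=0$ for every $i$. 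Partition $V$ by the support $U_x=\{i:A_i(x)\ne0\}$; one class $W$ has $|W|\ge N/2^r\ge R_r(N_0(r-1))\ge 3$ and constant support $U\ne\varnothing$. For each pair $\{j,k\}\subseteq W$ there is a \emph{resolving} index $i(j,k)\in U$ with $B_{i(j,k)}(\ell_{jk})\ne0$: otherwise, for every $m\in W$, each summand in the identity for $\ell_{jk}$ vanishes ($i\in U$ gives $B_i=0$, $i\notin U$ gives $A_i(x_m)=0$), forcing $1-\ell_{jk}(x_m)=0$ on $W$, impossible. Since $i(j,k)\in U$ and $j,k\in W$, both $A_{i(j,k)}(x_j)$ and $A_{i(j,k)}(x_k)$ are nonzero, so the geometric fact applies to the two orthogonality relations at $i(j,k)$: $\ell_{i(j,k)}\ge 1$ and $A_{i(j,k)}(x_j),A_{i(j,k)}(x_k)$ are boundary points with the same spatial direction. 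Colour each pair $\{j,k\}\subseteq W$ by $i(j,k)\in U$ (at most $r$ colours) and apply Ramsey to obtain $W'\subseteq W$ with $|W'|=N_0(r-1)$ all of whose pairs carry one colour $i^*$. Then all $A_{i^*}(x)$, $x\in W'$, are positive multiples of a single boundary vector $(1,d)$ with $\|d\|=1$; each $B_{i^*}(\ell_{jk})$ with $\{j,k\}\subseteq W'$ is a positive multiple of $(1,-d)$; hence $\langle A_{i^*}(x_m),B_{i^*}(\ell_{jk})\rangle=0$ for every $m\in W'$, since $(1,d)\perp(1,-d)$. Thus factor $i^*$ drops out of the identity on $W'$, and the data restricted to $W'$ with factor $i^*$ deleted satisfies the hypotheses for $r-1$ with $|W'|=N_0(r-1)$, contradicting the inductive hypothesis.

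\textbf{Main obstacle.} The single-factor case is essentially immediate; the difficulty, addressed by the two-stage pigeonhole/Ramsey reduction, is to handle many factors at once — in particular to rule out that different pairs of extreme points are accommodated by different coordinates, and to control the degenerate possibility that some $A_i(x)$ vanishes. This is exactly why the hypothesis asks for $2$-neighborliness with respect to \emph{arbitrarily large} finite sets rather than one fixed set: the size $N_0(r)$ needed to force the contradiction grows with the (a priori unknown) number of second-order cone factors in the lift.
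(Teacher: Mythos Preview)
The survey paper does not itself prove this theorem; it only states the result with a citation to \cite{fawzi2018representing}, so there is no in-paper proof to compare against. That said, your argument is correct and is essentially the approach of the cited source: reduce to a proper lift into a self-dual product of Lorentz cones (using that every face of such a product is again of the same form), obtain a $K$-factorization of the slack operator, and then run an induction on the number $r$ of factors. The two ingredients you isolate are exactly the right ones: (i) the rigidity of orthogonality in a single Lorentz cone (two nonzero orthogonal elements must be antipodal boundary rays), and (ii) a pigeonhole step on supports followed by an $r$-colour Ramsey step on the ``resolving index'' to force one factor $i^\ast$ to accommodate all pairs from a large subset, at which point the $i^\ast$-th summand vanishes identically on that subset and the factor can be deleted. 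Your bound $N_0(r)=2^r R_r(N_0(r-1))$ with $N_0(0)=3$ is adequate, and the verification that the resolving index exists (needing $|W|\ge 3$) and that the direction $d$ is common to all $A_{i^\ast}(x_m)$ on $W'$ (by chaining pairs through a common vertex) is sound.
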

Since the $3\times 3$ spectraplex is $2$-neighborly with respect 
to an infinite set of extreme points, it does not have a second-order cone lift. 
It follows (by a homogenization argument) that $\mathcal{S}_+^3$ does not 
have a second-order cone lift. 

The second order cone $\mathcal{L}_+^{\ell+1}$ has a $(\cS_+^2)^{\ell}$-lift.
%\textcolor{blue}{lift consisting of 
%a finite product of $2\times 2$ positive semidefinite cones.}
%It is the 
%projection onto the variables $(x_0,x)$ of
%\[ \left\{(x_0,x,z)\in \RR\times \RR^m\times\RR^{m-2}\;:\; \begin{bmatrix} x_\begin{bmatrix} x_{i-1}-x_{i} & x_{i+1}\\x_{i+1} & x_{i-1} + x_{i}\end{bmatrix} \psd 0\;\;\textup{for $i=1,2,\ldots,m$}\right\}.\]
	As such, Theorem~\ref{thm:fawzi-socp}
	can also be thought of as giving an obstruction to $K$-lifts where 
	$K$ is a finite Cartesian product of $2\times 2$ positive semidefinite
	cones. Averkov~\cite{averkov2019optimal} extended Theorem~\ref{thm:fawzi-socp} to show that if $m$ is a positive
integer and $C$ is $k$-neighborly with respect to arbitrarily large finite sets of extreme points then 
$C$ does not have a $(\cS_+^k)^m$-lift. This allowed him to show that many convex bodies associated
with convex approaches to polynomial optimization do not have lifts using small positive semidefinite blocks. 

The property that $C$ is $k$-neighborly with respect to arbitrarily large
finite sets of extreme points is really a property of the face lattice of $C$.
As such, we might expect that such a $k$-neighborliness property is an
obstruction to having a $K$-lift for a class of cones $K$ defined purely by
properties of its faces.
\begin{theorem}[{\cite{saunderson2019limitations}}]
\label{thm:nbsau}
Let $m$ be a positive integer and let $K_1,\ldots,K_m$ be proper convex cones such that, for $i=1,2,\ldots,m$, 
the length of the longest chain of non-empty faces of $K_i$ is at most $k+1$. If $C$ is a convex body that 
is $k$-neighborly with respect to arbitrarily large finite sets of extreme points, 
then $C$ does not have a $K_1\times K_2\times \cdots \times K_m$-lift.
\end{theorem}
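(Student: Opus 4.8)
The plan is to derive a contradiction from the existence of a $K_1\times\cdots\times K_m$-lift by exploiting the factorization theorem (Theorem~\ref{thm:GPTliftstheorem}) together with a pigeonhole-style argument over the finitely many faces of the $K_i$. Suppose $C$ has a $K_1\times\cdots\times K_m$-lift. By Theorem~\ref{thm:GPTliftstheorem} (and passing to a face to make the lift proper, which only shortens the chains of faces of each factor), we obtain a factorization of the slack operator $s_C$ through $K := K_1\times\cdots\times K_m$: maps $A:\ext(C)\to K$ and $B:\ext(C^\circ)\to K^*$ with $s_C(\ell,x)=\langle A(x),B(\ell)\rangle$. Writing $A(x)=(A_1(x),\ldots,A_m(x))$ with $A_i(x)\in K_i$, and similarly $B(\ell)=(B_1(\ell),\ldots,B_m(\ell))$, we get $1-\ell(x)=\sum_{i=1}^m \langle A_i(x),B_i(\ell)\rangle$, a sum of $m$ nonnegative terms.

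The key step is the following combinatorial observation, which generalizes the argument behind Lemma~\ref{lem:chain-length}. For a fixed $x_0\in\ext(C)$, consider the minimal face $F_i(x_0)$ of $K_i$ containing $A_i(x_0)$. For any linear functional $\ell\in C^\circ$ with $\ell(x_0)=1$, the corresponding summand vanishes: $\langle A_i(x_0),B_i(\ell)\rangle=0$ for all $i$; since $A_i(x_0)$ lies in the relative interior of $F_i(x_0)$ and $B_i(\ell)\in K_i^*$, this forces $B_i(\ell)$ to lie in the conjugate face $F_i(x_0)^\diamond$. Now take a finite set $V$ of extreme points of $C$ with $|V|=N$, where $C$ is $k$-neighborly with respect to $V$; by the $k$-neighborliness hypothesis, for each $k$-subset $I\subseteq V$ there is $\ell_I\in C^\circ$ exposing exactly $I$ among $V$. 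I would analyze, for each fixed pair $x\in V$, the chain of faces of some $K_i$ — the idea is that if $A_i(x)$ and $A_i(x')$ both contribute to exposing points while the $\ell_I$'s separate $x,x'$, one is forced to build a chain of non-empty faces in some single $K_i$ that is too long (length exceeding $k+1$), contradicting the hypothesis. Concretely, using that $\ell_I(x)=1$ for $x\in I$ but $\ell_I(x)<1$ for $x\in V\setminus I$, one can, by a Ramsey/pigeonhole argument on $N\to\infty$, extract a subset of $V$ and a single index $i$ along which the faces $F_i(x)$ (as $x$ ranges over this subset, together with proper faces arising from the strict inequalities) form a strictly increasing chain of length $k+2$ in $K_i$, which is the desired contradiction.

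Let me sketch the chain extraction more carefully, since that is the heart of the matter. Fix two points $x,x'\in V$ and the functional $\ell_{\{x\}}$ exposing only $x$; then $\ell_{\{x\}}(x)=1$ and $\ell_{\{x\}}(x')<1$, so $1-\ell_{\{x\}}(x')=\sum_i\langle A_i(x'),B_i(\ell_{\{x\}})\rangle>0$, meaning $\langle A_i(x'),B_i(\ell_{\{x\}})\rangle>0$ for some $i$, while $\langle A_i(x),B_i(\ell_{\{x\}})\rangle=0$ for all $i$. This shows $A_i(x')\notin F_i(x)$ for that $i$, i.e.\ the minimal faces $F_i(x)$ detect distinctness. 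The plan is to iterate: with $N$ large, use the pigeonhole principle to find $k+2$ points of $V$ and a single coordinate $i$ such that the associated faces of $K_i$ are pairwise comparable and distinct, then show this contradicts the bound $k+1$ on the length of the longest chain of non-empty faces of $K_i$. I expect the main obstacle to be precisely this extraction — ensuring one can always route enough of the "neighborliness witnesses" through a single coordinate block $K_i$ so as to build a chain that is one longer than allowed. This is where the quantitative strength of "arbitrarily large finite sets of extreme points" is essential: one needs $N$ (hence the number of $k$-subsets) large enough relative to $m$ and the bounded chain lengths to force the pigeonhole. I would model this on the combinatorial core of Theorem~\ref{thm:fawzi-socp} and its extension by Averkov, replacing the specific structure of $2\times2$ psd cones (or $k\times k$ blocks) by the abstract hypothesis on chain lengths, since the only property of those cones actually used is the bound on the length of chains of non-empty faces.
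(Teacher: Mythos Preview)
The paper does not prove this theorem; it is only cited from \cite{saunderson2019limitations}. So there is nothing in the paper to compare against, and I assess your sketch on its own.

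Your setup is correct: the factorization theorem gives block maps $A_i,B_i$, and the key orthogonality observation---that $B_i(\ell_I)$ annihilates the minimal face $F_i^I$ of $K_i$ containing $\{A_i(x):x\in I\}$, hence $A_i(y)\notin F_i^I$ for some $i$ whenever $y\in V\setminus I$---is exactly what drives the argument.

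There are two gaps. The minor one: in your third paragraph you invoke a functional $\ell_{\{x\}}$ exposing a single point, but the hypothesis only supplies functionals for $k$-subsets. You need to first argue (for instance by averaging $\ell_I$ over all $k$-supersets $I\supseteq J$) that $k$-neighborliness with respect to $V$ implies $j$-neighborliness with respect to $V$ for every $j\le k$.

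The substantive gap is the chain extraction, which you correctly flag but do not resolve. Your proposed mechanism---finding $k+2$ points whose \emph{individual} faces $F_i(x)$ are pairwise comparable in a single coordinate---does not work: for distinct $x$ the faces $F_i(x)$ have no reason to be comparable (think of rank-one matrices in $\mathcal S_+^k$, which generate pairwise incomparable extreme rays). The objects that \emph{are} automatically comparable are the faces $F_i^S$ for \emph{nested} subsets $S$, since $S\subseteq T$ forces $F_i^S\subseteq F_i^T$. The chain-length bound on $K_i$ then gives a Carath\'eodory-type statement: for any finite $W$ there is $S\subseteq W$ with $|S|\le k$ and $F_i^S=F_i^W$ (greedily add points that strictly enlarge the face; the resulting chain of nonempty faces has length at most $k+1$, so at most $k$ additions are possible). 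The actual argument combines this spanning lemma with an iteration across the $m$ blocks to produce, for $|V|$ large enough, a $k$-subset $I$ and a point $y\notin I$ with $A_i(y)\in F_i^I$ for \emph{every} $i$ simultaneously; this forces $1-\ell_I(y)=\sum_i\langle A_i(y),B_i(\ell_I)\rangle=0$, contradicting $\ell_I(y)<1$. Your sketch recognizes that a pigeonhole over the $m$ blocks is needed but does not supply one, and an unspecified Ramsey step will not do the job: the target is not a monochromatic substructure but a single $k$-set that spans in all coordinates at once.
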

The length of the longest chain of non-empty faces of the $k\times k$ positive
semidefinite cone is $k+1$. As such, Theorem~\ref{thm:fawzi-socp}, and Averkov's extension, are special
cases of Theorem~\ref{thm:nbsau} where we take $K_1 = K_2 = \cdots = K_m =
\mathcal{S}_+^k$.  Other specializations of Theorem~\ref{thm:nbsau} tell us, for instance, that
\begin{itemize}
	\item the $3\times 3$ positive semidefinite cone has no $K$-lift where 
		$K$ is a finite Cartesian product of smooth convex cones, because any smooth cone only
has chains of non-empty faces of length at most three; and
\item the $4\times 4$ positive semidefinite cone has no lifted representation 
	using a finite Cartesian product of exponential cones, 
		$\mathcal{K}_{\exp} = \textup{cl}\{(x,y,t)\;:\; y > 0,\; y\exp(x/y) \leq t\}$, 
		since the longest chain of non-empty 
	faces in any three-dimensional cone is four.
\end{itemize}
This last example is of interest due to recent developments in 
tractable relaxations of polynomial optimization
problems based on geometric programming~\cite{chandrasekaran2016relative,dressler2017positivstellensatz}. The
connection arises because geometric programs can, themselves, be reformulated
as conic optimization problems over Cartesian products of exponential cones.

\subsection{Algebraic obstructions}
\label{sec:obs-alg}

We have so far discussed obstructions to the existence of lifts based on combinatorial properties of the face lattice of $C$. In this subsection we discuss \emph{algebraic obstructions} to the existence of lifts.

%The Goemans bound (Proposition \ref{prop:goemans}) gives an obstruction to the existence of polyhedral lifts in terms of the number of extreme points of a polytope $C$. We are now interested in finding a similar obstruction to the existence of spectrahedral lifts, for general convex sets $C$ (i.e., not necessarily polyhedral). The number of extreme points cannot alone constitute an obstruction to the existence of spectrahedral lifts as one can easily find spectrahedra that have an infinite number of extreme points (e.g., the unit ball). We will need another invariant to get an obstruction to spectrahedral lifts. The invariant we will talk about here is the degree of the \emph{algebraic boundary of $C$} which we introduce next.

%\paragraph{Semialgebraic sets}

\subsubsection{Preliminaries}
\label{sec:obs-semi-prelim}

A \emph{semialgebraic} subset of $\RR^n$ is a set described using Boolean combinations (finite unions, intersections and complementations) of sets of the form
\begin{equation}
\label{eq:semialgebraic}
\{x \in \RR^n : f_i(x) = 0, g_j(x) \geq 0, \forall i \in I, j \in J\}
\end{equation}
where $f_i, g_j$ are polynomials with real coefficients, and $I$ and $J$ are finite index sets. Most subsets of $\RR^n$ we have seen in this article are semialgebraic subsets. For example any \emph{spectrahedron} $S = \{x \in \RR^n : A_0 + x_1 A_1 + \dots + x_n A_n \psd 0\}$ is semialgebraic, since the constraint that a matrix is positive semidefinite can be expressed using polynomial inequalities in the entries of the matrix. A celebrated result of Tarski shows that the projection of any semialgebraic set is semialgebraic.
\begin{theorem}[Tarski]
\label{thm:tarski}
Let $S \subset \RR^\ell$ be a semialgebraic set and $\pi:\RR^\ell \rightarrow \RR^n$ be a linear map. Then $\pi(S)$ is semialgebraic.
\end{theorem}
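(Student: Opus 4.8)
The plan is to prove the statement by quantifier elimination for the theory of real closed fields, following the classical Tarski--Seidenberg approach; projecting a semialgebraic set is exactly the geometric avatar of eliminating an existential quantifier. First I would observe that it suffices to handle the case $\ell = n+1$ and $\pi$ the coordinate projection $(x_1,\dots,x_n,x_{n+1}) \mapsto (x_1,\dots,x_n)$ that forgets the last coordinate; a general linear map factors (after a linear change of coordinates in the source, which preserves semialgebraicity since it is a polynomial bijection) through a sequence of such elementary projections composed with a linear surjection onto a coordinate subspace, and linear images of coordinate subspaces are again coordinate projections up to an invertible linear map on the target. So the whole theorem reduces to: the projection that deletes one variable sends semialgebraic sets to semialgebraic sets.

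Next I would reduce to a normal form. A semialgebraic set $S \subseteq \RR^{n+1}$ is a finite Boolean combination of sets $\{g(x,t) \geq 0\}$ and $\{f(x,t)=0\}$ (writing $t$ for the last coordinate), and every such Boolean combination can be put in disjunctive normal form: $S = \bigcup_k S_k$ where each $S_k$ is described by finitely many conditions of the form $p(x,t) \,\square\, 0$ with $\square \in \{=, \neq, >, <\}$. Since $\pi\big(\bigcup_k S_k\big) = \bigcup_k \pi(S_k)$ and finite unions of semialgebraic sets are semialgebraic, it is enough to show $\pi(S_k)$ is semialgebraic for a single such ``basic'' set. Thus the core statement is: given finitely many polynomials $p_1,\dots,p_r \in \RR[x_1,\dots,x_n][t]$ and a sign condition $\sigma$, the set $\{x \in \RR^n : \exists\, t\in \RR,\ \bigwedge_i \mathrm{sign}(p_i(x,t)) = \sigma_i\}$ is semialgebraic.

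The heart of the argument is the elimination step itself, and this is where all the work lies. The key tool is that whether a prescribed sign condition on a finite list of univariate real polynomials is realizable at some real $t$, and on which intervals between consecutive real roots it holds, is governed by the signs of the leading coefficients of the $p_i$ together with the signs of a finite list of iterated \emph{subresultants} (or equivalently, the data produced by running a symbolic Sturm/Euclidean algorithm on the $p_i$ and their derivatives). These subresultant coefficients are themselves polynomials in $x_1,\dots,x_n$. One shows that the parameter space $\RR^n$ decomposes into finitely many semialgebraic pieces, on each of which the degrees of the $p_i$ in $t$, and the signs of all the relevant subresultants, are constant; on each such piece the root structure of the system in $t$ is combinatorially fixed, so whether the sign condition $\sigma$ is realized by some $t$ becomes a fixed Boolean function of those (constant) signs. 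Hence the projection, intersected with each piece, is either the whole piece or empty, and the projection is a finite union of semialgebraic sets. The main obstacle — and the technically heaviest part — is precisely this case analysis: verifying that the subresultant sequence really does determine the number of real roots and the sign of each $p_i$ on each root interval, uniformly in the parameters, including degenerate cases where leading coefficients vanish. I would not reproduce this computation here; it is the content of the classical Tarski--Seidenberg theorem, for which I would refer the reader to a standard source such as \cite{BPTSIAMBook} (or Bochnak--Coste--Roy), and simply record that the theorem follows.
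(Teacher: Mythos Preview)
The paper does not prove this theorem at all: it is stated as ``a celebrated result of Tarski'' and immediately used as a black box, with no argument supplied. So there is nothing to compare your attempt against.

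That said, your outline is the standard Tarski--Seidenberg route and is correct in its architecture: reduce to forgetting one coordinate, put the set in disjunctive normal form, and then eliminate the last variable by observing that realizability of a sign condition on a family of univariate polynomials is controlled by the signs of finitely many subresultant polynomials in the parameters. You are also right that the technically heavy part is the uniform control of the root structure (including the degenerate leading-coefficient cases), and deferring that to a standard reference such as Bochnak--Coste--Roy is the appropriate move in a survey context. One small remark: your reduction of a general linear map to coordinate projections is slightly over-argued; it is enough to note that $\pi$ factors as a linear isomorphism of $\RR^\ell$ followed by a coordinate projection $\RR^\ell \to \RR^n$, and linear isomorphisms manifestly preserve semialgebraicity.
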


\if0
This result very powerful and highly nontrivial as we illustrate below.
\begin{example}[Deciding nonnegativity of a polynomial]
Consider the set $S = \{ (p,x) \in \RR[x_1,\ldots,x_n]_{\leq d} \times \RR^n : p(x) < 0\}$, which we can regard as a subset of $\RR^{\dim \RR[\mathbf{x}]_{\leq d} + n}$. Clearly $S$ is semialgebraic. Consider the projection $\pi(S) \subset \RR[\mathbf{x}]_{\leq d}$ on the first component $p$. Then $\pi(S)$ is precisely the set of polynomials that take negative value. By Tarski's theorem, $\pi(S)$ is semialgebraic and thus can be described using a \emph{finite} Boolean combination of polynomial equalities and inequalities \eqref{eq:semialgebraic}. In other words, this tells us that one can decide global nonnegativity of a polynomial by simply checking a finite number of equalities and inequalities in its coefficients.
\end{example}
\fi

A consequence of Tarski's theorem is that  \emph{if $C \subset \RR^n$ has a spectrahedral lift, then $C$ must be semialgebraic.} This already gives us one obstruction to the existence of spectrahedral lift. For example it tells us that a set like $\{(x,y) \in \RR^2 : y \geq \exp(x)\}$ (which is not semialgebraic) cannot have a spectrahedral lift of finite size.

%In what follows we will restrict our attention to semialgebraic sets, and we will try to understand some properties of semialgebraic sets that give us some quantitative information about the existence of spectrahedral lifts.

%Algebraic boundary 

\subsubsection{Degree bounds}

Let $C$ be a convex semialgebraic set.  Define the (topological) boundary of $C$ as $\partial C = (\cl C) \setminus (\interior C)$. Assuming that our convex set $C$ is full-dimensional in $\RR^n$, it can be shown that the boundary is a semialgebraic set of dimension $n-1$ \cite{sinn}, i.e., it is a hypersurface. This means that there exists a nonzero polynomial $p \in \RR[x_1,\ldots,x_n]$ such that $\partial C \subset \Z(p)$ where $\Z(p) = \{x \in \RR^n : p(x) = 0\}$ is the zero set of $p$. The set $\Z(p)$ is known as the \emph{Zariski closure} of $\partial C$. The \emph{degree} of $\partial C$ is the smallest degree of a polynomial $p$ such that $\partial C \subset \Z(p)$; and the \emph{algebraic boundary} of $C$ is defined as $\Z(p)$. Below we give several examples of convex sets for which we describe the algebraic boundary and the corresponding degree.

\begin{example}[Disk]
Consider the convex set $C = \{(x,y) \in \RR^2 : x^2 + y^2 \leq 1\}$ which has the $2\times 2$ spectrahedral representation
\[
C = \left\{ (x,y) \in \RR^2 : \begin{bmatrix} 1-x & y\\ y & 1+x \end{bmatrix} \psd 0 \right\}.
\]
The topological boundary of this convex set is the circle $\{(x,y) \in \RR^2 : x^2 + y^2 = 1\}$ which is described by a polynomial of degree two. Thus the degree of $\partial C$ is equal to two.
\end{example}

\begin{example}[Oval]
Consider the two-dimensional convex set defined by
\begin{equation}
\label{eq:ovalC}
C = \left\{ (x,y) \in \RR^2 : \begin{bmatrix} x & 0 & y\\
0 & 1 & -x\\
y & -x & 1\end{bmatrix} \psd 0 \right\}
\end{equation}
which corresponds to the blue oval depicted in Figure \ref{fig:oval}. The smallest degree of a polynomial that vanishes on the boundary of $C$ is 3; an example of such a polynomial is $p(x,y) = x - x^3 - y^2 = 0$. The zero set of $p(x,y)$ is shown with a black thick line in Figure \ref{fig:oval}. We see that it has two components. In this example the topological boundary and algebraic boundary are distinct.

\begin{figure}[ht]
  \centering
  \includegraphics[scale=0.7]{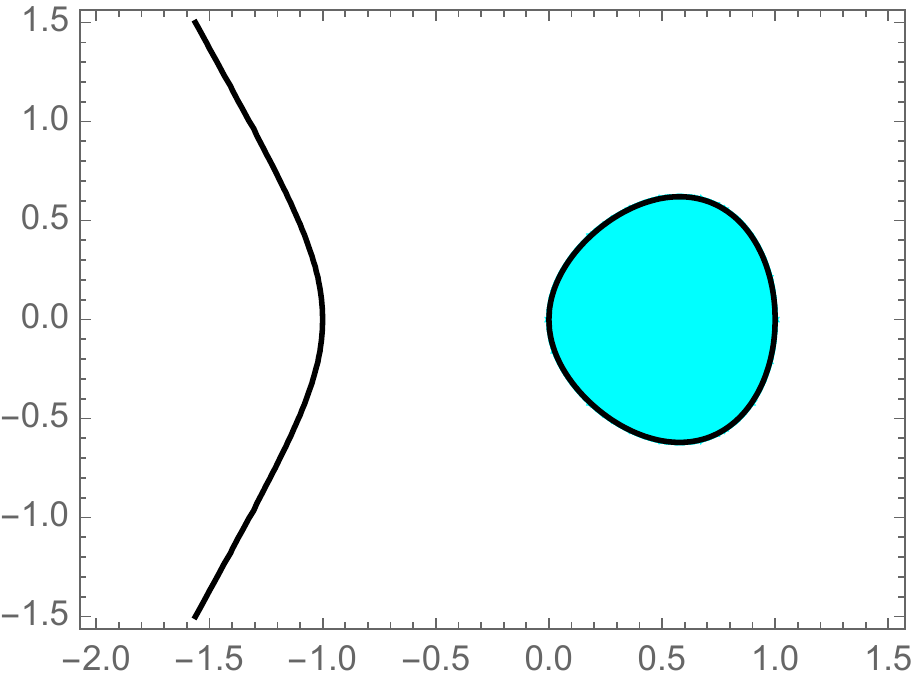}
  \caption{In blue is the convex set of Equation \eqref{eq:ovalC}. The black solid line represents the algebraic boundary of $C$. It has two components. The degree of the algebraic boundary of $C$ is 3.}
  \label{fig:oval}
\end{figure}

\end{example}

\begin{example}[$k$-ellipse]
Given points $p_1,\ldots,p_k \in \RR^2$ in general position, consider the convex set
\begin{equation}
\label{eq:kellipse}
C = \{x \in \RR^2 : \|x-p_1\|_2 + \dots + \|x-p_k\|_2 \leq 1\}.
\end{equation}
For $k=2$, the set $C$ is an ellipse with focal points $p_1$ and $p_2$ and the degree of the boundary of $C$ is equal to 2. For higher $k$, the degree of the boundary of $C$ was computed in \cite{kellipse}, and was shown to be $2^k$ if $k$ is odd, and $2^k - \binom{k}{k/2}$ if $k$ is even. Figure \ref{fig:kellipse} shows an example where $k=3$.
\begin{figure}[ht]
  \centering
  \includegraphics[scale=0.7]{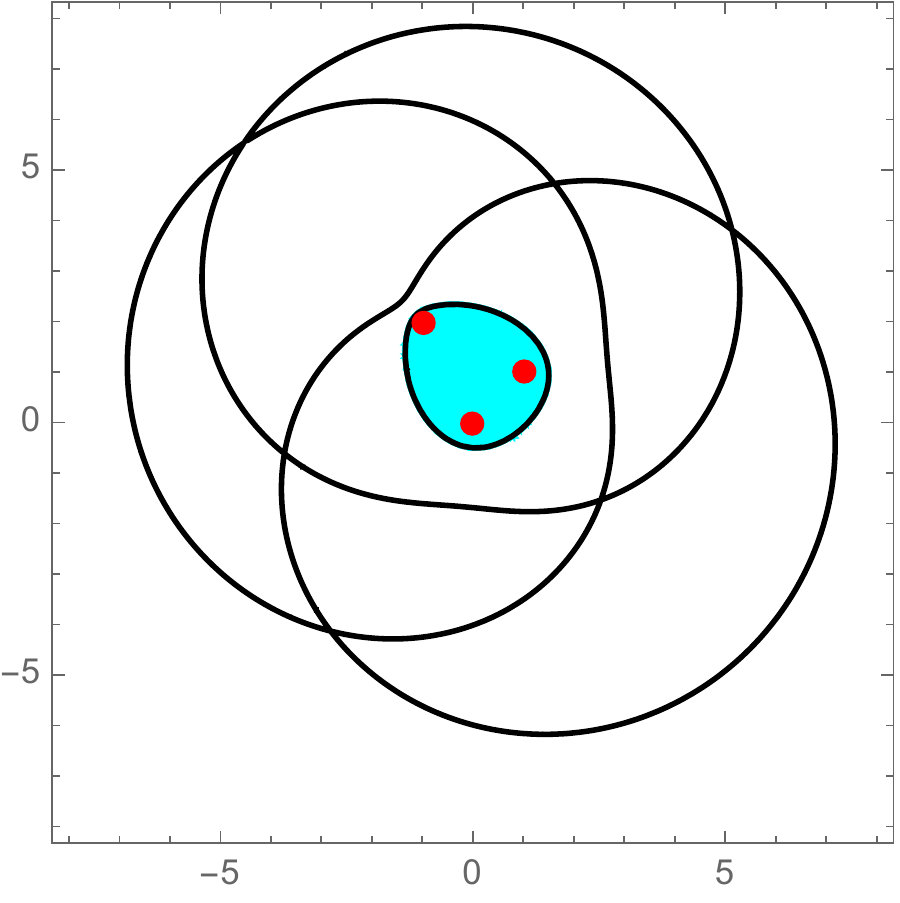}
  \caption{A 3-ellipse (see Equation \eqref{eq:kellipse}) with the focal points shown in red. The smallest degree of a polynomial that vanishes on the boundary of $C$ is 8. The figure shows the zero set of this degree-8 polynomial.}
  \label{fig:kellipse}
\end{figure}
\end{example}

We saw in Theorem \ref{thm:tarski} that the projection of any semialgebraic set is semialgebraic. More explicit formulations of Tarski's result give bounds on the degrees of the polynomials that define the semialgebraic set $\pi(S)$, in terms of the polynomials that define $S$, see, e.g., \cite{RenegarI}. These results can be used to show for example that if $S$ is a spectrahedron defined using matrices of size $m\times m$, then the degree of the boundary of $\pi(S)$ is at most $m^{O(m^2 n)}$ \cite{GPTlifts} where $n = \dim(\pi(S))$. It follows immediately that if $C$ is a convex body of dimension $n$ having a spectrahedral lift of size $m$ then necessarily $m^{O(m^2 n)} \geq d$, where $d$ is the degree of the boundary of $C$.
%The following proposition is then immediate.
%\begin{proposition}
%Assume $C$ is a convex semialgebraic set and its boundary $\partial C$ has degree $d$. If $C$ has a spectrahedral lift of size $r$ then necessarily $r^{O(r^2 n)} \geq d$.
%\end{proposition}
Inverting this inequality allows us to get a lower bound on $m$ in terms of $d$ and $n$ which reads 
\begin{equation}
\label{eq:mOmega123}
m \geq \Omega\left(\sqrt{\frac{\log d}{n \log \log (d/n)}}\right)
\end{equation}
where $\Omega$ hides a constant.

The previous lower bound on $m$ used very little about the structure of the spectrahedron $S$ living in higher dimensions and that projects onto $C$. A more refined analysis allows us to get the better lower bound \cite{FSED}:
\begin{equation}
\label{eq:degreebnd}
m \geq \sqrt{\log d}.
\end{equation}
This bounds relies on the specific structure of the lift and uses Karush-Kuhn-Tucker system of optimality conditions from convex optimization, combined with the B\'ezout bound to count zeroes of systems of polynomial equations.

Observe that the bound \eqref{eq:degreebnd} only depends on the degree $d$ of the boundary of $C$ and does not depend, say, on the dimension of $C$. One can easily show that no convex body can have a spectrahedral lift of size smaller than $\sqrt{\dim(C)}$. Indeed if $C = \pi(S)$ and $S$ is defined using a linear matrix inequality of size $m\times m$ then we must have 
\begin{equation}
\label{eq:dimbnd}
\dim(C) \leq \dim(S) \leq \dim \S^m = \binom{m+1}{2} \leq m^2,
\end{equation}
which gives $m \geq \sqrt{\dim(C)}$.

We note that the two bounds presented above, based on degree and dimension, are not in general comparable:
\begin{itemize}
\item Consider the regular $N$-gon in the plane. The degree of the boundary is $N$ and so the lower bound \eqref{eq:degreebnd} gives $m \geq \sqrt{\log N}$. On the other hand the bound \eqref{eq:dimbnd} gives $m \geq \sqrt{2}$.
\item Now consider the unit Euclidean ball in $\RR^n$. The degree of the boundary is equal to two, since the boundary is given by the degree-two polynomial equation $x_1^2 + \dots + x_n^2 - 1 = 0$. Thus the lower bound \eqref{eq:degreebnd} gives $m \geq \sqrt{2}$. On the other hand the dimension lower bound gives $m \geq \sqrt{n}$.
\end{itemize}
One interesting open question is to find a bound combining both degree $d$ and dimension $\dim(C)$ which improves on \eqref{eq:mOmega123}.

\if0
We now briefly sketch the argument used to prove this inequality. For the details we refer the reader to \cite{FSED}. First recall from Section \ref{sec:} that the \emph{polar} of convex body $C$ is
\[
C^{\circ} = \{c \in \RR^n : \langle c , x \rangle \leq 1 \; \forall x \in C\}.
\]
For the rest of the argument we will assume that the origin is in the interior of $C$. We now propose to think about the boundary of $C^{\circ}$, instead of the boundary of $C$. The boundary of $C^{\circ}$ consists of the cost vectors $c$ such that $\max_{x \in C} c^\top  x = 1$. If $C$ is the projection of a spectrahedron then the maximization problem $\max_{x \in C} c^\top  x$ is a semidefinite program. KKT conditions for semidefinite programming allow us to express the condition that $\max_{x \in C} c^\top  x = 1$  in terms of an equivalent system of equations/inequations. If $C = \pi(S)$ where $S = \{y \in \RR^n : A_0 + \cA(y) \psd 0\}$ then by the KKT equations:
\begin{equation}
\label{eq:KKT}
\begin{aligned}
\max_{x \in C} c^\top  x = 1 &\iff \max_{y \text{ s.t. } A_0 + \cA(y) \psd 0} (\pi^*(c))^\top  y = 1\\  
 &\iff \exists y \in \RR^n, Z\in \S^r \text{ s.t. }  \begin{cases}
\pi^*(c) + \cA^*(Z) = 0, X = A_0 + \cA(y)\\
XZ = 0\\
\pi^*(c)^\top  y = 1\\
X \psd 0 \text{ and } Z \psd 0.
\end{cases}
\end{aligned}
\end{equation}
Here, $Z$ is the dual variable for the constraint $A_0 + \cA(y) \psd 0$, and the equation $XZ = 0$ is known as complementary slackness. If we remove the last two inequality constraints from \eqref{eq:KKT} we get a set, call it $\cV$, described only by polynomial \emph{equations}:
\begin{equation}
\label{eq:varietyV}
\begin{aligned}
\cV = \Bigl\{ (c,y,X,Z) \text{ such that } &  \pi^*(c) + \cA^*(Z) = 0,\;\; X = A_0 + \cA(y),\\
& \qquad  \qquad \qquad XZ = 0 \text{ and } \pi^*(c)^\top  y = 1 \Bigr\}.
\end{aligned}
\end{equation}
The previous reasoning tells us that if we project $\cV$ on the first component (the $c$ component) we get an algebraic set that contains $\partial (C^{\circ})$. One can show that this set, call it $\text{proj}_{c}(\cV)$, has dimension $n-1$ generically, i.e., it is a hypersurface. It thus remains to compute the degree of this hypersurface to get an upper bound on the algebraic degree of $\partial (C^{\circ})$. To do so we will use the B\'ezout bound.

The degree of a hypersurface is given by the number of intersection points with a generic line. Since $C^{\circ}$ lives in the space of ``cost vectors'' $c$, a line is specified by a direction $c=c_0$. It is not hard to show that the number of intersection points of $\text{proj}_{c}(\cV)$ with the line of direction $c_0$, is equal to the number of points in the zero-dimensional variety defined by the equations in \eqref{eq:varietyV} with the addition of $c=c_0$. The fact that this variety is zero-dimensional follows from a dimension counting argument and an application of Bertini's theorem, which we omit here. An upper bound on the \emph{number} of points can be obtained by an easy application of B\'ezout's theorem.
\begin{theorem}[B\'ezout bound]
Let $f_1,\ldots,f_m$ be complex polynomials in $\CC[x_1,\ldots,x_n]$ of degree at most $d$. If $\{x \in \CC^n : f_1(x) = \dots = f_m(x) = 0\}$ is finite, then it has at most $d^m$ points.
\end{theorem}
\begin{remark}
Note that the B\'ezout theorem only applies to complex varieties (and not real varieties). For a counterexample, consider the (single) polynomial equation $\sum_{i=1}^n (x_i^2 - 1)^2 = 0$. Over $\RR$ this polynomial equation has a finite number of solutions, namely $2^n$. Thus we see that the B\'ezout bound fails in this setting (the B\'ezout bound would tell us that it has at most 4 points since there is a single polynomial of degree 4). Note that the \emph{complex} variety $\{x \in \CC^n : \sum_{i=1}^n (x_i^2 - 1)^2 = 0\}$ is \emph{not} finite, except in the case $n=1$.
\end{remark}
We can now count the number of solutions to the polynomial equations in \eqref{eq:varietyV} when $c$ is equal to $c_0$: the first two equations are linear, and the third equation $XZ = 0$ consists of $r^2$ equations, each of degree two. Thus B\'ezout bound tells us that for a fixed $c=c_0$ we have at most $2^{r^2}$ solutions. This tells us that the degree of the algebraic boundary of $C^{\circ}$ is at most $2^{r^2}$. We have thus proven:
\begin{theorem}
Assume $C \subset \RR^n$ has a spectrahedral lift of size $r$. Then $r \geq \sqrt{\log d}$ where $d$ is the degree of the algebraic boundary of $C^{\circ}$.
\end{theorem}
\fi

\subsubsection{Scheiderer's result}

In the previous section we saw, using Tarski's theorem, that in order for a convex set $C$ to have a spectrahedral lift, it must be semialgebraic. Is this condition sufficient, i.e., does every convex semialgebraic set have a spectrahedral lift? Nemirovski raised this question in his ICM survey \cite{nemirovskiICM} in 2006. In \cite{helton2009sufficient,helton2010semidefinite} Helton and Nie proved a series of results showing that every convex semialgebraic set whose boundary satisfies certain smoothness conditions admits a spectrahedral lift. They also conjectured that \emph{any} convex semialgebraic set has a spectrahedral lift. In a breakthrough result, Scheiderer \cite{scheidererSDR} disproved this conjecture, and showed the existence of convex semialgebraic sets with no spectrahedral lift. 

In this section we give a simple proof of Scheiderer's result based on the arguments in \cite{fawzi2019separable}. For concreteness we will focus on the following convex set:
\[
C_{n,2d} = \cl \conv \left\{ (x^{\alpha})_{|\alpha| \leq 2d} : x \in \RR^n \right\} \]
where $n$ and $d$ are two integers. Note that $C_{n,2d}$ is a convex semialgebraic set since it is the convex hull of a semialgebraic set. We will show:
\begin{theorem}
\label{thm:scheiderer}
Assume $n$ and $d$ are two integers such that there is a \emph{homogeneous} polynomial of degree $2d$ in $n$ variables that is not a sum of squares. Then $C_{n,2d}$ has no spectrahedral lift.
\end{theorem}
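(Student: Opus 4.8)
The plan is to establish the contrapositive-flavored link between spectrahedral lifts of $C_{n,2d}$ and sum-of-squares representations on its polar. First I would identify $C_{n,2d}^\circ$ explicitly: since the extreme points of $C_{n,2d}$ are the vectors $(x^\alpha)_{|\alpha| \le 2d}$ for $x \in \RR^n$ (together with limiting directions at infinity), a linear functional $\ell$ lies in $C_{n,2d}^\circ$ precisely when $\sum_\alpha \ell_\alpha x^\alpha \le 1$ for all $x \in \RR^n$, i.e. when $1 - p_\ell(x)$ is a nonnegative polynomial of degree $\le 2d$ on $\RR^n$, where $p_\ell$ is the polynomial whose coefficients are the entries of $\ell$. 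Thus, up to the obvious affine shift, $C_{n,2d}^\circ$ is essentially the set $\{p : p \le 1 \text{ on } \RR^n\}$, and its boundary corresponds to the degree-$2d$ polynomials achieving value $1$. The point is that the facial/boundary structure of this polar is governed by nonnegative polynomials, while a spectrahedral lift of $C_{n,2d}$ would (via Theorem~\ref{thm:GPTliftstheorem}, in the form Theorem~\ref{thm:GPTliftstheorem22}) force those nonnegative polynomials to be sums of squares.

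Concretely: suppose $C_{n,2d}$ had a spectrahedral lift of size $m$. By Theorem~\ref{thm:GPTliftstheorem22}, there is a map $A$ from the extreme points of $C_{n,2d}$ to $\S^m_+$ such that for every $\ell \in C_{n,2d}^\circ$ there is $B \in \S^m_+$ with $1 - \ell(x^\alpha) = \tr(A((x^\alpha)) B)$ for all $x \in \RR^n$. Running the factorization through the square-root argument from \eqref{eq:1-ellSOS}–\eqref{eq:1-ellSOSV}, this says there is a fixed finite-dimensional space $V$ of \emph{functions on $\RR^n$} (spanned by the coordinate functions $x \mapsto F_{ij}(x)$ of a square-root of $A$) such that every valid inequality $1 - p_\ell(x) \ge 0$ with $p_\ell$ of degree $\le 2d$ is a sum of squares of elements of $V$. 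The crux is then to show that the functions in $V$ can be taken to be \emph{polynomials of degree $\le d$}: the entries $A((x^\alpha))_{ij}$ are, by the equation $1 - \ell(x^\alpha) = \tr(A B)$ holding for enough linearly independent $\ell$, forced to be affine-linear in the coordinates $x^\alpha$, hence polynomials in $x$ of degree $\le 2d$; one then argues (this is where a careful rank/positivity argument is needed) that $A((x^\alpha)) \succeq 0$ polynomially in $x$ forces its Cholesky-type factors, after a suitable linear reparametrization, to have entries of degree $\le d$. That would mean: \emph{every} nonnegative polynomial of degree $\le 2d$ in $n$ variables (after the affine normalization to the form $1 - p_\ell$, and then by homogenization) is a sum of squares — contradicting the hypothesis that some homogeneous degree-$2d$ form in $n$ variables is not a sum of squares.

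The homogenization step needs a small amount of care: a form $q(x)$ of degree $2d$ that is nonnegative on $\RR^n$ corresponds, via dehomogenization $q(x_1,\dots,x_{n-1},1)$, to a nonnegative polynomial of degree $\le 2d$ in $n-1$ variables, and the sum-of-squares property is preserved in both directions for \emph{nonnegative} forms of even degree (a standard fact: $q$ homogeneous of degree $2d$ is SOS iff its dehomogenization is SOS). So a not-SOS form in $n$ variables produces a not-SOS polynomial in $n-1$ variables; one should simply state the theorem with the bookkeeping on $n$ versus $n-1$ handled by this correspondence (or absorb it, noting the statement quantifies over all $n, d$ with the stated property). I expect the main obstacle to be precisely the degree-$d$ claim for the factorization maps: a priori the factorization from Theorem~\ref{thm:GPTliftstheorem22} only gives measurable/arbitrary maps $A$, and upgrading this to polynomiality of degree $d$ — rather than merely degree $2d$, which would not close the argument since SOS of degree-$d$ forms is exactly what non-SOS forms violate — is the delicate point. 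The reference \cite{fawzi2019separable} presumably supplies the key lemma that a psd-matrix-valued polynomial of degree $2d$ arising this way admits a polynomial factorization of half the degree; I would isolate and invoke that as the technical heart, then assemble the contradiction as above.
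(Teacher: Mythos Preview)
Your setup is correct through the identification of $C_{n,2d}^\circ$ with nonnegative polynomials and the extraction, via Theorem~\ref{thm:GPTliftstheorem22} and \eqref{eq:1-ellSOSV}, of a finite-dimensional space $V$ of functions on $\RR^n$ such that every nonnegative polynomial of degree $\le 2d$ is a sum of squares from $V$. The gap is in the step you flag as ``delicate'': you hope to show that the functions in $V$ can be taken to be \emph{polynomials of degree $\le d$}, via a polynomial Cholesky factorization of a psd-matrix-valued polynomial. This lemma is not available---multivariate psd matrix polynomials do not in general admit polynomial square roots, and certainly not of half the degree. Indeed, if such a factorization existed one would have shown directly that every nonnegative polynomial of degree $2d$ is a sum of squares of polynomials, which is exactly what the hypothesis forbids. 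The reference~\cite{fawzi2019separable} does not supply such a lemma.

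What the paper actually does is weaker on the function side but compensates with a local argument exploiting homogeneity. The functions $f_1,\dots,f_m$ spanning $V$ are shown only to be \emph{semialgebraic} (this comes from the semialgebraic nature of the lift and a cylindrical algebraic decomposition argument). Semialgebraic functions are $C^\infty$ almost everywhere, so there is a common point $a\in\RR^n$ at which all $f_i$ are smooth. Now take a homogeneous nonnegative non-SOS form $p$ of degree $2d$; since $p(x+a)$ is nonnegative it is a sum of squares of functions in $V$, hence after translating back, $p(x)=\sum_j h_j(x)^2$ with each $h_j$ smooth at the origin. The key observation (Proposition~\ref{prop:sostaylor}) is that if a \emph{homogeneous} polynomial of degree $2d$ is a sum of squares of functions smooth at $0$, then comparing Taylor expansions forces each $h_j$ to agree with a homogeneous polynomial of degree $d$ and the remainder to vanish---so $p$ is SOS of polynomials, a contradiction. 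Thus the homogeneity hypothesis is used not via de/homogenization between $n$ and $n-1$ variables, but to make a Taylor-expansion argument at a single smooth point go through.
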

The theorem above shows in particular that $C_{3,6}$ and $C_{4,4}$ have no spectrahedral lift since it is known by a result of Hilbert that in the cases $(n,2d) = (3,6)$ and $(n,2d) = (4,4)$ there exist nonnegative polynomials that cannot be written as a sum of squares, see, e.g., \cite[Chapter 4]{BPTSIAMBook}.

We remark that the assumption in the theorem above can be used to prove that $C_{n-1,2d}$ has no spectrahedral lift (which readily implies that $C_{n,2d}$ has no spectrahedral lift, since $C_{n-1,2d}$ is a projection of $C_{n,2d}$), but the argument is more technical and uses additional properties about semialgebraic functions; we refer to \cite{fawzi2019separable} for more details.
% The proof above allows us to show more generally that if there is a polynomial $M(x) \in \RR[x_1,\ldots,x_n]$ supported on $S \subset \NN^n$ that is nonnegative but not a sum of squares, then the set
%\[
%C_A = \cl \conv \left\{ (x^{\alpha})_{\alpha \in A} : x \in \RR^n \right\}
%\]
%is not a sum of squares, where $A = \left\{ \alpha \in \NN^n : \exists \beta \in \NN^n \text{ s.t. } \alpha \leq \beta \right\}$ is the ``staircase'' under $S = \support(M)$. 
%\end{remark}

\begin{proof}[Sketch of proof of Theorem \ref{thm:scheiderer}]
Let $n$ and $d$ as in the statement of the theorem, and assume for contradiction that $C_{n,2d}$ has a spectrahedral lift. We know from Section \ref{sec:constructions} (see Equation \eqref{eq:1-ellSOSV}) that there is a finite-dimensional subspace of functions that can be used to certify all valid linear inequalities on $C_{n,2d}$ using sums of squares. Linear inequalities valid on $C_{n,2d}$ are nothing but nonnegative polynomials of degree $2d$ in $n$ variables. Thus the existence of a spectrahedral lift for $C_{n,2d}$ means that there is a finite number of functions $f_1,\ldots,f_m : \RR^n \rightarrow \RR$ such that any nonnegative polynomial in $\RR[x_1,\ldots,x_n]_{\leq 2d}$ is a sum of squares from $\linspan(f_1,\ldots,f_m)$.

A key observation here is that the functions $f_i$ can be taken to be \emph{semialgebraic}. A semialgebraic function is one whose graph $\{(x,f(x)) : x \in \RR^n\} \subset \RR^{n+1}$ is a semialgebraic set. The observation about the $f_i$ can be shown by going back to the proof of the factorization theorem (Theorem \ref{thm:GPTliftstheorem}) and using some standard results from semialgebraic geometry, e.g., the cylindrical algebraic decomposition of semialgebraic sets, see \cite{fawzi2019separable} for details. Semialgebraic functions are tame and possess nice qualitative properties. The following fact will be crucial for our argument (see, e.g., \cite[Theorem 1.7]{son2016genericity} for a proof).

\begin{theorem}
A semialgebraic function $f:\RR^n \rightarrow \RR$ is smooth ($C^{\infty}$) almost everywhere.
\end{theorem}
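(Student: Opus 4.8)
The plan is to reduce the claim to the $C^p$ cell decomposition theorem for semialgebraic sets --- the $C^p$ refinement of the cylindrical algebraic decomposition already invoked in the proof sketch of Theorem~\ref{thm:scheiderer} --- applied for every finite $p$, and then to intersect the resulting ``good'' sets. Since $f$ is semialgebraic its graph $\{(x,f(x)) : x \in \RR^n\}\subseteq \RR^{n+1}$ is semialgebraic, so $f$ is a semialgebraic function in the usual sense and this machinery applies to it. Fix an integer $p\geq 1$. The $C^p$ cell decomposition theorem gives a finite partition of $\RR^n$ into semialgebraic cells $D_1,\dots,D_{N_p}$ such that $f|_{D_j}$ is of class $C^p$ for every $j$. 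In such a decomposition a cell is open in $\RR^n$ precisely when it has dimension $n$; let $U_p$ be the union of these top-dimensional cells. Then $\RR^n\setminus U_p$ is a finite union of semialgebraic sets of dimension strictly less than $n$, hence is semialgebraic of dimension $<n$, and therefore has Lebesgue measure zero. The top-dimensional cells are open, pairwise disjoint, and cover $U_p$, and $f$ is $C^p$ on each of them, so $f$ is of class $C^p$ on the open set $U_p$.

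Next I would set $U=\bigcap_{p\geq 1}U_p$. Its complement $\RR^n\setminus U=\bigcup_{p\geq 1}(\RR^n\setminus U_p)$ is a countable union of Lebesgue-null sets, hence Lebesgue-null. For every $p$ we have $U\subseteq U_p$ and $f|_{U_p}\in C^p$, so $f|_U\in C^p$; since this holds for all $p$, $f$ is of class $C^\infty$ on $U$. As $\RR^n\setminus U$ has measure zero, this is exactly the statement that $f$ is smooth almost everywhere.

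The only genuinely nontrivial ingredient is the $C^p$ cell decomposition theorem itself, which I would invoke as a standard structural fact rather than reprove. Two points merit a careful-but-brief mention: that a semialgebraic subset of $\RR^n$ of dimension less than $n$ is Lebesgue-null (seen, for instance, via a further cell decomposition followed by projection onto coordinate hyperplanes), and that ``$C^p$ on an open full-measure set for every $p$'' upgrades to ``$C^\infty$ on an open full-measure set'' simply because a countable intersection of full-measure sets has full measure. A more hands-on alternative --- stratify the graph of $f$ into smooth semialgebraic submanifolds, project onto $\RR^n$, and use the inverse function theorem to identify $f$ locally with a $C^\infty$ graph over the regular locus of the top-dimensional strata, whose complement again has dimension $<n$ --- reaches the same conclusion; but the cell-decomposition route is the shortest, so that is what I would write up.
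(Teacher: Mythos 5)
The paper does not actually prove this statement---it is quoted as a known fact with a citation to \cite[Theorem 1.7]{son2016genericity}---so there is no internal proof to compare against; your write-up has to stand on its own. Its overall architecture ($C^p$ cell decomposition, discard the cells of dimension $<n$, observe that a semialgebraic set of dimension $<n$ is Lebesgue-null) is the standard route and those steps are all correct. The problem is the very last step. The set $U=\bigcap_{p\geq 1}U_p$ is a $G_\delta$ of full measure but need not be open, and for a point $x\in U$ what you actually know is: \emph{for each $p$ there is a neighborhood of $x$ (depending on $p$) on which $f$ is $C^p$}. For a general function this does \emph{not} imply that $f$ is $C^\infty$ on some single neighborhood of $x$: the neighborhoods can shrink as $p\to\infty$ (think of a function on $\RR$ that is $C^\infty$ away from the points $1/k$ but only $C^k$ at $1/k$; at $0$ it is locally $C^p$ for every $p$ yet $C^\infty$ on no neighborhood of $0$). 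So under the usual reading of ``smooth at $x$'' as ``$C^\infty$ on a neighborhood of $x$,'' the sentence ``since this holds for all $p$, $f$ is of class $C^\infty$ on $U$'' is a genuine jump. What your argument does establish is that all partial derivatives of all orders of $f$ exist and are continuous at almost every point, which happens to be all that the paper's application needs (Proposition~\ref{prop:sostaylor} only uses a finite-order Taylor expansion with Peano remainder at the chosen point $a$), but it is weaker than the theorem as most readers would parse it.

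The fix is to replace the countable family of $p$-dependent decompositions by a \emph{single} decomposition that is good for all orders at once: semialgebraic sets and functions admit a finite stratification into Nash (semialgebraic and real-analytic) cells on which $f$ is Nash, so $f$ is real-analytic---in particular $C^\infty$---on the union of the open top-dimensional cells, which is a single open semialgebraic set whose complement has dimension $<n$ and hence measure zero. This is exactly your ``more hands-on alternative'' (stratify the graph and apply the implicit/inverse function theorem over the regular top-dimensional strata), and it is that version, not the $\bigcap_p U_p$ version, that actually closes the argument; it also yields the stronger and cleaner conclusion that the exceptional set is a closed semialgebraic set of dimension $<n$, not merely a null $F_\sigma$. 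Alternatively, if you want to keep the $C^p$-for-each-$p$ framework, you must argue that the sets $\Sigma_p$ of non-$C^p$ points stabilize (or are all contained in one semialgebraic set of dimension $<n$), which again essentially requires the Nash stratification. I would recommend promoting the ``alternative'' to the main proof and demoting the countable-intersection argument to a remark, with the caveat about which notion of ``smooth at a point'' it delivers.
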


We now have all the ingredients to prove our result. 
%We will focus on the set $C_{3,6}$ (the other cases use exactly the same argument.) Assume for contradiction that $C_{3,6}$ has a spectrahedral lift, and let $f_1,\ldots,f_m$ be the semialgebraic functions.
Since the $f_i$ ($i=1,\ldots,m$) are smooth almost everywhere there is at least one point $a \in \RR^n$ such that all the $f_i$ are smooth at the point $a$.

%Let $M(x_1,x_2,x_3) = x_1^4 x_2^2 + x_1^2 x_2^4 + x_3^6 - 3 x_1^2 x_2^2 x_3^2$ be the homogeneous \emph{Motzkin polynomial}. 
%It is well-known that $M$ is not a sum of squares of polynomials. 

Let $p$ be a homogeneous polynomial in $n$ variables of degree $2d$ that is nonnegative but not a sum of squares.
Consider the shifted polynomial $p(x+a)$. Since $p(x+a)$ is in $\RR[x_1,\ldots,x_n]_{\leq 2d}$ and nonnegative it must be a sum of squares from $\linspan(f_1,\ldots,f_m)$. Shifting back to the origin, this means that $p(x)$ is a sum of squares from $\linspan(\tilde{f_1},\ldots,\tilde{f_m})$ where $\tilde{f_i}(x) = f_i(x-a)$. Note that all the functions in $\linspan(\tilde{f_1},\ldots,\tilde{f_m})$ are $C^{\infty}$ at the origin. We now use the following proposition:
\begin{proposition}
\label{prop:sostaylor}
Let $p$ be a \emph{homogeneous} polynomial of degree $2d$ in $n$ variables, and assume that $p = \sum_{j} h_j^2$ where the $h_j$ are arbitrary functions that are smooth ($C^{\infty}$) at the origin. Then $p$ is a sum of squares of polynomials.
\end{proposition}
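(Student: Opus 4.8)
The strategy is to replace the $C^\infty$ identity $p = \sum_j h_j^2$ (valid in a neighborhood of the origin) by the corresponding identity of formal Taylor series at the origin, and then compare homogeneous components degree by degree, exploiting that $p$ is homogeneous of degree $2d$. Concretely, since each $h_j$ is smooth at $0$, let $T_0(h_j) = \sum_{k\geq 0} h_{j,k} \in \RR[[x_1,\ldots,x_n]]$ be its Taylor series, where $h_{j,k}$ is the homogeneous degree-$k$ part, a genuine polynomial with real coefficients (real because $h_j\colon \RR^n\to\RR$). Taylor expansion at a point is additive and multiplicative on germs of $C^\infty$ functions, and germs agreeing on a neighborhood have the same Taylor series, so from $p = \sum_j h_j^2$ near $0$ and $T_0(p) = p$ (as $p$ is already a polynomial) we obtain the formal identity $p = \sum_j T_0(h_j)^2$ in $\RR[[x_1,\ldots,x_n]]$.

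\textbf{Main steps.} First I would extract the degree-$k$ component of the right-hand side, namely $\sum_j \sum_{a+b=k} h_{j,a}h_{j,b}$, and use homogeneity of $p$ to record that this equals $0$ for every $k\neq 2d$ and equals $p$ for $k=2d$. Next I would prove by induction on $\ell$ that $h_{j,\ell}=0$ for all $j$ and all $\ell<d$: under the inductive hypothesis $h_{j,i}=0$ for $i<\ell$, the degree-$2\ell$ equation collapses to its diagonal terms, giving $\sum_j h_{j,\ell}^2 = 0$ as an identity of polynomials, and a finite sum of squares of real polynomials vanishes only if each summand vanishes, so $h_{j,\ell}=0$; the odd-degree equations of degree below $2d$ carry no new information under the hypothesis. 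Finally, substituting $h_{j,i}=0$ for $i<d$ into the degree-$2d$ equation leaves only the diagonal contribution $\sum_j h_{j,d}^2 = p$, which exhibits $p$ as a sum of squares of the homogeneous degree-$d$ polynomials $h_{j,d}$, completing the proof.

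\textbf{Where the difficulty lies.} There is no deep obstacle here; the proposition is really the observation that homogeneity of $p$ plus smoothness of the $h_j$ at a single point forces each $h_j$ to vanish to order at least $d$ at that point, so that only the degree-$d$ jets of the $h_j$ contribute. The two points requiring care are (i) the justification for passing to formal Taylor series --- additivity and multiplicativity of the jet map and invariance under germ equivalence, all standard for $C^\infty$ functions --- and (ii) the bookkeeping of which cross-terms $h_{j,a}h_{j,b}$ survive the inductive hypothesis, which is immediate once one notes that $a+b=2\ell$ with $\min(a,b)\geq\ell$ forces $a=b=\ell$. The hypothesis that the $h_j$ are smooth only at the origin (rather than globally) is exactly what is used, and no more.
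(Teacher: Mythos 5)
Your proof is correct and follows essentially the same route as the paper: both arguments show that homogeneity of $p$ forces each $h_j$ to vanish to order at least $d$ at the origin, so that only the degree-$d$ Taylor coefficients survive and give the sum-of-squares decomposition. The paper truncates the Taylor expansion at order $d$ and argues via the minimal degree $\alpha_j$ of each truncated piece, while you pass to the full formal power series ring and run a degree-by-degree induction; this is the same mechanism with slightly tidier bookkeeping.
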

\begin{proof}
We can write a Taylor expansion of each $h_j$ of order $d$: $h_j(x) = q_j(x) + o(\|x\|^{d})$ where $\deg q_j = d$. Let $\alpha_j = \mindeg q_j \leq d$ (where $\mindeg q_j$ is the smallest degree of a monomial in $q_j$) and note that $h_j(x)^2 = q_j(x)^2 + o(\|x\|^{d+\alpha_j})$. Then we get
\[
p(x) = \sum_{j} q_j(x)^2 + o(\|x\|^{d+\alpha_j}).
\]
Note that since $p$ is homogeneous we must have $\alpha_j = d$ for all $j$, since otherwise there is a term of degree $2\alpha_j$ in $q_j(x)^2$ that cannot be canceled by the other terms. This means that the $q_j$ are homogeneous of degree $d$. But then this means that we must have exact equality $p(x) = \sum_{j} q_j(x)^2$ and the remainder term $o(\|x\|^{2d})$ is identically zero. This shows that $p(x)$ is a sum of squares of polynomials.
\end{proof}
\textit{End of proof of Theorem \ref{thm:scheiderer}:} The proposition above implies that $p$ is a sum of squares of polynomials, which is a contradiction. We have thus shown that $C_{n,2d}$ has no spectrahedral lift.
\end{proof}

%%%%%%%%%%%%%%%%%%%%%%%%%%%%%%%%%%%%%%%%%%%%%%%%%%%%%%%%%
\section{Discussion}
\label{sec:discussion}
% !TEX root = main.tex

%\label{sec:discussion}

\newcommand{\parheading}[1]{\textbf{#1} \quad}

In this article we have focussed on understanding exact lifts of
convex sets. We saw several scenarios in which lifted representations
greatly simplify the description of a convex set with applications to
optimization and other fields. We also discussed in detail the key
tool of slack matrices and operators that both determine the existence
of lifts into specific cones and provide a method to construct
them. Various construction methods for lifts, as well as obstructions
to their existence, were also discussed. The main focus in these last
two sections was on spectrahedral lifts. The purpose of this article was 
to show the broad connections of lifts to mathematics at large with a focus on spectrahedral 
lifts in the second half. A short and focussed exposition on just spectrahedral lifts can be found in \cite{thomasicmarticle}.

There is much more that one can say and study in and around the theory of lifts, 
and in this final section we collect together several of these directions with pointers to the literature. 
We highlight the surveys and expository articles in each area. 
These aspects have not been discussed in this article but knit together a more complete view of this rich 
subject. They also showcase the wide ranging connections between the theory of lifts and other disciplines. 
 
\parheading{Integer programming/Combinatorial optimization.}~In this
article we have emphasized recent developments in spectrahedral lifts
of convex semialgebraic sets, at the expense of more classical topics
like polyhedral lifts of polytopes, usually called \emph{extended
formulations}. The study of such extended formulations has a long
history, motivated by their importance in combinatorial optimization.
A broad exposition of extended formulations of polytopes that arise in
combinatorial optimization can be found in
\cite{conforti-cornuejols-zambelli2013}. For a survey of mixed integer
linear programming reformulations, see \cite{vielma-sirev}.

%While we have a few examples of polyhedral lifts of polytopes, our main focus has been on spectrahedral lifts of convex semialgebraic sets. Polyhedral lifts of polytopes have a long history in which they are usually called {\em extended formulations}. 
% They have been used profitably in areas like integer programming to come up with simpler formulations of optimization problems, see \cite{vielma-sirev} for a survey. 

 %This article contains many examples of {\em compact} extended formulations of problems that arise in combinatorial optimization. 

\parheading{Lower bounds for polyhedral lifts.}~The main result of the seminal paper of Yannakakis \cite{yannakakis1991expressing} that was mentioned in Section~\ref{sec:slackoperator}, 
was that the {\em perfect matching polytope} of a complete graph does not admit a small {\em symmetric} polyhedral lift. 
Symmetric lifts were discussed in Section~\ref{sec:constructions}. As was mentioned there already,  
Yannakakis asked whether symmetry imposes restrictions on the linear 
extension complexity of a polytope. In \cite{kaibel-pashkovich-theis} Kaibel, Pashkovich and Theis showed that, indeed symmetry can force the size of a lift to be higher than necessary. Their examples are certain slices of the perfect matching polytope of a complete graph.  A friendly introduction to polyhedral lifts including the symmetry questions 
can be found in \cite{KaibelOptima}. 
A second breakthrough result on polyhedral lifts came in 
\cite{fiorini-et-al2015} where the authors showed that {\em cut polytopes} and {\em traveling salesman polytopes} do not admit small polyhedral lifts. 
The proof relied on showing that a submatrix of the slack matrix had exponentially high nonnegative rank. The underlying 
combinatorial problems in these examples, the {\em max cut problem} and the {\em traveling salesman problem}, are NP-hard, and so these results were expected although not known to be true explicitly. On the other hand, there is a polynomial time algorithm to find the maximum size matching in a graph which makes it plausible that the matching polytope has a small polyhedral lift. 
Rothvoss showed in \cite{rothvoss2017} that the matching polytope does not in fact have a small extended formulation. Several results on combinatorial lower bounds on the extension complexity of polytopes and the techniques for establishing such bounds can be found in 
\cite{fiorini-kaibel-pashkovich-theis2013}.

\parheading{Lower bounds for spectrahedral lifts.}~Lower bounds on the semidefinite extension complexity of polytopes have also received attention. In 
\cite{lee-raghavendra-steurer} the authors show that the cut and traveling salesman polytopes of graphs have super polynomial lower bounds 
on their semidefinite extension complexity. As before, such a result is expected but was not known explicitly. It remains 
open whether matching polytopes also have high semidefinite extension complexity. Also no family of polytopes is known 
with a significant gap between linear extension complexity and semidefinite extension complexity; see \cite{fawzi2016sparse} for an explicit example with a nontrivial gap, based on special cyclic polytopes. Semidefinite extension complexity can be studied for convex semialgebraic sets beyond polytopes. Not much is known in this direction beyond the results we have presented in Section~\ref{sec:obstructions and lower bounds} of this paper. 

\parheading{Other cones.}~Lifts of convex sets into cone families beyond nonnegative orthants, second order cones  and psd cones have also been studied. 
For instance, a striking result in this direction is that every polytope obtained as the convex hull of a collection of $0/1$ vectors in $\RR^n$ 
admits a lift into the {\em completely positive cone} of size $n+1$ \cite{burer}. A symmetric matrix $M$ is {\em copositive} if $x^\top M x \geq 0$ for all $x \geq 0$ and the {\em copositive cone} of size $k$ is the set of all $k \times k$ copositive matrices. Its dual cone is the {\em completely positive cone} and consists of all symmetric matrices of the form $BB^\top$ where $B$ is a nonnegative matrix. While the lifts mentioned above are small in terms of the size of matrices involved, linear programming over copositive and completely positive cones do not admit efficient algorithms unless $\textup{P}=\textup{NP}$ \cite{dickinson-gijben,murty-kabadi}. 
There are other families of convex cones for which the associated conic program enjoys efficient algorithms. Examples include
{\em hyperbolic programming} \cite{Guler1997}, {\em geometric programming} \cite{geometric-programming},  and {\em relative entropy programming} \cite{chandrasekaran-shah}. Studying lifted representations of convex sets using hyperbolicity cones or relative entropy cones gives a systematic approach to understanding the expressive power of these families of optimization problems.
% Convex optimization encompasses a wide array of convex cones even beyond the ones we have mentioned so far. In parallel one can ask for cone lifts using any of these cone families. Examples include {\em hyperbolic programming} \cite{Guler1997}, {\em geometric programming} \cite{geometric-programming},  and {\em relative entropy programming} \cite{chandrasekaran-shah}. 

\parheading{Approximate lifts and hierarchies.}~Since finding an exact lift of a convex set might be hard, it is useful to look for an approximate lift in the sense of a set 
in higher dimension whose projection contains the original convex
set. There is a vast body of literature on such approximate lifts, as
well as construction methods for producing a hierarchy of lifts in
increasing dimensions whose projections create a nested sequence of
convex approximations of the original set. The hierarchies most
closely related to the discussion in Section~\ref{sec:constructions}
of this paper are those of Lasserre \cite{Lasserre} based on moments
of probability measures, and its dual due to Parrilo \cite{Parrilo}
based on sums of squares of polynomials.  These methods are connected
to deep results in real algebraic geometry, the theory of moments, and
optimization, which give tools to address questions of convergence,
construction, geometry and more. 
See \cite{LaurentSurvey} for a survey
on the Lasserre and Parrilo hierarchies. For a discussion of some of the central hierarchies for convex
approximations of a set and their relationships,
see \cite{LaurentComparison}.  See \cite{BPTSIAMBook} for
broad connections to convex algebraic geometry and related topics.
The chapter \cite{gouveiathomaschapter} in the Handbook of Semidefinite, Conic and Polynomial Optimization is about 
{\em theta bodies} of algebraic varieties which is a specific hierarchy of spectrahedral lifts that successively approximate the 
convex hull of an algebraic variety. Theta bodies are also the subject of the chapter titled {\em Convex Hulls of Algebraic Sets} in \cite{BPTSIAMBook}. The tight connection between conic lifts of convex semialgebraic sets and cone factorizations was inspired by the work on theta bodies and the paper of Yannakakis \cite{yannakakis1991expressing} that connects polyhedral lifts to nonnegative factorizations.

\parheading{Approximation algorithms.}~Hierarchies of convex relaxations have been studied extensively in theoretical computer science from the point of view of approximation algorithms. The most celebrated result in this context is perhaps the rounding algorithm for the max cut problem 
due to Goemans and Williamson, that yields an $\alpha$-approximation with $\alpha \approx 0.878$ \cite{GoemansWilliamson}. This is based on a simple spectrahedral relaxation 
%\textcolor{red}{[of the lift of the convex hull?]} 
of the cut polytope. 
%Sums of squares methods as a paradigm for a universal algorithm for many problems based on polynomials 
%and its connections to the Unique Games Conjecture is the subject of the article \cite{BarakSteurerICM}. Extensive lecture notes on these computer science connections can be found at \url{https://www.sumofsquares.org/public/index.html}. 
Recently, hierarchies have been used to produce lower bounds on the approximation power of arbitrary polynomial-size linear (semidefinite) programming relaxations of hard combinatorial problems; see e.g.\ \cite{CLRS,lee-raghavendra-steurer} and the references therein. 

\parheading{Computational complexity.}~A natural question is to ask: what is the complexity of deciding whether a convex set $C$ admits a $K$-lift?\footnote{We thank one of the reviewers for raising this question.} To make the question precise, one needs to specify how the input convex set $C$ is provided, e.g., as a set of extreme points, a set of valid inequalities, membership/separation oracle, etc., and similarly for the cone $K$.	Existing results on the hardness of computing the nonnegative and positive semidefinite rank of matrices \cite{vavasis,shitov2017complexity} can be used to show some NP-hardness results on the existence of lifts. For example, one can show that if $P \subset Q$ are two polytopes where $P$ is given by its set of vertices, and $Q$ by its set of facets, the question of deciding whether there is a polytope $S$ with an $\mathbb{R}^r_+$-lift such that $P \subset S \subset Q$ is NP-hard (here $r \in \mathbb{N}$ is part of the input). The same problem is also NP-hard if we replace $\RR^r_+$ by $\S^r_+$ lift, and is a consequence of the NP-hardness of computing the positive semidefinite rank of matrices \cite{shitov2017complexity}. These hardness results follow from the fact that any nonnegative matrix $M$ can be interpreted as the \emph{generalized slack matrix/operator} (see Prop. \ref{prop:nestedconvexsets})  of two such polytopes $P,Q$ by a simple rank factorization of $M$, see, e.g., \cite{gillis2012geometric}. As far as we know the same question where $P=Q$ is given by a $\mathcal{V}$ or  $\mathcal{H}$-representation is open, as not every nonnegative matrix is the slack matrix of a polytope \cite{gouveia2013nonnegative}.

\parheading{Other applications of factorizations.}~
%The relationship between polyhedral lifts and nonnegative factorizations of the slack matrix of a polytope was established in \cite{yannakakis1991expressing}. A nonnegative factorization of a nonnegative matrix is a special case of {\em cone factorizations} of nonnegative matrices in which the cones are nonnegative orthants. Cone factorizations through arbitrary closed convex cones can be studied for all nonnegative matrices, and indeed there is substantial literature on this.
Factorizations of nonnegative matrices
 %Such factorizations
  have many uses beyond the theory of lifts. Nonnegative factorizations of matrices is an essential tool for dimensionality reduction. In this case one is often interested in approximate factorizations, rather than exact ones, see the chapter \cite{Gillis} for applications of nonnegative factorizations to data analysis. Nonnegative factorizations also have interpretations in probabilistic modeling, information theory and communication complexity, see \cite{cohen1993nonnegative} and \cite{fiorini-et-al2015}. Positive semidefinite factorizations of nonnegative matrices in turn have applications in quantum information theory and quantum communication complexity; these are explained in \cite{psdranksurvey} and \cite{fiorini-et-al2015}.  The survey \cite{psdranksurvey} is about psd factorizations and psd ranks of general nonnegative matrices. It does not focus on slack matrices which are the nonnegative matrices needed for lifts. 
  
Symmetric factorizations also have applications. Recall that a matrix $M$ is completely positive if there exists vectors 
$\{b_i\} \subset \RR^k_+$ such that $M_{ij} = \langle b_i, b_j \rangle$. The {\em completely positive rank} of $M$ is the smallest $k$ 
for which such symmetric factorization is possible. Analogously, 
a square matrix $M \in \RR^{p\times p}_+$ is said to be {\em psd-completely-positive} if there exist psd matrices $A_1,\ldots,A_p$ (of arbitrary size) such that $M_{ij} = \tr (A_i  A_j )$ for all $i,j=1,\ldots,p$. These factorizations have found many applications in quantum information, in the context of nonlocal games, see, e.g., \cite{gribling2018bounds}. 
Tools have recently been developed to bound both the completely positive and {\em psd-completely-positive-rank}
 of matrices \cite{gribling-delaat-laurent2017,gribling-delaat-laurent2019}.

 % Psd factorizations can also be done through Hermitian psd cones leading to a notion of {\em Hermitian psd rank}. 

%%%%%%%%%%%%%%%%%%%%%%%%%%%%%%%%%%%%%%%%%%%%%%%%%%%%%%%%%

%\noindent {\bf Acknowledgments}:

\bibliographystyle{alpha}
\bibliography{sirev}

\newcommand{\etalchar}[1]{$^{#1}$}
\begin{thebibliography}{GGK{\etalchar{+}}13}

\bibitem[AP13]{ahmadi2013complete}
A.~A. Ahmadi and P.~A. Parrilo.
\newblock A complete characterization of the gap between convexity and
  {SOS}-convexity.
\newblock {\em SIAM J. Optim.}, 23(2):811--833, 2013.

\bibitem[Ave19]{averkov2019optimal}
G.~Averkov.
\newblock Optimal size of linear matrix inequalities in semidefinite approaches
  to polynomial optimization.
\newblock {\em SIAM J. Appl. Algebra Geometry}, 3(1):128--151, 2019.

\bibitem[BB97]{bauschke1997legendre}
H.~H. Bauschke and J.~M. Borwein.
\newblock Legendre functions and the method of random {B}regman projections.
\newblock {\em J. Convex Anal.}, 4(1):27--67, 1997.

\bibitem[Bir46]{birkhoff1946three}
G.~Birkhoff.
\newblock Tres observaciones sobre el algebra lineal.
\newblock {\em Univ. Nac. Tucum\'an, Rev. Ser. A}, 5:147--151, 1946.

\bibitem[BKVH07]{geometric-programming}
S.~Boyd, S.-J. Kim, L.~Vandenberghe, and A.~Hassibi.
\newblock A tutorial on geometric programming.
\newblock {\em Optim. Eng.}, 8(1):67--127, 2007.

\bibitem[BPT13]{BPTSIAMBook}
G.~Blekherman, P.~A. Parrilo, and R.~R. Thomas, editors.
\newblock {\em Semidefinite Optimization and Convex Algebraic Geometry},
  volume~13 of {\em MOS-SIAM Series on Optimization}.
\newblock Society for Industrial and Applied Mathematics (SIAM), Philadelphia;
  Mathematical Optimization Society, Philadelphia, 2013.

\bibitem[Bry86]{bryant1986graph}
R.~E. Bryant.
\newblock Graph-based algorithms for {B}oolean function manipulation.
\newblock {\em IEEE Trans. Comput.}, 100(8):677--691, 1986.

\bibitem[BTN01a]{ben2001lectures}
A.~Ben-Tal and A.~Nemirovski.
\newblock {\em Lectures on modern convex optimization: analysis, algorithms,
  and engineering applications}, volume~2.
\newblock Society for Industrial and Applied Mathematics, 2001.

\bibitem[BTN01b]{ben2001polyhedral}
A.~Ben-Tal and A.~Nemirovski.
\newblock On polyhedral approximations of the second-order cone.
\newblock {\em Math. Oper. Res.}, 26(2):193--205, 2001.

\bibitem[Bur09]{burer}
S.~Burer.
\newblock On the copositive representation of binary and continuous nonconvex
  quadratic programs.
\newblock {\em Math. Program.}, 120(2, Ser. A):479--495, 2009.

\bibitem[CCZ13]{conforti-cornuejols-zambelli2013}
M.~Conforti, G.~Cornu\'{e}jols, and G.~Zambelli.
\newblock Extended formulations in combinatorial optimization.
\newblock {\em Ann. Oper. Res.}, 204:97--143, 2013.

\bibitem[CLP99]{chen1999lifting}
F.~Chen, L.~Lov{\'a}sz, and I.~Pak.
\newblock Lifting {M}arkov chains to speed up mixing.
\newblock In {\em Proceedings of the 31st ACM Symposium on Theory of Computing,
  Atlanta, GA}, pages 275--281. ACM, 1999.

\bibitem[CLRS16]{CLRS}
S.~O. Chan, J.~R.\ Lee, P.~Raghavendra, and D.~Steurer.
\newblock Approximate constraint satisfaction requires large {LP} relaxations.
\newblock {\em J. ACM}, 63(4):Art. 34, 22, 2016.

\bibitem[CR93]{cohen1993nonnegative}
J.~E. Cohen and U.~G. Rothblum.
\newblock Nonnegative ranks, decompositions, and factorizations of nonnegative
  matrices.
\newblock {\em Linear Algebra Appl.}, 190:149--168, 1993.

\bibitem[CS16]{chandrasekaran2016relative}
V.~Chandrasekaran and P.~Shah.
\newblock Relative entropy relaxations for signomial optimization.
\newblock {\em SIAM J. Optim.}, 26(2):1147--1173, 2016.

\bibitem[CS17]{chandrasekaran-shah}
V.~Chandrasekaran and P.~Shah.
\newblock Relative entropy optimization and its applications.
\newblock {\em Math. Program.}, 161(1-2, Ser. A):1--32, 2017.

\bibitem[DG14]{dickinson-gijben}
P.~J.~C. Dickinson and L.~Gijben.
\newblock On the computational complexity of membership problems for the
  completely positive cone and its dual.
\newblock {\em Comput. Optim. Appl.}, 57(2):403--415, 2014.

\bibitem[DIDW17]{dressler2017positivstellensatz}
M.~Dressler, S.~Iliman, and T.~De~Wolff.
\newblock A {P}ositivstellensatz for sums of nonnegative circuit polynomials.
\newblock {\em SIAM J. Appl. Algebra Geometry}, 1(1):536--555, 2017.

\bibitem[DT05]{donoho2005sparse}
D.~L. Donoho and J.~Tanner.
\newblock Sparse nonnegative solution of underdetermined linear equations by
  linear programming.
\newblock {\em Proc. Nat. Acad. Sci. USA}, 102(27):9446--9451, 2005.

\bibitem[Faw19]{fawzi2018representing}
H.~Fawzi.
\newblock On representing the positive semidefinite cone using the second-order
  cone.
\newblock {\em Math. Program.}, 175(1-2):109--118, 2019.

\bibitem[Faw21]{fawzi2019separable}
H.~Fawzi.
\newblock The set of separable states has no finite semidefinite representation
  except in dimension $3\times 2$.
\newblock {\em Comm. Math. Phys.}, 386(3):1319--1335, 2021.

\bibitem[FGP{\etalchar{+}}15]{psdranksurvey}
H.~Fawzi, J.~Gouveia, P.~A. Parrilo, R.~Z. Robinson, and R.~R. Thomas.
\newblock Positive semidefinite rank.
\newblock {\em Math. Program.}, 153(1, Ser. B):133--177, 2015.

\bibitem[FKPT13]{fiorini-kaibel-pashkovich-theis2013}
S.~Fiorini, V.~Kaibel, K.~Pashkovich, and D.~O. Theis.
\newblock Combinatorial bounds on nonnegative rank and extended formulations.
\newblock {\em Discrete Math.}, 313(1):67--83, 2013.

\bibitem[FMP{\etalchar{+}}15]{fiorini-et-al2015}
S.~Fiorini, S.~Massar, S.~Pokutta, H.~R. Tiwary, and R.~de~Wolf.
\newblock Exponential lower bounds for polytopes in combinatorial optimization.
\newblock {\em J. ACM}, 62(2):Art. 17, 23, 2015.

\bibitem[FSED18]{FSED}
H.~Fawzi and M.~Safey El~Din.
\newblock A lower bound on the positive semidefinite rank of convex bodies.
\newblock {\em SIAM J. Appl. Algebra Geometry}, 2(1):126--139, 2018.

\bibitem[FSP15]{equivariantliftssos}
H.~Fawzi, J.~Saunderson, and P.~A. Parrilo.
\newblock Equivariant semidefinite lifts and sum-of-squares hierarchies.
\newblock {\em SIAM J. Optim.}, 25(4):2212--2243, 2015.

\bibitem[FSP16]{fawzi2016sparse}
H.~Fawzi, J.~Saunderson, and P.~A. Parrilo.
\newblock Sparse sums of squares on finite abelian groups and improved
  semidefinite lifts.
\newblock {\em Math. Prog.}, 160(1-2):149--191, 2016.

\bibitem[Gal63]{gale1963neighborly}
D.~Gale.
\newblock Neighborly and cyclic polytopes.
\newblock In {\em Proc. Sympos. Pure Math}, volume~7, pages 225--232, 1963.

\bibitem[GdLL17]{gribling-delaat-laurent2017}
S.~Gribling, D.~de~Laat, and M.~Laurent.
\newblock Matrices with high completely positive semidefinite rank.
\newblock {\em Linear Algebra Appl.}, 513:122--148, 2017.

\bibitem[GdLL18]{gribling2018bounds}
S.~Gribling, D.~de~Laat, and M.~Laurent.
\newblock Bounds on entanglement dimensions and quantum graph parameters via
  noncommutative polynomial optimization.
\newblock {\em Math. Program.}, 170(1):5--42, 2018.

\bibitem[GdLL19]{gribling-delaat-laurent2019}
S.~Gribling, D.~de~Laat, and M.~Laurent.
\newblock Lower bounds on matrix factorization ranks via noncommutative
  polynomial optimization.
\newblock {\em Found. Comput. Math.}, 19(5):1013--1070, 2019.

\bibitem[GG12]{gillis2012geometric}
N.~Gillis and F.~Glineur.
\newblock On the geometric interpretation of the nonnegative rank.
\newblock {\em Linear Algebra Appl.}, 437(11):2685--2712, 2012.

\bibitem[GGK{\etalchar{+}}13]{gouveia2013nonnegative}
J.~Gouveia, R.~Grappe, V.~Kaibel, K.~Pashkovich, R.~Z. Robinson, and R.~R.
  Thomas.
\newblock Which nonnegative matrices are slack matrices?
\newblock {\em Linear Algebra Appl.}, 439(10):2921--2933, 2013.

\bibitem[Gil15]{Gillis}
N.~Gillis.
\newblock The why and how of nonnegative matrix factorization.
\newblock In {\em Regularization, optimization, kernels, and support vector
  machines}, Chapman \& Hall/CRC Mach. Learn. Pattern Recogn. Ser., pages
  257--291. CRC Press, Boca Raton, FL, 2015.

\bibitem[GLS84]{GrotschelLovaszSchrijver1984}
M.~Gr\"{o}tschel, L.~Lov\'{a}sz, and A.~Schrijver.
\newblock Polynomial algorithms for perfect graphs.
\newblock In {\em Topics on Perfect Graphs}, volume~88 of {\em North-Holland
  Math. Stud.}, pages 325--356. North-Holland, Amsterdam, 1984.

\bibitem[GLS86]{grotschel1986relaxations}
M.~Gr{\"o}tschel, L.~Lov{\'a}sz, and A.~Schrijver.
\newblock Relaxations of vertex packing.
\newblock {\em J. Combin. Theory Ser. B}, 40(3):330--343, 1986.

\bibitem[GLS93]{GLSBook}
M.~Gr\"{o}tschel, L.~Lov\'{a}sz, and A.~Schrijver.
\newblock {\em Geometric Algorithms and Combinatorial Optimization}, volume~2
  of {\em Algorithms and Combinatorics}.
\newblock Springer-Verlag, Berlin, second edition, 1993.

\bibitem[GN11]{gouveia2011positive}
J.~Gouveia and T.~Netzer.
\newblock Positive polynomials and projections of spectrahedra.
\newblock {\em SIAM J. Optim.}, 21(3):960--976, 2011.

\bibitem[Goe15]{goemans2015smallest}
M.~X. Goemans.
\newblock Smallest compact formulation for the permutahedron.
\newblock {\em Math. Program.}, 153(1):5--11, 2015.

\bibitem[Gol04]{Gol04}
M.~C. Golumbic.
\newblock {\em Algorithmic Graph Theory and Perfect Graphs (Annals of Discrete
  Mathematics, Vol 57)}.
\newblock North-Holland Publishing Co., NLD, 2004.

\bibitem[GPRT17]{gouveia2017four}
J.~Gouveia, K.~Pashkovich, R.~Z. Robinson, and R.~R. Thomas.
\newblock Four-dimensional polytopes of minimum positive semidefinite rank.
\newblock {\em J. Combin. Theory Ser. A}, 145:184--226, 2017.

\bibitem[GPT10]{thetabodies}
J.~Gouveia, P.~A. Parrilo, and R.~R. Thomas.
\newblock Theta bodies for polynomial ideals.
\newblock {\em SIAM J. Optim.}, 20(4):2097--2118, 2010.

\bibitem[GPT13]{GPTlifts}
J.~Gouveia, P.~A. Parrilo, and R.~R. Thomas.
\newblock Lifts of convex sets and cone factorizations.
\newblock {\em Math. Oper. Res.}, 38(2):248--264, 2013.

\bibitem[GRT13]{gouveia2013polytopes}
J.~Gouveia, R.~Z. Robinson, and R.~R. Thomas.
\newblock Polytopes of minimum positive semidefinite rank.
\newblock {\em Discrete Comput. Geom.}, 50(3):679--699, 2013.

\bibitem[GT12]{gouveiathomaschapter}
J.~Gouveia and R.~R. Thomas.
\newblock Convex hulls of algebraic sets.
\newblock In {\em Handbook on Semidefinite, Conic and Polynomial Optimization},
  pages 113--138. Springer, 2012.

\bibitem[G{\"{u}}l97]{Guler1997}
O.~G{\"{u}}ler.
\newblock Hyperbolic polynomials and interior point methods for convex
  programming.
\newblock {\em Math. Oper. Res.}, 22(2):350--377, 1997.

\bibitem[GW95]{GoemansWilliamson}
M.~X. Goemans and D.~P. Williamson.
\newblock Improved approximation algorithms for maximum cut and satisfiability
  problems using semidefinite programming.
\newblock {\em J. ACM}, 42(6):1115--1145, 1995.

\bibitem[Hau03]{hauser2003hironaka}
H.~Hauser.
\newblock The {H}ironaka theorem on resolution of singularities (or: {A} proof
  we always wanted to understand).
\newblock {\em Bull. Amer. Math. Soc.}, 40(3):323--403, 2003.

\bibitem[Hen11]{henrionrational}
D.~Henrion.
\newblock Semidefinite representation of convex hulls of rational varieties.
\newblock {\em Acta Appl. Math.}, 115(3):319, 2011.

\bibitem[Hir64a]{hironaka1964resolution1}
H.~Hironaka.
\newblock Resolution of singularities of an algebraic variety over a field of
  characteristic zero: {I}.
\newblock {\em Ann. Math.}, 79(1):109--203, 1964.

\bibitem[Hir64b]{hironaka1964resolution2}
H.~Hironaka.
\newblock Resolution of singularities of an algebraic variety over a field of
  characteristic zero: {II}.
\newblock {\em Ann. Math.}, 79(2):205--326, 1964.

\bibitem[HN09]{helton2009sufficient}
J.~W. Helton and J.~Nie.
\newblock Sufficient and necessary conditions for semidefinite representability
  of convex hulls and sets.
\newblock {\em SIAM J. Optim.}, 20(2):759--791, 2009.

\bibitem[HN10]{helton2010semidefinite}
J.~W. Helton and J.~Nie.
\newblock Semidefinite representation of convex sets.
\newblock {\em Math. Program.}, 122(1):21--64, 2010.

\bibitem[Hor62]{Horn}
A.~Horn.
\newblock Eigenvalues of sums of {H}ermitian matrices.
\newblock {\em Pacific J. Math.}, 12:225--241, 1962.

\bibitem[HP16]{son2016genericity}
H.-V. H{\`a} and T.-S. Pham.
\newblock {\em Genericity in Polynomial Optimization}, volume~3.
\newblock World Scientific, 2016.

\bibitem[Kai11]{KaibelOptima}
V.~Kaibel.
\newblock Extended formulations in combinatorial optimization.
\newblock Optima 85, 2011.
\newblock 14 pages.

\bibitem[Kly98]{Klyachko}
A.~A. Klyachko.
\newblock Stable bundles, representation theory and {H}ermitian operators.
\newblock {\em Selecta Math. (N.S.)}, 4(3):419--445, 1998.

\bibitem[KPT12]{kaibel-pashkovich-theis}
V.~Kaibel, K.~Pashkovich, and D.~O. Theis.
\newblock Symmetry matters for sizes of extended formulations.
\newblock {\em SIAM J. Discrete Math.}, 26(3):1361--1382, 2012.

\bibitem[KT99]{KnutsonTaoJAMS}
A.~Knutson and T.~Tao.
\newblock The honeycomb model of {${\rm GL}_n({\Bbb C})$} tensor products. {I}.
  {P}roof of the saturation conjecture.
\newblock {\em J. Amer. Math. Soc.}, 12(4):1055--1090, 1999.

\bibitem[KT01]{KnutsonTao}
A.~Knutson and T.~Tao.
\newblock Honeycombs and sums of {H}ermitian matrices.
\newblock {\em Notices Amer. Math. Soc.}, 48(2):175--186, 2001.

\bibitem[Las01]{Lasserre}
J.~B. Lasserre.
\newblock Global optimization with polynomials and the problem of moments.
\newblock {\em SIAM J. Optim.}, 11(3):796--817, 2001.

\bibitem[Las09]{lasserre2009convex}
J.~B. Lasserre.
\newblock Convex sets with semidefinite representation.
\newblock {\em Math. Program.}, 120(2):457--477, 2009.

\bibitem[Lau03]{LaurentComparison}
M.~Laurent.
\newblock A comparison of the {S}herali-{A}dams, {L}ov\'{a}sz-{S}chrijver, and
  {L}asserre relaxations for 0-1 programming.
\newblock {\em Math. Oper. Res.}, 28(3):470--496, 2003.

\bibitem[Lau09]{LaurentSurvey}
M.~Laurent.
\newblock Sums of squares, moment matrices and optimization over polynomials.
\newblock In {\em Emerging applications of algebraic geometry}, volume 149 of
  {\em IMA Vol. Math. Appl.}, pages 157--270. Springer, New York, 2009.

\bibitem[LL92]{liaw1992obdd}
H.-T. Liaw and C.-S. Lin.
\newblock On the {OBDD}-representation of general {B}oolean functions.
\newblock {\em IEEE Trans. Comput.}, 41(6):661--664, 1992.

\bibitem[LRS15]{lee-raghavendra-steurer}
J.~R. Lee, P.~Raghavendra, and D.~Steurer.
\newblock Lower bounds on the size of semidefinite programming relaxations.
\newblock In {\em {P}roceedings of the 47th 2015 {ACM} {S}ymposium on {T}heory
  of {C}omputing, Portland, OR}, pages 567--576. ACM, New York, 2015.

\bibitem[LS91]{lovaszschrijver}
L.~Lov{\'a}sz and A.~Schrijver.
\newblock Cones of matrices and set-functions and 0--1 optimization.
\newblock {\em SIAM J. Optim.}, 1(2):166--190, 1991.

\bibitem[Mas76]{masek1976fast}
W.~J. Masek.
\newblock A fast algorithm for the string editing problem and decision graph
  complexity.
\newblock Master's thesis, Massachusetts Institute of Technology, 1976.

\bibitem[Min93]{minato1993zero}
S.~Minato.
\newblock Zero-suppressed {BDD}s for set manipulation in combinatorial
  problems.
\newblock In {\em Proceedings of the 30th ACM/IEEE Design Automation
  Conference}, pages 272--277. IEEE, 1993.

\bibitem[MK87]{murty-kabadi}
K.~G. Murty and S.~N. Kabadi.
\newblock Some {NP}-complete problems in quadratic and nonlinear programming.
\newblock {\em Math. Program.}, 39(2):117--129, 1987.

\bibitem[Nea03]{neal2003slice}
R.~M. Neal.
\newblock Slice sampling.
\newblock {\em Ann. Statist.}, 31(3):705--767, 2003.

\bibitem[Nea11]{neal2011mcmc}
R.~M. Neal.
\newblock {MCMC} using {H}amiltonian dynamics.
\newblock In {\em Handbook of Markov Chain Monte Carlo}, pages 139--188.
  Chapman and Hall/CRC, 2011.

\bibitem[Nem07]{nemirovskiICM}
A.~Nemirovski.
\newblock Advances in convex optimization: conic programming.
\newblock In {\em Proceedings of the International Congress of Mathematicians},
  volume~1, pages 413--444, 2007.

\bibitem[NN94]{nesterov1994interior}
Yu. Nesterov and A.~Nemirovskii.
\newblock {\em Interior-point Polynomial Algorithms in Convex Programming}.
\newblock SIAM, 1994.

\bibitem[NPS08]{kellipse}
J.~Nie, P.~A. Parrilo, and B.~Sturmfels.
\newblock Semidefinite representation of the k-ellipse.
\newblock In {\em Algorithms in algebraic geometry}, pages 117--132. Springer,
  2008.

\bibitem[Par03]{Parrilo}
P.~A. Parrilo.
\newblock Semidefinite programming relaxations for semialgebraic problems.
\newblock {\em Math. Program.}, 96(2, Ser. B):293--320, 2003.

\bibitem[Par06]{parrilo2006exact}
P.~A. Parrilo.
\newblock Exact semidefinite representation for genus zero curves.
\newblock In {\em Talk at the Banff workshop “Positive Polynomials and
  Optimization”, Banff, Canada}, pages 8--12, 2006.

\bibitem[Ren92]{RenegarI}
J.~Renegar.
\newblock On the computational complexity and geometry of the first-order
  theory of the reals. {I}. {I}ntroduction. {P}reliminaries. {T}he geometry of
  semi-algebraic sets. {T}he decision problem for the existential theory of the
  reals.
\newblock {\em J. Symbolic Comput.}, 13(3):255--299, 1992.

\bibitem[Roc70]{rockafellar}
R.~T. Rockafellar.
\newblock {\em Convex Analysis}.
\newblock Princeton University Press, 1970.

\bibitem[Rot17]{rothvoss2017}
T.~Rothvoss.
\newblock The matching polytope has exponential extension complexity.
\newblock {\em J. ACM}, 64(6):Art. 41, 19, 2017.

\bibitem[SA90]{sheraliadams}
H.~D. Sherali and W.~P. Adams.
\newblock A hierarchy of relaxations between the continuous and convex hull
  representations for zero-one programming problems.
\newblock {\em SIAM J. Discrete Math.}, 3(3):411--430, 1990.

\bibitem[Sau20]{saunderson2019limitations}
J.~Saunderson.
\newblock Limitations on the expressive power of convex cones without long
  chains of faces.
\newblock {\em SIAM J. Optim.}, 30(1):1033--1047, 2020.

\bibitem[Sch03]{schrijver2003combinatorial}
A.~Schrijver.
\newblock {\em Combinatorial Optimization: Polyhedra and Efficiency},
  volume~24.
\newblock Springer Science \& Business Media, 2003.

\bibitem[Sch18]{scheidererSDR}
C.~Scheiderer.
\newblock Spectrahedral shadows.
\newblock {\em SIAM J. Appl. Algebra Geom.}, 2(1):26--44, 2018.

\bibitem[Shi17]{shitov2017complexity}
Y.~Shitov.
\newblock The complexity of positive semidefinite matrix factorization.
\newblock {\em SIAM J. Optim.}, 27(3):1898--1909, 2017.

\bibitem[Sin15]{sinn}
R.~Sinn.
\newblock Algebraic boundaries of convex semi-algebraic sets.
\newblock {\em Res. Math. Sci.}, 2(1):3, 2015.

\bibitem[Sta86]{stanley1986two}
R.~P. Stanley.
\newblock Two poset polytopes.
\newblock {\em Discrete Comput. Geom.}, 1(1):9--23, 1986.

\bibitem[Tho18]{thomasicmarticle}
R.~R. Thomas.
\newblock Spectrahedral lifts of convex sets.
\newblock In {\em Proceedings of the {I}nternational {C}ongress of
  {M}athematicians---{R}io de {J}aneiro 2018. {V}ol. {IV}. {I}nvited lectures},
  pages 3819--3842. World Sci. Publ., Hackensack, NJ, 2018.

\bibitem[Vav10]{vavasis}
S.~A. Vavasis.
\newblock On the complexity of nonnegative matrix factorization.
\newblock {\em SIAM J. Optim.}, 20(3):1364--1377, 2010.

\bibitem[VB96]{VaB:96}
L.~Vandenberghe and S.~Boyd.
\newblock Semidefinite programming.
\newblock {\em SIAM Rev.}, 38(1):49--95, March 1996.

\bibitem[VGG17]{vandaele2017linear}
A.~Vandaele, N.~Gillis, and F.~Glineur.
\newblock On the linear extension complexity of regular n-gons.
\newblock {\em Linear Algebra Appl.}, 521:217--239, 2017.

\bibitem[Vie15]{vielma-sirev}
J.~P. Vielma.
\newblock Mixed integer linear programming formulation techniques.
\newblock {\em SIAM Rev.}, 57(1):3--57, 2015.

\bibitem[VN53]{von1953certain}
J.~Von~Neumann.
\newblock A certain zero-sum two-person game equivalent to the optimal
  assignment problem.
\newblock In H.~W. Kuhn and A.~W. Tucker, editors, {\em Contributions to the
  Theory of Games}, volume~2, pages 5--12. Princeton University Press, 1953.

\bibitem[Weg00]{wegener2000branching}
I.~Wegener.
\newblock {\em Branching Programs and Binary Decision Diagrams: Theory and
  Applications}, volume~4.
\newblock SIAM, 2000.

\bibitem[Yan91]{yannakakis1991expressing}
M.~Yannakakis.
\newblock Expressing combinatorial optimization problems by linear programs.
\newblock {\em J. Comput. System Sci.}, 43(3):441--466, 1991.

\bibitem[Zie00]{ziegler2000lectures}
G.~M. Ziegler.
\newblock Lectures on 0/1-polytopes.
\newblock In {\em Polytopes{--}Combinatorics and Computation}, pages 1--41.
  Springer, 2000.

\end{thebibliography}

\end{document}